\newtheorem{theorem}{Theorem}[section]
\newtheorem{lemma}[theorem]{Lemma}
\newtheorem{corollary}[theorem]{Corollary}
\newtheorem{definition}[theorem]{Definition}
\newtheorem{proposition}[theorem]{Proposition}
\newtheorem{remark}[theorem]{Remark}
\def\lan{\langle}    \def\ran{\rangle}
\def\az{\alpha}
\def\bz{\beta}
\def\gz{\gamma}  \def\ggz{\Gamma}
\def\dz{\delta}  
\def\oz{\omega}  \def\hoz{{\hat{\omega}}}  \def\ooz{\Omega}
\def\sz{\sigma}  
\def\lz{\lambda} \def\llz{\Lambda}
\def\vez{\varepsilon}  \def\ez{\epsilon}
\def\xz{\xi}     
\def\cM{{\mathcal M}}  \def\cG{{\mathcal G}}
\def\cP{{\mathcal P}}
\def\cC{{\mathcal C}} \def\cD{{\mathcal D}}    \def\cL{{\mathcal L}}
\def\cH{{\mathcal H}}   \def\cB{{\mathcal B}} \def\cS{{\mathcal S}}
\def\cZ{{\mathcal Z}}  
\def\cI{{\mathcal I}}    \def\cQ{{\mathcal Q}}  \def\cR{{\mathcal R}}
\def\cA{{\mathcal A}}    
\def\cK{{\mathcal K}}  \def\cT{{\mathcal T}}
\def\cF{{\mathcal F}}
\def\cW{{\mathcal W}}
\def\tcF{\tilde{\mathcal F}}
\def\tcG{\tilde{\mathcal G}}
\def\tcQ{\tilde{\mathcal Q}}
\def\tphi{\tilde{\phi}}
\def\tez{\tilde{\epsilon}}
\def\bbM{{\mathbb M}}
\def\bbN{{\mathbb N}}  \def\bbZ{{\mathbb Z}}  \def\bbQ{{\mathbb Q}}
    \def\bbF{{\mathbb F}}
\def\bbC{{\mathbb C}}  \def\bbE{{\mathbb E}}  \def\bbP{{\mathbb P}}
    \def\bbD{{\mathbb D}}
         \def\bfU{{\bf U}}
  \def\leq{\leqslant}  \def\geq{\geqslant}
\def\lra{\longrightarrow}   
\def\ra{\rightarrow}
\def\Hom{\mathrm {Hom}}  
\def\Ext{\mathrm{Ext}}   \def\Ker{\mbox{\rm Ker}\,}
\def\dim{\mathrm{dim}}
\def\dime{\mathrm{dim}\,}
\def\End{\mathrm {End}}
\def\Aut{\mbox{\rm Aut}}
\def\IC{\mathrm {IC}}
\def\Id{\mathrm {Id}}
\def\rk{\mathrm {rk}\,}
\def\udim{\mathrm{\underline{dim}}\,}
\def\mod{\mbox{\rm mod}\,}  
\def\rad{\mbox{\rm rad}\,}  
\def\rep{\mbox{\rm Rep}}  
\def\sgn{\mbox{\rm sgn}\,}
\def\Ind{\mathrm {Ind}}
\def\fkm{{\mathfrak m}}
\def\GL{\mathrm{GL}}
\def\bfH{{\mathbf H}}  
\def\bfc{{\bf c}}  \def\bfd{{\bf d}} \def\bfe{{\bf e}}
\def\supp{{\rm supp}\,}
\def\Rep{{\rm Rep}}
\def\bfi{{\bf i}}
\def\bfh{{\bf h}}
\def\bfj{{\bf j}}
\def\bfB{{\bf B}}
\def\bfI{{\bf I}}
\def\bfone{{\bf 1}}
\def\bmbz{\bm{\beta}}
\begin{document}

\title[Tame quivers and affine bases II]
{Tame quivers and affine bases II: nonsimply-laced cases}

\author[J. Xiao]{Jie Xiao}
\address{School of mathematical seciences, Beijing Normal University, Beijing 100875, P. R. China}
\email{jxiao@bnu.edu.cn (J. Xiao)}
\author[H. Xu]{Han Xu}
\address{School of Mathematical and Statistics, Beijing Institute of Technology, Beijing 100081, P. R. China}
\email{7520220010@bit.edu.cn (H. Xu)}

\thanks{Jie Xiao was supported by NSF of China (No. 12031007).}

\date{\today}

\keywords{}

\bibliographystyle{abbrv}

\maketitle

\begin{abstract}
In \cite{XXZ_Tame_quivers_and_affine_bases_I}, we give a Ringel-Hall algebra approach to the canonical bases in the symmetric affine cases. In this paper, we extend the results to general symmetrizable affine cases by using Ringel-Hall algebras of representations of a valued quiver.
We obtain a bar-invariant basis $\bfB'=\{C(\bfc,t_\lz)|(\bfc,t_\lz)\in\cG^a\}$ in the generic composition algebra $\cC^*$ and prove that $\cB'=\bfB'\sqcup(-\bfB')$ coincides with Lusztig's signed canonical basis $\cB$. Moreover, in type $\tilde{B}_n,\tilde{C}_n$, $\bfB'$ is the canonical basis $\bfB$.
\end{abstract}

\setcounter{tocdepth}{1}\tableofcontents

\section{Introduction}

Given a symmetrizable Cartan datum $(\bfI,(-,-))$ and $\bfU^+$ be the positive part of the associated quantized enveloping algebra over $\bbQ(v)$, generated by $E_\bfi,\bfi\in\bfI$, subject to the quantum Serre relation. Let $\bfU^+_\cA$ be the integral form, where $\cA=\bbZ[v,v^{-1}]$.

By the geometric construction of Lusztig (\cite{Lusztig_Introduction_to_quantum_groups}), there is a unique $\cA$-basis $\bfB$ of $\bfU^+_\cA$ such that $\bfB$ is almost-orthogonal, bar-invariant and compatible with Kashiwara's operators. This basis $\bfB$ is called the canonical basis. We recall this construction here briefly.

Any symmetrizable Cartan datum can be associated to a quiver $Q=(I,H)$ with an automorphism $\sz$ such that $\bfI$ is the set of $\sz$-orbits in $I$.
Given an $I$-graded space $V$ and an automorphism $\sz: V\ra V$ such that $\sz(V_i)=V_{\sz(i)}$, we have the representation variety $\bbE_V$ of $Q$ while $\GL_V$ and acts upon it.    
Lusztig constructed a category $\cQ_V$ containing perverse sheaves on $\bbE_V$ obtained from flag varieties.
The perverse sheaves in $\cQ_V$ are called Lusztig's sheaves.
Then $\bfU^+_{|V|}$ is isomorphic to the Grothendieck group $K(\tcQ_V)$ of $\tcQ_V$, where $\tcQ_V$ is the category of $(L,\phi)$ such that $L\in\cQ_V$ and $\phi:\sz^*L\ra L$ is an isomorphism. Here $\sz:\bbE_V\ra\bbE_V$ is induced naturally by $\sz: V\ra V$. 
The geometric realization is given by the isomorphism 
$$\Xi_2:\bfU^+_\cA\ra K(\tcQ),$$
where $K(\tcQ)=\oplus_{|V|\in\bbN I^\sz} K(\tcQ_V)$.

Let $\cB''$ be the set of all $\pm [L,\phi]$ in $K(\tcQ)$ such that $(D(L),D(\phi)^{-1})\cong (L,\phi)$, where $D$ is the Verdier duality. 
Then the signed canonical basis $\cB$ for $\bfU^+$ is by definition $\Xi_2^{-1}(\cB'')$.
The canonical basis $\bfB$ is by definition a subset of $\cB$ such that $\cB=\bfB\sqcup(-\bfB)$ and $\bfB$ is compatible with Kashiwara's operators. Let $\bfB''=\Xi_2(\bfB)$.

In \cite{Green_Hall_algebras_hereditary_algebras_and_quantum_groups} and \cite{Ringel_Hall_algebras_and_quantum_groups}, the Ringel-Hall algebra realization of $\bfU^+$ is introduced. Any symmetrizable Cartan datum can be associated to a valued quiver $\ggz$ with vertex set $\bfI$. Consider the category $\Rep_k \ggz$ of finite-dimensional representations of $\ggz$ over a finite field $k$. We can define the twisted Ringel-Hall algebra $\cH^*_k(\ggz)$ over $\bbQ$, then define the generic composition algebra $\cC^*_\cA=\cC^*(\ggz)$ over $\bbQ(v)$ as a $\cA$-subalgebra of $\cH^*(\ggz)=\prod_{k\in\cK}\cH^*_k(\ggz)$ for some infinite set $\cK$ of finite fields, where $v$ acts on $\cH^*(\ggz)$ by $(\sqrt{|k|})_k$. Then there is an isomorphism 
$$\Xi_1:\bfU^+_\cA\ra \cC^*_\cA$$
mapping Chevalley generators to simple modules. 
Denote 
$$\Xi=\Xi_2\circ(\Xi_1^{-1}):\cC^*_\cA\ra K(\tcQ).$$

For any symmetric Cartan datum, the construction above can be rebuilt without considering the automorphism $\sz$. In this case, $\Xi_1:\bfU^+_\cA\ra K(\cQ)$ maps the canonical basis to the Grothendieck image of all simple Lusztig's sheaves. 

We consider the affine types in this paper. 
In simply-laced affine cases, the Lusztig' sheaves can be explicitly given. By \cite{Lusztig_Affine_quivers_and_canonical_bases} and \cite{Li_Notes_on_affine_canonical_and_monomial_bases}, if $Q$ is a tame quiver, all simple Lusztig's sheaves are of the form $\IC(\overline{X(\bfc,m)},\cL_\lz)$, where $X(\bfc,m)$ is a locally closed subset of $\bbE_V$ and $\cL_\lz$ is a local system, and simply denoted by $\IC(\bfc,t_\lz)$.

In \cite{XXZ_Tame_quivers_and_affine_bases_I}, Xiao-Xu-Zhao give a Ringel-Hall algebra approach to the canonical basis of simply-laced affine type. 
Let $\cH^*(kQ)$ be the Ringel-Hall algebra of $Q$ over a finite field $k$. 
Then the generic composition subalgebra $\cC^*$ can be constructed and is isomorphic to $\bfU^+$.
Let $\cH^0$ be the extended composition algebra with a PBW-type $\cA$-basis $\{N(\bfc,t_\lz)|(\bfc,t_\lz)\in\cG(Q)\}$ 
$$N(\bfc,t_\lz)=\lan M(\bfc_-)\ran*\lan M(\bfc_0)\ran *S_\lz*\lan M(\bfc_+)\ran,$$
then $\cC^*\subset\cH^0$.
Let $\cG^a$ be the subset of $\cG=\cG(Q)$ containing all aperiodic indices, then Xiao-Xu-Zhao constructed a monomial basis $\{\fkm^{\oz(\bfc,t_\lz)}|(\bfc,t_\lz)\in\cG^a\}$ and a PBW basis $\{E(\bfc,t_\lz)|(\bfc,t_\lz)\in\cG^a\}$ for $\cC^*$ such that
$$\fkm^{\oz(\bfc,t_\lz)}=E(\bfc,t_\lz)+\sum_{(\bfc',t_{\lz'})\in\cG^a\atop(\bfc',t_{\lz'})\prec(\bfc,t_\lz)}\phi_{(\bfc,t_\lz)}^{(\bfc',t_{\lz'})}(v)E(\bfc',t_{\lz'})$$
and
$$E(\bfc,t_\lz)=N(\bfc,t_\lz)+\sum_{(\bfc',t_{\lz'})\in\cG\setminus\cG^a\atop (\bfc',t_{\lz'})\prec(\bfc,t_\lz)}b_{(\bfc,t_\lz)}^{(\bfc',t_{\lz'})}(v)N(\bfc',t_{\lz'})$$
hold in $\cH^0$, where coefficients are in $\cA$. 
Then there exists a bar-invariant basis $\bfB'=\{C(\bfc,t_\lz)|(\bfc,t_\lz)\in\cG^a\}$ of $\cC^*$ such that
$$C(\bfc,t_\lz)=E(\bfc,t_\lz)+\sum_{(\bfc',t_{\lz'})\in\cG^a,(\bfc',t_{\lz'})\prec(\bfc,t_\lz)}g^{(\bfc',t_{\lz'})}_{(\bfc,t_\lz)}(v)E(\bfc',t_{\lz'})$$
for some $g^{(\bfc',t_{\lz'})}_{(\bfc,t_\lz)}(v)\in v^{-1}\bbZ[v^{-1}]$.
Let $B(\bfc,t_\lz)=[\IC(\bfc,t_\lz)]\in\bfB''$ be the canonical basis element in Lusztig's geometric realization, then we have 
\begin{theorem}
    For simply-laced affine type, we have $\Xi(\bfB')=\bfB''$. Moreover, we have $\Xi(C(\bfc,t_\lz))=B(\bfc,t_\lz)$.
\end{theorem}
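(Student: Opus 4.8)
The plan is to characterise $\Xi(C(\bfc,t_\lz))$ intrinsically inside $K(\tcQ)$ and identify it with $B(\bfc,t_\lz)$ by a uniqueness argument. First I would record that $\Xi=\Xi_2\circ\Xi_1^{-1}$ is an $\bbN\bfI$-graded $\cA$-algebra isomorphism that intertwines the bar involutions: on $\cC^*$ the bar involution fixes the simple modules and sends $v\mapsto v^{-1}$, on $K(\tcQ)$ it is induced by Verdier duality, and the compatibility is built into the constructions of $\Xi_1$ (Ringel--Green) and $\Xi_2$ (Lusztig). Since the grading is by $\bbN\bfI$, each homogeneous piece of $\cC^*$ and of $K(\tcQ)$ is a free $\cA$-module of finite rank, so all the triangular systems below are legitimate; and as $C(\bfc,t_\lz)$ is bar-invariant in $\cC^*$, its image $\Xi(C(\bfc,t_\lz))$ is bar-invariant in $K(\tcQ)$.

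Next I would bring in the geometric PBW basis. By \cite{Lusztig_Affine_quivers_and_canonical_bases} and \cite{Li_Notes_on_affine_canonical_and_monomial_bases}, in the affine case $K(\tcQ)$ carries a PBW-type basis $\{E^{\mathrm{geo}}(\bfc,t_\lz)\mid(\bfc,t_\lz)\in\cG^a\}$, assembled by induction from the IC sheaves of preprojective orbit closures, semisimple monodromic sheaves on the regular tubes, and the IC sheaves of preinjective orbit closures, on which Verdier duality acts unitriangularly with respect to $\prec$, and such that the canonical basis element $B(\bfc,t_\lz)$ is the \emph{unique} bar-invariant element of the coset
\[
E^{\mathrm{geo}}(\bfc,t_\lz)+\sum_{(\bfc',t_{\lz'})\prec(\bfc,t_\lz)}v^{-1}\bbZ[v^{-1}]\,E^{\mathrm{geo}}(\bfc',t_{\lz'}).
\]

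The heart of the argument is the comparison of the two PBW bases: I would prove that, for every $(\bfc,t_\lz)\in\cG^a$, the element $\Xi(E(\bfc,t_\lz))$ lies in the coset displayed above with $B(\bfc,t_\lz)$ replaced by $\Xi(E(\bfc,t_\lz))$ --- ideally the honest equality $\Xi(E(\bfc,t_\lz))=E^{\mathrm{geo}}(\bfc,t_\lz)$, but agreement modulo $\sum_{(\bfc',t_{\lz'})\prec(\bfc,t_\lz)}v^{-1}\bbZ[v^{-1}]\,E^{\mathrm{geo}}(\bfc',t_{\lz'})$ already suffices. By multiplicativity of $\Xi$ and the formulas $N(\bfc,t_\lz)=\lan M(\bfc_-)\ran*\lan M(\bfc_0)\ran*S_\lz*\lan M(\bfc_+)\ran$ and $E(\bfc,t_\lz)=N(\bfc,t_\lz)+\sum_{(\bfc',t_{\lz'})\in\cG\setminus\cG^a,\,(\bfc',t_{\lz'})\prec(\bfc,t_\lz)}b_{(\bfc,t_\lz)}^{(\bfc',t_{\lz'})}(v)N(\bfc',t_{\lz'})$, this breaks into three matchings: (i) $\Xi(\lan M(\bfc_\pm)\ran)$ equals, up to the standard power of $v$, the IC class of the orbit closure of the preprojective (resp.\ preinjective) module $M(\bfc_\pm)$, which holds because those orbits are rigid so that the Hall element is pure; (ii) $\Xi(\lan M(\bfc_0)\ran*S_\lz)$ equals the regular part of $E^{\mathrm{geo}}$ attached to the tubes; and (iii) the non-aperiodic correction terms $b_{(\bfc,t_\lz)}^{(\bfc',t_{\lz'})}(v)N(\bfc',t_{\lz'})$ on the Hall side reproduce the rewriting of periodic PBW monomials in terms of aperiodic ones on the geometric side. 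I expect (ii) to be the main obstacle: it requires computing Hall products of modules supported on the homogeneous and non-homogeneous tubes and matching the resulting partition/multisegment combinatorics with Lusztig's strata $X(\bfc,m)$ and the local systems $\cL_\lz$ --- exactly where the affine-specific work of \cite{XXZ_Tame_quivers_and_affine_bases_I} is concentrated.

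Finally I would conclude. Applying the $\cA$-linear map $\Xi$ to $C(\bfc,t_\lz)=E(\bfc,t_\lz)+\sum_{(\bfc',t_{\lz'})\prec(\bfc,t_\lz)}g^{(\bfc',t_{\lz'})}_{(\bfc,t_\lz)}(v)E(\bfc',t_{\lz'})$ with $g^{(\bfc',t_{\lz'})}_{(\bfc,t_\lz)}(v)\in v^{-1}\bbZ[v^{-1}]$, and invoking the comparison (the $v^{-1}$-corrections compose and remain in $v^{-1}\bbZ[v^{-1}]$), we obtain that $\Xi(C(\bfc,t_\lz))$ lies in the coset $E^{\mathrm{geo}}(\bfc,t_\lz)+\sum_{(\bfc',t_{\lz'})\prec(\bfc,t_\lz)}v^{-1}\bbZ[v^{-1}]\,E^{\mathrm{geo}}(\bfc',t_{\lz'})$; it is bar-invariant by the first step; hence $\Xi(C(\bfc,t_\lz))=B(\bfc,t_\lz)$ by the uniqueness recalled above. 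Letting $(\bfc,t_\lz)$ range over $\cG^a$ then gives $\Xi(\bfB')=\bfB''$.
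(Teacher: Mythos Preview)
Your route differs from the paper's. The theorem is quoted from \cite{XXZ_Tame_quivers_and_affine_bases_I}, and the argument (reproduced in Section~7.2 for the general case) does \emph{not} compare algebraic and geometric PBW bases directly. Instead it goes through the \emph{monomial} basis: since $\Xi$ is an algebra homomorphism determined on the generators $u_\bfi$, one has $\Xi(\fkm^{\oz(\bfc,t_\lz)})=[L_{\oz(\bfc,t_\lz)}]$ by definition of $\Xi_2$. The key geometric step is the decomposition $L_{\oz(\bfc,t_\lz)}\cong\IC(\bfc,t_\lz)\oplus L'$ with $\supp L'\subset\overline{X(\bfc,|\lz|)}\setminus X(\bfc,|\lz|)$, proved by computing the monomial in $\cH^0$ and reading off the support. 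This yields $\fkm^{\oz(\bfc,t_\lz)}=B(\bfc,t_\lz)+(\text{lower terms})$; combining with Proposition~\ref{monomial basis} gives $\Xi^{-1}(B(\bfc,t_\lz))=N(\bfc,t_\lz)+\sum_{\prec}\zeta\,N(\bfc',t_{\lz'})$, and almost orthogonality of both $\bfB''$ and $\{N(\bfc,t_\lz)\}$ forces all $\zeta\in v^{-1}\bbQ[v^{-1}]$. The PBW basis $\{E(\bfc,t_\lz)\}$ enters only at the end, via the uniqueness in Proposition~\ref{uniqueness of PBW}, to pass from $N$'s to $E$'s; bar-invariance then finishes.

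Your step (ii) has a genuine gap. The isomorphism $\Xi$ is defined only on $\cC^*$, not on $\cH^0$; the factor $\lan M(\bfc_0)\ran*S_\lz=N(\bfc_0,t_\lz)$ typically lies in $\cH^0\setminus\cC^*$ (this is exactly why the non-aperiodic corrections defining $E(\bfc_0,t_\lz)$ are needed), so the expression ``$\Xi(\lan M(\bfc_0)\ran*S_\lz)$'' is not a priori meaningful. You would have to work with $E(\bfc_0,t_\lz)\in\cC^*$ instead; but computing $\Xi(E(\bfc_0,t_\lz))$ means expressing it in the generators $u_\bfi$, which brings you back to a monomial expansion. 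Likewise, your invocation of a ready-made geometric PBW basis $E^{\mathrm{geo}}$ with the stated triangularity to $B(\bfc,t_\lz)$ is not supplied by \cite{Lusztig_Affine_quivers_and_canonical_bases} or \cite{Li_Notes_on_affine_canonical_and_monomial_bases} in that form; establishing it would itself require the support analysis the paper performs on $L_\oz$. So the monomial detour is not a stylistic choice but the mechanism by which $\Xi$ becomes computable: monomials are the only elements on which $\Xi$ is given explicitly, and their geometric images are precisely the Lusztig sheaves to which the decomposition theorem applies.
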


In this paper, we aim to generalize this result to all affine type. First, we construct the extended composition algebra $\cH^0$ for a tame valued quiver, and prove that it is closed under multiplication and comultiplication. Then the monomial basis $\{\fkm^{\oz(\bfc,t_\lz)}|(\bfc,t_\lz)\in\cG^a\}$, PBW basis $\{E(\bfc,t_\lz)|(\bfc,t_\lz)\in\cG^a\}$ and a bar-invariant basis $\bfB'=\{C(\bfc,t_\lz)|(\bfc,t_\lz)\in\cG^a\}$ can be constructed similarly to simply-laced cases. 
Let $\cB'=\bfB'\sqcup(-\bfB')$. By comparing to the geometric signed canonical basis $\cB''=\bfB''\sqcup(-\bfB'')$, where
$$\bfB''=\{B(\bfc,t_\lz)=[\IC(\eta(\bfc),t_\lz),\tphi]|(\bfc,t_\lz)\in\cG^a\},$$
here $\tphi$ is uniquely chosen for each Lusztig's sheaf,
we can prove the main theorem:
\begin{theorem}
For general affine type, we have $\Xi(\cB')=\cB''$.  Moreover, we have $\Xi(\pm C(\bfc,t_\lz))=\pm B(\bfc,t_\lz)$.
\end{theorem}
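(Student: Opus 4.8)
The plan is to run the argument of \cite{XXZ_Tame_quivers_and_affine_bases_I} $\sz$-equivariantly and to conclude from the uniqueness characterization of the signed canonical basis. Recall that $\cB''=\bfB''\sqcup(-\bfB'')$ is pinned down in $K(\tcQ)$ by two properties: its members are fixed by the Verdier-duality involution, and $B(\bfc,t_\lz)$ expands with leading coefficient $1$ and every strictly smaller coefficient in $v^{-1}\bbZ[v^{-1}]$ in the stratum classes $[\mathbf 1_{X(\eta(\bfc),m)},\tphi]$ (this normalization being purity of the $\IC$ sheaves). On the Ringel-Hall side, $C(\bfc,t_\lz)$ is bar-invariant, the monomial $\fkm^{\oz(\bfc,t_\lz)}$ is bar-invariant and agrees with $E(\bfc,t_\lz)$ modulo an $\cA$-combination of $\prec$-smaller $E(\bfc',t_{\lz'})$, and $E(\bfc,t_\lz)$ agrees with $N(\bfc,t_\lz)$ modulo an $\cA$-combination of $\prec$-smaller periodic $N(\bfc',t_{\lz'})$. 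It therefore suffices to prove (a) that $\Xi$ intertwines the bar involution of $\cC^*$ with Verdier duality on $K(\tcQ)$, and (b) that $\Xi$ identifies $N(\bfc,t_\lz)$ with the stratum class $[\mathbf 1_{X(\eta(\bfc),m)},\tphi]$ modulo strictly $\prec$-smaller strata. Granting these, $\Xi(E(\bfc,t_\lz))$ and $\Xi(\fkm^{\oz(\bfc,t_\lz)})$ acquire the expected triangular, resp. positive, expansions in the $B(\bfc',t_{\lz'})$ with leading term $B(\bfc,t_\lz)$, while $\Xi(C(\bfc,t_\lz))$ is bar-invariant. The argument of \cite{XXZ_Tame_quivers_and_affine_bases_I} then forces $\Xi(C(\bfc,t_\lz))$ into the normalized shape characterizing $\bfB''\sqcup(-\bfB'')$, so $\Xi(C(\bfc,t_\lz))=\pm B(\bfc,t_\lz)$; matching the two $\prec$-unitriangular families index by index yields $\Xi(\cB')=\cB''$ and $\Xi(\pm C(\bfc,t_\lz))=\pm B(\bfc,t_\lz)$.

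Step (a) is comparatively routine. The bar involution of $\bfU^+$ is the $\bbQ$-algebra automorphism fixing each $E_\bfi$ and sending $v\mapsto v^{-1}$; under $\Xi_1$ the $E_\bfi$ go to the classes of the simple $\ggz$-modules, which are bar-invariant for the natural bar on $\cH^*(\ggz)$, so $\Xi_1$ intertwines the two bars (and since both bars are algebra automorphisms, products of divided powers, hence the $\fkm^{\oz(\bfc,t_\lz)}$, are bar-invariant). On the other hand $\Xi_2$ intertwines bar on $\bfU^+$ with Verdier duality on $K(\tcQ)$ by Lusztig. Composing, and using the earlier description of the extended algebra $\cH^0$ and of its bar operator on the $N(\bfc,t_\lz)$, gives (a); the one new point is to check that the bar operator on the valued-quiver Hall algebra behaves as in the split case, which is immediate from the defining formula for $N(\bfc,t_\lz)$.

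Step (b) is the technical heart, and the step I expect to be the main obstacle, since it requires transporting the stratum analysis of \cite{XXZ_Tame_quivers_and_affine_bases_I} across the folding $\sz$. One must: (i) make precise the dictionary $\eta$ between aperiodic representation data $(\bfc,t_\lz)\in\cG^a$ of the tame valued quiver and $\sz$-orbits of aperiodic $Q$-data, tracking how $\sz$ permutes the homogeneous tubes and acts within the exceptional tubes, and reconcile it with Lusztig's classification of the simple sheaves in the $\sz$-equivariant affine case as the $\IC(\eta(\bfc),t_\lz)$; (ii) show that the stratification of $\bbE_V$ by the locally closed subsets $X(\bfc,m)$ is $\sz$-stable and that each stratum, together with the local system $\cL_\lz$, carries a distinguished isomorphism $\sz^*(\cdot)\cong(\cdot)$, this being exactly where a sign can intervene, and the reason the honest canonical basis is recovered only in types $\tilde B_n,\tilde C_n$; and (iii) compute, as in the split case, the transition between the Hall elements $N(\bfc,t_\lz)$ and the stratum classes $[\mathbf 1_{X(\eta(\bfc),m)},\tphi]$ via the closure order and the purity-controlled passage to $\IC$-classes, now for $\sz$-fixed data. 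As in \cite{XXZ_Tame_quivers_and_affine_bases_I}, the aperiodicity hypothesis is essential to cut down to the index set $\cG^a$ on which $E(\bfc,t_\lz)$ and $B(\bfc,t_\lz)$ live, and the periodic corrections turning $N$ into $E$ cause no trouble because periodic strata are $\prec$-smaller.

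Finally, the refinement $\Xi(\pm C(\bfc,t_\lz))=\pm B(\bfc,t_\lz)$ with matched signs follows because, under the identification in (b), $C(\bfc,t_\lz)$ and $B(\bfc,t_\lz)$ share the same leading stratum term, so the global sign is fixed by comparing leading terms. In types $\tilde B_n$ and $\tilde C_n$ one checks in addition that $\bfB'$ is compatible with Kashiwara's operators (equivalently, that all these signs are $+1$), which upgrades the statement to $\bfB'=\bfB$; for the signed version stated here that additional input is not needed.
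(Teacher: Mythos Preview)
Your overall architecture (triangularity plus bar-invariance forces equality up to sign) matches the paper, but your step (b) as written has a genuine gap. The element $N(\bfc,t_\lz)$ lies in $\cH^0(\ggz)$ but \emph{not} in $\cC^*_\cA$ in general (only $E(\bfc,t_\lz)$ does), so $\Xi(N(\bfc,t_\lz))$ is undefined; likewise the constant sheaves $\mathbf 1_{X(\eta(\bfc),m)}$ on strata are not Lusztig sheaves and do not give classes in $K(\tcQ)$. So there is no direct ``$\Xi$ identifies $N$ with a stratum class'' statement to prove.

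The paper's actual bridge is the \emph{monomial} basis, used in the unfolded picture. One proves that the unfolded monomial $\fkm^{\hoz(\bfc,t_\lz)}\in\cC^*(Q)$ is $\prec$-unitriangular over the PBW basis $N_Q(\bfd,t_\mu)$ of $\cH^0(Q)$ with leading term $N_Q(\eta(\bfc),t_\lz)$; by the geometric result of \cite{XXZ_Tame_quivers_and_affine_bases_I} this forces $L_{\hoz(\bfc,t_\lz)}\cong\IC(\eta(\bfc),t_\lz)\oplus L'$ with $\supp L'$ in the boundary. Restricting $\phi_0$ then gives $[L_{\oz(\bfc,t_\lz)},\phi_0]=[\IC(\eta(\bfc),t_\lz),\tilde\phi_0]+[\text{smaller}]$, and the uniqueness-up-to-sign of $\tphi$ yields $[\IC(\eta(\bfc),t_\lz),\tilde\phi_0]=\pm B(\bfc,t_\lz)$. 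The genuinely new technical content here (what your sketch under (b)(i)--(iii) would have to become) is checking that $\fkm^{\hoz(\bfc_0)}$ has leading term $\langle M(\eta(\bfc_0))\rangle$ in $\cH^0(Q)$; this requires verifying $\eta(\pi\diamond\pi')=\eta(\pi)\diamond\eta(\pi')$ for generic extensions when a nonhomogeneous tube of $Q$ folds to one of half the rank, via the two cases of Lemma~\ref{how nh tubes are F-folded}. Finally, the passage from ``$\Xi^{-1}(B(\bfc,t_\lz))=\pm N(\bfc,t_\lz)+\text{lower}$'' to ``lower coefficients lie in $v^{-1}\bbQ[v^{-1}]$'' is obtained not from purity but from almost orthogonality of both $\bfB''$ and the PBW basis $\{N(\bfc,t_\lz)\}$ with respect to $(-,-)$; then Propositions~\ref{uniqueness of PBW} and~\ref{relation bt C and E} plus bar-invariance finish as you say.
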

Moreover, for type $\tilde{B}_n$ and $\tilde{C}_n$, by using Kashiwara's operators, we prove that $\Xi(C(\bfc,t_\lz))=B(\bfc,t_\lz)$, thus $\Xi(\bfB')=\bfB''$.

\section{Preliminary}

\subsection{Valued quivers}

A quiver $Q=(I,H,s,t)$ consists of the following data: a set $I$ of vertices, a set $H$ of arrows, and two maps $s,t:H\ra I$ such that any arrow $h\in H$ starts at $s(h)$ and terminates at $t(h)$. 

A valued quiver $\ggz=(\bfI,\bfH,s,t;d,m)$ consists of the following data: a quiver $(\bfI,\bfH,s,t)$ and two maps $d:\bfI\ra\bbZ^+,\bfi\mapsto d_\bfi;m:\bfH\ra\bbZ^+,\bfh\mapsto m_\bfh$ such that for any arrow $\bfh\in\bfH$, $m_{\bfh}$ is a common multiple of $d_{s(\bfh)}$ and $d_{t(\bfh)}$. We say that $d,m$ are the valuations of $\ggz$ on vertices and arrows, respectively. A quiver can be treated as a valued quiver given that the valuations are trivial ($d_{(-)}=1=m_{(-)}$).

We require in this paper that all quivers and valued quivers are finite and have no loops, that is, $I,H,\bfI,\bfH$ are finite sets there is no arrow sharing the same starting and terminal vertex. All quivers and valued quivers shall admit no oriented cycle (a sequence of arrows $h_1,h_2,\dots,h_s$ such that $t(h_r)=s(h_{r+1})$ for $1\leq r\leq s-1$ and $t(h_s)=s(h_1)$) unless stated.

An automorphism $\sz$ of a quiver $Q=(I,H,s,t)$ is a pair of bijections $I\ra I$ and $H\ra H$, both denoted by $\sz$, such that for any $h\in H$, $\sz(s(h))=s(\sz(h))$, $\sz(t(h))=t(\sz(h))$. For example, the identity  $\Id_Q$ consisting of identity maps of $I$ and $H$ is an automorphism.

Given a quiver $Q=(I,H,s,t)$ and an automorphism $\sz$ of $Q$ as above, we can define a valued quiver $\ggz(Q,\sz)=(\bfI,\bfH,s,t;d,m)$ as follows: the sets $\bfI$ and $\bfH$ are respectively the sets of $\sz$-orbits in $I$ and $H$; the maps $s,t$ are naturally induced; the cardinal number of a $\sz$-orbit $\bfi\in\bfI$ equals to $d_\bfi$; the cardinal number of a $\sz$-orbit $\bfh\in\bfH$ equals to $m_\bfh$.

\subsection{Representations}

Let $\ggz=(\bfI,\bfH,s,t;d,m)$ be a valued quiver and $k$ be a field. A $k$-modulation (also called $k$-species in \cite{Dlab_Ringel_Indecomposable_representations_of_graphs_and_algebras}) $\bbM=(\bbD_\bfi,\bbM_\bfh)_{\bfi\in\bfI,\bfh\in\bfH}$ on $\ggz$ is a family of divisible algebras $\bbD_\bfi,\bfi\in\bfI$ over $k$ and  $\bbD_{t(\bfh)}-\bbD_{s(\bfh)}-$bimodules $\bbM_\bfh,\bfh\in\bfH$, such that $\dim_k \bbD_\bfi=d_\bfi,\dim_k \bbM_\bfh=m_\bfh$, and the actions of  $k$ on $\bbM_\bfh$ from the left side and the right side are compatible. We call the pair $(\ggz,\bbM)$ a $k$-modulated (valued) quiver.

For a given $k$-modulated quiver $(\ggz,\bbM)$ as above, a representation (or module)  $V= (V_\bfi,V_\bfh)_{\bfi\in\bfI,\bfh\in\bfH}$ of $(\ggz,\bbM)$ consists of a family of $\bbD_{\bfi}$-vector spaces $V_\bfi$, $\bfi\in\bfI$ and a family of $\bbD_{t(\bfh)}$-linear maps
$$V_\bfh: \bbM_\bfh\otimes_{\bbD_{s(\bfh)}}V_{s(\bfh)}\lra V_{t(\bfh)},\bfh\in\bfH.$$

Given two representations $V$ and $V'$ of $(\ggz,\bbM)$, a morphism $f$ from $V$ to $V'$ is a family of $\bbD_\bfi$-linear maps $f_\bfi: V_\bfi\ra V'_\bfi$, $\bfi\in\bfI$ such that the following diagram commutes for all $\bfh\in\bfH$:
$$\xymatrix{
\bbM_\bfh\otimes_{\bbD_{s(\bfh)}}V_{s(\bfh)} \ar[r]^{V_\bfh} \ar[d]^{\Id_{\bbM_\bfh}\otimes f_{s(\bfh)}} &   V_{t(\bfh)} \ar[d]^{f_{t(\bfh)}} \\
\bbM_\bfh\otimes_{\bbD_{s(\bfh)}}V'_{s(\bfh)} \ar[r]^{V'_\bfh}   & V'_{t(\bfh)}
}$$

All representations of $(\ggz,\bbM)$ and their morphisms form a $k$-category, denoted by $\rep_k (\ggz,\bbM)$. We often abbreviate the $k$-modulation $\bbM$ and denote it by $\rep_k \ggz$. For $V\in\rep_k \ggz$, we denote by $[V]$ the isomorphism class of $V$ and by $\udim V=\sum_{\bfi\in\bfI}(\dim_k V_\bfi)\bfi\in\bbZ \bfI$ the dimension vector of $V$. We denote by $S_\bfi$ the simple module at $\bfi$.

For the special case where $\ggz$ is a quiver (valuations are trivial), let the $k$-modulation be such that all divisible algebras and bimodules are $k$, then the definition of $\rep_k \ggz$ here is exactly the definition of quiver representations as commonly known.

\subsection{Cartan datum}\label{sec: Cartan datum}

A Cartan datum is a pair $(I,(-,-))$ consisting of a finite set $I$ and a symmetric bilinear form $\nu,\nu'\mapsto (\nu,\nu')$ on the free abelian group  $\bbZ I$ with values in $\bbZ$ such that:
\begin{enumerate}
\item[(a)] $(i,i)\in\{2,4,6,\dots\}$ for any $i\in I$;
\item[(b)] $2\frac{(i,j)}{(i,i)}\in\{0,-1,-2,\dots\}$ for any $i\neq j$ in $I$.
\end{enumerate}

There is also an equivalent definition. A Cartan datum is a triple $(I,C,D)$ consisting of a finite set $I$ and two $I\times I$-matrices $C,D$ in $\bbZ$ such that:
\begin{enumerate}
\item[(a)] $C_{ii}=2$ for any $i\in I$; $C_{ij}\in\{0,-1,-2,\dots\}$ for any  $i\neq j$ in $I$;
\item[(b)] $D$ is a diagonal matrix with positive integers in the diagonal and $DC$ is symmetric.
\end{enumerate}
The matrix $C$ is often called a symmetrizable generalized Cartan matrix.

The equivalence is given as follow. For a given $(I,(-,-))$, one can obtain $(I,C,D)$ by letting $C_{ij}=2\frac{(i,j)}{(i,i)}$ and $D_{ii}=\frac{(i,i)}{2}$ for all $i,j\in I$. On the other hand, one can obtain $(I,(-,-))$ from $(I,C,D)$ by setting $(i,j)$ to be the $i,j$-entry of $DC$ for all $i,j\in I$.

Given a valued quiver $\ggz=(\bfI,\bfH,s,t;d,m)$, we associate it with a generalized Cartan matrix $C=C_\ggz=(C_{\bfi\bfj})_{\bfi,\bfj\in\bfI}$ defined as:
$$C_{\bfi\bfj}=\left\{
\begin{aligned}
&2,&\text{if}\, \bfi=\bfj,\\
&-\sum_\bfh m_\bfh/ d_\bfi,&\text{if}\, \bfi\neq\bfj.
\end{aligned}
\right.
$$
Here the sum $\sum_\bfh$ is taken over all arrows between $\bfi$ and $\bfj$.

Let $D=D_\ggz=\text{diag}\{d_\bfi\}_{\bfi\in\bfI}$, then one can check that $(\bfI,C,D)$ is a Cartan datum. We call it the Cartan datum of $\ggz$. Need to remind that $(\bfI,C,D)$ is independent of the orientation of $\ggz$, that is, if we exchange the starting and terminal vertices of an arrow in $\ggz$, we will get the same Cartan datum.

\subsection{Root system and Weyl group}

For a  Cartan datum $(I,(-,-))$, we can define a simple reflection $s_i\in\Aut(\bbZ I)$ for all $i\in I$:
$$s_i(\nu)=\nu-\frac{2(\nu,i)}{(i,i)}i.$$
The Weyl group $\cW$ is by definition the subgroup of $\Aut(\bbZ I)$ generated by all $s_i$.

Any $w\in\cW$ can be written as an expression $w=s_{i_1}s_{i_2}\dots s_{i_l}$. We say this expression is reduced if $l$ is minimal among all possible expressions of $w$.

Denote by $\Phi_{\rm re}=\cW I$ the set of real roots. Then $\Phi_{\rm re}=\Phi_{\rm re}^+\sqcup\Phi_{\rm re}^-$ where $\Phi_{\rm re}^+=\Phi_{\rm re}\cap\bbN I$ and $\Phi_{\rm re}^-=-\Phi_{\rm re}^+$. Denote by $\Phi_{\rm im}=\rad (-,-)$ the set of imaginary roots.

\subsection{Euler form}

For a given valued quiver $\ggz$ and $\nu,\nu'\in\bbZ \bfI$, we define the Euler form 
$$\lan \nu,\nu'\ran=\sum_{\bfi\in\bfI}d_\bfi\nu_\bfi\nu'_\bfi-\sum_{\bfh\in\bfH}m_\bfh\nu_{s(\bfh)}\nu'_{t(\bfh)}$$
and the symmetric Euler form 
$$(\nu,\nu')=\lan \nu,\nu'\ran+\lan \nu',\nu\ran,$$
where $\nu=\sum_{\bfi\in\bfI}\nu_\bfi \bfi,\nu'=\sum_{\bfi\in\bfI}\nu'_\bfi \bfi$.

One can easily check that $(\bfI,(-,-))$ is a Cartan datum and equivalent to $(\bfI,C,D)$ defined in Section $\ref{sec: Cartan datum}$. Note that $\lan -,-\ran$ depends on the orientation of $\ggz$ while $(-,-)$ does not.

By \cite{Ringel_Representations_of_K-species_and_bimodules}, given $V,W\in\rep_k \ggz$, it is known that
$$\lan \udim V,\udim W\ran=\dim_k \Hom_\ggz(V,W)-\dim_k \Ext^1_\ggz(V,W).$$

\subsection{Quantum groups}

The quantum group $\bfU$ (also called the quantized enveloping algebra) associated to a Cartan datum $(I,(-,-))$ or $(I,C,D)$ as above is a $\bbQ(v)$-algebra with generators $E_i,F_i,i\in I$ and $K_\nu,\nu\in\bbZ I$ subject to the following relations:
\begin{enumerate}
\item[(1)] $K_0=1$, $K_\nu K_{\nu'}=K_{\nu+\nu'}$, $\nu,\nu'\in\bbZ I$;
\item[(2)] $K_\nu E_i=v^{(\nu,i)}E_iK_\nu$, $K_\nu F_i=v^{(-\nu,i)}F_iK_\nu$, $\nu\in\bbZ I,i\in\ I$;
\item[(3)] $E_iF_j-F_jE_i=\dz_{ij}\frac{\tilde{K}_i-\tilde{K}_{-i}}{v_i-v_i^{-1}}$, $i,j\in I$;
\item[(4)](Quantum Serre relations) $\sum_{p+p'=1-C_{ij}}(-1)^pE_i^{(p)}E_jE_i^{(p')}=0$, $i\neq j$.
\end{enumerate}
Here $v_i=v^{d_i},[p]_{x}=\frac{x^p-x^{-p}}{x-x^{-1}},[p]^!_{x}=[p]_x[p-1]_x\dots [1]_x, [0]_x=1, E_i^{(p)}=E_i^p/[p]^!_{v_i}.\tilde{K}_{\pm i}=K_{\pm ((i,i)/2) i}$.

Let $\bfU^+,\bfU^0,\bfU^-$ be the $\bbQ(v)$-subalgebras of $\bfU$ generated by $E_i,i\in I$; $K_\nu,\nu\in\bbZ I$; $F_i,i\in I$, respectively. It is well-known that there is a triangular decomposition as vector spaces: $\bfU=\bfU^+\otimes\bfU^0\otimes\bfU^-$.

Let $\cA=\bbZ[v,v^{-1}]$. The integral form $\bfU_\cA^+$ is by definition the $\cA$-subalgebra of $\bfU$ generated by all $E_i^{(m)},i\in I,m\geq 0$.

The bar involution $\bar{}:\bfU^+\ra\bfU^+$ is the $\bbQ$-algebra isomorphism such that $\bar{E_\bfi}=E_\bfi$ and $\bar{v}=v^{-1}$.

\subsection{Ringel-Hall algebras}\label{sec: RH alg}

Let $k$ be a finite field $v_k=\sqrt{|k|}$. The (twisted) Ringel-Hall algebra $\cH^*_k(\ggz)$ of $\rep_k \ggz$ is by definition a $\bbQ(v_k)$-algebra with basis $\{[M]|M\in\rep_k \ggz\}$ and multiplication $*$ defined as
$$[M]*[N]=\sum_{[L]}v_k^{\lan \udim M,\udim N\ran}g^{L}_{MN}[L],$$
where $g^L_{MN}=|\{V\subset_{\text{submodule}} L| V\cong N, L/V\cong M\}|$ is the Hall number.

Following \cite{Ringel_PBW-bases_of_quantum_groups}, for any $M\in\rep_k\ggz$, we denote
$$\lan M\ran =v_k^{-\dim_k M+\dim_k \End_\ggz M}[M]$$

Let $\cK$ be an infinite set of finite fields $k$ such that for any $n\in\bbN$, there exists $k\in\cK$ with $|k|\geq n$. 
Consider the direct product
$$\cH^*(\ggz)=\prod_{k\in\cK}\cH^*_k(\ggz)$$
and $v,v^{-1},u_\bfi$ are elements of $\cH^*$ whose $k$-components are $v_k,v_k^{-1},[S_\bfi]$, respectively.
The generic composition algebra $\cC_\cA^*(\ggz)$ is defined as the subalgebra of $\cH^*$ generated by $\cA$ and $u_\bfi^{(m)},\bfi\in\bfI,m\geq 0$. Define the $\bbQ(v)$-algebra
$$\cC^*(\ggz)=\bbQ(v)\otimes_{\cA}\cC_\cA^*(\ggz).$$

\begin{lemma}[\cite{Green_Hall_algebras_hereditary_algebras_and_quantum_groups}]
Let $(\bfI,(-,-))$ be the Cartan datum of $\ggz$ and $\bfU$ be the associated quantum group. Then, by abuse of notation, there is a $\bbQ(v)$-algebra isomorphism $\Xi_1:\bfU^+\ra\cC^*(\ggz)$ and an $\cA$-algebra isomorphism $\Xi_1:\bfU_\cA^+\ra\cC_\cA^*(\ggz)$, both given by $E_\bfi^{(m)}\mapsto u_\bfi^{(m)}$.
\end{lemma}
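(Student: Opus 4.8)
**Proof proposal for the Green-type isomorphism $\Xi_1\colon\bfU^+\to\cC^*(\ggz)$.**

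The plan is to follow the strategy of Green and Ringel, adapted to the valued (modulated) setting, and to factor the argument through the structure of $\cH^*(\ggz)$ as a \emph{self-dual Hopf algebra} paired with itself, or equivalently through Ringel's direct verification of the quantum Serre relations. Concretely, I would proceed in four steps.

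First, I would establish that the assignment $E_\bfi\mapsto u_\bfi$ extends to a well-defined $\bbQ(v)$-algebra homomorphism $\Xi_1\colon\bfU^+\to\cC^*(\ggz)$. For this it suffices to check that the generators $u_\bfi\in\cH^*(\ggz)$ satisfy the quantum Serre relations (4) for the Cartan datum $(\bfI,C,D)$ attached to $\ggz$. The key input is a computation of Hall numbers $g^L_{S_\bfi^{(p)}\,S_\bfj\,S_\bfi^{(p')}}$ in the modulated quiver, which by Ringel's analysis reduces to representations supported on the rank-one or rank-two valued subquiver spanned by $\bfi,\bfj$; here the bimodule $\bbM_\bfh$ of $k$-dimension $m_\bfh$ enters, and the Euler form $\lan\bfi,\bfj\ran=-\sum_\bfh m_\bfh$ or its symmetrization $C_{\bfi\bfj}d_\bfi=-\sum_\bfh m_\bfh$ controls the powers of $v_k$. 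The outcome is the identity $\sum_{p+p'=1-C_{\bfi\bfj}}(-1)^p u_\bfi^{(p)}*u_\bfj*u_\bfi^{(p')}=0$ in $\cH^*(\ggz)$, valid generically (i.e.\ in the direct product over $\cK$, after dividing by quantum integers which are nonzero in $\bbQ(v)$). By definition $\cC^*(\ggz)$ is generated by the $u_\bfi^{(m)}$, so $\Xi_1$ is surjective onto $\cC^*(\ggz)$, and the integral statement follows since $u_\bfi^{(m)}$ lies in $\cC^*_\cA(\ggz)$.

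Second, I would prove injectivity via a dimension/PBW count. The standard route is Green's comultiplication: extend $\cH^*_k(\ggz)$ by a torus and equip it with the Green coproduct $\Delta$, for which Green's formula (valid for any finitary hereditary category, in particular $\rep_k\ggz$, since Riedtmann--Green's counting identity is insensitive to the base division algebras) shows $\Delta$ is an algebra map. This makes $\cH^*$ a (topological) bialgebra and induces a nondegenerate Hopf pairing; nondegeneracy on the composition subalgebra, together with the corresponding nondegeneracy of the Drinfeld pairing on $\bfU^+$, forces $\Xi_1$ to be injective. Alternatively, and perhaps more cleanly for the present paper, I would invoke the PBW basis: Ringel's PBW theorem for the modulated quiver gives an $\cA$-basis of $\cC^*_\cA(\ggz)$ indexed by the same combinatorial data that indexes Lusztig's PBW basis of $\bfU^+_\cA$ (via a fixed reduced expression of a Coxeter-type element and the imaginary part), and one checks $\Xi_1$ sends one basis to the other up to unitriangular change; comparing graded dimensions of the weight spaces $\bfU^+_\nu$ and $\cC^*(\ggz)_\nu$ then yields injectivity.

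The main obstacle, and the place requiring genuine care rather than routine bookkeeping, is Step~1: verifying the quantum Serre relations directly in the Ringel--Hall algebra of a \emph{valued} quiver, because the rank-two Hall-number computation now depends on the bimodule $\bbM_\bfh$ and its decomposition properties over $k$, not merely on an integer. One must track that the relevant Hall numbers and the $v_k$-exponents $\lan\udim M,\udim N\ran$ assemble, after the generic limit over $\cK$, into the quantum-integer coefficients $[\,\cdot\,]_{v_\bfi}$ appearing in (4), with $v_\bfi=v^{d_\bfi}$ matching $D_{\bfi\bfi}=d_\bfi$. Once Green's lemma and Ringel's PBW theorem are in hand in the valued generality (both are available in the cited literature, notably \cite{Ringel_Representations_of_K-species_and_bimodules} and \cite{Ringel_Hall_algebras_and_quantum_groups}), the remaining steps are formal, and the $\cA$-form statement is immediate from the compatibility of $\Xi_1$ with the divided-power generators on both sides.
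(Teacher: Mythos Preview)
The paper does not supply a proof of this lemma at all: it is stated as a citation to Green's work and no argument is given in the body of the paper. Your proposal is a faithful reconstruction of the standard Ringel--Green argument (verification of the quantum Serre relations in the twisted Hall algebra, surjectivity by definition of the composition subalgebra, and injectivity via Green's pairing or a PBW/dimension count), which is exactly the content of the cited references; there is nothing further to compare.
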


\section{About $\rep_k\ggz$}

\subsection{Affine and tame}

A Cartan datum $(I,(-,-))$ or $(I,C,D)$ is of finite (resp., affine) type if $DC$ is positive definite (resp., positive semidefinite of corank 1). 

A quiver $Q$ is tame if the underlying graph of $Q$ is of type $\tilde{A}_n$, $\tilde{D}_n$, $\tilde{E}_6$, $\tilde{E}_7$, $\tilde{E}_8$. A valued quiver $\ggz$  is tame if it is isomorphic to $\ggz(Q,\sz)$ for a tame quiver $Q$ and an automorphism $\sz$ of $Q$. 
The Cartan datum of a valued quiver $\ggz$ is of affine type if and only if $\ggz$ is tame.

We use notations in \cite{Dlab_Ringel_Indecomposable_representations_of_graphs_and_algebras} for the classification of all tame quivers and tame valued quivers. 
The types are listed as follows: $\tilde{A}_{11}$, $\tilde{A}_{12}$, $\tilde{A}_n$, $\tilde{B}_n$, 
$\tilde{C}_n$, $\Tilde{BC}_n$, $\tilde{BD}_n$, $\tilde{CD}_n$, $\tilde{D}_n$, $\tilde{E}_6$, $\tilde{E}_7$, $\tilde{E}_8$, $\tilde{F}_{41}$, $\tilde{F}_{42}$, $\tilde{G}_{21}$, $\tilde{G}_{22}$. 
Here types $\tilde{A}_n$, $\tilde{D}_n$, $\tilde{E}_6$, $\tilde{E}_7$, $\tilde{E}_8$ are known as simply-laced.

\subsection{Frobenius folding}\label{sec Frob fold}

To describe $\rep_k\ggz$ via $(Q,\sz)$, we introduce the notion of Frobenius folding and refer to \cite{Deng_Du_Frobenius_morphisms_and_representations_of_algebras} for more details.

Let $k$ be a finite field and $\bar{k}$ be an algebraic closure of $k$. Let $Q$ be a quiver (not necessarily of finite or affine type) with an automorphism $\sz$. Consider the category $\rep_{\bar{k}} Q$ of representations of $Q$ over $\bar{k}$. By \cite{DDPW_Finite_dimensional_algebras_and_quantum_groups}, there is a self-equivalent functor $(-)^{[1]}$ on $\rep_{\bar{k}} Q$ such that $\udim M^{[1]}=\sz(\udim M)$. The functor $(-)^{[1]}$ is called the Frobenius twist functor and denote $(-)^{[n]}=((-)^{[1]})^n$.

For $M$ in $\rep_{\bar{k}} Q$, let $r$ be the minimal positive integer such that $M\cong M^{[r]}$. If $r=1$, we say $M$ is $(-)^{[1]}$-stable. By \cite{DDPW_Finite_dimensional_algebras_and_quantum_groups}, there is a Frobenius map $F$ on $M$ such that the fixed point set $M^{F}$ is in $\rep_k\ggz$ and all representations in $\rep_k\ggz$ can be obtained in this way (up to isomorphisms).

Such $M^{F}$ is in $\rep_k\ggz$ for a certain $k$-modulation. Since the isomorphism between $\bfU^+$ and $\cC^*$ does not depend on which $k$-modulation we choose, we abbreviate it.
Also, the isomorphism class of $M^{F}$ does not depend on the choice of the Frobenius map $F$.

Moreover, this gives a bijection 
$$\{\text{iso-classes of }\rep_k\ggz\}\leftrightarrow\{ \text{iso-classes of }(-)^{[1]}-\text{stable objects in }\rep_{\bar{k}} Q\}$$
and a bijection
\begin{eqnarray}
&\{\text{iso-classes of indecomposable modules in }\rep_k\ggz\}\leftrightarrow\notag \\
&\{(-)^{[1]}-\text{orbits of iso-classes of indecomposable modules in }\rep_{\bar{k}} Q\}.\notag
\end{eqnarray}

If a full subcategory $\cC$ of $\rep_{\bar{k}} Q$ such that $N\in\cC$ implies $N^{[1]}\in\cC$, then there is a corresponding full subcategory $\cC^F$ of $\rep_k \ggz$ consisting of the objects which are isomorphic to $M^F$ for some $(-)^{[1]}$-stable $M\in\cC$ and a Frobenius map $F$. We say $\cC^F$ is the Frobenius folding of $\cC$. In particular, $\rep_k \ggz$ is the Frobenius folding of $\rep_{\bar{k}} Q$.

\subsection{Representations of a valued tame quiver} 

For a tame quiver $Q$, denote by $\cP,\cR,\cI$ the full subcatgory of $\rep_{\bar{k}} Q$ consisting of preprojective, regular , preinjective modules, respectively. Denote by $\cR_{\rm nh}$ and $\cR_{\rm h}$ the full subcategory of $\cR$ consisting of nonhomogeneous and homogeneous regular modules. 

Let $K_r$ be the cyclic quiver of $r$-vertices and $\cK_r(f)=\rep^0_{f}K_r$ be the category of nilpotent representations of $K_r$ for any field $f$ and $r\geq 1$. We say a category is a tube of rank $r$ over a field $f$ if it is equivalent to $\cK_r(f)$. We say a tube is nonhomogeneous if rank $\geq 2$ and homogeneous if rank is 1.

Then $\cR_{\rm nh}$ is a direct product of finitely many subcategories which are nonhomogeneous tubes over $\bar{k}$ and $\cR_{\rm h}$ is a direct product of infinitely many subcategories which are homogeneous tubes over $\bar{k}$.

For a valued tame quiver $\ggz=\ggz(Q,\sz)$, let $\cP^\ggz,\cR^\ggz,\cR^\ggz_{\rm nh},\cR^\ggz_{\rm h},\cI^\ggz$ be the full subcategory of $\rep_k\ggz$ consisting of preprojective, regular, nonhomogeneous regular, homogeneous regular, preinjective modules, respectively. Let $\cP^F,\cR^F,\cR^F_{\rm nh},\cR^F_{\rm h},\cI^F$ be the Frobenius foldings of $\cP,\cR,\cR_{\rm nh},\cR_{\rm h},\cI$.  

We have 
$$\cP^\ggz=\cP^F,\cR^\ggz=\cR^F,\cI^\ggz=\cI^F,$$
and
$$\cR^\ggz_{\rm nh}\subset\cR^F_{\rm nh}, \cR^F_{\rm h}\subset\cR^\ggz_{\rm h}$$
since a nonhomogeneous tube could be Frobenius-folded to a homogeneous tube for some $Q$ and $\sz$.

\subsection{Sequence $\underline{\bfi}$}

A doubly infinite sequence 
$$\underline{\bfi}=(\dots,\bfi_{-1},\bfi_0|\bfi_1,\bfi_2,\dots)$$
in $\bfI$ is called reduced if $s_{\bfi_{r}}s_{\bfi_{r+1}}\dots s_{\bfi_{t-1}}s_{\bfi_{t}}$ is a reduced expression for all $r,t\in\bbZ$ with $r<t$.

A vertex $\bfi\in\bfI$ is called a sink (resp., source) if there is no arrow starting (resp., terminating) at $\bfi$. Let $\sz_\bfi\ggz$ be the valued quiver obtained from $\ggz$ by reversing all arrows connected to $\bfi$ and replacing $\bbM_\bfh$ by its $k$-dual for $s(\bfh)=\bfi$ or $t(\bfh)=\bfi$.
We say that a finite sequence $(\bfi_1,\dots,\bfi_m)$ is an admissible sink (resp., source) sequence of $\ggz$ if for any $1\leq t \leq m$, $\bfi_t$ is a sink (resp., source) of $\sz_{\bfi_{t-1}}\dots\sz_{\bfi_1}\ggz$. 

We say that $\underline{\bfi}$ is admissible to $\ggz$ if $\underline{\bfi}$ is reduced and $(\bfi_0,\bfi_{-1},\dots,\bfi_{-m})$ is admissible sink sequence of $\ggz$ and $(\bfi_1,\bfi_{2},\dots,\bfi_{m})$ is admissible source sequence of $\ggz$ for all $m> 0$.

\subsection{Preprojective and preinjective part}\label{sec: PI part}

For a given doubly infinite sequence $\underline{\bfi}$ which is admissible to $\ggz$, set positive real roots
$$\bmbz_t=\left\{
\begin{aligned}
&s_{\bfi_0}s_{\bfi_{-1}}\dots s_{\bfi_{t+1}}(\bfi_t),   & \text{if}\, t\leq 0,\\
&s_{\bfi_1}s_{\bfi_{2}}\dots s_{\bfi_{t-1}}(\bfi_t),    & \text{if}\, t>0.
\end{aligned}
\right.
$$
Here $\bmbz_0=\bfi_0,\bmbz_1=\bfi_1$. Then there is a unique (up to isomorphism) indecomposable module $M_k(\bmbz_t)$ in $\rep_k\ggz$ for each $t\in\bbZ$ such that 
$$\udim M_k(\bmbz_t)=\bmbz_t.$$
Moreover, we can classify all indecomposable module in $\cP^\ggz$ and $\cI^\ggz$:
$$\{\text{iso-classes of indecomposable modules in }\cP^\ggz\}=\{[M_k(\bmbz_t)]|t\leq 0\},$$
$$\{\text{iso-classes of indecomposable modules in }\cI^\ggz\}=\{[M_k(\bmbz_t)]|t>0\}.$$
Note that $\bmbz_t,t\in\bbZ$ are not all of the positive real roots if $\cR^\ggz_{\rm nh}$ is nontrivial.
\begin{lemma}
For $t_1<t_2\leq 0$ or $0<t_1<t_2$, we have 
$$\Hom_\ggz(M_k(\bmbz_{t_1}),M_k(\bmbz_{t_2}))=0$$
$$\Ext^1_\ggz(M_k(\bmbz_{t_2}),M_k(\bmbz_{t_1}))=0$$
\end{lemma}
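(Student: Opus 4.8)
The plan is to reduce the statement to the well-known homological properties of preprojective and preinjective components over the algebraically closed field $\bar k$, and then transport them across the Frobenius folding. Recall from Section~\ref{sec: PI part} that each $M_k(\bmbz_t)$ is the Frobenius fixed-point module $M^F$ of a $(-)^{[1]}$-stable indecomposable $M$ in $\rep_{\bar k}Q$, lying in $\cP$ if $t\leq 0$ and in $\cI$ if $t>0$; moreover $\udim M_k(\bmbz_t)=\bmbz_t$ determines the $(-)^{[1]}$-orbit of the corresponding $\bar k$-module, so I can pin down which preprojective or preinjective indecomposable of $\rep_{\bar k}Q$ it folds from. The first step is therefore to identify, for $\underline{\bfi}$ admissible to $\ggz$ and the induced lifted sequence over $Q$, the $\bar k$-indecomposables $X_{t_1},X_{t_2}$ with $M_k(\bmbz_{t_j})=X_{t_j}^F$.

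Next I would invoke the classical fact for a tame quiver $Q$ over $\bar k$: if $X,Y$ are indecomposable preprojective (or both preinjective) modules that occur in this order along the $\tau$-orbit structure dictated by an admissible source/sink sequence, then $\Hom_{\bar k Q}(X,Y)=0$ when $X$ is "later" than $Y$ in the preprojective ordering (equivalently $\Ext^1_{\bar k Q}(Y,X)=0$). Concretely, admissibility of $\underline{\bfi}$ guarantees that $\bmbz_{t}$ for $t_1<t_2\le 0$ are obtained by successive reflections so that $M_k(\bmbz_{t_1})$ is a "smaller" preprojective than $M_k(\bmbz_{t_2})$; APR-tilting / reflection-functor arguments then give the vanishing $\Hom(M(\bmbz_{t_1}),M(\bmbz_{t_2}))=0$ and dually $\Ext^1(M(\bmbz_{t_2}),M(\bmbz_{t_1}))=0$ at the level of $\rep_{\bar k}Q$ (the preinjective case $0<t_1<t_2$ is symmetric under the duality $D$, which swaps $\cP$ and $\cI$ and reverses the order). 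One checks that $\Hom$ and $\Ext^1$ over $\rep_{\bar k}Q$ between two $(-)^{[1]}$-stable modules carry a compatible $(-)^{[1]}$-action, so that their $F$-fixed parts compute $\Hom_\ggz$ and $\Ext^1_\ggz$ between the folded modules; hence the vanishing descends to $\rep_k\ggz$.

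An alternative, more self-contained route avoids the lifting: work directly in $\rep_k\ggz$ using the reflection functors $S_\bfi^\pm$ attached to the admissible sink/source sequence. Since $(\bfi_0,\bfi_{-1},\dots)$ is an admissible sink sequence, the composite reflection functor sends $M_k(\bmbz_{t})$ (for $t$ in the preprojective range) to a simple module after finitely many steps, and reflection functors at a sink induce isomorphisms on $\Hom$ and $\Ext^1$ for all modules not isomorphic to the corresponding simple; applying enough reflections reduces $\Hom_\ggz(M_k(\bmbz_{t_1}),M_k(\bmbz_{t_2}))$ to $\Hom_\ggz$ of two distinct simples in a suitable orientation, which vanishes, and similarly for $\Ext^1$ via the Euler form identity $\lan\udim V,\udim W\ran=\dim_k\Hom_\ggz(V,W)-\dim_k\Ext^1_\ggz(V,W)$ combined with $\Ext^1_\ggz(M_k(\bmbz_{t_1}),M_k(\bmbz_{t_2}))=0$ (which follows from $M_k(\bmbz_{t_1})$ being preprojective and $\Hom_\ggz(M_k(\bmbz_{t_2}),M_k(\bmbz_{t_1}))=0$).

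The main obstacle is keeping the bookkeeping straight: one must verify that admissibility of $\underline{\bfi}$ really forces the two indecomposables into the correct relative position along the AR-quiver (so that the vanishing has the orientation claimed, not its opposite), and that the reflection-functor reduction terminates and never hits the "bad" simple prematurely. Once that combinatorial alignment is nailed down, each vanishing is a standard consequence of the structure of $\cP^\ggz$ and $\cI^\ggz$; the Frobenius-folding compatibility of $\Hom$ and $\Ext^1$ is routine given the setup of Section~\ref{sec Frob fold}.
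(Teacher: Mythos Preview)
The paper states this lemma without proof, treating it as a standard structural fact about preprojective and preinjective components of tame hereditary algebras (valued or not). Your proposal is correct, and either of your two routes yields a valid argument.

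The second route (direct reflection functors) is the cleanest and is essentially the classical proof: applying the BGP functors $\sz^+_{\bfi_0},\sz^+_{\bfi_{-1}},\dots,\sz^+_{\bfi_{t_2+1}}$ (which are equivalences on the relevant subcategories since neither module is killed along the way) reduces the pair so that $M_k(\bmbz_{t_2})$ becomes the simple $S_{\bfi_{t_2}}$ at a sink, hence simple projective. Then $\Hom(-,S_{\bfi_{t_2}})$ vanishes on any indecomposable not isomorphic to $S_{\bfi_{t_2}}$ (a nonzero map would be a split surjection), and $\Ext^1(S_{\bfi_{t_2}},-)=0$ by projectivity. The preinjective case is dual. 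The Frobenius-folding route also works, since vanishing of $\Hom$ and $\Ext^1$ over $\bar k$ trivially descends to the fixed points.

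One small slip in your write-up: for $t_1<t_2\le 0$ it is $M_k(\bmbz_{t_2})$, not $M_k(\bmbz_{t_1})$, that sits closer to the simple projectives (recall $M_k(\bmbz_0)=S_{\bfi_0}$). Your stated vanishing direction is nonetheless the correct one, so this is only a wording issue.
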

We order these real roots:
$$(\bmbz_0<\bmbz_{-1}<\cdots)<(\cdots<\bmbz_2<\bmbz_1).$$

Let $\cG_-$ (resp., $\cG_+$) be the set of all finitely supported functions $\bbZ_{\leq 0}\ra\bbN$ (resp., $\bbZ_{>0}\ra\bbN$). For $\bfc_-\in\cG_-$ and $\bfc_+\in\cG_+$, let
$$M_k(\bfc_-)=M_k(\bmbz_0)^{\oplus \bfc_-(0)}\oplus M_k(\bmbz_{-1})^{\oplus \bfc_-(-1)}\oplus\dots,$$
$$M_k(\bfc_+)=\dots\oplus M_k(\bmbz_2)^{\oplus \bfc_+(2)}\oplus M_k(\bmbz_{1})^{\oplus \bfc_+(1)}.$$
Then we have the following classification:
$$\{\text{iso-classes of  modules in }\cP^\ggz\}=\{[M_k(\bfc_-)]|\bfc_-\in\cG_-\},$$
$$\{\text{iso-classes of  modules in }\cI^\ggz\}=\{[M_k(\bfc_+)]|\bfc_+\in\cG_+\}.$$

\subsection{Nonhomogeneous regular part}\label{sec: nh part}

A module in a nonhomogeneous tube $\cT$ is called regular simple if it is simple as an object in $\cT$.
To charaterize a nonhomogeneous tube $\cT$ is deduced to charaterize all regular simples in $\cT$.

We refer to Section 6 in \cite{Dlab_Ringel_Indecomposable_representations_of_graphs_and_algebras} where regular simples in $\cR^\ggz_{\rm nh}$ are given for all types of tame valued quiver with a certain orientation. By applying BGP reflection functors (which are equivalences between $\cR^\ggz_{\rm nh}$'s of different orientations), one can obtain all regular simple for other orientations. 

By comparing $\cR^\ggz_{\rm nh}$  with $\cR_{\rm nh}$ for all types of $\ggz=\ggz(Q,\sz)$, we have the following lemma.
\begin{lemma}\label{how nh tubes are F-folded}
    For a nonhomogeneous regular tube $\cT$ in $\cR^\ggz_{\rm nh}$, a regular simple $L$ in $\cT$ can be obtained in the follow manners:
    \begin{enumerate}
        \item[(a)] $L\cong (L_0\oplus L_1 \oplus\dots\oplus L_{l-1})^F$, where $L_t\in\ T^{[t]}$ are regular simples, and $T,T^{[1]},\dots,T^{[l-1]}$ ($l\leq 3$) are different tubes in $\cR_{\rm nh}$ such that $\rk T=\rk\cT$ and $(-)^{[l]}|_T$ is naturally isomorphic to identity on $T$.
        \item[(b)] $L\cong (L_1\oplus L_2)^F$, where $L_1,L_2$ are regular simples in the same tube $T$ of $\cR_{\rm nh}$, such that $\rk T=2\rk \cT\geq 4$. Here $T$ is a non-homogeneous tube in $\Rep_{\bar{k}}Q$.
    \end{enumerate}
\end{lemma}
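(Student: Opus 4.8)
The plan is to deduce the lemma from the Frobenius-folding description of $\cR^\ggz_{\rm nh}$ in Section~\ref{sec Frob fold} together with a uniform bound on the shapes of the orbits of the twist functor $(-)^{[1]}$, and then to confirm that the two listed mechanisms genuinely occur by running through the Dlab--Ringel classification. First I would record how $(-)^{[1]}$ acts on tubes: being a self-equivalence of $\rep_{\bar k}Q$ it preserves the regular part $\cR$ together with its Auslander--Reiten structure, hence permutes the finitely many nonhomogeneous tubes of $\rep_{\bar k}Q$, preserving their ranks, and on any tube that it fixes it acts on the cyclically ordered set of regular simples either trivially or by a rotation. Writing $m=\mathrm{ord}(\sz)$, so that $(-)^{[m]}$ is naturally isomorphic to the identity, every $(-)^{[1]}$-orbit of tubes, and every $(-)^{[1]}$-orbit of regular simples inside a fixed tube, has cardinality dividing $m$. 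By the classification one has $m\in\{1,2,3\}$ for every $(Q,\sz)$ with $\ggz(Q,\sz)$ tame, and $m=3$ occurs only for $\tilde{D}_4$ (giving $\tilde{G}_{21},\tilde{G}_{22}$), where moreover every nonhomogeneous tube of $\rep_{\bar k}\tilde{D}_4$ has rank $2$.

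Next I would translate this into the structure of a regular simple $L$ of a nonhomogeneous tube $\cT\subset\cR^\ggz$. By the Frobenius-folding formalism of Section~\ref{sec Frob fold}, $L\cong M^F$ where the indecomposable summands of $M$ are regular simples of $\cR_{\rm nh}$ forming a single $(-)^{[1]}$-orbit whose size $p$ equals the minimal period of $M$, and $\rk\cT$ is the number of $(-)^{[1]}$-orbits of regular simples pooled into $\cT$, so $\cT$ is nonhomogeneous exactly when this number is at least $2$. Two cases arise from the step above. Either the summands of $M$ lie in $p$ pairwise distinct tubes $T,T^{[1]},\dots,T^{[p-1]}$ of a common rank $r$ with $(-)^{[p]}|_T\cong\Id_T$; then $p\mid m$ so $p\le 3$, and $\rk\cT=r\ge 2$ automatically — this is case (a), with $l=p$ and $L_t=L_0^{[t]}$. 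Or all summands of $M$ lie in one tube $T$ fixed by $(-)^{[1]}$, so $(-)^{[1]}|_T$ is a rotation by some amount $a$ and $p=\rk T/\gcd(a,\rk T)$; here $p=1$ is again case (a) with $l=1$, $p=3$ would force $3\mid\rk T$ and $m=3$ hence $\rk T=2$, which is impossible, and $p=2$ forces $\rk T$ even with $a=\rk T/2$ and $\rk\cT=\rk T/2$, which is $\ge 2$ precisely when $\rk T\ge 4$ — this is case (b) (when $\rk T=2$ one instead obtains a homogeneous tube, consistently with $\cR^F_{\rm h}\subset\cR^\ggz_{\rm h}$, and such $L$ are not regular simples of $\cR^\ggz_{\rm nh}$). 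Thus every such $L$ is as in (a) or (b).

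Finally I would confirm, type by type through the classification, that both mechanisms actually arise, and record which orbit $\{L_0,L_0^{[1]},\dots\}$ produces which $L$, using the explicit regular simples of $\cR_{\rm nh}$ from \cite[\S6]{Dlab_Ringel_Indecomposable_representations_of_graphs_and_algebras} for a single orientation and BGP reflection functors to pass to arbitrary orientations. I expect the main obstacle to be precisely this bookkeeping: for each type one must match the $\sz$-orbit pattern on tubes and on regular simples against the known ranks of the nonhomogeneous tubes of $\ggz$, and in case (b) verify that $(L_1\oplus L_2)^F$ is genuinely regular \emph{simple} in $\cT$ rather than merely regular — which holds because $L_1\oplus L_2$ has minimal period $2$, so $L_1^F$ is indecomposable over the modulation, and comparing dimension vectors against $\rk\cT=\rk T/2$ leaves no room for a nontrivial decomposition inside $\cT$. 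The conceptual content (orbit size divides $\mathrm{ord}(\sz)\le 3$) is short; the length is all in the tables.
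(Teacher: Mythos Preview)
Your approach is correct. The paper does not give a proof beyond the one-line assertion preceding the lemma: the result is stated as the outcome of ``comparing $\cR^\ggz_{\rm nh}$ with $\cR_{\rm nh}$ for all types of $\ggz=\ggz(Q,\sz)$'', i.e.\ a direct type-by-type inspection of the Dlab--Ringel tables, with the two illustrative examples following the statement. Your structural layer --- that $(-)^{[1]}$, being a self-equivalence, permutes the nonhomogeneous tubes preserving ranks and acts by a rotation on any tube it fixes, so that every orbit size divides $\mathrm{ord}(\sigma)\le 3$, and the dichotomy (a)/(b) then drops out of a short case analysis --- is a genuine addition: it explains \emph{a priori} why the list is exhaustive, whereas the paper simply reports that it is. The type-by-type confirmation you propose at the end is exactly what the paper does (and all it does). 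One small point worth making explicit in your write-up: your reduction uses that regular simples in $\cR_{\rm nh}$ over $\bar k$ are rigid and determined by their dimension vectors, so that the action of $(-)^{[1]}$ on them is governed by $\sigma$ alone and $(-)^{[m]}|_{\cR_{\rm nh}}\cong\Id$ once $\sigma^m=1$; this is standard for tame quivers but is the hinge on which the bound $p\mid m$ rests.
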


For example, $\ggz$ of type $\tilde{F}_{41}$ has a tube of rank 3, which is obtained by Frobenius-folding the two rank-3 tubes of $Q$ of type $\tilde{E}_6$, satisfying case $(a)$. 
For another example, let $n>3$, $\ggz$ of type $\tilde{BD}_n$ has a tube of rank $n-1$ which is obtained by Frobenius-folding the tube of rank $2n-2$ of $\tilde{D}_{2n}$, satisfying case $(b)$.

Since $\cR^\ggz_{\rm nh}$ is the Frobenius folding of some nonhomogeneous tubes over $\bar{k}$ in $\cR_{\rm nh}$, $\cR^\ggz_{\rm nh}$ is a direct product of some nonhomogeneous tubes.
Denote by $\cT_1,\cT_2,\dots,\cT_s(s\leq 3)$ the nonhomogeneous tubes in $\cR^\ggz_{\rm nh}$ and fix the equivalences $\vez_t:\cK_{r_t}(k_t)\cong\cT_t$ where $r_t\geq 2$ is the rank of $\cT_t$ and $k_t$ is the endomorphism ring of a simple object in $\cT_t$. The equivalence $\vez_t$ also induces an Hall-algebra isomorphism $\cH^*(\cK_{r_t}(k_t))\cong\cH^*(\cT_t)$, which is denoted again by $\vez_t$.

By \cite{Deng_Du_Xiao_Generic_extensions_and_canonical_bases_for_cyclic_quivers}, the set of isomorphism classes of modules of $\cK_r(k')$ is one-to-one-correspondent to the set $\Pi_r$ of (cyclic) multisegments. The set $\Pi_r$ is independent of $k'$ and its elements are of the form
$$\pi=\sum_{1\leq i\leq r,l\geq 1}\pi_{i,l}[i;l)$$
with $\pi_{i,l}\in\bbN$ and almost all of them being zero. We denote $M_{k'}(\pi)\in\cK_r(k')$ the corresponding module.

Let 
$$\cG_0=\Pi_{r_1}\times\Pi_{r_2}\times\dots\times\Pi_{r_s},$$
then for $\bfc_0=(\pi_1,\pi_2,\dots,\pi_s)\in\cG_0$, we denote 
$$M_k(\bfc_0)=\vez_1(M_{k_1}(\pi_1))\oplus\vez_2(M_{k_2}(\pi_2))\oplus\dots\oplus\vez_s(M_{k_s}(\pi_s))\in\cR^\ggz_{\rm nh},$$
then 
$$\{\text{iso-classes of  modules in }\cR^\ggz_{\rm nh}\}=\{[M_k(\bfc_0)]|\bfc_0\in\cG_0\}.$$
Note that $\cG_0$ is independent of the field $k$ and the choice of the equivalences $\vez_t$'s. 

A multisegment $\pi\in\Pi_r$ is called aperiodic if, for each $l\geq 1$, there is some $i$ such that $\pi_{i,l}=0$. We denote by $\Pi_r^a$ the set of aperiodic multisegments. Denote
$$\cG^a_0=\Pi_{r_1}^a\times\Pi_{r_2}^a\times\dots\times\Pi_{r_s}^a.$$
A module $M$ in $\cR^\ggz_{\rm nh}$ is called aperiodic if $M$ is isomorphic to $M_k(\bfc_0)$ for some $\bfc_0\in\cG_0^a$.

\subsection{Homogeneous regular part}

The homogeneous regular part $\cR^\ggz_{\rm h}$ is a direct product of some homogeneous tubes and we denote by $\cZ=\cZ_k$ the index set of these tubes. 
For $z\in\cZ$, let $\cT_z$ be the corresponding homogeneous tube, then $\cT_z\cong\cK_1(k_z)$ for some extension field $k_z$ of $k$. Let $d_z$ be minimal positive integer such that $d_z\dz$ is a dimension vector of some module in $\cT_z$. We denote $\deg z=d_z$. Note that the set of isomorphism classes of objects in $\cK_1(f)$ is isomorphic to the set of all partitions for any field $f$.

Let $\lz$ be a partition and $M_k(\lz,z)$ be the corresponding module in $\cT_z$. Then any module in $\cR^\ggz_{\rm h}$ is isomorphic to 
$$M_{k}(\lz_1,z_1)\oplus M_{k}(\lz_2,z_2)\oplus \dots\oplus M_{k}(\lz_t,z_t)$$
for some partitions $\lz_1,\dots,\lz_t$ and some pairwise-different $z_1,\dots,z_t\in\cZ$. Denote this module by $M_{k}(\underline{\lz},\underline{z})$, where $\underline{\lz}=(\lz_1,\dots,\lz_t)$, $\underline{z}=(z_1,\dots,z_t)$.

\subsection{Index set}\label{index set}

Following \cite{XXZ_Tame_quivers_and_affine_bases_I}, we define an similar index set for the valued tame quiver $\ggz$. Denote $\cG_-$ (resp., $\cG_+$) the set of all finitely supported functions $\bbZ_{\leq 0}\ra\bbN$ (resp., $\bbZ_{>0}\ra\bbN$). Denote $\bbP$ the set of all $t_\lz$ and $0$, where $t_\lz$ is the complex character of the Specht module $S_\lz$ of a symmetric group over $\bbC$ and $\lz$ is a partition of an positive integer.

Set 
$$\cG'=\cG_-\times\cG_0\times\cG_+,$$
$$\cG'^a=\cG_-\times\cG_0^a\times\cG_+,$$
and 
$$\cG=\cG'\times\bbP,\tcG=\cG'\times\bbN,$$
$$\cG^a=\cG'^a\times\bbP,\tcG^a=\cG'^a\times\bbN.$$

For $\bfc=(\bfc_-,\bfc_0,\bfc_+)\in\cG'$, let 
$$M_k(\bfc)=M_k(\bfc_-)\oplus M_k(\bfc_0)\oplus M_k(\bfc_+).$$
By Section \ref{sec: PI part} and \ref{sec: nh part}, we have
$$\{\text{iso-classes of  modules in }\Rep_k\ggz \text{ without direct summands in }\cR^\ggz_{\rm h}\}= \{[M_k(\bfc)]|\bfc\in\cG'\}$$
and $\cG'$ depends only on $\ggz$.

For $\bfc\in\cG',t_\lz\in\bbP,m\in\bbN$, set 
$$D(\bfc,t_\lz)=\udim M_k(\bfc)+|\lz|\dz\in\bbN \bfI,$$
$$D(\bfc,m)=\udim M_k(\bfc)+m\dz\in\bbN \bfI,$$
where $\dz$ is the minimal positive imaginary root in $\bbN\bfI$, which is also the minimal dimension vector of nonzero modules in $\cR^\ggz_{\rm h}$.
For $\nu\in\bbN\bfI$, denote
$$\cG_\nu=\{(\bfc,t_\lz)\in\cG|D(\bfc,t_\lz)=\nu\},$$
$$\tcG_\nu=\{(\bfc,t_m)\in\tcG|D(\bfc,m)=\nu\}.$$
Then $\cG=\sqcup_{\nu\in\bbN\bfI}\cG_\nu,\tcG=\sqcup_{\nu\in\bbN\bfI}\tcG_\nu$.

\subsection{Existence of Hall polynomials}

Let $\cG''$ be the set of all pairs $(\bfc,\underline{\lz})$ where $\bfc\in\cG'$ and $\underline{\lz}=(\lz_1,\dots,\lz_t)$ is a sequence of partitions. Here we allow $\lz_i=0$ for some $i$.

Let $\underline{d}=(d_1,\dots,d_t)$ be a sequence of integers and $\underline{z}=(\lz_1,\dots,\lz_t)$ be a pairwise-different sequence in $\cZ$. We say that $\underline{z}$ is of type $\underline{d}$ if $\deg z_i=d_i$ for all $i$. Let 
$$M_k(\bfc,\underline{\lz},\underline{z})=M_k(\bfc)\oplus M_k(\underline{\lz},\underline{z}).$$

\begin{lemma}[\cite{Deng_Han}]\label{lem: exist of Hall poly}
Fix $\underline{d}$. Given $\bm{\az},\bm{\bz},\bm{\gz}\in\cG''$, there exists a polynomial $\varphi^{\bm{\az}}_{\bm{\bz},\bm{\gz}}\in\bbQ[T]$ such that for each finite field $k=\bbF_q$,
$$\varphi^{\bm{\az}}_{\bm{\bz},\bm{\gz}}(q)=g^{M_k(\bm{\az},\underline{z})}_{M_k(\bm{\bz},\underline{z}),M_k(\bm{\gz},\underline{z})}$$
holds for all $\underline{z}$ of type $\underline{d}$.
\end{lemma}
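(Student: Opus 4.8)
The strategy is to reduce the statement for $\rep_k\ggz$ to the corresponding statement for representations over $\bar{k}$ of the underlying ordinary tame quiver $Q$ (or, more precisely, to the known existence of Hall polynomials for $Q$ combined with the cyclic quiver case), using the Frobenius-folding machinery of Section \ref{sec Frob fold}. First I would recall that, because $\ggz\cong\ggz(Q,\sz)$, every indecomposable of $\rep_k\ggz$ is the Frobenius-fixed-point module $M^F$ of a $(-)^{[1]}$-stable indecomposable $M$ over $\bar{k}$, and the Hall number $g^{M_k(\bm\az,\ue z)}_{M_k(\bm\bz,\ue z),M_k(\bm\gz,\ue z)}$ counts submodules of a $\ggz$-module that, after base change, becomes a count of $F$-stable submodules of the associated $\bar{k}Q$-module. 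By a Lang--Steinberg / Frobenius-trace argument (as in \cite{Deng_Du_Frobenius_morphisms_and_representations_of_algebras,DDPW_Finite_dimensional_algebras_and_quantum_groups}), such a count of $F$-stable points of a variety defined over $\bbF_q$ is a polynomial in $q$ whenever the underlying geometry is ``polynomial-count''. So the real content is that the relevant submodule variety — the variety of submodules $U\subset M_{\bar k}(\bm\az,\ue z)$ with $U\cong M_{\bar k}(\bm\gz,\ue z)$ and quotient $\cong M_{\bar k}(\bm\bz,\ue z)$ — has polynomial point-count, uniformly as $\ue z$ ranges over tuples of homogeneous tubes of the fixed type $\ue d$.

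The second step is to split the module according to the decomposition $\rep_{\bar k}Q=\cP\vee\cR_{\mathrm{nh}}\vee\cR_{\mathrm h}\vee\cI$ and to analyze the three kinds of summands separately. For the preprojective/preinjective/nonhomogeneous-regular parts, the indices $\bfc\in\cG'$ already range over a fixed (field-independent) set, and existence of Hall polynomials for those pieces over $\bar k$ is classical: for $\cP^\ggz,\cI^\ggz$ it follows from directedness (Lemma in Section \ref{sec: PI part}) and Ringel's work, and for $\cR_{\mathrm{nh}}$ it reduces, tube by tube, to the cyclic quiver case of \cite{Deng_Du_Xiao_Generic_extensions_and_canonical_bases_for_cyclic_quivers}. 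For the homogeneous-regular part, each homogeneous tube $\cT_z$ of degree $d_i$ is equivalent to $\cK_1(k_z)$ with $[k_z:k]=d_i$, and the Hall numbers inside a single homogeneous tube only see the partitions $\lz_i$ and the residue-field size $|k_z|=q^{d_i}$; these are the classical Hall polynomials (in the variable $q^{d_i}$, hence still polynomial in $q$ once $\ue d$ is fixed). Cross terms between distinct homogeneous tubes, and between homogeneous and non-homogeneous/(pre)projective summands, vanish at the level of Hom and $\Ext^1$ in the appropriate direction, so the submodule variety factors as a product over the summands and the total count is the product of the individual polynomials. Multiplying by the twisting power $v_k^{\lan\cdot,\cdot\ran}$ does not affect polynomiality since the Euler form is determined by dimension vectors.

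I expect the main obstacle to be the uniformity in $\ue z$ of type $\ue d$: one must show a single polynomial $\varphi^{\bm\az}_{\bm\bz,\bm\gz}\in\bbQ[T]$ works for all choices of pairwise-distinct homogeneous tubes $z_1,\dots,z_t$ with $\deg z_i=d_i$, even though the fields $k_{z_i}$ and the specific tubes vary. The resolution is that, after the Frobenius-unfolding, the homogeneous tubes of $\rep_{\bar k}Q$ are all equivalent to $\cK_1(\bar k)$ and are permuted transitively (within a degree class) by a group action compatible with $F$; the submodule-count only depends on how $F$ acts on the set of tubes involved, which is encoded precisely by the degree datum $\ue d$. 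Thus the argument parallels exactly the simply-laced treatment in \cite{XXZ_Tame_quivers_and_affine_bases_I}, with the Frobenius map recording the nonsimply-laced folding. A secondary technical point is to make sure the varieties are genuinely polynomial-count and not merely ``polynomial on a Zariski-dense set of $q$'', which one gets from the stratification of the submodule variety into affine-space bundles over products of Grassmannians and flag-type varieties in the tube categories — all manifestly polynomial-count over any field.
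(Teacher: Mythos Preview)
The paper does not prove this lemma; it is simply quoted from \cite{Deng_Han}, so there is no in-paper argument to compare against and the question is whether your sketch stands on its own.

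The overall architecture --- unfold to $\rep_{\bar k}Q$ via Frobenius, analyse by component type, and argue uniformity in $\underline z$ via the degree datum $\underline d$ --- is the right shape and matches the spirit of the Deng--Han argument. The uniformity paragraph in particular is correct: inside a homogeneous tube the Hall numbers are classical Hall polynomials in $q^{d_i}$, and distinct homogeneous tubes are genuinely Hom- and $\Ext^1$-orthogonal, so the dependence on $\underline z$ really is only through $\underline d$.

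There is, however, a genuine gap at the gluing step. Your claim that ``the submodule variety factors as a product over the summands'' because cross terms vanish ``in the appropriate direction'' is false as stated. For $L=P\oplus H$ with $P$ preprojective and $H$ homogeneous regular one has $\Hom(P,H)\neq0$, and the graph of any nonzero $f:P\to H$ is a submodule of $L$ isomorphic to $P$ that does not split as $U_P\oplus U_H$. One-sided vanishing of $\Hom$/$\Ext^1$ gives at best an affine-bundle fibration of the submodule variety over a product of smaller submodule varieties, not a product, and the fibre dimensions depend on the particular isomorphism types involved. So the Hall number is not simply the product of the Hall numbers of the pieces, and the ``manifestly polynomial-count'' claim for the glued variety is not justified by what you wrote.

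What the actual proofs do instead is use associativity and Green-type reduction identities (together with reflection functors in the valued case) to express $g^{L}_{M,N}$ as a $\bbZ$-polynomial in Hall numbers of strictly smaller configurations, eventually bottoming out in (i) Hall numbers within a single tube, and (ii) Hall numbers among preprojectives or among preinjectives --- both of which are polynomial for the reasons you state. If you replace the product-factorisation step by this reduction mechanism (or, equivalently, upgrade the one-sided vanishing to an honest affine-fibration argument with the fibre dimensions tracked), the sketch becomes correct.
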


In particular, we see that $g^{M_k(\bfc_1)}_{M_k(\bfc_2),M_k(\bfc_3)}$ is a polynomial in $|k|$ with rational coefficients for $\bfc_1,\bfc_2,\bfc_3\in\cG'$.

\subsection{The order $\preceq$}

First, we define the lexicographic orderings on $\cG_-$ and $\cG_+$ as follow.
Given $\bfc_-,\bfd_-\in\cG_-$,
we say $\bfc_->_L\bfd_-$ if there exists $t\leq 0$ such that $\bfc_-(t)>\bfd_-(t)$ and $\bfc_-(l)=\bfd_-(l)$ for all $t< l\leq0$.
Given $\bfc_+,\bfd_+\in\cG_+$,
we say $\bfc_+>_L\bfd_+$ if there exists $t>0$ such that $\bfc_+(t)>\bfd_+(t)$ and $\bfc_-(l)=\bfd_-(l)$ for all $0<l<t$.

Second, there is a partial order $\leq_G$ on $\Pi_r$ for each $r\geq 2$. For $\pi,\pi'\in\Pi_r$, $\pi\leq_G\pi'$ if and only if $\udim M_k(\pi)=\udim M_k(\pi')$ and 
$$\dim_k\Hom_{\cK_r(k)}(M,M_k(\pi))\geq\dim_k\Hom_{\cK_r(k)}(M,M_k(\pi'))$$
for all $M\in\cK_r(k)$. There are several equivalent definitions for $\leq_G$ and one can refer to \cite{Bongartz_On_degenerations_and_extensions_of_finite-dimensional_modules} for details. 

The order $\leq_G$ is independent of the field $k$, so we can define a partial order, still denoted by $\leq_G$, on $\cG_0$ such that $\bfc_0=(\pi_1,\pi_2,\dots,\pi_s)<_G\bfc_0'=(\pi_1',\pi_2',\dots,\pi_s')$ if and only if $\pi_t\leq_G\pi_t'$ for all $1\leq t\leq s$ but not all equalities hold.

\begin{definition}\label{order}
Define a partial order $\preceq$ on $\tilde{\cG}_{\nu}$ by letting $(\bfc',m')\prec(\bfc,m)$ if and only if one of the followings is satisfied:
\begin{enumerate}
\item[(a)] $\bfc'_\pm>_L \bfc_\pm$, that is, by definition, $\bfc'_-\geq_L \bfc_-$ and $\bfc'_+\geq_L \bfc_+$ but not all equalities hold;

\item[(b)] $\bfc'_-= \bfc_-$, $\bfc'_+=\bfc_+$, $m'<m$;

\item[(c)] $\bfc'_-= \bfc_-$, $\bfc'_+=\bfc_+$, $m'=m$, and $\bfc'_0<_G\bfc_0$. 
\end{enumerate}
Define a partial order of $\cG_{\nu}$, also denoted by $\preceq$, by letting $(\bfc',t_{\lz'})\prec(\bfc,t_\lz)$ if and only if $(\bfc',|\lz'|)\prec(\bfc,|\lz|)$ in $\tilde{\cG}_{\nu}$, or $(\bfc',|\lz'|)=(\bfc,|\lz|)$ and $\lz'>\lz$ where the order of partitions is the lexicographic order (which is also the order of Specht modules, see \cite{Macdonald_Symmetric_functions_and_Hall_polynomials}).

\end{definition}

\section{The algebra $\cH^0(\ggz)$}

\subsection{Definition of $\cH^0(\ggz)$ and PBW basis}\label{sec def of h0}

Let $\cA'=\bbQ[v,v^{-1}]$. Following \cite{XXZ_Tame_quivers_and_affine_bases_I}, we define an $\cA'$-subalgebra $\cH^0(\ggz)$ of $\cH^*(\ggz)$ (see Section \ref{sec: RH alg}) as follow:
Consider the element $[M(\bfc_0)]$ of $\cH^*(\ggz)$ whose $k$-component is $[M_k(\bfc_0)]$ for each $\bfc_0\in\cG_0$, then $\cH^0(\ggz)$ is defined as subalgebra of $\cH^*(\ggz)$ generated by $\bbQ, v, v^{-1}, u_\bfi,\bfi\in\bfI$ and $[M(\bfc_0)],\bfc_0\in\cG_0$.

For $m\in\bbN$, consider the element $H_m$ in $\cH^*(\ggz)$ whose $k$-component is 
$$(H_m)_k=\sum_{[M]: M\in\cR^\ggz_{\rm h},\udim M=m\dz}v_k^{-\dim_k M}[M].$$
For a partition $\lz=(\lz_1,\lz_2,\dots,\lz_s)$, define 
$$H_\lz=\prod_{1\leq t\leq s}H_{\lz_t}$$
and 
$$S_\lz=\det(H_{\lz_t-t+t'})_{1\leq t,t'\leq s}.$$

For $\bfc_-\in\cG_-,\bfc_0\in\cG_0,\bfc_+\in\cG_+$, consider the elements $\lan M(\bfc_-)\ran,\lan M(\bfc_0)\ran,\lan M(\bfc_+)\ran$ in $\cH^*(\ggz)$ whose $k$-components are $\lan M_k(\bfc_-)\ran,\lan M_k(\bfc_0)\ran,\lan M_k(\bfc_+)\ran$, respectively. 

For $\bfc=(\bfc_-,\bfc_0,\bfc_+)\in\cG', t_\lz\in\bbP$, define 
$$N(\bfc,t_\lz)=\lan M(\bfc_-)\ran*\lan M(\bfc_0)\ran*S_\lz*\lan M(\bfc_+)\ran\in\cH^*(\ggz).$$

\begin{proposition}\label{multiplication of N}
Let $(\bfc^1,t_{\lz^1}),(\bfc^2,t_{\lz^2})\in\cG$. Then there exists a polynomial $\psi^{(\bfc,t_{\lz})}_{(\bfc^1,t_{\lz^1}),(\bfc^2,t_{\lz^2})}\in\cA'$ for each $(\bfc,t_\lz)\in\cG$ with $\bfc_-\geq_L \bfc^1_-,\bfc_+\geq_L \bfc^2_+$, such that the following equalities hold in $\cH^*(\ggz)$:
$$N(\bfc^1,t_{\lz^1})*N(\bfc^2,t_{\lz^2})=\sum_{(\bfc,t_\lz)\in\cG,\bfc_-\geq_L \bfc^1_-,\bfc_+\geq_L \bfc^2_+}\psi^{(\bfc,t_\lz)}_{(\bfc^1,t_{\lz^1}),(\bfc^2,t_{\lz^2})}N(\bfc,t_\lz).$$
\end{proposition}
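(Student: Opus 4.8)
The plan is to reduce the multiplication formula for the $N(\bfc,t_\lz)$'s to three separate statements, one for each of the three ``sectors'' (preprojective/preinjective, nonhomogeneous regular, homogeneous regular), and then to reassemble them using the triangular structure of $\rep_k\ggz$ together with the existence of Hall polynomials from Lemma~\ref{lem: exist of Hall poly}. First I would recall that $\cP^\ggz$, $\cR^\ggz_{\rm nh}$, $\cR^\ggz_{\rm h}$, $\cI^\ggz$ form a directed sequence of orthogonal subcategories: there are no nonzero homomorphisms ``backwards'' and $\Ext^1$ vanishes in one direction between any two of them (this is the valued-quiver analogue of the standard hereditary decomposition, following Lemma~3.?? and the classical Dlab–Ringel picture). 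Consequently, for any $M$ in $\rep_k\ggz$ with no summand in $\cR^\ggz_{\rm h}$, the element $\lan M(\bfc)\ran$ factors as $\lan M(\bfc_-)\ran * \lan M(\bfc_0)\ran * \lan M(\bfc_+)\ran$ up to a power of $v_k$ dictated by the Euler form, and the Hall numbers $g^L_{MN}$ for $L$ in a single sector are controlled by the (polynomial) Hall numbers within that sector.

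The main computation then splits as follows. First I would compute $N(\bfc^1,t_{\lz^1})*N(\bfc^2,t_{\lz^2})$ by interleaving the two products: schematically,
$$
\big(\lan M(\bfc^1_-)\ran * \lan M(\bfc^1_0)\ran * S_{\lz^1} * \lan M(\bfc^1_+)\ran\big)*\big(\lan M(\bfc^2_-)\ran * \lan M(\bfc^2_0)\ran * S_{\lz^2} * \lan M(\bfc^2_+)\ran\big),
$$
and the key point is to move all preprojective factors to the left and all preinjective factors to the right. Moving $\lan M(\bfc^1_+)\ran$ past $\lan M(\bfc^2_-)\ran$: since preinjectives have no maps to preprojectives and $\Ext^1$ from preinjective to preprojective can be nonzero, this product expands over modules with summands in all of $\cP^\ggz,\cR^\ggz,\cI^\ggz$, but the $\cP$-parts grow (so $\bfc_-\geq_L\bfc^1_-$) and the $\cI$-parts grow (so $\bfc_+\geq_L\bfc^2_+$) — exactly the support condition in the statement. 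The regular ``middle'' that is produced decomposes again into a nonhomogeneous part and a homogeneous part, and I would use Lemma~\ref{lem: exist of Hall poly} (applied with $\underline{d}$ fixed by the homogeneous tubes appearing) to see that each structure constant is a polynomial in $|k|$; summing over the finitely many $k$-dependent choices of homogeneous tubes is where one checks the result lands in $\cH^*(\ggz)$ with coefficients in $\cA'=\bbQ[v,v^{-1}]$ rather than merely in $\cH^*$. For the homogeneous regular sector specifically, the products $S_{\lz^1}*S_{\lz^2}$ (and their interaction with the $H_m$'s) are governed by the classical computation in a disjoint union of homogeneous tubes, each $\cT_z\cong\cK_1(k_z)$, where Hall algebra multiplication is the ring of symmetric functions (Macdonald); the determinantal $S_\lz$ are, up to normalization, Schur functions, so $S_{\lz^1}*S_{\lz^2}=\sum_\lz c^\lz_{\lz^1\lz^2}S_\lz$ with Littlewood–Richardson coefficients $c^\lz_{\lz^1\lz^2}\in\bbZ$, independent of $v$. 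This forces the $\bbP$-component of the output and explains why only $t_\lz$ (not higher-degree correction terms) appears.

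The remaining task is bookkeeping: after pushing everything into the normal order $\lan M(\bfc_-)\ran*\lan M(\bfc_0)\ran*S_\lz*\lan M(\bfc_+)\ran$, one must verify that the resulting coefficients, collected from the sector-wise polynomial Hall numbers and the Euler-form normalization powers $v_k^{\lan\cdot,\cdot\ran}$, indeed assemble to a single element of $\cA'=\bbQ[v,v^{-1}]$ times $N(\bfc,t_\lz)$, uniformly in $k\in\cK$. This uses the compatibility of the normalizations $\lan M\ran = v_k^{-\dim_k M+\dim_k\End_\ggz M}[M]$ with the Euler form, exactly as in the simply-laced case treated in \cite{XXZ_Tame_quivers_and_affine_bases_I}. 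I expect the main obstacle to be precisely this last uniformity-in-$k$ step, because in the valued (non-simply-laced) setting the homogeneous tubes $\cT_z$ have varying degrees $d_z=\deg z$ and varying residue fields $k_z$, so the naive count of homogeneous-regular modules of a given dimension vector depends on $k$ in a more delicate way than in the simply-laced case; this is handled by invoking Lemma~\ref{lem: exist of Hall poly} with $\underline d$ held fixed and then summing the resulting polynomials over the (now $k$-independent) combinatorial data $\cZ$-types, mirroring how the degeneration/Hall-polynomial machinery of \cite{Deng_Han} was set up. Everything else — the orthogonality of the four subcategories, the factorization of $\lan M(\bfc)\ran$, and the triangularity ensuring the support condition $\bfc_-\geq_L\bfc^1_-$, $\bfc_+\geq_L\bfc^2_+$ — follows the template of the symmetric case with only notational changes.
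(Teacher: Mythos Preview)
Your proposal is correct and takes essentially the same approach as the paper: reduce to the simply-laced template of \cite{XXZ_Tame_quivers_and_affine_bases_I} (sector decomposition, triangular reordering, symmetric-function identities for the $S_\lz$), with the one new ingredient being Lemma~\ref{lem: exist of Hall poly} to guarantee the structure constants lie in $\cA'$ uniformly in $k$. The paper's own proof is in fact just a one-line pointer to Proposition~8.3 of \cite{XXZ_Tame_quivers_and_affine_bases_I} plus the remark that Lemma~\ref{lem: exist of Hall poly} supplies the polynomiality, so you have spelled out more detail than the paper does, but the strategy and the identification of the key obstacle (uniformity in $k$ for the homogeneous tubes) are identical.
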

\begin{proof}
See [\cite{XXZ_Tame_quivers_and_affine_bases_I}, proposition 8.3]. The proof is similar, except that here we use Lemma \ref{lem: exist of Hall poly} to show the coefficients are polynomials in $v$, therefore $\psi^{(\bfc,t_{\lz})}_{(\bfc^1,t_{\lz^1}),(\bfc^2,t_{\lz^2})}\in\cA'$.
\end{proof}

\begin{corollary}
The set $\{N(\bfc,t_\lz)|(\bfc,t_\lz)\in\cG\}$ is an $\cA'$-basis of $\cH^0(\ggz)$.
\end{corollary}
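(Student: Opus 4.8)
The plan is to establish two things: that $\{N(\bfc,t_\lz)\mid(\bfc,t_\lz)\in\cG\}$ spans $\cH^0(\ggz)$ over $\cA'$, and that it is $\cA'$-linearly independent. For the spanning part, I would first observe that each $N(\bfc,t_\lz)$ lies in $\cH^0(\ggz)$: indeed $\lan M(\bfc_-)\ran$ and $\lan M(\bfc_+)\ran$ are (up to a power of $v$) products of $u_\bfi$'s since preprojective and preinjective modules are generated in the composition subalgebra via Ringel's PBW construction (using that the $\bmbz_t$ are obtained from the admissible sequence $\underline{\bfi}$ by reflections, hence $M_k(\bmbz_t)$ is a composition-algebra element up to scalar), while $\lan M(\bfc_0)\ran$ is an $\cA'$-combination of the explicit generators $[M(\bfc_0)]$, and each $H_m$ — hence each $S_\lz$ — can be written in terms of the $u_\bfi$ and the $[M(\bfc_0)]$ by the same argument used in the simply-laced case of \cite{XXZ_Tame_quivers_and_affine_bases_I}. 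Conversely, I must show the generators $u_\bfi,[M(\bfc_0)]$ themselves lie in the $\cA'$-span of the $N(\bfc,t_\lz)$ and that this span is closed under multiplication; the latter is exactly Proposition \ref{multiplication of N}, and since $u_\bfi$ and $[M(\bfc_0)]$ each equal $N(\bfc,t_\lz)$ for suitable $\bfc$ (with all other components trivial, $\lz=0$), closure under $*$ gives that the $\cA'$-span of the $N$'s is a subalgebra containing all generators, hence equals $\cH^0(\ggz)$.

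For linear independence, the strategy is the standard triangularity argument. I would order $\cG_\nu$ by the partial order $\preceq$ of Definition \ref{order} and show that $N(\bfc,t_\lz)$, expanded in the natural basis $\{[M]\}$ of $\cH^*(\ggz)$ restricted to dimension vector $\nu=D(\bfc,t_\lz)$, has a "leading term" indexed by $(\bfc,t_\lz)$ with an invertible (in fact monomial) coefficient, and all other modules $M$ appearing either have direct summands in $\cR^\ggz_{\rm h}$ whose contributions organize via the symmetric-function identity $S_\lz=\det(H_{\lz_t-t+t'})$ into a transition governed by the partition order, or correspond to indices $(\bfc',t_{\lz'})\prec(\bfc,t_\lz)$. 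Concretely: multiplying $\lan M(\bfc_-)\ran$, $\lan M(\bfc_0)\ran$, $S_\lz$, $\lan M(\bfc_+)\ran$ produces, as top term, $\lan M_k(\bfc_-)\oplus M_k(\bfc_0)\oplus M_k(\underline{\lz})\oplus M_k(\bfc_+)\ran$ where $M_k(\underline\lz)$ ranges over homogeneous tubes — the Hall-number $g^L_{MN}$ for a direct-sum decomposition with no self-extensions is $1$ up to the automorphism-group factor absorbed into $\lan-\ran$ — and the error terms are controlled by Lemma (the $\Hom$/$\Ext$ vanishing between the $M_k(\bmbz_t)$), by the definition of $\leq_G$ via $\Hom$-dimensions in the tubes, and by the Jacobi–Trudi expansion of $S_\lz$ over the homogeneous part. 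This exhibits the change-of-basis matrix from $\{N(\bfc,t_\lz)\}_{(\bfc,t_\lz)\in\cG_\nu}$ to $\{[M]\}$ as unitriangular (after normalization) with respect to $\preceq$, hence invertible over $\cA'$, giving both independence and a second proof of spanning within each graded piece.

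The main obstacle I anticipate is the bookkeeping in the homogeneous regular part: one must show that when $S_\lz=\det(H_{\lz_t-t+t'})$ is multiplied out and combined with the rest of $N(\bfc,t_\lz)$, the resulting coefficient of the "diagonal" module $M_k(\bfc)\oplus M_k(\underline\lz,\underline z)$ — summed appropriately over all choices of distinct homogeneous tubes $\underline z$ and partitions summing to the given $\lz$ — matches the Specht-module character $t_\lz$ and does so with an $\cA'$-coefficient independent of which tubes are chosen, which is where Lemma \ref{lem: exist of Hall poly} (existence of Hall polynomials of fixed type $\underline d$) is essential; this is precisely the technical heart that was carried out for symmetric type in \cite{XXZ_Tame_quivers_and_affine_bases_I}, and here one must additionally verify it survives Frobenius folding, i.e. that the tube structure of $\cR^\ggz_{\rm h}$ over $k$ (with its degree function $\deg z=d_z$) interacts with the symmetric-function identities exactly as in the untwisted case. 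I expect this to go through essentially verbatim once the dictionary of Section \ref{sec Frob fold} and the existence of Hall polynomials are in hand, so the proof can legitimately be reduced to a citation of [\cite{XXZ_Tame_quivers_and_affine_bases_I}] for the structural argument together with Proposition \ref{multiplication of N} and Lemma \ref{lem: exist of Hall poly} for the new ingredients.
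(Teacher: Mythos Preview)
Your proposal is correct and matches the paper's (implicit) approach: the paper states this as an immediate corollary of Proposition~\ref{multiplication of N} and otherwise defers to the parallel argument in \cite{XXZ_Tame_quivers_and_affine_bases_I}, and you have correctly unpacked what that entails---closure of the $\cA'$-span under $*$ from the proposition, membership of the generators $u_\bfi$ and $[M(\bfc_0)]$ among the $N(\bfc,t_\lz)$ (up to powers of $v$), an inductive argument placing each $H_m$ (hence each $S_\lz$ and each $N(\bfc,t_\lz)$) inside $\cH^0(\ggz)$, and linear independence via triangularity against the natural basis $\{[M]\}$ using the order $\preceq$ together with the Jacobi--Trudi/Schur-function structure on the homogeneous part, with Lemma~\ref{lem: exist of Hall poly} supplying the polynomiality of all coefficients. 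One small inaccuracy of phrasing: $\lan M(\bfc_\pm)\ran$ are not literally monomials in the $u_\bfi$ but rather $\cA$-combinations thereof (they lie in $\cC^*$ as Ringel's PBW elements via the reflection functors); this does not affect your argument.
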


We call $\{N(\bfc,t_\lz)|(\bfc,t_\lz)\in\cG\}$ the PBW basis of $\cH^0(\ggz)$.

Moreover, we have the tensor-product decomposition
$$\cH^0(\ggz)=\cH^0(\cP)\otimes_{\cA'}\cH^0(\cT_1)\otimes_{\cA'}\cH^0(\cT_2)\otimes_{\cA'}\dots\otimes_{\cA'}\cH^0(\cT_s)\otimes_{\cA'}\cA'[H_1,H_2,\dots]\otimes_{\cA'}\cH^0(\cI)$$
as $\cA'$-spaces. Here $\cH^0(-)$ are defined as the $\cA'$-subalgebras for the corresponding components.

\subsection{Almost orthogonality}\label{sec: Almost orthogonality}

Fix a valued quiver $\ggz$ and its Cartan datum $(\bfI,(-,-))$, let $\bfU$ be the associated quantum group.
Define an algebra structure on $\bfU^+\otimes\bfU^+$ by 
$$(x_1\otimes x_2)(y_1\otimes y_2)=v^{(|x_2|,|y_1|)}x_1y_1\otimes x_2y_2,$$
where $|x|\in\bbN\bfI$ is the grading of homogeneous $x$ on $\bfU^+$ such that $|E_\bfi|=\bfi$.
Let $r:\bfU^+\ra\bfU^+\otimes\bfU^+$ be the the algebra homomorphism such that 
$r(E_\bfi)=E_\bfi\otimes 1+1\otimes E_\bfi.$

The operators ${}_\bfi r, r_\bfi:\bfU^+\ra\bfU^+$ are defined such that 
$r(x)=u_\bfi\otimes {}_\bfi r(x)$
plus terms of other bi-homogeneities, and $r(x)=r_\bfi (x)\otimes u_\bfi$
plus terms of other bi-homogeneities. 
One can deduce that 
$${}_\bfi r(xy)={}_\bfi r(x)y+v^{(\bfi,|x|)} x{}_\bfi r(y),r_\bfi(xy)=v^{(\bfi,|y|)} r_\bfi(x)y+ x r_\bfi(y)$$ 
for homogeneous $x,y\in\bfU^+$.

By \cite{Lusztig_Introduction_to_quantum_groups}, there is a non-degenerate symmetric bilinear form $(-,-):\bfU^+\times\bfU^+\ra\bbQ(v)$ such that
$$(E_\bfi,E_\bfj)=\dz_{\bfi\bfj}(1-v_\bfi^{-2})^{-1},(x,y_1y_2)=(r(x),y_1\otimes y_2),(x_1x_2,y)=(x_1\otimes x_2,r(y))$$
where the form on $\bfU^+\otimes\bfU^+$ is defined by $(x\otimes x',y\otimes y')=(x,x')(y,y')$.

For a fixed finite field $k$, by  \cite{Green_Hall_algebras_hereditary_algebras_and_quantum_groups}, there is an inner product $(-,-):\cH^*_k(\ggz)\times\cH^*_k(\ggz)\ra\bbQ(v_k)$ defined by
$$(\lan M\ran,\lan N\ran)=\dz_{MN}\frac{v_k^{2\dim_k\End_\ggz M}}{a_M},$$
where $a_M=|\Aut_\ggz M|$.
This bilinear form is also well-defined on $\cC^*(\ggz)$ and coincides with the form $(-,-)$ on $\bfU^+$ via the isomorphism $\bfU^+\cong\cC^*(\ggz)$. 

By \cite{Ringel_Green's_Theorem_on_Hall_Algebras} and \cite{Green_Hall_algebras_hereditary_algebras_and_quantum_groups}, there is a linear operator
$r:\cH^*_k(\ggz)\ra\cH^*_k(\ggz)\times\cH^*_k(\ggz)$ defined by
$$r([L])=\sum_{[M],[N]}v_k^{\lan \udim M,\udim N\ran}g_{MN}^L\frac{a_Ma_N}{a_L}[M]\otimes[N],$$
which is also well-defined on $\cC^*(\ggz)$ and coincides with the operator $r$ on $\bfU^+$ via the isomorphism $\bfU^+\cong\cC^*(\ggz)$.

Note that for any $\bfc_0\in\cG_0$, let $L=M_k(\bfc_0)$, then $g^L_{MN}\neq 0$ implies that $M,N$ have no direct summand in $\cR^\ggz_{\rm h}$, therefore $r([M(\bfc_0)])$ is well-defined. Since $r$ is an algebra homomorphism, $r$ is well-defined on $\cH^0(\ggz)$.

The operators ${}_\bfi r, r_\bfi: \cH^0(\ggz)\ra\cH^0(\ggz)$ are similarly well-defined, and their restriction on $\cC^*(\ggz)$ coincide with ${}_\bfi r, r_\bfi:\bfU^+\ra\bfU^+$, respectively.

\begin{proposition}
For $(\bfc,t_\lz),(\bfc',t_{\lz'})\in\cG$, we have $$(N(\bfc,t_\lz),N(\bfc',t_{\lz'}))\in\dz_{\bfc\bfc'}\dz_{\lz\lz'}+v^{-1}\bbQ[[v^{-1}]]\cap\bbQ(v).$$
\end{proposition}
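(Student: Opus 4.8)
The plan is to establish the almost-orthogonality of the PBW basis $\{N(\bfc,t_\lz)\}$ by exploiting the tensor-product decomposition
$$\cH^0(\ggz)=\cH^0(\cP)\otimes_{\cA'}\cH^0(\cT_1)\otimes_{\cA'}\dots\otimes_{\cA'}\cH^0(\cT_s)\otimes_{\cA'}\cA'[H_1,H_2,\dots]\otimes_{\cA'}\cH^0(\cI)$$
together with the fact that the inner product $(-,-)$ and the coproduct $r$ on $\cH^0(\ggz)$ restrict compatibly to each of these factors. Concretely, $N(\bfc,t_\lz)=\lan M(\bfc_-)\ran * \lan M(\bfc_0)\ran * S_\lz * \lan M(\bfc_+)\ran$ is a product of elements coming from the preprojective, nonhomogeneous-regular, homogeneous-regular, and preinjective blocks respectively, and since $\Hom$ and $\Ext^1$ vanish between appropriate blocks in the direction compatible with the ordering, the coproduct of such a product is, up to lower-order terms, the tensor product of the coproducts of the factors. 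This is exactly the mechanism by which one reduces the pairing $(N(\bfc,t_\lz),N(\bfc',t_{\lz'}))$ to a product of pairings within each block.

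The key steps, in order, are as follows. First, I would record the behavior of $(-,-)$ and $r$ under the block decomposition: because $M(\bfc_-)$, $M(\bfc_0)$, $M(\bfc_+)$ and the homogeneous modules live in categories with the one-sided $\Hom$/$\Ext^1$ vanishing (Lemma in Section~\ref{sec: PI part} and the directedness of $\cP^\ggz,\cR^\ggz_{\rm nh},\cI^\ggz$), the formula $(x_1 x_2, y) = (x_1\otimes x_2, r(y))$ lets one peel off factors one at a time, showing $(N(\bfc,t_\lz),N(\bfc',t_{\lz'}))$ equals $(\lan M(\bfc_-)\ran,\lan M(\bfc'_-)\ran)\cdot(\lan M(\bfc_0)\ran,\lan M(\bfc'_0)\ran)\cdot(S_\lz,S_{\lz'})\cdot(\lan M(\bfc_+)\ran,\lan M(\bfc'_+)\ran)$ modulo contributions that are strictly lower in the order $\preceq$ and hence (by the PBW-triangularity already established) contribute only to $v^{-1}\bbQ[[v^{-1}]]$. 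Second, I would treat the preprojective and preinjective factors: here $\lan M(\bfc_\pm)\ran$ is a monomial-type element built from a directed family of indecomposables with no self-extensions, so the pairing $(\lan M(\bfc_-)\ran,\lan M(\bfc'_-)\ran)$ is $\dz_{\bfc_-\bfc'_-}$ up to a power of $v$ lying in $1+v^{-1}\bbQ[[v^{-1}]]$, using the formula $(\lan M\ran,\lan N\ran)=\dz_{MN}v_k^{2\dim\End M}/a_M$ and the fact that for preprojective/preinjective $M$ one has $\dim\End M = \dim M - (\text{positive part vanishing at } v\to\infty)$ after normalization. Third, for the nonhomogeneous-regular factor I would use the equivalences $\vez_t:\cK_{r_t}(k_t)\cong\cT_t$ to transfer the question to cyclic quivers, where almost-orthogonality of the $M(\pi)$'s (for the relevant ordering $\leq_G$) is known from the work of Deng--Du--Xiao. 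Fourth, for the homogeneous part I would invoke the standard computation that the Schur-type symmetric functions $S_\lz$ in the $H_m$'s are almost-orthogonal, with $(S_\lz,S_{\lz'})\in\dz_{\lz\lz'}+v^{-1}\bbQ[[v^{-1}]]$; this is essentially the same calculation as in the simply-laced case in \cite{XXZ_Tame_quivers_and_affine_bases_I}, combined with the determination of $\deg z$ for homogeneous tubes over a valued quiver.

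I expect the main obstacle to be the bookkeeping of the cross-terms when peeling off factors via the coproduct: although the directedness of the block categories ensures that $r(\lan M(\bfc_-)\ran)$ has its ``first tensor factor'' supported in $\cP^\ggz$ and the ``second'' is again preprojective, one must check carefully that when computing $(\lan M(\bfc_-)\ran * X, \lan M(\bfc'_-)\ran * Y)$ the mixed contributions either vanish by degree/block reasons or are captured by lower-order $N(\bfc'',t_{\lz''})$ with $(\bfc'',t_{\lz''})\prec$ both indices, so that the induction on $\preceq$ closes. The homogeneous-regular block needs the extra care that $H_m$ involves an infinite sum over tubes, but since only finitely many tubes of each bounded degree contribute in a fixed dimension, the coefficients remain in $\cA'$ and the limiting behavior at $v\to\infty$ is controlled. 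Once these compatibilities are in place, the statement follows by multiplying the four block-wise estimates, each of which lies in the ring $\dz_{\bullet\bullet}+v^{-1}\bbQ[[v^{-1}]]\cap\bbQ(v)$, which is closed under multiplication.
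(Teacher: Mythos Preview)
Your proposal is correct and follows essentially the same approach as the paper, which simply defers to Proposition~8.8 of \cite{XXZ_Tame_quivers_and_affine_bases_I}: reduce the pairing to a product of block-wise pairings via the adjointness of $r$ and multiplication together with the one-sided $\Hom/\Ext^1$ vanishing between $\cP^\ggz,\cR^\ggz_{\rm nh},\cR^\ggz_{\rm h},\cI^\ggz$, and then verify almost-orthogonality separately on the preprojective/preinjective factors (directedness), the nonhomogeneous tubes (via the equivalences $\vez_t$ and \cite{Deng_Du_Xiao_Generic_extensions_and_canonical_bases_for_cyclic_quivers}), and the homogeneous part (the Schur-function computation for the $S_\lz$). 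The only adjustment relative to the simply-laced argument is that the ground fields $k_t$ for the tubes need not equal $k$, which is exactly what you account for by transporting through $\vez_t$.
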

\begin{proof}
The proof is similar to Proposition 8.8 in \cite{XXZ_Tame_quivers_and_affine_bases_I}.
\end{proof}

Therefore we call the PBW basis $\{N(\bfc,t_\lz)|(\bfc,t_\lz)\in\cG\}$ almost orthogonal.

\section{Monomial basis, PBW basis and a bar-invariant basis}\label{sec affine bases}

The construction of the monomial basis, the PBW basis and a bar-invariant basis has basically no difference from the (non-valued) quiver cases. In this section, all proofs are similar to those in Section 9 in \cite{XXZ_Tame_quivers_and_affine_bases_I}.

\subsection{Monomials}\label{section: monomials}
Let $\cS(\ggz)$ be the set of all pairs $\oz=(\underline{\bfi},\underline{a})$, where $\underline{\bfi}=(\bfi_1,\bfi_2,\dots,\bfi_m)$ is a sequence in $\bfI$ and $\underline{a}=(a_1,a_2,\dots,a_m)$ is a sequence in $\bbZ^{>0}$ of the same length. 
For such a pair $\oz=(\underline{\bfi},\underline{a})$, we define the corresonding monomial
$$\fkm^\oz=u_{\bfi_1}^{(a_1)}*u_{\bfi_2}^{(a_2)}*\dots*u_{\bfi_m}^{(a_m)}$$
in $\cC^*(\ggz)$.

Consider the quiver $Q$ such that $\ggz=\ggz(Q,\sz)$. Let $\cS(Q)$ be the set of all pairs $\oz'=(\underline{i},\underline{a})$, where $\underline{i}=(i_1,i_2,\dots,i_m)$ is a sequence in $I$ and $\underline{a}=(a_1,a_2,\dots,a_m)$ is a sequence in $\bbZ^{>0}$ of the same length. Then there is a injection $\cS(\ggz)\ra\cS(Q),\oz\mapsto\hat{\oz}$, where $\hat\oz$ is obtained from $\oz=(\underline{\bfi},\underline{a})$ by replacing each $\bfi_t$ by a sequence of all vertices (under any fixed order) inside $\bfi_t$, and repeating each $a_t$ by $|\bfi_t|$ times. 
Then we have a monomial 
$$\fkm^\hoz=\prod_{i\in\bfi_1}u_i^{(a_1)}*\prod_{i\in\bfi_2}u_i^{(a_2)}*\dots*\prod_{i\in\bfi_t}u_i^{(a_t)}$$
in $\cC^*(Q)$. Here $u_i$ commutes with $u_j$ if $i,j$ are in the same $\sz$-orbit.

For a fixed valued quiver $\ggz$, since we only consider quivers without oriented cycles, let $\bfI=\{\mathbf{1},\mathbf{2},\dots,\mathbf{n}\}$ such that there are no arrows from $\bfi$ to $\bfj$ if $\bfi>\bfj$.

For a given $\nu=\sum_\bfi\nu_\bfi \bfi\in\bbN\bfI$, define a monomial in $\cC^*(\ggz)$:
$$\fkm^{\nu}=u_{\mathbf{1}}^{(\nu_\mathbf{1})}*u_\mathbf{2}^{(\nu_\mathbf{2})}*\dots*u_\mathbf{n}^{(\nu_\mathbf{n})}.$$

\subsection{Monomials for $\cP^\ggz$ and $\cI^\ggz$}

For $j\in\bbZ$, let $\bfe_j$ be the function $\bbZ^{\leq 0}\ra\bbN$ for $j\leq 0$ or respectively $\bbZ^{>0}\ra\bbN$ for $j>0$, which takes value 1 on $j$ and 0 on others. Then for $m\in\bbN$, the entry of 
$N((m\bfe_j,0,0),0)\in\cH^0(\ggz)$ on $k$ is $\lan M_k(m\bfe_j)\ran=\lan M_k(\bmbz_j)^{\oplus m}\ran=\lan M_k(\bmbz_j)\ran^{(m)}$. Without ambiguity, we simply denote $N((\bfd,0,0),0)=N(\bfd,0)$ for $\bfd\in\cG-$ and $N((0,0,\bfd'),0)=N(\bfd',0)$ for $\bfd'\in\cG+$.
\begin{lemma}\label{monomial of indecomp of P}
For $m\geq 0$, $j\leq 0$, we have
$$\fkm^{m\bmbz_j}=N(m\bfe_j,0)+\sum_{(\bfc,t_\lz)\in\cG\atop\bfc_->_L m\bfe_j}\phi_{m\bfe_j}^{(\bfc,t_\lz)}(v) N(\bfc,t_\lz)$$
in $\cH^0(\ggz)$ for some $\phi_{m\bfe_j}^{(\bfc,t_\lz)}\in \cA'$.
\end{lemma}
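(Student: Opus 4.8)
The plan is to reduce the statement to the analogous computation over the covering quiver $Q$ and then push it down through the Frobenius folding. First I would record what the right-hand side means: the module $M_k(m\bmbz_j) = M_k(\bmbz_j)^{\oplus m}$ is a preprojective module (since $j\le 0$), so $N(m\bfe_j,0) = \lan M(\bfe_j)\ran^{(m)}$ lives in the preprojective factor $\cH^0(\cP)$ of the tensor decomposition recorded after the corollary in Section \ref{sec def of h0}. The key structural input is that, since $\bfi_0,\bfi_{-1},\dots$ is an admissible sink sequence for $\ggz$, the indecomposable $M_k(\bmbz_j)$ is obtained from a simple $S_{\bfi_j}$ by a sequence of BGP reflection functors; equivalently it has a ``shortest word'' expression, and the standard APR/Ringel argument on a hereditary algebra shows that the monomial $\fkm^{m\bmbz_j}$ built from the reduced word realizing $\bmbz_j$ expands, in the PBW basis $\{N(\bfc,t_\lz)\}$, with leading term $N(m\bfe_j,0)$ plus strictly higher terms in the order $\preceq$. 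This is exactly Lemma 9.x of \cite{XXZ_Tame_quivers_and_affine_bases_I} in the non-valued case, so the task is to verify each ingredient survives the passage to valued quivers.

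Concretely I would argue as follows. Step one: over $\bar k$, for the covering quiver $Q$ with automorphism $\sz$, pick a reduced word $s_{\bfj_1}\cdots s_{\bfj_\ell}$ (with $\bfj_\ell = \bfi_j$, built from the admissible sink sequence truncated at $j$) so that $M_{\bar k}(\bmbz_j)$ is the unique indecomposable with the given dimension vector, and recall from \cite{Lusztig_Introduction_to_quantum_groups} / Ringel's PBW construction that the corresponding monomial in the composition algebra equals the PBW root vector up to lower-order corrections. Step two: transport this to $\rep_k\ggz$ via the folding. Because the $\sz$-orbit structure is compatible with the reflection sequence (the sequence $\underline\bfi$ is admissible to $\ggz$ precisely so that each $\bfi_t$ is a genuine sink/source, i.e. a whole $\sz$-orbit of sinks/sources of the covering), the monomial $\fkm^{m\bmbz_j} = u_{\bfi_0}^{(\,\cdot\,)}*\cdots$ in $\cC^*(\ggz)$ maps under $\Xi$-type identifications, or directly via Hall-algebra computation in $\cH^0(\ggz)$, to the same kind of expansion. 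Here I would invoke Proposition \ref{multiplication of N} to guarantee that products of the relevant $\lan M(\bfc_-)\ran$'s re-expand in the PBW basis with coefficients in $\cA'$, and the existence of Hall polynomials (Lemma \ref{lem: exist of Hall poly}) to know these coefficients are honest polynomials in $v$, hence in $\cA'$. Step three: identify the leading term and its triangularity. Since $M_k(\bmbz_j)^{\oplus m}$ is the generic extension in its dimension-vector stratum among preprojective modules (any proper degeneration has strictly larger $\bfc_-$ in $>_L$, by Lemma \ref{monomial of indecomp of P}'s supporting facts on $\Hom$/$\Ext$ vanishing among the $\bmbz_t$), the coefficient of $N(m\bfe_j,0)$ is $1$ and every other $N(\bfc,t_\lz)$ appearing satisfies $\bfc_- >_L m\bfe_j$; terms with a homogeneous-regular or nonhomogeneous-regular or preinjective summand can only appear with $\bfc_->_L m\bfe_j$ as well, since $M_k(\bmbz_j)$ is preprojective and Hall numbers $g^L_{M,N}$ with $L$ preprojective force $M,N$ preprojective. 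This gives the displayed formula.

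The main obstacle I expect is \emph{not} the combinatorics of the order but verifying that the ``shortest word'' / BGP-reflection description of $M_k(\bmbz_j)$ and the resulting monomial-to-PBW triangularity genuinely go through in the valued (species) setting. The delicate points are: (i) the BGP reflection functors on a $k$-species behave as expected (they do, by \cite{Dlab_Ringel_Indecomposable_representations_of_graphs_and_algebras}, but one must check the $k$-modulation bookkeeping of $\sz_\bfi\ggz$ as set up in Section \ref{sec: PI part}), and (ii) the quantum-group identity expressing a PBW root vector as a monomial in divided powers of Chevalley generators, which over a symmetric datum is a statement about $\bfU^+$ and transfers verbatim, must here be matched with the \emph{valued} Hall algebra rather than the folded geometric one. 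The cleanest route is probably to use the isomorphism $\Xi_1:\bfU^+\to\cC^*(\ggz)$ and simply run Lusztig's/Ringel's argument inside $\bfU^+$, then observe that $\Xi_1$ carries the relevant PBW vectors to $\lan M(\bfc_-)\ran$-type elements of $\cH^0(\ggz)$ — at which point the statement becomes the non-valued argument of \cite{XXZ_Tame_quivers_and_affine_bases_I} applied to $\bfU^+$ and only the last ``reading off of leading terms'' step uses the valued Hall-algebra module theory. As the section preamble already indicates, once these identifications are in place the proof is formally identical to Section 9 of \cite{XXZ_Tame_quivers_and_affine_bases_I}.
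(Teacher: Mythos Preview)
Your proposal has a concrete misidentification at its core: the monomial $\fkm^{m\bmbz_j}$ is \emph{not} built from the admissible sink sequence $\underline{\bfi}$. By the definition in Section~\ref{section: monomials} it is $u_{\mathbf 1}^{(\nu_{\mathbf 1})}*u_{\mathbf 2}^{(\nu_{\mathbf 2})}*\cdots*u_{\mathbf n}^{(\nu_{\mathbf n})}$ for $\nu=m\bmbz_j$, using the fixed source-to-sink total order on $\bfI$. Your Step~1 (``reduced word $s_{\bfj_1}\cdots s_{\bfj_\ell}$ built from the admissible sink sequence'') and the expression ``$\fkm^{m\bmbz_j}=u_{\bfi_0}^{(\cdot)}*\cdots$'' in Step~2 describe a different object, closer to the PBW root vector $\lan M(\bmbz_j)\ran$ itself; the Ringel/Lusztig reduced-word facts you cite are therefore not the ones in play. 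Relatedly, the sentence ``Hall numbers $g^L_{M,N}$ with $L$ preprojective force $M,N$ preprojective'' does not apply here: the modules $L$ arising in the expansion of $\fkm^{m\bmbz_j}$ have $\udim L=m\bmbz_j$ but need not be preprojective.

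The folding route through $Q$ is also not how the paper argues and is not easy to make precise: there is no algebra map between $\cH^0(Q)$ and $\cH^0(\ggz)$ that would transport the PBW expansion, and the comparison carried out in Section~7.2 goes through $K(\tcQ)$ for a different purpose. Your fallback of ``running the argument inside $\bfU^+$'' cannot suffice either, since the targets $N(\bfc,t_\lz)$ live in $\cH^0(\ggz)\supsetneq\cC^*(\ggz)\cong\bfU^+$. The paper's intended proof is the direct Hall-algebra computation in $\cH^*_k(\ggz)$, verbatim parallel to \cite{XXZ_Tame_quivers_and_affine_bases_I}: expand $\fkm^{m\bmbz_j}$ as a combination of $\langle L\rangle$ over $\udim L=m\bmbz_j$, with coefficients in $\cA'$ by Lemma~\ref{lem: exist of Hall poly}; the rigid module $M(\bmbz_j)^{\oplus m}$ gives the leading term with coefficient $1$, and every other such $L$ satisfies $\bfc_-(L)>_L m\bfe_j$. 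You do identify this last ingredient and the role of Lemma~\ref{lem: exist of Hall poly} correctly, so the right pieces are present in your write-up --- they are just buried under a folding superstructure that is neither needed nor straightforward to justify.
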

\begin{lemma}\label{monomial of indecomp of I}
For $m\geq 0$, $j>0$, we have
$$\fkm^{m\bmbz_j}=N(m\bfe_j,0)+\sum_{(\bfc,t_\lz)\in\cG\atop\bfc_+>_L m\bfe_j}\phi_{m\bfe_j}^{(\bfc,t_\lz)}(v) N(\bfc,t_\lz)$$
in $\cH^0(\ggz)$ for some $\phi_{m\bfe_j}^{(\bfc,t_\lz)}\in \cA'$.
\end{lemma}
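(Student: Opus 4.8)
The statement in question, Lemma \ref{monomial of indecomp of I}, is the preinjective analogue of Lemma \ref{monomial of indecomp of P}, so the natural plan is to mirror the proof of the preprojective case. The key structural fact is that for $j>0$ the indecomposable $M_k(\bmbz_j)$ is a preinjective module whose dimension vector is the positive real root $\bmbz_j = s_{\bfi_1}s_{\bfi_2}\cdots s_{\bfi_{j-1}}(\bfi_j)$. By admissibility of $\underline{\bfi}$, the sequence $(\bfi_1,\ldots,\bfi_j)$ is an admissible source sequence of $\ggz$, which means that after applying the BGP reflection functors associated to $\bfi_1,\ldots,\bfi_{j-1}$, the module $M_k(\bmbz_j)$ is carried to the simple module $S_{\bfi_j}$ at a source of the reflected quiver. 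The plan is to use this to reduce the computation of $\fkm^{m\bmbz_j}$ to an explicit monomial identity that can be read off in the reflected quiver, then transport the answer back using the compatibility of reflection functors with Hall multiplication.

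Concretely, first I would recall that for a source $\bfi$ of $\ggz$ one has $\fkm^{m\bfi} = u_\bfi^{(m)} = \lan M_k(\bfi)^{\oplus m}\ran = N(m\bfe_j,0)$ when $j$ happens to index the simple at a source (the base case $j$ with $\bmbz_j = \bfi_j$ a simple source, i.e. the smallest relevant $j$). Then I would induct: to pass from the quiver $\sz_{\bfi_{t-1}}\cdots\sz_{\bfi_1}\ggz$ back to $\ggz$, one multiplies on the left by the appropriate power $u_{\bfi_t}^{(\nu_t)}$ and invokes Proposition \ref{multiplication of N} (the multiplication formula for the $N(\bfc,t_\lz)$) together with Lemma \ref{monomial of indecomp of P}-type bookkeeping on the triangular order. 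The point is that each such left-multiplication by a divided power of a generator at a source strictly increases $\bfc_+$ in the lexicographic order $>_L$ unless it produces exactly the next indecomposable $M_k(\bmbz_t)$, and the dimension-vector constraint $D(\bfc,t_\lz)=\nu$ pins down the leading term as $N(m\bfe_j,0)$ with all other contributions having $\bfc_+ >_L m\bfe_j$. All coefficients land in $\cA' = \bbQ[v,v^{-1}]$ because the structure constants $\psi$ of Proposition \ref{multiplication of N} do, and the divided-power normalization contributes only integer powers of $v$.

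The main obstacle, and the place where one must be careful rather than simply copying \cite{XXZ_Tame_quivers_and_affine_bases_I}, is making sure the reflection-functor argument interacts correctly with the \emph{valued} setting: applying $\sz_\bfi$ to $\ggz$ replaces the bimodules $\bbM_\bfh$ by their $k$-duals, and one needs that the induced map on Ringel-Hall algebras (or rather on the generic composition algebra $\cC^*(\ggz)$) sends divided powers of generators to divided powers of generators and is compatible with the PBW bases $N(\bfc,t_\lz)$ attached to the two orientations. Since $\cC^*(\ggz)\cong\bfU^+$ canonically (the isomorphism $\Xi_1$, which is independent of orientation and of the chosen $k$-modulation), and since Lusztig's symmetries $T_{\bfi}$ on $\bfU^+$ realize the reflection functors, this compatibility is available; but one must check that $T_{\bfi}$ applied to the admissible subsequence produces exactly the reindexing $\bmbz_t \mapsto \bmbz_{t-1}$ on the preinjective side and preserves the order $\preceq$. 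Once that bookkeeping is in place, the proof is a routine induction on $j$ (equivalently on $|\bmbz_j|$ or on the length of the admissible source sequence) entirely parallel to Lemma \ref{monomial of indecomp of P}, so I would simply state "the proof is dual to that of Lemma \ref{monomial of indecomp of P}, replacing sinks by sources, $\cP^\ggz$ by $\cI^\ggz$, and $>_L$ on $\cG_-$ by $>_L$ on $\cG_+$, and using Proposition \ref{multiplication of N}."
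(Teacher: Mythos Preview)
The paper offers no proof here; the opening line of Section \ref{sec affine bases} states that all proofs in the section follow \cite[Section 9]{XXZ_Tame_quivers_and_affine_bases_I}, and this lemma is simply the preinjective counterpart of Lemma \ref{monomial of indecomp of P}. Your final sentence --- that the proof is dual to that of Lemma \ref{monomial of indecomp of P}, swapping sinks for sources, $\cP^\ggz$ for $\cI^\ggz$, and $\cG_-$ for $\cG_+$ --- is therefore exactly what the paper does, and it is correct.

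Your longer sketch via BGP reflection functors and Lusztig symmetries, however, does not compute the object in question. The monomial $\fkm^{m\bmbz_j}$ is defined via the \emph{fixed} vertex ordering $\mathbf{1}<\cdots<\mathbf{n}$ of Section \ref{section: monomials}, not via the admissible source sequence $(\bfi_1,\bfi_2,\ldots)$, and once $j>1$ these need not agree. Lusztig's symmetries $T_{\bfi}$ do not send monomials in the $E_\bfj$ to monomials (they do not preserve the generators), so the proposed induction on $j$ by ``passing back from $\sz_{\bfi_{t-1}}\cdots\sz_{\bfi_1}\ggz$ via left multiplication by $u_{\bfi_t}^{(\nu_t)}$'' does not recover $\fkm^{m\bmbz_j}$. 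What the reflection-functor machinery produces cleanly is the \emph{right-hand} side $N(m\bfe_j,0)=\lan M(\bmbz_j)^{\oplus m}\ran$ (this is how the paper uses $T_\bfi$ later, in Section 8.3), not the monomial on the left. The actual argument one dualises is a direct expansion of $\fkm^{\nu}$ in the Hall algebra together with a dimension-vector check that every term other than $\lan M(\bmbz_j)^{\oplus m}\ran$ contributes only to $N(\bfc,t_\lz)$ with $\bfc_+>_L m\bfe_j$; Lemma \ref{lem: exist of Hall poly} then ensures the coefficients lie in $\cA'$. So drop the reflection-functor paragraph and keep your last sentence.
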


Consider $\bfd:\bbZ^{\leq 0}\ra\bbN$ in $\cG_-$ and $\bfd':\bbZ^{>0}\ra\bbN$ in $\cG_+$.
Write $\bfd=\sum_{j\leq 0}\bfd(j)\bfe_j$ and $\bfd'=\sum_{j>0}\bfd'(j)\bfe_j$.
Define the following monomials in $\cC^*(\ggz)$: 
$$\fkm^{\oz(\bfd)}=\fkm^{\bfd(0)\bmbz_0}*\fkm^{\bfd(-1)\bmbz_{-1}}*\fkm^{\bfd(-2)\bmbz_{-2}}*\cdots,$$
$$\fkm^{\oz(\bfd')}=\cdots*\fkm^{\bfd'(3)\bmbz_3}*\fkm^{\bfd'(2)\bmbz_{2}}*\fkm^{\bfd'(1)\bmbz_{1}}.$$
Note that these are finite products since $\bfd,\bfd'$ are finitely supported.

\begin{lemma}\label{monomial for proj}
For $\bfd\in\cG_-$, we have
$$\fkm^{\oz(\bfd)}=N(\bfd,0)+\sum_{(\bfc,t_\lz)\in\cG\atop\bfc_->_L\bfd}\phi_\bfd^{(\bfc,t_\lz)}(v) N(\bfc,t_\lz)$$
in $\cH^0(\ggz)$ for some $\phi_\bfd^{(\bfc,t_\lz)}\in\cA'$.
\end{lemma}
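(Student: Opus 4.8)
The plan is to induct on $\bfd \in \cG_-$ with respect to the linear order $>_L$ on $\cG_-$ (restricted to those $\bfd$ with $D(\bfd,0,0,0)$ equal to a fixed weight $\nu$, so that we are working in a finite poset), using Lemma \ref{monomial of indecomp of P} as the base case. Write $\bfd = \sum_{j\leq 0}\bfd(j)\bfe_j$ and let $j_0 \leq 0$ be the smallest index (in absolute value closest to $0$, i.e.\ the largest $j$) in the support of $\bfd$. Set $\bfd'' = \bfd - \bfd(j_0)\bfe_{j_0}$, so $\fkm^{\oz(\bfd)} = \fkm^{\bfd(j_0)\bmbz_{j_0}} * \fkm^{\oz(\bfd'')}$. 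By Lemma \ref{monomial of indecomp of P} applied to $m=\bfd(j_0)$, $j=j_0$, and by the induction hypothesis applied to $\bfd''$ (which satisfies $\bfd''_- >_L \bfd$ is false — rather $\bfd \geq_L \bfd''$ with the leading term at $j_0$ matched; here one must be slightly careful about the direction, since $\bfd''$ has strictly smaller $\bfd''(j_0)$-coordinate when that coordinate was nonzero, hence $\bfd >_L \bfd''$, so induction should instead be run downward — I would set it up so the induction runs on the number of nonzero entries of $\bfd$ together with $>_L$), we expand both factors into $N$'s.

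Next I would multiply the two $N$-expansions using Proposition \ref{multiplication of N}. The key point is the constraint recorded there: $N(\bfc^1,t_{\lz^1})*N(\bfc^2,t_{\lz^2})$ is a sum of $N(\bfc,t_\lz)$ with $\bfc_- \geq_L \bfc^1_-$ and $\bfc_+ \geq_L \bfc^2_+$. In our situation the left factor $\fkm^{\bfd(j_0)\bmbz_{j_0}}$ expands into $N(\bfd(j_0)\bfe_{j_0},0)$ plus terms with $\bfc^1_- >_L \bfd(j_0)\bfe_{j_0}$, and the right factor $\fkm^{\oz(\bfd'')}$ expands (by induction) into $N(\bfd'',0)$ plus terms with $\bfc^2_- >_L \bfd''$ and (since $\bfd''$ lives entirely in the preprojective part) $\bfc^2_+ = 0$. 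Multiplying the two leading terms $N(\bfd(j_0)\bfe_{j_0},0)*N(\bfd'',0)$ and tracking the $>_L$ bound from Proposition \ref{multiplication of N}, the leading term is $N(\bfd,0)$ with coefficient $1$ — this needs the combinatorial fact that the unique minimal $\bfc_-$ compatible with the constraints, among those summands whose dimension vector is $\nu$, is exactly $\bfd = \bfd(j_0)\bfe_{j_0} + \bfd''$, and that its coefficient is $1$ because $M_k(\bfd)$ is the generic extension of $M_k(\bfd(j_0)\bfe_{j_0})$ by $M_k(\bfd'')$ (so the relevant Hall number has a monic leading behaviour). All remaining contributions — the cross terms involving a non-leading summand from either factor, and the non-leading $N(\bfc,t_\lz)$ appearing in $N(\bfd(j_0)\bfe_{j_0},0)*N(\bfd'',0)$ itself — carry $\bfc_- >_L \bfd$, which is exactly the claimed shape. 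Finally, all coefficients are in $\cA'$ since they are products and sums of the polynomials $\phi$ from the two lemmas and the polynomials $\psi$ from Proposition \ref{multiplication of N}.

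The main obstacle I anticipate is the bookkeeping in the preceding paragraph: showing that after expanding and multiplying, the $N(\bfd,0)$-coefficient is precisely $1$ and that nothing else with $\bfc_- = \bfd$ (or with $\bfc_- \not>_L \bfd$) survives. This is where the structure of $\cP^\ggz$ enters essentially — one uses that $\Hom_\ggz(M_k(\bmbz_{j_1}),M_k(\bmbz_{j_2}))$ and $\Ext^1$ vanish in the appropriate directions (Lemma of Section \ref{sec: PI part}), so that $M_k(\bfd)$ is rigid and is the unique generic extension, forcing the Hall polynomial $g^{M_k(\bfd)}_{M_k(\bfd(j_0)\bfe_{j_0}),M_k(\bfd'')}$ to specialize correctly and all "competing" Hall numbers $g^{M_k(\bfc)}_{\cdots}$ with $\bfc_- = \bfd$ but $\bfc \neq \bfd$ to vanish. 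Once this rigidity input is in hand, the rest is a formal triangularity argument identical in spirit to [\cite{XXZ_Tame_quivers_and_affine_bases_I}, Section 9], and indeed the excerpt already signals that the proof is "similar"; I would therefore present the rigidity/generic-extension step in full and refer to \cite{XXZ_Tame_quivers_and_affine_bases_I} for the purely formal remainder.
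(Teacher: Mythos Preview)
Your approach is correct and is precisely the one the paper intends (it defers to \cite{XXZ_Tame_quivers_and_affine_bases_I}, Section 9): induct on the support of $\bfd$, peel off the factor $\fkm^{\bfd(j_0)\bmbz_{j_0}}$, expand both factors via Lemma~\ref{monomial of indecomp of P} and the inductive hypothesis, and multiply using Proposition~\ref{multiplication of N} together with the $\Hom/\Ext$-vanishing of Section~\ref{sec: PI part}. One point to sharpen: Proposition~\ref{multiplication of N} only gives $\bfc_-\geq_L\bfc^1_-$, which for the ``leading $\times$ non-leading'' cross term (where $\bfc^1_-=\bfd(j_0)\bfe_{j_0}$) is strictly weaker than $\bfc_->_L\bfd$; here you must argue directly that in $\lan M(\bmbz_{j_0})^{\bfd(j_0)}\ran*\lan M(\bfc^2_-)\ran$ every resulting $\bfe_-$ satisfies $\bfe_->_L\bfd$, using that $\bfc^2_->_L\bfd''$ together with the directedness of $\cP^\ggz$ forces either $\bfc^2_-$ to be supported on $<j_0$ (whence the product is the direct sum and $\bfe_-=\bfd(j_0)\bfe_{j_0}+\bfc^2_-$) or $\bfc^2_-$ to have a nonzero entry at some position $\geq j_0$ (whence a short $\Hom$-vanishing argument bounds the leading position of $\bfe_-$). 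This is exactly the ``rigidity input'' you flag, so your plan is sound.
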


\begin{lemma}\label{monomial for inj}
For $\bfd'\in\cG_+$, we have
$$\fkm^{\oz(\bfd')}=N(\bfd',0)+\sum_{(\bfc,t_\lz)\in\cG\atop\bfc_+>_L\bfd'}\phi_{\bfd'}^{(\bfc,t_\lz)}(v) N(\bfc,t_\lz)$$
in $\cH^0(\ggz)$ for some $\phi_{\bfd'}^{(\bfc,t_\lz)}\in\cA'$.
\end{lemma}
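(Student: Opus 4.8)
The statement to prove is Lemma~\ref{monomial for inj}: for $\bfd'\in\cG_+$, the monomial $\fkm^{\oz(\bfd')}$ equals $N(\bfd',0)$ plus a $\cA'$-combination of $N(\bfc,t_\lz)$ with $\bfc_+>_L\bfd'$. This is the preinjective analogue of Lemma~\ref{monomial for proj}, and the plan is to reduce it to that lemma (or to run the same argument) rather than re-deriving everything from scratch. The two basic inputs are Lemma~\ref{monomial of indecomp of I} (which handles a single indecomposable preinjective, $\fkm^{m\bmbz_j}=N(m\bfe_j,0)+\sum_{\bfc_+>_L m\bfe_j}\phi\,N(\bfc,t_\lz)$ for $j>0$) and Proposition~\ref{multiplication of N} (multiplicativity of the PBW basis, which says a product $N(\bfc^1,t_{\lz^1})*N(\bfc^2,t_{\lz^2})$ expands over $N(\bfc,t_\lz)$ with $\bfc_-\geq_L\bfc^1_-$ and $\bfc_+\geq_L\bfc^2_+$, with coefficients in $\cA'$).

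\textbf{Main steps.} First I would write $\bfd'=\sum_{j>0}\bfd'(j)\bfe_j$ and induct on the (finite) support of $\bfd'$, peeling off the indecomposable summands in the order dictated by the definition of $\fkm^{\oz(\bfd')}=\cdots*\fkm^{\bfd'(2)\bmbz_2}*\fkm^{\bfd'(1)\bmbz_1}$, i.e. from larger $j$ down to $j=1$ reading left to right. The base case is Lemma~\ref{monomial of indecomp of I}. For the inductive step, substitute the expansion of $\fkm^{\bfd'(j)\bmbz_j}=N(\bfd'(j)\bfe_j,0)+(\text{higher }\bfc_+)$ from Lemma~\ref{monomial of indecomp of I} into the product, use Proposition~\ref{multiplication of N} to expand each resulting product of two $N$'s, and track the $\bfc_+$-component of every term that appears. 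The key combinatorial point is that $\bmbz_j$ with $j>0$ are exactly the indecomposable preinjectives, ordered so that $\Hom_\ggz(M_k(\bmbz_{t_1}),M_k(\bmbz_{t_2}))=0$ and $\Ext^1_\ggz(M_k(\bmbz_{t_2}),M_k(\bmbz_{t_1}))=0$ for $0<t_1<t_2$ (Lemma in Section~\ref{sec: PI part}); consequently the "leading" term of the product of the $N(\bfd'(j)\bfe_j,0)$ in the correct order is exactly $N(\bfd',0)$, with all other terms strictly larger in $\bfc_+$ under $>_L$. Here one must check that the $>_L$-ordering on $\cG_+$ is preserved under the two operations — substituting a higher-$\bfc_+$ term and multiplying via Proposition~\ref{multiplication of N} — which follows because $>_L$ is defined by the first coordinate (largest $j$) of disagreement and multiplying preinjective pieces on the left by pieces supported at $\leq$ the current index can only increase, never decrease, the $>_L$-value; a short lemma on the compatibility of $>_L$ with generic extension / the product of Ringel–Hall elements does this.

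\textbf{Alternative and the main obstacle.} Alternatively one can invoke an anti-automorphism or duality exchanging $\cP^\ggz$ and $\cI^\ggz$: the opposite valued quiver $\ggz^{\op}$ has $\cI^{\ggz^{\op}}\cong\cP^\ggz$, and there is a corresponding algebra (anti-)isomorphism of Ringel–Hall algebras sending $u_\bfi\mapsto u_\bfi$, $N(\bfc,t_\lz)\mapsto N(\bfc^{\op},t_\lz)$ with $\cG_+\leftrightarrow\cG_-$ and $>_L\leftrightarrow>_L$; Lemma~\ref{monomial for inj} then follows formally from Lemma~\ref{monomial for proj}. I would state this reduction but still need to verify that the anti-automorphism reverses monomials correctly, i.e. that $\fkm^{\oz(\bfd')}$ maps to $\fkm^{\oz(\bfd'^{\op})}$ up to the order of factors, which is where the definitions $\fkm^{\oz(\bfd)}=\fkm^{\bfd(0)\bmbz_0}*\fkm^{\bfd(-1)\bmbz_{-1}}*\cdots$ versus $\fkm^{\oz(\bfd')}=\cdots*\fkm^{\bfd'(2)\bmbz_2}*\fkm^{\bfd'(1)\bmbz_1}$ were set up precisely to match. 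The main obstacle I anticipate is purely bookkeeping: making sure at each application of Proposition~\ref{multiplication of N} that no term with $\bfc_+=\bfd'$ other than the intended leading one appears (this uses that the $N(\bfd'(j)\bfe_j,0)$ multiply, in the prescribed order, to $N(\bfd',0)$ on the nose — a fact ultimately coming from the $\Ext$-vanishing above and the uniqueness of the indecomposable with a given real-root dimension vector), and that the coefficients stay in $\cA'$ rather than $\bbQ(v)$, which is guaranteed by the polynomiality built into Proposition~\ref{multiplication of N} and Lemma~\ref{monomial of indecomp of I}. Since all of this is parallel to Section~9 of \cite{XXZ_Tame_quivers_and_affine_bases_I}, I would present the argument compactly, citing the proof of Lemma~\ref{monomial for proj} verbatim with $\cG_-,\bmbz_{t\leq 0}$ replaced by $\cG_+,\bmbz_{t>0}$.
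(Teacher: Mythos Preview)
Your proposal is correct and matches the paper's approach: the paper gives no proof beyond the blanket remark that all proofs in Section~\ref{sec affine bases} are ``similar to those in Section~9 in \cite{XXZ_Tame_quivers_and_affine_bases_I}'', and your inductive scheme (Lemma~\ref{monomial of indecomp of I} for the base case, Proposition~\ref{multiplication of N} for the step, with the $\Ext$-vanishing of Section~\ref{sec: PI part} forcing the leading product to collapse to $N(\bfd',0)$) is exactly the argument one finds there, dual to Lemma~\ref{monomial for proj}. Your duality alternative via $\ggz^{\op}$ is a legitimate shortcut not used in the paper, and your identification of the bookkeeping obstacle (ensuring no spurious $\bfc_+=\bfd'$ term survives) is accurate; it is handled by peeling off the factor at the smallest index $j$ first and using that $\bfc^2_+>_L \bfd'(1)\bfe_1$ forces $\bfc_+(1)>\bfd'(1)$, together with a slightly finer look at $N(\bfc^1,-)*u_{\bfi_1}^{(\bfd'(1))}$ when $\bfc^1_+>_L\bfd'|_{\geq 2}$.
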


\subsection{Monomials for $\cR_{\rm nh}^\ggz$}

Consider a fixed non-homogeneous tube $\cT$ first.
 We simply denote $r=\rk \cT$ and $\Pi=\Pi_r$.
Let $\vez:\cK_{r}\cong\cT$ be the equivalence where $\cK_r=\cK_r(k')$ for some extension field $k'$ of $k$.
Let $L_j=\vez(S_j)$ for $1\leq j\leq r$ where $S_j,1\leq j\leq r$ are the simple objects in $\cK_r$.

\begin{lemma}\label{monomial in tube of indecomp}
For $1\leq j\leq r$ and $a>0$, we have
$$\fkm^{a\udim L_j}=\lan L_j\ran^{(a)}+\sum_{(\bfc,t_\lz)\in\cG\atop\bfc_->0,\bfc_+>0}\phi^{(\bfc,t_\lz)}_{L_j,a}(v)N(\bfc,t_\lz)$$
in $\cH^0(\ggz)$ for some $\phi^{(\bfc,t_\lz)}_{L_j,a}\in\cA'$.
\end{lemma}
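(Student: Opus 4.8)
\textbf{Proof proposal for Lemma \ref{monomial in tube of indecomp}.}

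The plan is to reduce the statement to a known fact about monomial bases in Ringel–Hall algebras of cyclic quivers, transported through the Frobenius-folding equivalence. First I would recall from Lemma \ref{how nh tubes are F-folded} how the fixed non-homogeneous tube $\cT$ arises: either $\cT\cong(T\oplus T^{[1]}\oplus\cdots\oplus T^{[l-1]})^F$ for tubes $T,\dots,T^{[l-1]}$ of $\rep_{\bar k}Q$ of the same rank $r$, or $\cT\cong(T)^F$ where $T$ is a single tube of rank $2r$. In either case the regular simple $L_j=\vez(S_j)$ has dimension vector $\udim L_j$ equal to the sum over the $\sz$-orbit of the dimension vectors of the corresponding regular simples upstairs, so $a\udim L_j\in\bbN\bfI$ is an honest dimension vector and the monomial $\fkm^{a\udim L_j}$ in $\cC^*(\ggz)$ is defined as in Section \ref{section: monomials}.

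The main computation is to expand $\fkm^{a\udim L_j}$ in the PBW basis $\{N(\bfc,t_\lz)\}$ of $\cH^0(\ggz)$ and show that (i) the coefficient of $\lan L_j\ran^{(a)}=N((0,a\pi_j,0),0)$ (where $a\pi_j$ is the multisegment of $L_j^{\oplus a}$ in $\cT$) is $1$, and (ii) every other $N(\bfc,t_\lz)$ occurring has $\bfc_-\neq 0$ and $\bfc_+\neq 0$. For (i): the monomial $\fkm^{a\udim L_j}$ is a product of divided powers $u_\bfi^{(\nu_\bfi)}$ ordered by the chosen total order on $\bfI$; I would argue, exactly as in the quiver case (Section 9 of \cite{XXZ_Tame_quivers_and_affine_bases_I}), that among modules with composition-series multiset fixed by $a\udim L_j$, the module $L_j^{\oplus a}$ is the unique one admitting a filtration of the prescribed shape counted with coefficient a power of $v$ that, after normalization of $\lan-\ran$ versus $[-]$, is exactly $1$; the key input is that $L_j$ is regular simple in $\cT$ and hence its composition factors (as a $\ggz$-module) force any submodule-chain of the right shape to rebuild $L_j^{\oplus a}$ up to the regular structure. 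For (ii): since $\fkm^{a\udim L_j}$ is a product of generators $u_\bfi$ which lie in the composition algebra, by Proposition \ref{multiplication of N} its PBW expansion only involves $N(\bfc,t_\lz)$ with $D(\bfc,t_\lz)=a\udim L_j$; because $a\udim L_j$ is the dimension vector of a regular module with no preprojective or preinjective constituents in the sense that $\langle\udim P,a\udim L_j\rangle$ and $\langle a\udim L_j,\udim I\rangle$ vanish for the relevant indecomposables, any $\bfc$ with $\bfc_-=0$ forces the module $M_k(\bfc)\oplus(\text{homogeneous})$ to be purely regular, and a Hall-number/ordering argument (the order $\preceq$ of Definition \ref{order}, together with almost-orthogonality from Proposition in Section \ref{sec: Almost orthogonality}) pins the leading term to be $\lan L_j\ran^{(a)}$ with all strictly smaller terms either of the claimed shape $\bfc_->0,\bfc_+>0$ or absent.

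The hard part will be (ii), i.e. controlling which non-regular PBW vectors can appear: a priori $\fkm^{a\udim L_j}$, being a product over all of $\bfI$ of divided powers, could produce preprojective or preinjective summands, and one must rule out $N(\bfc,t_\lz)$ with exactly one of $\bfc_\pm$ nonzero. Here I would invoke the Euler-form constraint $\langle\udim V,\udim W\rangle=\dim\Hom-\dim\Ext^1$ together with the fact that $\Hom_\ggz(\cP^\ggz,\cR^\ggz)\neq0$ but $\Hom_\ggz(\cR^\ggz,\cP^\ggz)=0=\Ext^1_\ggz(\cR^\ggz,\cP^\ggz)$ and symmetrically for $\cI^\ggz$, so that a module of dimension vector $a\udim L_j$ with a preprojective summand must also have a compensating preinjective or regular-nonhomogeneous part — combined with the $>_L$ order behavior in Proposition \ref{multiplication of N} this yields $\bfc_->0\Rightarrow\bfc_+>0$ and vice versa. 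This is precisely the argument carried out for non-valued quivers in \cite{XXZ_Tame_quivers_and_affine_bases_I}, and since $\cH^0(\ggz)$ is built as a Frobenius folding with the same combinatorial index set $\cG$, the proof goes through verbatim after replacing dimension vectors and Euler forms by their valued analogues.
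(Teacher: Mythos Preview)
Your outline is the right one and matches the paper's intended argument (which is simply ``as in Section 9 of \cite{XXZ_Tame_quivers_and_affine_bases_I}''), but two of the steps you sketch do not actually go through as written.

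First, your $\Hom/\Ext$ vanishing claims are incorrect: you assert $\Ext^1_\ggz(\cR^\ggz,\cP^\ggz)=0$ and that $\lan\udim P,\,a\udim L_j\ran=0$ for preprojective $P$, but neither holds in general (for instance $\Ext^1_\ggz(R,P)\cong D\Hom_\ggz(P,\tau R)$ is typically nonzero for $R$ regular and $P$ preprojective). The clean way to obtain $\bfc_->0\Leftrightarrow\bfc_+>0$ is the \emph{defect} argument: pair with the minimal imaginary root $\dz$. Since $L_j\in\cR^\ggz$ one has $\lan\dz,\,a\udim L_j\ran=0$, while $\lan\dz,\udim M(\bfc_-)\ran<0$ whenever $\bfc_-\neq 0$ and $\lan\dz,\udim M(\bfc_+)\ran>0$ whenever $\bfc_+\neq 0$; additivity forces both to vanish or neither. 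This is what the argument in Part I actually uses, and your Euler-form discussion should be replaced by it.

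Second, and more seriously, you never establish the crucial fact that when $\bfc_\pm=0$ the \emph{only} PBW element $N((0,\bfc_0,0),t_\lz)$ of graded degree $a\udim L_j$ is $\lan L_j\ran^{(a)}$ itself. Your part (i) argues that the coefficient of $\lan L_j\ran^{(a)}$ is $1$, but the phrase ``$L_j^{\oplus a}$ is the unique one admitting a filtration of the prescribed shape'' is false: every module of dimension vector $a\udim L_j$ admits such a composition series. What you need instead is that $L_j^{\oplus a}$ is the \emph{unique regular} module of dimension vector $a\udim L_j$; equivalently, $a\udim L_j$ cannot be written as a nonnegative combination of dimension vectors of regular simples from other tubes, nor as $m\dz$ plus such a combination. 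This follows from the explicit description of non-homogeneous regular simples (their dimension vectors have support strictly smaller than that of $\dz$, and the supports in different tubes are arranged so that no nontrivial cross-tube combination can reproduce $a\udim L_j$), but it has to be argued and is the real content of the lemma beyond the defect computation. The Frobenius-folding preamble you wrote is not needed for this lemma; the whole argument lives inside $\cH^0(\ggz)$.
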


By \cite{Deng_Du_Xiao_Generic_extensions_and_canonical_bases_for_cyclic_quivers}, there is a multiplication $\diamond$ on $\Pi^a$ such that $M_{\cK_r}(\pi\diamond\pi')$ is the generic extension of $M_{\cK_r}(\pi)$ by $M_{\cK_r}(\pi')$. Moreover, for any $\pi\in\Pi^a$, there exists
$1\leq j_1,j_2,\dots,j_s\leq r$ and $a_1,a_2,\dots,a_s\geq 1$ such that
$$\pi=(a_1[i_1;1))\diamond (a_2[i_2;1))\diamond \dots(a_s[i_s;1)),$$
then
$$\lan S_{j_1}\ran^{(a_1)}*\lan S_{j_2}\ran^{(a_2)}*\dots*\lan S_{j_s}\ran^{(a_s)}=\lan M_{\cK_r}(\pi)\ran+\sum_{\pi'\in\Pi,\pi'<_G \pi}\xz^{\pi'}_{\oz_\pi}(v)\lan M_{\cK_r}(\pi')\ran$$
in $\cH^*(\cK_r)$.
Applying $\vez$ to the both sides, we get
$$\lan L_{j_1}\ran^{(a_1)}*\lan L_{j_2}\ran^{(a_2)}*\dots*\lan L_{j_s}\ran^{(a_s)}=\lan M_{\cT}(\pi)\ran+\sum_{\pi'\in\Pi,\pi'<_G \pi}\xz^{\pi'}_{\oz_\pi}(v)\lan M_{\cT}(\pi')\ran$$
in $\cH^*(\cT)$.
Define a monomial $\fkm^{\oz(\pi)}$ in $\cC^*(\ggz)$:
$$\fkm^{\oz(\pi)}=\fkm^{a_1\udim L_{i_1}}*\fkm^{a_2\udim L_{i_2}}*\dots*\fkm^{a_s\udim L_{i_s}}.$$

\begin{lemma}\label{monomial in tube}
For $\pi\in\Pi^a$, we have
$$\fkm^{\oz(\pi)}=\lan M_{\cT}(\pi)\ran+\sum_{\pi'\in\Pi,\pi'<_G \pi}\xz^{\pi'}_{\pi}(v)\lan M_{\cT}(\pi')\ran+\sum_{(\bfc,t_\lz)\in\cG\atop\bfc_->0,\bfc_+>0}\phi^{(\bfc,t_\lz)}_{\pi}(v)N(\bfc,t_\lz)$$
in $\cH^0(\ggz)$ for some $\xz^{\pi'}_{\pi},\phi^{(\bfc,t_\lz)}_{\pi}\in\cA'$.
\end{lemma}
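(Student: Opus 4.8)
\textbf{Proof plan for Lemma \ref{monomial in tube}.}
The strategy is to lift the identity
$$\fkm^{\oz(\pi)}=\fkm^{a_1\udim L_{i_1}}*\fkm^{a_2\udim L_{i_2}}*\dots*\fkm^{a_s\udim L_{i_s}}$$
to $\cH^0(\ggz)$ by substituting the expansion of each factor from Lemma \ref{monomial in tube of indecomp}, and then to show that the product of the correction terms, together with the product of the leading terms $\lan L_{i_1}\ran^{(a_1)}*\dots*\lan L_{i_s}\ran^{(a_s)}$, reorganizes into the claimed shape. First I would apply Lemma \ref{monomial in tube of indecomp} to write each $\fkm^{a_t\udim L_{i_t}}=\lan L_{i_t}\ran^{(a_t)}+R_t$, where $R_t$ is an $\cA'$-combination of $N(\bfc,t_\lz)$ with $\bfc_->0$ and $\bfc_+>0$. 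Expanding the product distributively, one term is the ``pure'' product $\lan L_{i_1}\ran^{(a_1)}*\dots*\lan L_{i_s}\ran^{(a_s)}$; every other term contains at least one factor $R_t$.

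For the pure product, I would invoke the identity already displayed in the excerpt (coming from \cite{Deng_Du_Xiao_Generic_extensions_and_canonical_bases_for_cyclic_quivers} after applying $\vez$), which gives
$$\lan L_{i_1}\ran^{(a_1)}*\dots*\lan L_{i_s}\ran^{(a_s)}=\lan M_{\cT}(\pi)\ran+\sum_{\pi'\in\Pi,\pi'<_G \pi}\xz^{\pi'}_{\oz_\pi}(v)\lan M_{\cT}(\pi')\ran$$
inside $\cH^*(\cT)\subset\cH^0(\ggz)$; this contributes exactly the first two sums of the claimed formula. For every mixed term, the key point is a closure/stability fact: in $\cH^0(\ggz)$, any product of an $N(\bfc,t_\lz)$ with $\bfc_->0,\bfc_+>0$ against anything supported on the tubes $\cT_t$ (or against another such $N$) again lies in the span of $N(\bfc',t_{\lz'})$ with $\bfc'_->0$ and $\bfc'_+>0$ — because $\bfc_->0$ means the module has a nonzero preprojective direct summand, $\bfc_+>0$ a nonzero preinjective summand, and these properties survive multiplication by Proposition \ref{multiplication of N} (the condition $\bfc_-\geq_L\bfc^1_-$, $\bfc_+\geq_L\bfc^2_+$ forces the summand supports not to shrink). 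Hence each mixed term lands in $\sum_{\bfc_->0,\bfc_+>0}\cA'\,N(\bfc,t_\lz)$, and summing them all gives the third sum, with coefficients in $\cA'$ since everything in sight (Hall polynomials via Lemma \ref{lem: exist of Hall poly}, the structure constants of Proposition \ref{multiplication of N}, and the $\xz$'s from the cyclic-quiver identity) is polynomial in $v$.

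The main obstacle I anticipate is the bookkeeping needed to guarantee that the mixed terms really do preserve \emph{both} $\bfc_->0$ and $\bfc_+>0$ simultaneously: one must check that multiplying by $\lan L_{i_t}\ran^{(a_t)}$ (which is purely regular) cannot cancel the preprojective or preinjective parts, and that the ordering conventions in the definition of $\fkm^{\oz(\pi)}$ (projective factors on the left, injective on the right, regular in the middle — the same convention underlying $N(\bfc,t_\lz)=\lan M(\bfc_-)\ran*\lan M(\bfc_0)\ran*S_\lz*\lan M(\bfc_+)\ran$) are compatible with the order $\preceq$. This is precisely the analysis carried out in the quiver case in Section 9 of \cite{XXZ_Tame_quivers_and_affine_bases_I}, and since $\cH^0(\ggz)$ has the same PBW basis indexed by the same combinatorial set $\cG$, with multiplication governed by the same Proposition \ref{multiplication of N}, the argument transfers verbatim; I would simply cite it, noting only that the needed Hall polynomiality is now supplied by Lemma \ref{lem: exist of Hall poly} rather than by Ringel's original result.
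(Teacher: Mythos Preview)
Your proposal is correct and matches the paper's approach: the paper gives no proof at all for this lemma, deferring with the blanket remark at the start of Section~\ref{sec affine bases} that ``all proofs are similar to those in Section~9 in \cite{XXZ_Tame_quivers_and_affine_bases_I}'', and the structure you outline (expand each factor via Lemma~\ref{monomial in tube of indecomp}, identify the pure product with the $\vez$-transported cyclic-quiver identity from \cite{Deng_Du_Xiao_Generic_extensions_and_canonical_bases_for_cyclic_quivers}, and push the mixed terms into the $\bfc_->0,\bfc_+>0$ region via Proposition~\ref{multiplication of N}) is exactly the argument of that reference. One small caution: Proposition~\ref{multiplication of N} as stated only gives $\bfc_-\geq_L\bfc^1_-$ from the \emph{left} factor and $\bfc_+\geq_L\bfc^2_+$ from the \emph{right} factor, so a single application does not immediately yield both $\bfc_->0$ and $\bfc_+>0$ for a mixed term; you correctly flag this as the delicate point and defer it to \cite{XXZ_Tame_quivers_and_affine_bases_I}, which is precisely what the paper does.
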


For $\bfc_0=(\pi_1,\pi_2,\cdots,\pi_s)\in\Pi^a(1)\times\Pi^a(2)\times\cdots\times\Pi^a(s)$,
define
$$\fkm^{\oz(\bfc_0)}=\fkm^{\oz({\pi_1})}*\fkm^{\oz(\pi_2)}*\cdots*\fkm^{\oz(\pi_s)}.$$

\begin{lemma}\label{monomial for nh}
For $\bfc_0\in\cG^a_0$, we have
$$\fkm^{\oz(\bfc_0)}=\lan M(\bfc_0)\ran+\sum_{\bfc'_0\in\cG_0\atop\bfc'_0<_G\bfc_0}\phi^{\bfc'_0}_{\bfc_0}(v)\lan M(\bfc'_0)\ran+\sum_{(\bfc,t_\lz)\in\cG\atop\bfc''_->0,\bfc''_+>0}\phi^{(\bfc'',t_\lz)}_{\bfc_0}(v)N(\bfc'',t_\lz),$$
in $\cH^0(\ggz)$ for some $\phi^{\bfc'_0}_{\bfc_0},\phi^{(\bfc'',t_\lz)}_{\bfc_0}\in\cA'$.
\end{lemma}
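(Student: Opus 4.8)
The plan is to prove Lemma \ref{monomial for nh} by assembling the single-tube result of Lemma \ref{monomial in tube} across the finitely many nonhomogeneous tubes $\cT_1,\dots,\cT_s$, using the tensor-product decomposition of $\cH^0(\ggz)$ and the fact that modules in different tubes have vanishing $\Ext^1$ between them. First I would recall that, by definition, $\fkm^{\oz(\bfc_0)} = \fkm^{\oz(\pi_1)} * \cdots * \fkm^{\oz(\pi_s)}$ for $\bfc_0 = (\pi_1,\dots,\pi_s) \in \cG_0^a$, and substitute into the product the expansion of each factor $\fkm^{\oz(\pi_t)}$ given by Lemma \ref{monomial in tube}, namely $\fkm^{\oz(\pi_t)} = \lan M_{\cT_t}(\pi_t)\ran + \sum_{\pi'_t <_G \pi_t} \xz^{\pi'_t}_{\pi_t}(v)\,\lan M_{\cT_t}(\pi'_t)\ran + \sum \phi(v)\, N(\bfc,t_\lz)$ where the last sum ranges over indices with $\bfc_- > 0$ and $\bfc_+ > 0$.

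Next I would expand the product of these $s$ expressions multilinearly. The cross terms split into two classes. In the first class, every factor contributes a term of the form $\lan M_{\cT_t}(\pi'_t)\ran$ with $\pi'_t \leq_G \pi_t$; since the tubes $\cT_1,\dots,\cT_s$ are pairwise orthogonal (no nonzero morphisms or extensions between distinct tubes, hence Hall numbers split as in Lemma \ref{lem: exist of Hall poly}), the twisted product $\lan M_{\cT_1}(\pi'_1)\ran * \cdots * \lan M_{\cT_s}(\pi'_s)\ran$ equals $\lan M(\bfc'_0)\ran$ up to a power of $v$ that is actually trivial here because the relevant Euler form pairings vanish — so this yields exactly $\lan M(\bfc'_0)\ran$ with $\bfc'_0 = (\pi'_1,\dots,\pi'_s) \leq_G \bfc_0$, and the top term $\bfc'_0 = \bfc_0$ appears with coefficient $1$. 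This accounts for the first two sums in the claimed identity, using that $\leq_G$ on $\cG_0$ is defined componentwise. In the second class, at least one factor contributes a term $N(\bfc,t_\lz)$ with $\bfc_- > 0$ and $\bfc_+ > 0$; here I would invoke Proposition \ref{multiplication of N} (together with the other monomial lemmas rewriting $\lan M_{\cT_t}(\pi'_t)\ran$ as $N$-elements) to expand the whole product back into the PBW basis, and argue that the properties $\bfc_- > 0$, $\bfc_+ > 0$ propagate: multiplying by preprojective-free/preinjective-free pieces on either side cannot kill the preprojective or preinjective part once it is nonzero, because in the multiplication formula the $\bfc_-$ coordinate only increases in $>_L$ order (so stays positive) and similarly for $\bfc_+$. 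Thus every term of this class lands in $\sum_{\bfc''_- > 0,\, \bfc''_+ > 0} \phi^{(\bfc'',t_\lz)}_{\bfc_0}(v)\, N(\bfc'',t_\lz)$, and all coefficients lie in $\cA'$ by Lemma \ref{lem: exist of Hall poly} and Proposition \ref{multiplication of N}.

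The main obstacle I expect is bookkeeping the orthogonality of the tubes cleanly: I must verify that the twisted multiplication $*$ in $\cH^0(\ggz)$ restricted to modules supported in distinct nonhomogeneous tubes introduces no spurious $v$-power and no lower-order correction terms, i.e. that $\lan M_{\cT_1}(\pi_1)\ran * \cdots * \lan M_{\cT_s}(\pi_s)\ran = \lan M(\bfc_0)\ran$ on the nose. This follows because for $M$ in $\cT_a$ and $N$ in $\cT_b$ with $a \neq b$ one has $\Hom(M,N) = \Ext^1(M,N) = 0$, so $\lan \udim M, \udim N\ran = 0$ and the only extension of $M$ by $N$ is the trivial one, forcing the Hall product to be a single term $\lan M \oplus N\ran$ — but I would want to state this as a preliminary observation (or cite the analogous statement from \cite{XXZ_Tame_quivers_and_affine_bases_I}) rather than re-derive it. Given that, everything else is a routine expansion, and since the excerpt explicitly says all proofs in this section are similar to those in Section 9 of \cite{XXZ_Tame_quivers_and_affine_bases_I}, I would close by noting the proof parallels the simply-laced argument there, with Lemma \ref{lem: exist of Hall poly} supplying polynomiality of coefficients in place of the symmetric Hall polynomial existence used in the original.
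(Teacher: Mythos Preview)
Your approach is correct and is precisely the argument the paper intends (it defers to Section 9 of \cite{XXZ_Tame_quivers_and_affine_bases_I}): expand each factor by Lemma \ref{monomial in tube}, use orthogonality of distinct tubes to collapse the pure products into $\lan M(\bfc'_0)\ran$ with $\bfc'_0\leq_G\bfc_0$, and push all remaining cross terms into the region $\bfc''_\pm>0$.

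One point deserves a sharper justification. Proposition \ref{multiplication of N} only asserts $\bfc_-\geq_L\bfc^1_-$ (from the \emph{left} factor) and $\bfc_+\geq_L\bfc^2_+$ (from the \emph{right} factor). So when a pure tube term $\lan M_{\cT_t}(\pi'_t)\ran$ sits to the \emph{left} of a mixed factor $N(\bfc,t_\lz)$ with $\bfc_->0$, the proposition by itself gives only $\bfc_-\geq_L 0$, which is vacuous; your sentence ``the $\bfc_-$ coordinate only increases in $>_L$ order'' does not follow from \ref{multiplication of N} as stated. What closes the gap is the structural fact that for $R$ regular and $P$ preprojective nonzero, every term in $\lan R\ran*\lan P\ran$ is of the form $\lan L_P\ran*\lan L_R\ran$ with $L_P\neq 0$ (any extension of $R$ by $P$ has negative defect, hence a nonzero preprojective summand, and $\Ext^1(\cP,\cR)=0$ lets you split it back); the dual statement handles pure factors on the right. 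Alternatively, since the total dimension vector $\udim M(\bfc_0)$ has defect zero, any PBW term $N(\bfc'',t_\lz)$ in the expansion satisfies $\bfc''_->0\iff\bfc''_+>0$, so it suffices to check one side, and for that side one can always place the mixed factor outermost and apply Proposition \ref{multiplication of N} directly. Either remark completes your argument.
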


\subsection{Monomials for homogeneous regular part}

\begin{lemma}\label{monomial for P}
For $m\geq 0$, we have
$$\fkm^{m\dz}=H_m+\sum_{(\bfc,t_\lz)\in\cG\atop\bfc_\pm=0,\bfc_0\neq 0}\phi^{(\bfc,t_\lz)}_{m\dz}(v)N(\bfc,t_\lz)+\sum_{(\bfc,t_\lz)\in\cG\atop\bfc_->0,\bfc_+>0}\phi^{(\bfc,t_\lz)}_{m\dz}(v)N(\bfc,t_\lz),$$
in $\cH^0(\ggz)$ for some $\phi^{(\bfc,t_\lz)}_{m\dz}\in\cZ$.
\end{lemma}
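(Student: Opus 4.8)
The plan is to sidestep any direct PBW manipulation: I would first pin down $\fkm^{m\dz}$ completely in the \emph{module} basis of $\cH^*(\ggz)$, and only afterwards read off its PBW expansion in $\cH^0(\ggz)$.

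The first step is to prove the closed formula
$$\fkm^{m\dz}=v^{-mc}\sum_{[M]:\,\udim M=m\dz}[M]\quad\text{in }\cH^*(\ggz),\qquad c:=\textstyle\sum_{\bfi\in\bfI}d_\bfi\dz_\bfi ,$$
i.e.\ that $\fkm^{m\dz}$ is $v^{-mc}$ times the sum of the classes of \emph{all} modules of dimension vector $m\dz$. The key point is that the chosen order $\mathbf1<\dots<\mathbf n$ on $\bfI$ makes every arrow strictly increasing, so at each stage the largest remaining vertex is a sink: for any $M$ with $\udim M=m\dz$ the subrepresentation $(0,\dots,0,M_{\mathbf n})$ is the unique submodule isomorphic to $S_{\mathbf n}^{\oplus m\dz_{\mathbf n}}$, its cokernel is naturally a representation of the subquiver on $\{\mathbf1,\dots,\mathbf{n-1}\}$, and iterating shows $M$ has \emph{exactly one} filtration with bottom-to-top subquotients $S_{\mathbf n}^{\oplus m\dz_{\mathbf n}},S_{\mathbf{n-1}}^{\oplus m\dz_{\mathbf{n-1}}},\dots,S_{\mathbf1}^{\oplus m\dz_{\mathbf1}}$. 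Feeding this unique filtration and the identity $u_\bfi^{(a)}=v^{d_\bfi a(a-1)}[S_\bfi^{\oplus a}]$ (a flag count over the finite field $\bbD_\bfi$) into the Hall multiplication rule, the coefficient of $[M]$ in $\fkm^{m\dz}=u_{\mathbf1}^{(m\dz_{\mathbf1})}*\dots*u_{\mathbf n}^{(m\dz_{\mathbf n})}$ equals $v$ to the power $\sum_\bfi d_\bfi(m\dz_\bfi)(m\dz_\bfi-1)+\sum_{i<j}(m\dz_i)(m\dz_j)\lan\mathbf i,\mathbf j\ran$; using $d_\bfi=(\bfi,\bfi)/2$ and $\lan\mathbf i,\mathbf j\ran=(\mathbf i,\mathbf j)$ for $i<j$ (no arrow goes from a larger to a smaller vertex), this collapses to $\tfrac12(m\dz,m\dz)-\dim_kM=-\dim_kM=-mc$, because $\dz\in\rad(-,-)$.

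Granting this, the remainder is organisation. By definition $H_m=v^{-mc}\sum_{[M]:\,M\in\cR^\ggz_{\rm h},\,\udim M=m\dz}[M]$ (the exponent $-\dim_kM=-mc$ is constant on that set), hence $\fkm^{m\dz}-H_m=v^{-mc}\sum_{[M]:\,M\notin\cR^\ggz_{\rm h},\,\udim M=m\dz}[M]$, which lies in $\cH^0(\ggz)$ since $\fkm^{m\dz}\in\cC^*(\ggz)\subseteq\cH^0(\ggz)$ and $H_m\in\cH^0(\ggz)$, and so has an $\cA'$-PBW-expansion supported on indices $(\bfc,t_\lz)$ with $D(\bfc,t_\lz)=m\dz$. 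For such indices $\udim M(\bfc)=(m-|\lz|)\dz$ has defect $0$, and since $\lan\dz,-\ran$ is strictly negative on nonzero preprojectives, zero on regulars and strictly positive on nonzero preinjectives, one gets $\bfc_-=0\iff\bfc_+=0$; thus the only possible PBW vectors are: $S_\lz$ (when $\bfc=0$, $|\lz|=m$), the nonhomogeneous ones ($\bfc_-=\bfc_+=0\neq\bfc_0$), and the mixed ones ($\bfc_->0$ and $\bfc_+>0$). Hom-orthogonality of $\cP^\ggz,\cR^\ggz_{\rm nh},\cR^\ggz_{\rm h},\cI^\ggz$ then shows that no $L\in\cR^\ggz_{\rm h}$ can occur in $N(\bfc,t_\lz)=\lan M(\bfc_-)\ran*\lan M(\bfc_0)\ran*S_\lz*\lan M(\bfc_+)\ran$ unless $\bfc=0$ (a homogeneous regular has no preinjective submodule, no preprojective quotient, and no quotient lying in a nonhomogeneous tube), while $S_\lz$ is supported entirely on $\cR^\ggz_{\rm h}$. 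Comparing $[M]$-coefficients for $M\in\cR^\ggz_{\rm h}$ with $\udim M=m\dz$: $\fkm^{m\dz}-H_m$ has coefficient $0$ there, and only its $S_\lz$-part can contribute, so that $S_\lz$-part kills every such $[M]$ while lying in their span, hence is $0$; linear independence of $\{S_\lz:|\lz|=m\}$ forces all $\psi_{((0,0,0),t_\lz)}=0$, which is exactly the asserted identity.

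I expect the main obstacle to be the weight computation in the first step: tracking the $v$-powers from the divided powers $u_\bfi^{(a)}$ and from the twisted Hall product, and verifying that they cancel down to $-mc$ precisely because $\dz\in\rad(-,-)$, as well as checking that the relevant filtration is genuinely unique (this is where the ordering of $\bfI$ is used). The non-simply-laced features cause no essential trouble: $\bbD_\bfi$ is a finite field by Wedderburn, so the flag counts are the usual $v$-binomial expressions, and $\cA'$-integrality of all coefficients is automatic from $\fkm^{m\dz}-H_m\in\cH^0(\ggz)$ together with Lemma \ref{lem: exist of Hall poly}; otherwise the argument runs parallel to Section 9 of \cite{XXZ_Tame_quivers_and_affine_bases_I}.
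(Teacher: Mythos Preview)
Your approach is correct and is essentially the argument the paper has in mind when it defers all of Section~\ref{sec affine bases} to \cite{XXZ_Tame_quivers_and_affine_bases_I}; the closed formula $\fkm^{m\dz}=v^{-mc}\sum_{[M]:\,\udim M=m\dz}[M]$ via the unique-filtration count is exactly the standard first step (and the paper itself uses this implicitly later, where $\tilde H_m$ appears in the proof of Proposition~\ref{PBW basis is multiplicative}). Your way of killing the residual $S_\lz$-terms by projecting to the span of $\{[M]:M\in\cR^\ggz_{\rm h}\}$ and then invoking linear independence of the $S_\lz$ inside the PBW basis of $\cH^0(\ggz)$ is a clean finish; just note that the linear independence you need is in $\cH^0(\ggz)$ (guaranteed by Corollary to Proposition~\ref{multiplication of N}), not in an individual $\cH^*_k(\ggz)$, and your argument already delivers $\sum_\lz\psi_{(0,t_\lz)}S_\lz=0$ there since every $k$-component vanishes.

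One small caveat on conventions: the identity $\dim_kM=mc$ with $c=\sum_\bfi d_\bfi\dz_\bfi$ holds under the normalization $\udim S_\bfi=\bfi$ (so that $\dim_k M_\bfi=d_\bfi(\udim M)_\bfi$), which is what makes the Euler-form identity $\lan\udim V,\udim W\ran=\dim_k\Hom-\dim_k\Ext$ come out right; the paper's stated formula $\udim V=\sum_\bfi(\dim_k V_\bfi)\bfi$ should be read this way. With that, your exponent computation $\tfrac12(m\dz,m\dz)-\dim_kM=-mc$ goes through exactly as you wrote.
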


For a partition $\lz=(\lz_1,\lz_2,\cdots,\lz_t)$, define
$$\fkm^{\oz(t_\lz)}=\fkm^{\lz_1\dz}*\fkm^{\lz_2\dz}*\cdots*\fkm^{\lz_t\dz}.$$
When $\lz=0=t_\lz$, set $\fkm^{\oz(0)}=1$.
\begin{lemma}\label{monomial for h}
For a nonzero partition $\lz=(\lz_1,\lz_2,\cdots,\lz_t)$, we have
$$\fkm^{\oz(t_\lz)}=H_\lz+\sum_{(\bfc,t_\mu)\in\cG\atop\bfc_\pm=0,\bfc_0\neq 0}\phi^{(\bfc,t_\mu)}_{t_\lz}(v)N(\bfc,t_\mu)+\sum_{(\bfc,t_\mu)\in\cG\atop\bfc_->0,\bfc_+>0}\phi^{(\bfc,t_\mu)}_{t_\lz}(v)N(\bfc,t_\mu),$$
in $\cH^0(\ggz)$ for some $\phi^{(\bfc,t_\mu)}_{t_\lz}\in\cA'$.
\end{lemma}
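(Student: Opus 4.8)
\textbf{Proof proposal for Lemma \ref{monomial for h}.}

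The plan is to reduce the statement to an iterated application of Lemma \ref{monomial for P} together with the multiplication rule for PBW elements from Proposition \ref{multiplication of N}. First I would expand $\fkm^{\oz(t_\lz)} = \fkm^{\lz_1\dz}*\fkm^{\lz_2\dz}*\cdots*\fkm^{\lz_t\dz}$ by substituting, for each factor $\fkm^{\lz_j\dz}$, the expression given in Lemma \ref{monomial for P}, namely $\fkm^{\lz_j\dz} = H_{\lz_j} + (\text{terms }N(\bfc,t_\mu)\text{ with }\bfc_\pm = 0,\ \bfc_0\neq 0) + (\text{terms }N(\bfc,t_\mu)\text{ with }\bfc_- >0,\ \bfc_+ >0)$. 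Multiplying these $t$ expansions out produces a sum of products, and the goal is to control the ``type'' of each resulting term under the order $\preceq$.

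The key observation is a closure property under multiplication: once a PBW factor has $\bfc_->0$ or $\bfc_+>0$ (a nontrivial preprojective or preinjective part), multiplying on either side by anything in $\cH^0(\ggz)$ keeps a nontrivial preprojective/preinjective part, because preprojectives have no maps into non-preprojectives and non-preinjectives have no maps into preinjectives, so by Proposition \ref{multiplication of N} (the condition $\bfc_-\geq_L\bfc^1_-$, $\bfc_+\geq_L\bfc^2_+$ on the output) the preprojective part can only increase in the $\geq_L$-order and likewise the preinjective part. Hence any product involving at least one ``mixed'' factor expands into $N(\bfc,t_\mu)$'s with $\bfc_->0$ and $\bfc_+>0$, which go into the third sum. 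For the products in which \emph{every} factor is of one of the first two types (i.e.\ $\bfc_\pm = 0$), the whole product lives in the subalgebra $\cH^0(\cT_1)\otimes\cdots\otimes\cH^0(\cT_s)\otimes\cA'[H_1,H_2,\dots]$ coming from the tensor decomposition at the end of Section \ref{sec def of h0}; such a product is a $\cA'$-combination of $N(\bfc,t_\mu)$ with $\bfc_\pm = 0$, which go into the second sum — \emph{except} the single product $H_{\lz_1}*H_{\lz_2}*\cdots*H_{\lz_t} = H_\lz$, which is the claimed leading term. So I must separately check that the contribution $H_\lz$ appears with coefficient $1$ and is not cancelled; this follows because $H_\lz$ is itself one of the basis-type building blocks and the PBW element $S_\mu$ (hence $N(\bfc,t_\mu)$ with $\bfc_\pm=0,\ \bfc_0 = 0$) is obtained from the $H_m$'s by a unitriangular (determinantal) change of basis, so $H_\lz = S_\lz + (\text{lower } S_\mu) = N((0,0,0),t_\lz) + (\text{strictly }\prec\text{ terms})$, and rewriting in this way only introduces more admissible terms for the second sum.

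The main obstacle I expect is bookkeeping the order relations precisely: I need that when I replace $H_\lz = \prod_j H_{\lz_j}$ by its $N$-expansion, no term of type $\bfc_\pm = 0,\ \bfc_0 \neq 0$ that arises can collide with or cancel the leading $N((0,0,0),t_\lz)$, and that the homogeneous part indeed stays confined to the $\cH^0(\cT_i)$/$\cA'[H_\bullet]$ block rather than leaking into the preprojective/preinjective directions. This is exactly the analysis carried out in \cite{XXZ_Tame_quivers_and_affine_bases_I} and the argument transfers verbatim once the structural inputs — Proposition \ref{multiplication of N}, the tensor decomposition of $\cH^0(\ggz)$, and the unitriangularity of $\{S_\mu\}$ versus $\{H_\mu\}$ — are in place, all of which are available in the present valued setting. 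Since the coefficients produced by Proposition \ref{multiplication of N} and by Lemma \ref{monomial for P} all lie in $\cA'$ (using Lemma \ref{lem: exist of Hall poly} for polynomiality), the final coefficients $\phi^{(\bfc,t_\mu)}_{t_\lz}$ lie in $\cA'$ as asserted, and I would close the proof there.
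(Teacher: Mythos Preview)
Your overall strategy---expand each $\fkm^{\lz_j\dz}$ via Lemma~\ref{monomial for P}, multiply out, and identify $H_\lz=\prod_j H_{\lz_j}$ as the product of the leading terms---is correct and is exactly what the paper intends (the paper simply defers to \cite{XXZ_Tame_quivers_and_affine_bases_I} for all of Section~\ref{sec affine bases}).

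One step needs repair. Your stated Hom-vanishings are backwards: it is \emph{false} that preprojectives have no maps into non-preprojectives, and \emph{false} that non-preinjectives have no maps into preinjectives. The correct facts are $\Hom(\cR^\ggz\oplus\cI^\ggz,\cP^\ggz)=0$ and $\Hom(\cI^\ggz,\cP^\ggz\oplus\cR^\ggz)=0$. Relatedly, Proposition~\ref{multiplication of N} only bounds $\bfc_-$ by the \emph{left} factor's $\bfc^1_-$ and $\bfc_+$ by the \emph{right} factor's $\bfc^2_+$, so a mixed factor sitting in the middle of the product is not controlled by that inequality alone, and your sentence ``multiplying on either side keeps a nontrivial preprojective/preinjective part'' does not follow from it. The cleanest patch is the defect argument: every $N(\bfc,t_\mu)$ appearing in the expansion has dimension vector a multiple of $\dz$, and since the defect pairing with $\dz$ vanishes on regulars and on $\dz$ while being of one sign on indecomposable preprojectives and the opposite sign on indecomposable preinjectives, one gets $\bfc_->0\iff\bfc_+>0$ automatically. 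Combining this with the \emph{correct} Hom-vanishings (a preinjective submodule of $L$ forces a preinjective direct summand of $L$; a preprojective quotient of $L$ forces a preprojective direct summand) shows that any product containing a mixed factor lands in the third sum, so the $\cA'[H_1,H_2,\dots]$-component of $\fkm^{\oz(t_\lz)}$ is exactly $H_\lz$ as you want.
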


\subsection{The monomial basis}

For $(\bfc,t_\lz)\in\cG^a$, we define the monomial
$$\fkm^{\oz(\bfc,t_\lz)}=\fkm^{\oz(\bfc_-)}*\fkm^{\oz(\bfc_0)}*\fkm^{\oz(t_\lz)}*\fkm^{\oz(\bfc_+)}.$$

\begin{proposition}\label{monomial basis}
For $(\bfc,t_\lz)\in\cG^a$, we have
$$\fkm^{\oz(\bfc,t_\lz)}=N(\bfc,t_\lz)+\sum_{(\bfc',t_{\lz'})\in\cG\atop(\bfc',t_{\lz'})\prec(\bfc,t_\lz)}\phi_{(\bfc,t_\lz)}^{(\bfc',t_{\lz'})}(v)N(\bfc',t_{\lz'})$$
in $\cH^0(\ggz)$ for some $\phi_{(\bfc,t_\lz)}^{(\bfc',t_{\lz'})}\in\cA'$, such that $\phi_{(\bfc,t_\lz)}^{(\bfc,t_\mu)}=K_{\lz\mu},\mu\geq\lz$, where $K_{\lz\mu}$ is the Kostka number.
Moreover, $\{\fkm^{\oz(\bfc,t_\lz)}|(\bfc,t_\lz)\in\cG^a\}$ is an $\cA'$-basis of $\cC^*_{\cA'}(\ggz)$.
\end{proposition}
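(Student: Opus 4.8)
The plan is to expand $\fkm^{\oz(\bfc,t_\lz)}=\fkm^{\oz(\bfc_-)}*\fkm^{\oz(\bfc_0)}*\fkm^{\oz(t_\lz)}*\fkm^{\oz(\bfc_+)}$ by substituting the four ``building-block'' expansions of Lemmas~\ref{monomial for proj}, \ref{monomial for nh}, \ref{monomial for h} and \ref{monomial for inj}, and then using Proposition~\ref{multiplication of N} to rewrite each of the resulting products of PBW vectors as an $\cA'$-combination of single PBW vectors. Set $\nu=D(\bfc,t_\lz)$; since $\fkm^{\oz(\bfc,t_\lz)}$ is a product of divided powers $u_\bfi^{(a)}$ of total dimension vector $\nu$, every PBW vector that appears lies in $\cG_\nu$, so the whole computation takes place in the finite poset $(\cG_\nu,\preceq)$. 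The one extra ingredient I would use beyond the stated lemmas is the Jacobi--Trudi identity: the $H_m$ commute and are algebraically independent and $S_\lz=\det(H_{\lz_t-t+t'})$, so the classical expansion $h_\lz=\sum_\mu K_{\lz\mu}s_\mu$ of a complete homogeneous symmetric function in Schur functions transports to $H_\lz=S_\lz+\sum_{\mu>\lz}K_{\lz\mu}S_\mu$ in $\cH^0(\ggz)$, with $S_\mu=N((0,0,0),t_\mu)$ by definition. After these substitutions $\fkm^{\oz(\bfc,t_\lz)}$ is manifestly $\sum_{(\bfc',t_{\lz'})\in\cG_\nu}\phi^{(\bfc',t_{\lz'})}_{(\bfc,t_\lz)}N(\bfc',t_{\lz'})$ with all $\phi\in\cA'$.

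The leading term is immediate: by definition $N(\bfc,t_\lz)=\lan M(\bfc_-)\ran*\lan M(\bfc_0)\ran*S_\lz*\lan M(\bfc_+)\ran$, so the product of the four genuine leading factors with $S_\lz$ is exactly $N(\bfc,t_\lz)$, coefficient $1$, and with $S_\mu$ ($\mu>\lz$) replacing $S_\lz$ it is $N(\bfc,t_\mu)$, coefficient $K_{\lz\mu}$, which yields the asserted values of $\phi^{(\bfc,t_\mu)}_{(\bfc,t_\lz)}$. Everything then reduces to showing that every other product contributes only vectors $N(\bfc',t_{\lz'})$ with $(\bfc',t_{\lz'})\prec(\bfc,t_\lz)$, and that none of them reaches $N(\bfc,t_\lz)$ or any $N(\bfc,t_\mu)$.

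Here I would argue as follows. By associativity and Proposition~\ref{multiplication of N}, any PBW vector in a product $N(\bfa^1,\cdot)*\dots*N(\bfa^r,\cdot)$ has preprojective part $\geq_L\bfa^1_-$ and preinjective part $\geq_L\bfa^r_+$; with Lemmas~\ref{monomial for proj} and \ref{monomial for inj} (whose terms have preprojective, resp.\ preinjective, part $\geq_L\bfc_-$, resp.\ $\geq_L\bfc_+$, with equality only for the leading term), this forces $\bfc'_-\geq_L\bfc_-$ and $\bfc'_+\geq_L\bfc_+$ for every occurring $N(\bfc',t_{\lz'})$. If either inequality is strict, Definition~\ref{order}(a) gives $(\bfc',t_{\lz'})\prec(\bfc,t_\lz)$ and rules out $\bfc'=\bfc$. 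If both are equalities, the first and last factors must have contributed their leading terms, and one is left to analyze $\lan M(\bfc_-)\ran*X*Y*\lan M(\bfc_+)\ran$ with $X$ a term of $\fkm^{\oz(\bfc_0)}$ and $Y$ a term of $\fkm^{\oz(t_\lz)}$. When $X=\lan M(\bfc'_0)\ran$ with $\bfc'_0\leq_G\bfc_0$ and $Y=S_\mu$ with $\mu\geq\lz$, this product is exactly $N((\bfc_-,\bfc'_0,\bfc_+),t_\mu)$: it equals $(\bfc,t_\lz)$ only for $\bfc'_0=\bfc_0,\mu=\lz$, equals some $(\bfc,t_\mu)$ only for $\bfc'_0=\bfc_0$ (the contributions already counted in the $K_{\lz\mu}$ above), and is otherwise $\prec(\bfc,t_\lz)$ by Definition~\ref{order}(c) or by the partition comparison. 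The remaining cases---where $X$ is a ``mixed'' term of Lemma~\ref{monomial for nh} (carrying nonzero preprojective and preinjective summands) or $Y$ is a mixed term or a regular-plus-homogeneous term of Lemma~\ref{monomial for h}---are the technical heart: one must show that after multiplying through, the output acquires a strictly larger preprojective or preinjective datum, or a strictly smaller imaginary dimension $m$, or a strictly smaller $\leq_G$-type $\bfc'_0$, so that it is again $\prec(\bfc,t_\lz)$ by Definition~\ref{order}(a),(b) or (c). This relies on the monotonicity of Hall multiplication with respect to the imaginary dimension and to $\leq_G$ inside and between nonhomogeneous tubes; since $\cR^\ggz_{\rm nh}$ is, by Section~\ref{sec: nh part}, a finite product of cyclic-quiver tubes $\cK_{r_t}(k_t)$ exactly as in the simply-laced case, this step proceeds as in Section~9 of \cite{XXZ_Tame_quivers_and_affine_bases_I}. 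I expect this case analysis to be the main obstacle.

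For the ``moreover'': the expansion above presents $N(\bfc,t_\lz)$ as the $\preceq$-leading PBW vector of $\fkm^{\oz(\bfc,t_\lz)}$, and distinct $(\bfc,t_\lz)\in\cG^a$ give distinct members of the $\cA'$-basis $\{N(\bfc',t_{\lz'})\}_{(\bfc',t_{\lz'})\in\cG}$ of $\cH^0(\ggz)$, so $\{\fkm^{\oz(\bfc,t_\lz)}\mid(\bfc,t_\lz)\in\cG^a\}$ is $\cA'$-linearly independent; with $\dim_{\bbQ(v)}\cC^*(\ggz)_\nu=|\cG^a_\nu|$ (\cite{Lusztig_Affine_quivers_and_canonical_bases,Li_Notes_on_affine_canonical_and_monomial_bases}) it is therefore a $\bbQ(v)$-basis of $\cC^*(\ggz)$. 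The integral statement follows because the transition matrix $\big(\phi^{(\bfc',t_{\lz'})}_{(\bfc,t_\lz)}\big)_{(\bfc,t_\lz),(\bfc',t_{\lz'})\in\cG^a}$ is lower unitriangular for any linear refinement of $\preceq$ (the entries $\phi^{(\bfc,t_\mu)}_{(\bfc,t_\lz)}=K_{\lz\mu}$ with $\mu>\lz$ sit strictly below the diagonal, and every other nonzero off-diagonal entry is indexed by a strictly $\prec$ pair), hence invertible over $\cA'$: writing an arbitrary $x\in\cC^*_{\cA'}(\ggz)$ simultaneously in the $\bbQ(v)$-basis $\{\fkm^{\oz(\bfc,t_\lz)}\}_{\cG^a}$ of $\cC^*(\ggz)$ and in the $\cA'$-basis $\{N(\bfc',t_{\lz'})\}_{\cG}$ of $\cH^0(\ggz)$, this invertibility forces its $\fkm$-coefficients into $\cA'$, as in \cite{XXZ_Tame_quivers_and_affine_bases_I}.
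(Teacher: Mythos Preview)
Your proposal is correct and follows essentially the same route as the paper, which simply refers to Section~9 of \cite{XXZ_Tame_quivers_and_affine_bases_I} for all proofs in this section; your sketch (substitute Lemmas~\ref{monomial for proj}--\ref{monomial for inj}, use the Jacobi--Trudi relation $H_\lz=\sum_{\mu\geq\lz}K_{\lz\mu}S_\mu$, control lower terms via Proposition~\ref{multiplication of N} and Definition~\ref{order}, then deduce the basis statement from unitriangularity and the dimension count) is exactly what that reference does in the simply-laced case and carries over verbatim here. One small remark: you do not actually need algebraic independence of the $H_m$ for the Kostka expansion---it is a formal polynomial identity in the $H_m$---and the linear independence of the resulting $S_\mu=N(0,t_\mu)$ is already part of the PBW basis statement for $\cH^0(\ggz)$.
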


The basis $\{\fkm^{\oz(\bfc,t_\lz)}|(\bfc,t_\lz)\in\cG^a\}$ is called the monomial basis.

\subsection{The PBW basis}

Using the method in \cite{Deng_Du_Xiao_Generic_extensions_and_canonical_bases_for_cyclic_quivers}, a set $\{E(\bfc,t_\lz)|(\bfc,t_\lz)\in\cG^a\}$ can be constructed inductively to satisfy the following proposition.

\begin{proposition}\label{uniqueness of PBW}
For each $(\bfc,t_\lz)$ in $\cG^a$, there is a unique element $E(\bfc,t_\lz)\in\cC^*(\ggz)$ such that
$$E(\bfc,t_\lz)=N(\bfc,t_\lz)+\sum_{(\bfc',t_{\lz'})\in\cG\setminus\cG^a\atop (\bfc',t_{\lz'})\prec(\bfc,t_\lz)}b_{(\bfc,t_\lz)}^{(\bfc',t_{\lz'})}(v)N(\bfc',t_{\lz'})$$
holds in $\cH^0(\ggz)$ for some $b_{(\bfc,t_\lz)}^{(\bfc',t_{\lz'})}\in\cA'$.
Moreover, 
$$\fkm^{\oz(\bfc,t_\lz)}=E(\bfc,t_\lz)+\sum_{(\bfc',t_{\lz'})\in\cG^a\atop(\bfc',t_{\lz'})\prec(\bfc,t_\lz)}\phi_{(\bfc,t_\lz)}^{(\bfc',t_{\lz'})}(v)E(\bfc',t_{\lz'})$$
holds in $\cC^*(\ggz)$ for some $\phi_{(\bfc,t_\lz)}^{(\bfc',t_{\lz'})}\in\cA'$.
Moreover, $\{E(\bfc,t_\lz)|(\bfc,t_\lz)\in\cG^a\}$ is an $\cA'$-basis of $\cC^*_{\cA'}(\ggz)$.
\end{proposition}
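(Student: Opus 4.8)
\textbf{Proof proposal for Proposition~\ref{uniqueness of PBW}.}
The plan is to mimic the triangular-bootstrapping argument of \cite{Deng_Du_Xiao_Generic_extensions_and_canonical_bases_for_cyclic_quivers} and \cite[Section~9]{XXZ_Tame_quivers_and_affine_bases_I}, using the order $\preceq$ of Definition~\ref{order} as the bookkeeping device. First I would fix $\nu\in\bbN\bfI$ and restrict attention to $\cG_\nu$ and $\tcG_\nu$, which are finite posets under $\preceq$; this makes every ``inductive construction'' below a genuine finite downward induction. The starting point is Proposition~\ref{monomial basis}, which expresses $\fkm^{\oz(\bfc,t_\lz)}$ as $N(\bfc,t_\lz)$ plus a $\preceq$-lower combination of PBW elements $N(\bfc',t_{\lz'})$, with the crucial refinement that the coefficients $\phi^{(\bfc,t_\mu)}_{(\bfc,t_\lz)}$ for $(\bfc',t_{\lz'})=(\bfc,t_\mu)$ on the ``same'' layer are exactly Kostka numbers $K_{\lz\mu}$. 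Since the Kostka matrix is unitriangular (with respect to the dominance/lexicographic order on partitions), the system $\{\fkm^{\oz(\bfc,t_\lz)}\mid(\bfc,t_\lz)\in\cG^a\}$ is already $\preceq$-unitriangular over $\{N(\bfc,t_\lz)\mid(\bfc,t_\lz)\in\cG\}$, hence an $\cA'$-basis of $\cC^*_{\cA'}(\ggz)$: that last assertion of the proposition then follows once $E(\bfc,t_\lz)$ is shown to be an $\cA'$-basis, since the change-of-basis matrix between $\{\fkm^{\oz(\bfc,t_\lz)}\}$ and $\{E(\bfc,t_\lz)\}$ will be $\preceq$-unitriangular by the second displayed formula.

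Next I would construct the $E(\bfc,t_\lz)$ by downward induction on $(\bfc,t_\lz)$ in the finite poset $\cG_\nu$. For the base case (maximal elements of $\cG^a_\nu$) set $E(\bfc,t_\lz)=\fkm^{\oz(\bfc,t_\lz)}$ if that monomial already lies in the span of PBW vectors indexed by $\cG\setminus\cG^a$ below it plus $N(\bfc,t_\lz)$; in general, having defined $E(\bfc',t_{\lz'})$ for all $(\bfc',t_{\lz'})\succ(\bfc,t_\lz)$ in $\cG^a_\nu$, I would expand $\fkm^{\oz(\bfc,t_\lz)}$ via Proposition~\ref{monomial basis} in the PBW basis and then subtract off $\cA'$-multiples of the already-constructed $E(\bfc',t_{\lz'})$ so as to kill all contributions of $N(\bfc',t_{\lz'})$ with $(\bfc',t_{\lz'})\in\cG^a$, $(\bfc',t_{\lz'})\prec(\bfc,t_\lz)$; what remains is $N(\bfc,t_\lz)$ plus an $\cA'$-combination of $N(\bfc',t_{\lz'})$ with $(\bfc',t_{\lz'})\in\cG\setminus\cG^a$ and $(\bfc',t_{\lz'})\prec(\bfc,t_\lz)$, which is the element $E(\bfc,t_\lz)$. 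The key algebraic input making this work is that the span of $\{N(\bfc',t_{\lz'})\mid(\bfc',t_{\lz'})\in\cG\setminus\cG^a\}$ is a complement (inside $\cH^0(\ggz)$, in each $D$-degree) to $\cC^*_{\cA'}(\ggz)$ spanned by $\{N(\bfc,t_\lz)\mid(\bfc,t_\lz)\in\cG^a\}$: this is precisely the statement, established in the simply-laced case and carried over via Frobenius folding using Proposition~\ref{monomial basis}, that the aperiodic PBW vectors span $\cC^*_{\cA'}(\ggz)$ and the non-aperiodic ones are ``error terms.'' Uniqueness of $E(\bfc,t_\lz)$ is then immediate: two choices would differ by an element of $\cC^*_{\cA'}(\ggz)$ lying in the span of the non-aperiodic $N$'s, which is $0$.

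For the second displayed formula, I would invert the triangular relation just obtained: having written each $E(\bfc,t_\lz)$ as $\fkm^{\oz(\bfc,t_\lz)}$ minus lower $E$-terms (equivalently, the matrix $(\phi)$ expressing $\{\fkm^{\oz}\}$ in terms of $\{E\}$ on the index set $\cG^a$ is obtained by restricting and re-triangularizing the Proposition~\ref{monomial basis} relations), the identity
$$\fkm^{\oz(\bfc,t_\lz)}=E(\bfc,t_\lz)+\sum_{(\bfc',t_{\lz'})\in\cG^a,\,(\bfc',t_{\lz'})\prec(\bfc,t_\lz)}\phi_{(\bfc,t_\lz)}^{(\bfc',t_{\lz'})}(v)E(\bfc',t_{\lz'})$$
holds in $\cC^*(\ggz)$ with coefficients in $\cA'$, because a $\preceq$-unitriangular matrix over $\cA'$ has a $\preceq$-unitriangular inverse over $\cA'$. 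Finally, since $\{\fkm^{\oz(\bfc,t_\lz)}\mid(\bfc,t_\lz)\in\cG^a\}$ is an $\cA'$-basis of $\cC^*_{\cA'}(\ggz)$ by Proposition~\ref{monomial basis} and the change of basis to $\{E(\bfc,t_\lz)\}$ is $\preceq$-unitriangular over $\cA'$, the latter is an $\cA'$-basis as well. The main obstacle I anticipate is not any single step but making the bookkeeping airtight: one must check that at every stage of the induction the subtracted $E(\bfc',t_{\lz'})$ only introduce terms that are still $\preceq(\bfc,t_\lz)$ and never re-pollute the aperiodic layer (this uses that $\prec$ is transitive and that $E(\bfc',t_{\lz'})-N(\bfc',t_{\lz'})$ is supported on $\{(\bfc'',t_{\lz''})\in\cG\setminus\cG^a:(\bfc'',t_{\lz''})\prec(\bfc',t_{\lz'})\}$), and that the whole argument is degree-by-degree finite so that the induction terminates — all of which is routine in the quiver case of \cite{XXZ_Tame_quivers_and_affine_bases_I} and transfers verbatim here because Proposition~\ref{monomial basis} and Proposition~\ref{multiplication of N} already package every valued-quiver-specific ingredient.
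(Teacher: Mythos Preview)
Your overall strategy is exactly the one the paper invokes (the inductive construction of \cite{Deng_Du_Xiao_Generic_extensions_and_canonical_bases_for_cyclic_quivers}, transported via Proposition~\ref{monomial basis}), but the induction is stated in the wrong direction. You write ``having defined $E(\bfc',t_{\lz'})$ for all $(\bfc',t_{\lz'})\succ(\bfc,t_\lz)$'' and then propose to subtract these to ``kill all contributions of $N(\bfc',t_{\lz'})$ with $(\bfc',t_{\lz'})\in\cG^a$, $(\bfc',t_{\lz'})\prec(\bfc,t_\lz)$.'' This cannot work: each such $E(\bfc',t_{\lz'})$ has leading term $N(\bfc',t_{\lz'})$ with $(\bfc',t_{\lz'})\succ(\bfc,t_\lz)$, a term that does not occur in $\fkm^{\oz(\bfc,t_\lz)}$ at all, so subtracting it introduces terms strictly above $(\bfc,t_\lz)$ rather than cancelling anything below.

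The induction must go upward in $\preceq$: assume $E(\bfc',t_{\lz'})$ already constructed for every aperiodic $(\bfc',t_{\lz'})\prec(\bfc,t_\lz)$, and set
\[
E(\bfc,t_\lz)=\fkm^{\oz(\bfc,t_\lz)}-\sum_{(\bfc',t_{\lz'})\in\cG^a,\ (\bfc',t_{\lz'})\prec(\bfc,t_\lz)}\phi_{(\bfc,t_\lz)}^{(\bfc',t_{\lz'})}(v)\,E(\bfc',t_{\lz'}),
\]
where the $\phi$'s are the coefficients of the aperiodic $N(\bfc',t_{\lz'})$ in the expansion of Proposition~\ref{monomial basis}. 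Since by induction each $E(\bfc',t_{\lz'})-N(\bfc',t_{\lz'})$ is an $\cA'$-combination of \emph{non-aperiodic} $N$'s strictly below $(\bfc',t_{\lz'})$, this subtraction removes precisely the aperiodic $N$-terms and leaves $N(\bfc,t_\lz)$ plus non-aperiodic terms, all $\prec(\bfc,t_\lz)$. The correct base case is the set of $\preceq$-minimal elements of $\cG^a_\nu$, where the sum is empty and one simply takes $E(\bfc,t_\lz)=\fkm^{\oz(\bfc,t_\lz)}$; no conditional ``if'' is needed. With this correction your arguments for the second displayed formula, for uniqueness, and for the basis property go through and agree with the paper. (One small aside: $\cC^*_{\cA'}(\ggz)$ is not literally ``spanned by $\{N(\bfc,t_\lz)\mid(\bfc,t_\lz)\in\cG^a\}$'' as you write, since those $N$'s need not lie in $\cC^*$; the complementarity you actually need, $\cC^*_{\cA'}\cap\sum_{(\bfc',t_{\lz'})\notin\cG^a}\cA' N(\bfc',t_{\lz'})=0$, follows once the $E$'s are shown to be a basis.)
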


The basis $\{E(\bfc,t_\lz)|(\bfc,t_\lz)\in\cG^a\}$ is called the PBW basis.

\subsection{A bar-invariant basis}\label{sec bar-inv basis}

Recall that $\bar{}:\bfU^+\ra\bfU^+$ is the bar-involution such that $\bar{E_\bfi}=E_\bfi$ and $\bar{v}=v^{-1}$.

\begin{proposition}\label{relation bt C and E}
For each $(\bfc,t_\lz)$ in $\cG^a$, there is a unique element $C(\bfc,t_\lz)\in\cC_{\cA'}^*(\ggz)$ such that 
$\overline{C{(\bfc,t_\lambda)}}=C{(\bfc,t_\lambda)}$ and
$$C(\bfc,t_\lz)=E(\bfc,t_\lz)+\sum_{(\bfc',t_{\lz'})\in\cG^a,(\bfc',t_{\lz'})\prec(\bfc,t_\lz)}g^{(\bfc',t_{\lz'})}_{(\bfc,t_\lz)}(v)E(\bfc',t_{\lz'}).$$
holds in $\cC^*(\ggz)$ for some $g^{(\bfc',t_{\lz'})}_{(\bfc,t_\lz)}\in v^{-1}\bbQ[v^{-1}]$.
Moreover, $\{C(\bfc,t_\lz)|(\bfc,t_\lz)\in\cG^a\}$ is an $\cA'$-basis of $\cC^*_{\cA'}(\ggz)$.
\end{proposition}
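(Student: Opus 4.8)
The plan is to follow the standard argument that produces Kazhdan--Lusztig-type bases from a PBW basis equipped with a bar involution, exactly as in \cite{XXZ_Tame_quivers_and_affine_bases_I}, Section 9. First I would record the action of the bar involution on the PBW basis. Since each $E(\bfc,t_\lz)$ lies in $\cC^*_{\cA'}(\ggz)$ and the bar involution restricts to a $\bbQ$-algebra automorphism of $\cC^*_{\cA'}(\ggz)$ fixing each $u_\bfi$ and sending $v\mapsto v^{-1}$, the element $\overline{E(\bfc,t_\lz)}$ again lies in $\cC^*_{\cA'}(\ggz)$ and hence expands in the PBW basis. Using Proposition \ref{uniqueness of PBW} together with the triangularity of $\fkm^{\oz(\bfc,t_\lz)}$ with respect to $\prec$ (Proposition \ref{monomial basis}), and the fact that the monomials $\fkm^{\oz(\bfc,t_\lz)}$ are manifestly bar-invariant (being products of the bar-invariant divided powers $u_\bfi^{(a)}$), one gets that $\overline{E(\bfc,t_\lz)}$ is upper-triangular:
$$\overline{E(\bfc,t_\lz)}=E(\bfc,t_\lz)+\sum_{(\bfc',t_{\lz'})\in\cG^a,\,(\bfc',t_{\lz'})\prec(\bfc,t_\lz)}r^{(\bfc',t_{\lz'})}_{(\bfc,t_\lz)}(v)E(\bfc',t_{\lz'})$$
for some $r^{(\bfc',t_{\lz'})}_{(\bfc,t_\lz)}\in\cA'$, with the coefficient of $E(\bfc,t_\lz)$ equal to $1$.

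Next I would run the usual Kazhdan--Lusztig recursion. Order $\cG^a_\nu$ (inside each fixed degree $\nu\in\bbN\bfI$, a finite set) by a linear refinement of $\prec$, and construct the coefficients $g^{(\bfc',t_{\lz'})}_{(\bfc,t_\lz)}(v)\in v^{-1}\bbQ[v^{-1}]$ by downward induction so that
$$C(\bfc,t_\lz)=E(\bfc,t_\lz)+\sum_{(\bfc',t_{\lz'})\prec(\bfc,t_\lz)}g^{(\bfc',t_{\lz'})}_{(\bfc,t_\lz)}(v)\,E(\bfc',t_{\lz'})$$
is bar-invariant. Expanding $\overline{C(\bfc,t_\lz)}=C(\bfc,t_\lz)$ in the PBW basis and comparing coefficients of each $E(\bfc',t_{\lz'})$ gives, for each such index, an equation of the form $\overline{g}-g = (\text{known expression in the } r\text{'s and the already-constructed } g\text{'s for indices strictly between } (\bfc',t_{\lz'}) \text{ and } (\bfc,t_\lz))$. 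The standard lemma (see \cite{Lusztig_Introduction_to_quantum_groups}) that $h=\overline{h}$ for $h\in\cA'$ forces $h\in\bbQ$, combined with the known expression being bar-anti-invariant by the inductive hypothesis, yields a unique solution $g^{(\bfc',t_{\lz'})}_{(\bfc,t_\lz)}\in v^{-1}\bbQ[v^{-1}]$. This simultaneously gives existence and uniqueness of $C(\bfc,t_\lz)$, and the transition matrix from $\{C(\bfc,t_\lz)\}$ to $\{E(\bfc,t_\lz)\}$ is unitriangular over $\cA'$, so $\{C(\bfc,t_\lz)\mid(\bfc,t_\lz)\in\cG^a\}$ is an $\cA'$-basis of $\cC^*_{\cA'}(\ggz)$ because $\{E(\bfc,t_\lz)\}$ is.

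The only genuinely new ingredient beyond the general formalism is that the base ring has changed from $\cA=\bbZ[v,v^{-1}]$ to $\cA'=\bbQ[v,v^{-1}]$, so I would simply check that Lusztig's lemma on the existence and uniqueness of the Kazhdan--Lusztig coefficients goes through with $\bbQ$-coefficients --- which it does verbatim, since the solvability of $\overline{g}-g=f$ with $g\in v^{-1}\bbQ[v^{-1}]$ and $f\in\cA'$ bar-anti-invariant is a formal fact about the ring $\cA'$. There is no real obstacle here; the content of the proof, like everything in Section \ref{sec affine bases}, is that the bookkeeping for valued quivers is identical to the simply-laced case once Proposition \ref{multiplication of N}, Proposition \ref{monomial basis}, and Proposition \ref{uniqueness of PBW} are in place. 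Accordingly I would simply state that the proof is the same as that of the corresponding proposition in \cite{XXZ_Tame_quivers_and_affine_bases_I}, Section 9, with the triangularity data supplied by Proposition \ref{uniqueness of PBW}.
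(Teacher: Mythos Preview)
Your proposal is correct and matches the paper's approach: the paper gives no separate proof for this proposition, relying on the blanket statement at the start of Section~\ref{sec affine bases} that all proofs are identical to those in \cite{XXZ_Tame_quivers_and_affine_bases_I}, Section~9, and what you have written is precisely the standard Kazhdan--Lusztig recursion carried out there. Your observation that the only change is $\cA\rightsquigarrow\cA'$ and that Lusztig's lemma goes through verbatim over $\bbQ[v,v^{-1}]$ is exactly the point.
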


\begin{corollary}\label{relation bt C and m}
    For $(\bfc,t_\lz)\in\cG^a$, we have 
    $$\fkm^{\oz(\bfc,t_\lz)}=C(\bfc,t_\lz)+\sum_{(\bfc',t_{\lz'})\in\cG^a,(\bfc',t_{\lz'})\prec(\bfc,t_\lz)}h^{(\bfc',t_{\lz'})}_{(\bfc,t_\lz)}(v) C(\bfc',t_{\lz'})$$
    holds in $\cC^*(\ggz)$ for some $h^{(\bfc',t_{\lz'})}_{(\bfc,t_\lz)}\in\cA'$.
\end{corollary}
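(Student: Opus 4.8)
The plan is to derive Corollary \ref{relation bt C and m} directly from Proposition \ref{uniqueness of PBW} and Proposition \ref{relation bt C and E} by a purely formal triangularity argument, exactly as in the quiver case. First I would recall that Proposition \ref{uniqueness of PBW} gives
$$\fkm^{\oz(\bfc,t_\lz)}=E(\bfc,t_\lz)+\sum_{(\bfc',t_{\lz'})\in\cG^a,\,(\bfc',t_{\lz'})\prec(\bfc,t_\lz)}\phi_{(\bfc,t_\lz)}^{(\bfc',t_{\lz'})}(v)\,E(\bfc',t_{\lz'})$$
in $\cC^*(\ggz)$, with all $\phi_{(\bfc,t_\lz)}^{(\bfc',t_{\lz'})}\in\cA'$. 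This expresses the monomial basis in terms of the PBW basis with coefficients that are unitriangular with respect to the partial order $\preceq$ on $\cG^a$.

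Next I would invert the transition matrix of Proposition \ref{relation bt C and E}. That proposition writes each $C(\bfc,t_\lz)$ as $E(\bfc,t_\lz)$ plus a $\preceq$-lower combination of the $E(\bfc',t_{\lz'})$ with coefficients in $v^{-1}\bbQ[v^{-1}]\subset\cA'$; since this is a unitriangular change of basis over $\cA'$ (with respect to a partial order on a set each of whose down-sets is finite, because $\cG^a=\sqcup_\nu\cG^a_\nu$ is graded by $\bbN\bfI$ and each $\cG^a_\nu$ is finite), it can be inverted over $\cA'$, yielding
$$E(\bfc,t_\lz)=C(\bfc,t_\lz)+\sum_{(\bfc',t_{\lz'})\in\cG^a,\,(\bfc',t_{\lz'})\prec(\bfc,t_\lz)}\tilde g^{(\bfc',t_{\lz'})}_{(\bfc,t_\lz)}(v)\,C(\bfc',t_{\lz'})$$
for suitable $\tilde g^{(\bfc',t_{\lz'})}_{(\bfc,t_\lz)}\in\cA'$. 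Substituting this into the displayed expansion of $\fkm^{\oz(\bfc,t_\lz)}$ and collecting terms gives the claimed identity
$$\fkm^{\oz(\bfc,t_\lz)}=C(\bfc,t_\lz)+\sum_{(\bfc',t_{\lz'})\in\cG^a,\,(\bfc',t_{\lz'})\prec(\bfc,t_\lz)}h^{(\bfc',t_{\lz'})}_{(\bfc,t_\lz)}(v)\,C(\bfc',t_{\lz'}),$$
where each $h^{(\bfc',t_{\lz'})}_{(\bfc,t_\lz)}$ is a finite $\cA'$-linear combination of products of the $\phi$'s and $\tilde g$'s, hence lies in $\cA'$; transitivity of $\preceq$ guarantees only $\preceq$-lower indices appear.

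The only point requiring care — and the place I would be most careful in writing the argument — is the inversion of a unitriangular matrix over a partially ordered (rather than totally ordered) index set: one must observe that for each fixed $(\bfc,t_\lz)$ the set of $(\bfc',t_{\lz'})\preceq(\bfc,t_\lz)$ is finite, so the usual recursive/Neumann-series inversion terminates and stays inside $\cA'$. This is genuinely routine, and since the statement is an immediate formal consequence of the two preceding propositions, I expect no real obstacle; the proof is essentially a one-line substitution argument, and I would simply cite the analogous Corollary in \cite{XXZ_Tame_quivers_and_affine_bases_I} for the details.
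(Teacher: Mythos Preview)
Your proposal is correct and is exactly the argument the paper has in mind: the corollary is stated without proof immediately after Propositions \ref{uniqueness of PBW} and \ref{relation bt C and E}, and the blanket remark at the start of Section \ref{sec affine bases} refers all proofs to the analogous ones in \cite{XXZ_Tame_quivers_and_affine_bases_I}. Composing the two unitriangular transitions (monomial $\to$ PBW and the inverse of $C \to E$) over the finite poset $\cG^a_\nu$ is the intended one-line derivation.
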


\subsection{A property of the PBW basis}

We prove here a property of the PBW basis $\{E(\bfc,t_\lz)|(\bfc,t_\lz)\in\cG^a\}$.

We identify $\bfc_-,\bfc_0,\bfc_+$ with $(\bfc_-,0,0),(0,\bfc_0,0),(0,0,\bfc_+)$. Also, we simply denote $E(\bfc_-)=\lan M(\bfc_-)\ran,E(\bfc_+)=\lan M(\bfc_+)\ran$ and $E(\bfc_0,t_\lz)=E((0,\bfc_0,0),t_\lz)$ for $(\bfc,t_\lz)\in\cG^a$.

\begin{lemma}
For $(\bfc_0,t_\lz)\in\cG_0^a\times\bbP$, we have 
$$E(\bfc_0,t_\lz)=N(\bfc_0,t_\lz)+\sum_{(\bfc'_0,t_{\lz'})\in(\cG_0\setminus\cG_0^a)\times\bbP\atop(\bfc'_0,t_{\lz'})\prec(\bfc_0,t_{\lz})}b_{(\bfc_0,t_\lz)}^{(\bfc'_0,t_{\lz'})}(v)N(\bfc'_0,t_{\lz'})$$
for some $b_{(\bfc_0,t_\lz)}^{(\bfc'_0,t_{\lz'})}\in\cA'$. In other words, $E(\bfc_0,t_\lz)\in\cC^*\cap\cH^*(\cR^\ggz)$.
\end{lemma}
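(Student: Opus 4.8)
The plan is to run the same induction-on-$\preceq$ argument used to establish Proposition~\ref{uniqueness of PBW}, but carried out inside the subalgebra $\cH^*(\cR^\ggz)$ (equivalently, using the tensor factors $\cH^0(\cT_1)\otimes\cdots\otimes\cH^0(\cT_s)$ of the decomposition at the end of Section~\ref{sec def of h0}), and then to observe that the resulting element must agree with $E(\bfc_0,t_\lz)$ by the uniqueness clause of Proposition~\ref{uniqueness of PBW}. Concretely, I would first note that when $\bfc=(0,\bfc_0,0)$ the monomial $\fkm^{\oz(\bfc_0,t_\lz)}=\fkm^{\oz(\bfc_0)}*\fkm^{\oz(t_\lz)}$ is built entirely from the generators $u_\bfi$ appearing in the nonhomogeneous-tube expansions (Lemma~\ref{monomial for nh}) and the homogeneous expansions (Lemma~\ref{monomial for h}); crucially, the ``error terms'' in those two lemmas that lie outside $\cH^*(\cR^\ggz)$ all have $\bfc_->0$ and $\bfc_+>0$ simultaneously, which places them strictly above $(\bfc_0,t_\lz)$ in $\preceq$ via clause (a) of Definition~\ref{order}, so they cannot interfere with the triangular system in the relevant range.

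The key steps, in order, are: (1) Show by Proposition~\ref{multiplication of N} that the product $\fkm^{\oz(\bfc_0)}*\fkm^{\oz(t_\lz)}$, expanded in the PBW basis $\{N(\bfc',t_{\lz'})\}$, has all terms with $\bfc'_-=\bfc'_+=0$ except for terms with $\bfc'_-\geq_L 0$, $\bfc'_+\geq_L 0$ not both trivial — but since $\fkm^{\oz(\bfc_0)}$ and $\fkm^{\oz(t_\lz)}$ are products of $u_\bfi$'s whose expansions (Lemmas~\ref{monomial in tube}, \ref{monomial for h}) only ever produce, besides the intended tube/homogeneous terms, terms with $\bfc_->0$ \emph{and} $\bfc_+>0$; in other words the product stays inside $\cH^0(\cR^\ggz)\oplus(\text{stuff with }\bfc_->0,\bfc_+>0)$. (2) Combine Lemmas~\ref{monomial for nh} and \ref{monomial for h} to get
$$\fkm^{\oz(\bfc_0,t_\lz)}=N(\bfc_0,t_\lz)+\sum_{(\bfc'_0,t_{\lz'})\prec(\bfc_0,t_\lz)}*N(\bfc'_0,t_{\lz'})+\sum_{\bfc''_->0,\bfc''_+>0}*N(\bfc'',t_{\lz'}),$$
where the middle sum runs over $(\cG_0\times\bbP)$-type indices and coefficients lie in $\cA'$ (I would track the leading $\langle M(\bfc_0)\rangle*H_\lz$ term and use Proposition~\ref{multiplication of N} to reorganize $*$ into $N(-,-)$). (3) Restrict attention to the subset of $\cG^a$ with $\bfc_\pm=0$, which is a $\preceq$-downward-closed subset once intersected with the support appearing above (because the $\bfc''_->0,\bfc''_+>0$ terms are all $\succ$ everything with $\bfc_\pm=0$): within this range the monomials $\fkm^{\oz(\bfc_0,t_\lz)}$ form a $\preceq$-triangular system over the $N(\bfc'_0,t_{\lz'})$, so inverting gives a unique family $\tilde E(\bfc_0,t_\lz)\in\cC^*\cap\cH^*(\cR^\ggz)$ expressed $\preceq$-triangularly in the $N(\bfc'_0,t_{\lz'})$ with aperiodic leading term and the off-diagonal $N$-terms indexed in $(\cG_0\setminus\cG_0^a)\times\bbP$. (4) Finally, view $\tilde E(\bfc_0,t_\lz)$ as an element of $\cC^*(\ggz)$ and check it satisfies the defining property of $E(\bfc_0,t_\lz)$ in Proposition~\ref{uniqueness of PBW}: its expansion in the global PBW basis $\{N(\bfc',t_{\lz'})\mid(\bfc',t_{\lz'})\in\cG\}$ has leading term $N(\bfc_0,t_\lz)$ and all other terms indexed by $\cG\setminus\cG^a$ with index $\prec(\bfc_0,t_\lz)$ — this follows because the extra terms coming from the $\bfc''_->0,\bfc''_+>0$ part are absent in $\tilde E$ by construction, and the remaining terms are exactly the $(\cG_0\setminus\cG_0^a)\times\bbP$ ones, which are non-aperiodic hence in $\cG\setminus\cG^a$. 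By uniqueness in Proposition~\ref{uniqueness of PBW}, $\tilde E(\bfc_0,t_\lz)=E(\bfc_0,t_\lz)$, giving the claimed formula and membership in $\cC^*\cap\cH^*(\cR^\ggz)$.

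The main obstacle I anticipate is step (3)/(4): one must make sure that passing to the \emph{global} $\preceq$-order (on all of $\cG$) and to the global notion of aperiodicity $\cG^a$ is consistent with the \emph{local} inversion performed inside the tube algebra. Specifically, the danger is that some $N(\bfc'_0,t_{\lz'})$ with $(\bfc'_0,t_{\lz'})\prec(\bfc_0,t_\lz)$ used in the local inversion might, when reinterpreted globally, turn out to be aperiodic (i.e.\ lie in $\cG^a$), which would violate the defining shape of $E(\bfc_0,t_\lz)$. This is ruled out by the fact that $\bfc'_0\leq_G\bfc_0$ and $|\lz'|\le|\lz|$ force $\bfc'_0\neq$ aperiodic whenever $(\bfc'_0,t_{\lz'})\neq(\bfc_0,t_\lz)$ and $\bfc_0\in\cG_0^a$ — one needs the combinatorial lemma (from \cite{Deng_Du_Xiao_Generic_extensions_and_canonical_bases_for_cyclic_quivers}) that the $\leq_G$-predecessors of an aperiodic multisegment are never aperiodic, applied tubewise. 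Verifying this compatibility carefully, together with keeping the bookkeeping of which error terms land where in $\preceq$, is where the real work lies; everything else is a routine transcription of the argument in Section~9 of \cite{XXZ_Tame_quivers_and_affine_bases_I}.
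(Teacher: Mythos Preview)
Your proposal has a genuine gap at the heart of step~(3). You assert that inverting the triangular system of monomials yields $\tilde E(\bfc_0,t_\lz)\in\cC^*\cap\cH^*(\cR^\ggz)$, but that intersection is precisely the content of the lemma, and your construction does not land there. An $\cA'$-linear combination of monomials $\fkm^{\oz(\bfc'_0,t_{\lz'})}$ certainly lies in $\cC^*$, but each such monomial carries error terms supported on indices with $\bfc''_->0$ and $\bfc''_+>0$ (Lemmas~\ref{monomial for nh} and~\ref{monomial for h}), and nothing in your argument forces these to cancel; so the combination is not in $\cH^*(\cR^\ggz)$. Conversely, a combination of $N(\bfc'_0,t_{\lz'})$'s is in $\cH^*(\cR^\ggz)$ but has no reason to lie in $\cC^*$. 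You have also reversed the direction of $\preceq$: by clause~(a) of Definition~\ref{order}, indices with $\bfc''_\pm>0$ are $\prec$ (not $\succ$) those with $\bfc_\pm=0$, so the set $\{\bfc_\pm=0\}$ is not downward-closed; the error terms sit \emph{below} and may well be aperiodic, so in step~(4) your $\tilde E$ will not satisfy the defining property in Proposition~\ref{uniqueness of PBW}. Finally, the ``combinatorial lemma'' you invoke---that $\leq_G$-predecessors of an aperiodic multisegment are never aperiodic---is false already for $r=3$: the aperiodic $[1;2)$ has the aperiodic predecessor $[1;1)+[2;1)$.

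The paper's proof addresses exactly this tension by building elements that are visibly in both algebras from the start. For the nonhomogeneous part it takes $\tilde E(\bfc_0)=\vez_1(E_{\pi_1})*\cdots*\vez_s(E_{\pi_s})$, where $E_{\pi_t}$ is the cyclic-quiver PBW element of \cite{Deng_Du_Xiao_Generic_extensions_and_canonical_bases_for_cyclic_quivers}; this lies in $\cH^*(\cR^\ggz_{\rm nh})$ by construction and in $\cC^*(\ggz)$ because $\vez_t$ sends simples to exceptional modules. For the homogeneous part it introduces $\tilde H_m$, the sum over \emph{all} regular (not just homogeneous) modules of dimension $m\dz$, proves $\tilde H_m\in\cC^*$ by a separate induction via $\fkm^{m\dz}$, and forms $\tilde S_\lz$ from the $\tilde H_m$. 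The product $\tilde E(\bfc_0)*\tilde S_\lz$ then lies in $\cC^*\cap\cH^*(\cR^\ggz)$ with leading term $N(\bfc_0,t_\lz)$; an honest induction on $\preceq$ subtracting the already-known $E(\bfc'_0,t_{\lz'})$ for smaller \emph{aperiodic} indices (which do occur), together with the uniqueness in Proposition~\ref{uniqueness of PBW}, identifies the result with $E(\bfc_0,t_\lz)$.
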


\begin{proof}
Write $\bfc_0=(\pi_1,\pi_2,\dots,\pi_s)\in\cG^a_0$. Consider
 $$\tilde{E}(\bfc_0)=\vez_1(E_{\pi_1})*\vez_2(E_{\pi_2})*\dots*\vez_s(E_{\pi_s}),$$
where $E_{\pi_t}$ is the PBW basis element of $\cC^*(\cK_{r_t}(k_t))$ defined in \cite{Deng_Du_Xiao_Generic_extensions_and_canonical_bases_for_cyclic_quivers}, satisfying
$$E_\pi-\lan M(\pi)\ran\in\sum_{\pi'\in\Pi_{ lr_t}\setminus\Pi_{r_t}^a\atop\pi'<_G\pi}\cA\lan M(\pi')\ran,\forall \pi\in\Pi_{r_t}^a.$$
Since $\vez_t$ sends simple modules to exceptional modules, $\tilde{E}(\bfc_0)$ is in the composition algebra $\cC^*(\ggz)$.
Also, by definition of the orderings, we have
$$\tilde{E}(\bfc_0)-\lan M(\bfc_0)\ran\in\sum_{\bfd_0\in\cG_0\setminus\cG_0^a\atop\bfd_0\prec\bfc_0}\cA\lan M(\bfd_0)\ran.$$

Consider the element $\tilde{H}_m$ in $\cH^*(\ggz)$ whose $k$-component is
$$(\tilde{H}_m)_k=\sum_{[M]: M\in\cR^\ggz,\udim M=m\dz}v_k^{-\dim_k M}[M].$$

In fact, we can prove that $\tilde{H}_m\in\cC^*(\ggz)$ by induction on $m$ and that
$$\fkm^{m\dz}=\Tilde{H}_m+\sum_{0\leq j<m}A_j\Tilde{H}_jB_j$$
for some $A_j\in\cC^*(\cP^\ggz)\subset\cC^*$ and $B_j\in\cC^*(\cI^\ggz)\subset\cC^*$. Here $\cC^*(\cP^\ggz),\cC^*(\cI^\ggz)$ are subalgebras of $\cH^*(\ggz)$ spanned by $\lan M(\bfc_-)\ran,\bfc_-\in \cG^-$ and $\lan M(\bfc_+)\ran,\bfc_+\in \cG^+$, respectively.

Let $$\tilde{S}_\lz=\det(\tilde{H}_{\lz_t-t+t'})_{1\leq t,t'\leq s}$$ for a partition $\lz$.
By definition, we have $$\Tilde{H}_m-H_m\in\sum_{(\bfc_0,t_\mu),|\mu|<m}\cA N(\bfc_0,t_\mu).$$
Then
$$\Tilde{S}_\lz-S_\lz\in\sum_{(\bfc_0,t_\mu),|\mu|<m}\cA N(\bfc_0,t_\mu).$$

Now for $(\bfc_0,t_\lz)\in\cG^a_0\times\bbP$, consider $$\tilde{E}(\bfc_0,t_\lz)=\tilde{E}(\bfc_0)*\tilde{S_\lz}\in\cC^*\cap\cH^*(\cR^\ggz),$$
then $$\tilde{E}(\bfc_0,t_\lz)-N(\bfc_0,t_\lz)\in\sum_{(\bfd_0,t_\mu)\in\cG_0\times\bbP\atop(\bfd_0,t_\mu)\prec(\bfc_0,t_\lz)}\cA' N(\bfd_0,t_\mu).$$

Assume that this lemma holds for all
$(\bfc'_0,t_{\lz'})\in\cG_0^a\times\bbP$ such that 
$(\bfc'_0,t_{\lz'})\prec(\bfc_0,t_\lz)$.
Write
$$\tilde{E}(\bfc_0,t_\lz)=N(\bfc_0,t_\lz)+\sum_{(\bfd_0,t_\mu)\in\cG_0\times\bbP\atop(\bfd_0,t_\mu)\prec(\bfc_0,t_\lz)}\gz_{(\bfc_0,t_\lz)} ^{(\bfd_0,t_\mu)}N(\bfd_0,t_\mu),\gz^-_-\in\cA'$$
then by assumption 
$$\tilde{E}(\bfc_0,t_\lz)-\sum_{(\bfc'_0,t_{\lz'})\in\cG_0^a\times\bbP\atop(\bfc'_0,t_{\lz'})\prec(\bfc_0,t_\lz)}\gz_{(\bfc_0,t_\lz)} ^{(\bfc'_0,t_{\lz'})}E(\bfc'_0,t_{\lz'})-N(\bfc_0,t_\lz)\in\sum_{(\bfd_0,t_\mu)\in(\cG_0\setminus\cG_0^a)\times\bbP\atop(\bfd_0,t_\mu)\prec(\bfc_0,t_\lz)}\cA'N(\bfd_0,t_\mu).$$
Therefore by Proposition \ref{uniqueness of PBW}, we have
$$\tilde{E}(\bfc_0,t_\lz)-\sum_{(\bfc'_0,t_{\lz'})\in\cG_0^a\times\bbP\atop(\bfc'_0,t_{\lz'})\prec(\bfc_0,t_\lz)}\gz_{(\bfc_0,t_\lz)} ^{(\bfc'_0,t_{\lz'})}E(\bfc'_0,t_{\lz'})=E(\bfc_0,t_\lz).$$
The left side is clearly in $\cC^*\cap\cH^*(\cR^\ggz)$, so is the right side.

\end{proof}

The following proposition gives a more explicit interpretation for the PBW basis.

\begin{proposition}\label{PBW basis is multiplicative}
For $(\bfc,t_\lz)\in\cG^a$, we have
$$E(\bfc,t_\lz)=E(\bfc_-)*E(\bfc_0,t_\lz)*E(\bfc_+).$$
\end{proposition}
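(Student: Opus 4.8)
The plan is to prove the identity $E(\bfc,t_\lz)=E(\bfc_-)*E(\bfc_0,t_\lz)*E(\bfc_+)$ by showing the right-hand side satisfies the defining property of $E(\bfc,t_\lz)$ given in Proposition \ref{uniqueness of PBW}, namely that it lies in $\cC^*(\ggz)$ and differs from $N(\bfc,t_\lz)$ by an $\cA'$-combination of terms $N(\bfc',t_{\lz'})$ with $(\bfc',t_{\lz'})\in\cG\setminus\cG^a$ and $(\bfc',t_{\lz'})\prec(\bfc,t_\lz)$; uniqueness then forces equality. That the product lies in $\cC^*(\ggz)$ is immediate: each of $E(\bfc_-)=\lan M(\bfc_-)\ran$, $E(\bfc_0,t_\lz)$ (by the preceding Lemma, which places it in $\cC^*\cap\cH^*(\cR^\ggz)$), and $E(\bfc_+)=\lan M(\bfc_+)\ran$ lies in $\cC^*(\ggz)$.

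The core of the argument is a multiplication computation in $\cH^0(\ggz)$. First I would expand $E(\bfc_-)*E(\bfc_0,t_\lz)*E(\bfc_+)$ using $N(\bfc_-,0)=\lan M(\bfc_-)\ran$, the expansion of $E(\bfc_0,t_\lz)$ from the preceding Lemma as $N(\bfc_0,t_\lz)$ plus lower $N(\bfc'_0,t_{\lz'})$ with $\bfc'_0\in\cG_0\setminus\cG_0^a$, and $N(\bfc_+,0)=\lan M(\bfc_+)\ran$. The key structural input is that multiplying $\lan M(\bfc_-)\ran$ on the left and $\lan M(\bfc_+)\ran$ on the right of something supported on $\cR^\ggz$ reproduces a term $N(\bfc,t_\lz)=\lan M(\bfc_-)\ran*\lan M(\bfc_0)\ran*S_\lz*\lan M(\bfc_+)\ran$ exactly, by the very definition of $N$, and all other contributions have strictly larger $\bfc_-$ or $\bfc_+$ in the $>_L$ order (from Hall products of preprojectives with regulars, and regulars with preinjectives, which can only produce summands that are "more preprojective/preinjective") — hence are $\prec(\bfc,t_\lz)$ by clause (a) of Definition \ref{order} — together with the lower-order terms coming from the $N(\bfc'_0,t_{\lz'})$ expansion of $E(\bfc_0,t_\lz)$, which are $\prec$ by clauses (b),(c) and sit in $\cG\setminus\cG^a$ since $\bfc'_0\notin\cG_0^a$. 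This mirrors the proof of Proposition \ref{multiplication of N} and the triangularity relations established throughout Section \ref{sec affine bases}.

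The main obstacle is controlling the interaction terms precisely — in particular, verifying that no term with $(\bfc',t_{\lz'})\in\cG^a$ other than $(\bfc,t_\lz)$ itself appears with a nonzero coefficient, and that the "diagonal" coefficient is exactly $1$. For the latter, one notes that the top term in each factor's $N$-expansion multiplies to give $\lan M(\bfc_-)\ran*N(\bfc_0,t_\lz)*\lan M(\bfc_+)\ran=N(\bfc,t_\lz)$ with coefficient $1$ directly from the definition of $N(\bfc,t_\lz)$ (using that $S_\lz$ already appears inside $N(\bfc_0,t_\lz)$). For the former, the point is that any aperiodic nonhomogeneous-regular term arising as a proper summand must come with a strictly smaller $\bfc_-$ or $\bfc_+$ increment or a strictly smaller $|\lz|$, by the Hall-algebra order estimates for preprojective–regular and regular–preinjective products; since those estimates were already used to establish Propositions \ref{monomial basis} and \ref{uniqueness of PBW}, I would invoke them rather than redo them, noting explicitly that the proof is parallel to the corresponding statement in \cite{XXZ_Tame_quivers_and_affine_bases_I}.

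Finally, having shown $E(\bfc_-)*E(\bfc_0,t_\lz)*E(\bfc_+)$ satisfies the two conditions characterizing $E(\bfc,t_\lz)$ in Proposition \ref{uniqueness of PBW} (membership in $\cC^*(\ggz)$, and the required $\prec$-triangular expansion over $N(\bfc',t_{\lz'})$, $(\bfc',t_{\lz'})\in\cG\setminus\cG^a$), I conclude by the uniqueness clause of that proposition that the two agree.
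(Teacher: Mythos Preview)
Your approach is essentially the paper's: show the product lies in $\cC^*(\ggz)$, check it has the required triangular expansion over the $N$-basis, and invoke the uniqueness clause of Proposition \ref{uniqueness of PBW}. The one place you overcomplicate is the multiplication step: because $N((\bfc_-,\bfc'_0,\bfc_+),t_{\lz'})$ is \emph{by definition} $\lan M(\bfc_-)\ran*\lan M(\bfc'_0)\ran*S_{\lz'}*\lan M(\bfc_+)\ran$, left-multiplying the expansion of $E(\bfc_0,t_\lz)$ from the preceding Lemma by $\lan M(\bfc_-)\ran$ and right-multiplying by $\lan M(\bfc_+)\ran$ produces $N(\bfc,t_\lz)+\sum b^{(\bfc'_0,t_{\lz'})}_{(\bfc_0,t_\lz)}N((\bfc_-,\bfc'_0,\bfc_+),t_{\lz'})$ on the nose, with no Hall-product ``cross terms'' to estimate; the only lower terms are those inherited from $E(\bfc_0,t_\lz)$, which already have $\bfc'_0\in\cG_0\setminus\cG_0^a$ and $(\bfc'_0,t_{\lz'})\prec(\bfc_0,t_\lz)$, hence $((\bfc_-,\bfc'_0,\bfc_+),t_{\lz'})\in\cG\setminus\cG^a$ and $\prec(\bfc,t_\lz)$.
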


\begin{proof}
    The previous lemma implies that 
    $$E(\bfc_-)*E(\bfc_0,t_\lz)*E(\bfc_+)-N(\bfc,t_\lz)\in\sum_{(\bfc',t_{\lz'})\in\cG\setminus\cG^a\atop(\bfc',t_{\lz'})\prec(\bfc,t_{\lz})}\cA'N(\bfc',t_{\lz'}),$$
    then the proof is completed by Proposition \ref{uniqueness of PBW}.
\end{proof}

\section{The geometric realization and the canonical basis}

In this section, we recall the classical geometric realization introduced by Lusztig in \cite{Lusztig_Introduction_to_quantum_groups}.

\subsection{Perverse sheaves for quivers}
Let $Q=(I,H,s,t)$ be any quiver. Fix a finite field $k$ and a prime $l$  which is invertible in $k$. All algebraic varieties will be over the algebraic closure $\bar{k}$ of $k$. 

Let $V$ be an $I$-graded vector space and $\bbE_V=\oplus_{h\in H}\Hom_{\bar{k}}(V_{s(h)},V_{t(h)})$ be the representation variety and $\GL_V=\oplus_{i\in I}\GL_{\bar{k}}(V_i)$.
There is a $\GL_V$-action on $\bbE_V$ given by $(g.x)_h=g_{t(h)}x_hg^{-1}_{s(h)}$, such that each orbit contains isomorphic representations.

Given any $\nu\in\bbN I$, let $\cS_{\nu}(Q)$ be the subset of $\cS(Q)$ consisting of all
$$\oz'=(\underline{i},\underline{a})=((i_1,\dots,i_t),(a_1,\dots,a_t))$$
such that $\sum_ma_mi_m=\nu$.
Let $|V|=\sum_{i\in I}(\dime V_i)i$, for any $\oz\in\cS_{|V|}(Q)$, we say that a flag of $I$-graded subspaces
$$V^\bullet=(V=V^0\supseteq V^1\supseteq\cdots\supseteq V^t=0)$$
is of type $\oz'$ if $|V^{m-1}/V^m|=a_mi_m$ for all $m=1,2,\cdots,t$.

Let $\cF_{\oz'}$ be the flag variety consisting of all flags of type $\oz'$. Then $\GL_V$ acts on $\cF_{\oz'}$ naturally.

Given $x\in\bbE_V, V^\bullet\in\cF_{\oz'}$, we say that $V^\bullet$ is $x$-stable if $x_h(V^m_{s(h)})\subseteq V^m_{t(h)}$ for any $h\in H$ and $m=1,2,\cdots,t$.
Define $\tcF_{\oz'}$ to be the variety consisting of all pairs $(x,V^\bullet)\in\bbE_V\times\cF_{\oz'}$ such that $V^\bullet$ is $x$-stable. Then $\GL_V$ acts on $\tcF_{\oz'}$ naturally.

Let $\mathbf{1}_{\tcF_{\oz'}}$ be the constant sheaf on $\tcF_{\oz'}$.
Let $\pi_{\oz'}:\tcF_{\oz'}\ra\bbE_V$ be the first projection $(x,V^\bullet)\mapsto x$.
By the decomposition theorem of Beilinson, Bernstein and Deligne (\cite{BBD_Faisceaux_pervers}), the Lusztig sheaf (omitting the Tate twist)
$$L_{\oz'}:=(\pi_{\oz'})_!\mathbf{1}_{\tcF_{\oz'}}[\dime \tcF_{\oz'}]$$
is a $\GL_V$-equivariant semisimple complex on $\bbE_V$.

Let $\cD(\bbE_V)$ be the derived category of complexes of $l$-adic sheaves on $\bbE_V$.  Let $\cM_{\GL_V}(\bbE_V)$ be the full subcategory of $\cD(\bbE_V)$ consisting of $\GL_V$-equivariant perverse sheaves over $\bbE_V$. Let $\cP_V$ be the full subcategory of $\cM_{\GL_V}(\bbE_V)$ consisting of perverse sheaves which are direct sums of simple perverse sheaves $L$ that have the following property: $L[d]$ appears as a direct summand of $L_{\oz'}$ for some ${\oz'}\in\cS_{\nu}(Q)$. Let $\cQ_V$ be the full category of $\cD(\bbE_V)$ whose objects are the complexes that are isomorphic to finite direct sums of complexes of the form $L[d']$ for various simple perverse sheaves $L\in\cP_V$ and $d'\in\bbZ$.

By the construction of Lusztig (\cite{Lusztig_Introduction_to_quantum_groups}), the Grothendieck group $K(\cQ_V)$, which depends only on $|V|$, is a free $\cA$-module such that $v$ acts by shift. Write $K(\cQ_V)=K(\cQ)_{|V|}$. Let $K(\cQ)=\oplus_{\nu\in\bbN I}K(\cQ)_\nu$. Then $K(\cQ)$ becomes an $\cA$-algebra where the multiplication is defined by 
$$[L]\circ[L']=[\Ind_{T,W}^V(L\boxtimes L')].$$
Here $W\subset V, T=V/W, L\in\cQ_T, L'\in\cQ_W$ and $\Ind_{T,W}^V$ is the induction functor (see \cite{Lusztig_Introduction_to_quantum_groups}, Chapter 9).

\begin{lemma}\label{KQ isom to U+ for quiver}
Let $\bfU$ be the quantum group associated to the Cartan datum of $Q$. Let 
$\Xi_2:\bfU^+_\cA\ra K(\cQ)$ be an $\cA$-linear map such that 
$$\Xi_2(E_{i_1}^{(a_1)}*E_{i_2}^{(a_2)}*\dots*E_{i_t}^{(a_t)})=[L_{\oz'}]$$
for all ${\oz'}=((i_1,\dots,i_t),(a_1,\dots,a_t))\in\cS_\nu(Q)$ and $\nu\in\bbN I$. Then $\Xi_2$ is an isomorphism of $\cA$-algebras.
\end{lemma}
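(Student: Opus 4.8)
The plan is to recall Lusztig's construction \cite{Lusztig_Introduction_to_quantum_groups} and verify three things in turn: that $\Xi_2$ is a well-defined $\cA$-algebra homomorphism, that it is surjective, and that it is injective. For well-definedness I would first work over $\bbQ(v)$. Let $[\bfone_i]\in K(\cQ)_{\mathbf e_i}$ denote the class of the constant sheaf $\bfone_{\bbE_V}[\dime\bbE_V]$ for $V$ with $|V|=\mathbf e_i$ (here $\bbE_V$ is a point). Since $\bfU^+$ is the $\bbQ(v)$-algebra on the $E_i$ subject only to the quantum Serre relations, to get an algebra map $\phi\colon\bfU^+\to K(\cQ)\otimes_\cA\bbQ(v)$ with $E_i\mapsto[\bfone_i]$ it suffices to check that in $K(\cQ)$ one has $\sum_{p+p'=1-C_{ij}}(-1)^p\,[\bfone_i]^{\circ(p)}\circ[\bfone_j]\circ[\bfone_i]^{\circ(p')}=0$ for $i\neq j$. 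Only the vertices $i,j$ and the arrows between them enter, so this reduces to the rank-two case and is Lusztig's explicit computation with the relevant flag varieties (projective-space bundles and the decomposition theorem). One then checks $\phi(E_i^{(m)})=[L_{((i),(m))}]\in K(\cQ)$, and more generally, using associativity of the induction product $\circ$ on $K(\cQ)$ together with the fact that $\tcF_{\oz'}$ is assembled from successive rank-one flag constructions, that $\phi(E_{i_1}^{(a_1)}*\cdots*E_{i_t}^{(a_t)})=[L_{\oz'}]$ for $\oz'=((i_1,\dots,i_t),(a_1,\dots,a_t))$. Since such monomials span $\bfU^+_\cA$ over $\cA$, this shows $\phi(\bfU^+_\cA)\subseteq K(\cQ)$, and $\Xi_2:=\phi|_{\bfU^+_\cA}$ is the desired $\cA$-algebra homomorphism.

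For surjectivity I would invoke the definition of $\cP_V$ directly: every simple perverse sheaf $L\in\cP_V$ is, up to shift, a direct summand of some $L_{\oz'}$ with $\oz'\in\cS_{|V|}(Q)$, while $K(\cQ)_\nu$ is free over $\cA$ on the classes $[L]$, $L\in\cP_V$ of dimension $\nu$, with $v$ acting by shift. A triangularity argument — order the relevant $\oz'$ so that at each step the "new" simple summand of $L_{\oz'}$ has not yet appeared — then shows the $[L_{\oz'}]$ already span $K(\cQ)_\nu$ over $\cA$, so $\Xi_2$ is onto. For injectivity it is enough to prove the rank equality $\mathrm{rank}_\cA K(\cQ)_\nu=\dim_{\bbQ(v)}\bfU^+_\nu$ for every $\nu\in\bbN I$: then, in each degree $\nu$, $\Xi_2$ is a surjection of free $\cA$-modules of the same finite rank over the domain $\cA=\bbZ[v,v^{-1}]$, hence an isomorphism, and summing over $\nu$ finishes. (A surjective $\cA$-linear map between free modules of equal finite rank over a domain has torsion-free kernel of rank zero, so the kernel vanishes.)

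The rank equality is the crux and the only genuinely hard step; everything above is bookkeeping plus the classical rank-two Serre verification. I would establish it via the geometric inner product on $K(\cQ)$: one defines a pairing using the restriction functor adjoint to $\Ind$ together with $\GL_V$-equivariant Euler characteristics (pullback to a point), shows it is nondegenerate, and checks on the generators $[\bfone_i]$ that it matches, under the assignment above, Lusztig's nondegenerate form $(-,-)$ on $\bfU^+$; an algebra map intertwining two nondegenerate bilinear forms is forced to be injective, which (with surjectivity) yields the rank equality. Alternatively one can feed in the Hall-algebra side: by \cite{Green_Hall_algebras_hereditary_algebras_and_quantum_groups} one has $\bfU^+\cong\cC^*(\ggz)$ with $\ggz=Q$ trivially valued, pinning down $\dim\bfU^+_\nu$, and combines this with Lusztig's linear independence of the classes of simple perverse sheaves in $K(\cQ)$. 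Either way, the main obstacle is precisely proving that the simple perverse sheaves in $\cP_V$ are in bijective correspondence (degree by degree) with a basis of $\bfU^+$; once that is in hand the lemma follows.
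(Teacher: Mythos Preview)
The paper does not supply a proof of this lemma at all: it is stated as a known result, essentially a summary of Lusztig's construction in \cite{Lusztig_Introduction_to_quantum_groups}, and the paper immediately moves on. So there is no ``paper's own proof'' to compare against.

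Your outline is a faithful sketch of Lusztig's argument (Chapter~13 of \cite{Lusztig_Introduction_to_quantum_groups}): verify the Serre relations on the geometric side to get a well-defined map, obtain surjectivity from the very definition of $\cP_V$, and deduce injectivity from the nondegenerate geometric pairing matching the algebraic one. This is correct as an outline and is exactly the content the paper is quoting. One small remark: your triangularity argument for surjectivity is a bit more than is needed --- the definition of $\cP_V$ already says every simple $L\in\cP_V$ occurs (up to shift) as a summand of some $L_{\oz'}$, so $[L]$ lies in the $\cA$-span of the $[L_{\oz'}]$ directly, without any ordering argument.
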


\subsection{Perverse sheaves for valued quivers}\label{section: Perverse sheaves for valued quivers}

Fix $\ggz=\ggz(Q,\sz)$, $\ggz=(\bfI,\bfH,s,t;d,m)$ and $Q=(I,H,s,t)$. Since $\bfI$ is the set of $\sz$-orbits in $I$, there is a bijection $\eta':\bbZ\bfI\ra(\bbZ I)^\sz$ such that $\eta'(\bfi)=\sum_{i\in\bfi}i$.

For an I-graded vector space $V$ such that $|V|\in(\bbZ I)^\sz$, by abuse of notation, choose an automorphism $\sz:V\ra V$ such that for all $i\in I$, $\sz(V_i)=V_{\sz(i)}$ and $\sz^j|_{V_i}=1_{V_i}$ if $\sz^j(i)=i$. 
Then it naturally induces isomorphisms $\sz:\GL_V\ra \GL_V$ and $\sz:\bbE_V\ra \bbE_V$ such that $\sz(g.z)=\sz(g).\sz(z)$ for all $g\in\GL_V,z\in V$ and $\sz\circ x_h=\sz(x)_{\sz(h)}\circ \sz$ for all $x\in\bbE_V,h\in H$. 
Taking the inverse image under $\sz:\bbE_V\ra\bbE_V$ gives us a functor $\sz^*:\cD(\bbE_V)\ra \cD(\bbE_V)$. Let $T>1$ be minimal such that $\sz^T=1_V$, then $\sz^{*T}=1_{\cD(\bbE_V)}$.

Define a category $\tcQ_V$ consisting of objects of the form $(L,\phi)$, where $L\in\cQ_V$ and $\phi:\sz^*L\ra L$ is an isomorphism in $\cD(\bbE_V)$, such that 
$$\phi\circ\sz^*\phi\circ\dots\circ\sz^{*(T-1)}\phi=1_L.$$
The morphisms in $\tcQ_V$ are those in $\cQ_V$ commuting with $\phi$'s.
We say an object in $\tcQ_V$ is traceless if it is isomorphic to $(L\oplus \sz^*L\oplus\dots\oplus \sz^{*(t-1)}L,\phi)$ for some $L\in\cQ_V$ and $t>1$ dividing $T$ such that $\sz^{*t}L\cong L$, where $\phi:\sz^*L\oplus \sz^{*2}L\oplus\dots\oplus \sz^{*t}L\ra L\oplus \sz^*L\oplus\dots\oplus \sz^{*(t-1)}L$ carrying the summand $\sz^{*j}L$ onto the summand $\sz^{*j}L$ for $1\leq j\leq t-1$ and the summand $\sz^{*t}L$ onto the summand $L$.

The Grothendieck group $K(\tcQ_V)$ is the $\cA$-module generated by symbols $[L,\phi]$, one for each isomorphism class of objects $(L,\phi)$ in $\tcQ_V$, subject to the following relations:
\begin{enumerate}
\item[(1)] $[L,\phi]+[L',\phi']=[L\oplus L',\phi\oplus\phi']$;
\item[(2)] If $(L,\phi)$ is traceless ,then $[L,\phi]=0$;
\item[(3)] $v^j[L,\phi]=[L[j],\phi[j]], j\in\bbZ$.
\end{enumerate}
Also, $K(\tcQ_V)$ depends only on $|V|$, so we write $K(\tcQ_V)=K(\tcQ)_{|V|}$ and define $K(\tcQ)=\oplus_{\nu\in\bbN\bfI}K(\tcQ)_{\eta'(\nu)}$.
The multiplication on $K(\tcQ)$ is given by 
$$[L',\phi']\circ[L'',\phi'']=[\Ind^V_{T,W}(L'\boxtimes L''),\phi]$$
for $W\subset V, T=V/W, (L',\phi')\in\cQ_T,(L'',\phi'')\in\cQ_W$, where $\phi$ is the composition of ${\rm Ind}^V_{T,W}(\phi'\boxtimes \phi'')$ and the natural isomorphism $\sz^*{\rm Ind}^V_{T,W}(L'\boxtimes L'')\cong{\rm Ind}^V_{T,W}\sz^*(L'\boxtimes L'')$.
This makes $K(\tcQ)$ an $\cA$-algebra.

Consider $\oz=((\bfi_1,\dots,\bfi_t),(a_1,\dots,a_t))\in\cS_{\nu}(\ggz),\nu\in\bbN \bfI$,  we say that a flag of $I$-graded subspaces
$$V^\bullet=(V=V^0\supseteq V^1\supseteq\cdots\supseteq V^t=0)$$
is of type $\oz$ if $|V^{m-1}/V^m|=a_m\eta'(\bfi_m)$ for all $m=1,2,\cdots,t$. 
Let $\cF_{\oz}$ be the flag variety consisting of all flags of type $\oz$. 

Note that for each $\bfi\in\bfI$, vertices in $\bfi$ are disjoint to each other, $\cF_{\oz}$ is actually isomorphic to $\cF_{\hat\oz}$, where $\hat\oz\in\cS_{\eta'(\nu)}(Q)$ is introduced in Section \ref{section: monomials} and $\cF_\hoz$ is the flag variety of type $\hoz$ for quiver $Q$.

Similarly, define $\tcF_\oz$ to be the variety consisting of all pairs $(x,V^\bullet)\in\bbE_V\times\cF_\oz$ such that $V^\bullet$ is $x$-stable. Then $\sz:V\ra V$ naturally induces isomorphisms $\sz:\cF_\oz\ra\cF_\oz$ and $\sz:\tcF_\oz\ra\tcF_\oz$. The obvious isomorphism $\sz^*\bfone_{\tcF_\oz}\cong\bfone_{\tcF_\oz}$ induces a canonical isomorphism 
$$\sz^*(\pi_\oz)_!\bfone_{\tcF_\oz}=(\pi_\oz)_!(\sz^*\bfone_{\tcF_\oz})\cong(\pi_\oz)_!\bfone_{\tcF_\oz}.$$
By applying to both sides a shift by $\dime \tcF_\oz$, we obtain an isomorphism $\sz^* L_\oz\ra L_\oz$ denoted by $\phi_0$, where $L_\oz=(\pi_\oz)_!\bfone_{\tcF_\oz}[\dime \tcF_\oz]$ is isomorphic to $L_{\hoz}$. 

\begin{lemma}\label{KQ isom to U+ for v quiver}
Let $\bfU$ be the quantum group associated to the Cartan datum of $\ggz$. Let
$\Xi_2:\bfU^+_\cA\ra K(\tcQ)$ be an $\cA$-linear map such that 
$$\Xi_2(E_{\bfi_1}^{(a_1)}*E_{\bfi_2}^{(a_2)}*\dots*E_{\bfi_t}^{(a_t)})=[L_{\oz},\phi_0]$$
for all $\oz=((\bfi_1,\dots,\bfi_t),(a_1,\dots,a_t))\in\cS_\nu$ and $\nu\in\bbN \bfI$. Then $\Xi_2$ is an isomorphism of $\cA$-algebras.
\end{lemma}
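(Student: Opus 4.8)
The plan is to reduce Lemma \ref{KQ isom to U+ for v quiver} to the quiver case (Lemma \ref{KQ isom to U+ for quiver}) via Frobenius folding, combined with the known fact that $K(\tcQ)$ is the ``$\sz$-fixed'' analogue of $K(\cQ)$. First I would observe that the $\cA$-linear map $\Xi_2$ is well-defined and surjective: by the decomposition theorem, every object of $\tcQ_V$ is, modulo traceless objects and shifts, a sum of pairs $(L,\phi)$ with $L$ simple and $\sz^*L\cong L$; and the classes $[L_\oz,\phi_0]$ span $K(\tcQ)$ because the $L_\oz$ (equivalently $L_{\hoz}$) generate $\cQ_V$ and the isomorphism $\phi_0$ is canonically attached to them. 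Surjectivity then follows once we know each $[L,\phi]$ (for simple $\sz$-stable $L$) can be written as an $\cA$-combination of the $[L_\oz,\phi_0]$; this is exactly the statement that Lusztig's sheaves with their canonical Weil structures span, which one extracts from the triangularity between $\{[L_\oz,\phi_0]\}$ and the PBW/canonical classes.

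The core step is to identify $K(\tcQ)$ with $\bfU^+_\cA$ abstractly and check the map matches. Here I would invoke Lusztig's general result that for a Cartan datum arising from $(Q,\sz)$, the algebra $K(\tcQ)$ is canonically isomorphic to $\bfU^+_\cA$ with $E_\bfi^{(a)}$ going to the class of the simple perverse sheaf (with its canonical $\phi$) supported on the point-orbit $a\eta'(\bfi)$; this is precisely the content of \cite{Lusztig_Introduction_to_quantum_groups}, where $K(\tcQ)$ is defined and shown to carry the canonical basis $\cB''$. Thus there is a ring isomorphism $\Theta:\bfU^+_\cA\to K(\tcQ)$ sending $E_\bfi^{(a)}$ to $[L_{\oz_{\bfi,a}},\phi_0]$ for the one-term sequence $\oz_{\bfi,a}=((\bfi),(a))$. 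Since $r(E_\bfi)=E_\bfi\otimes 1+1\otimes E_\bfi$ and the comultiplication on $K(\tcQ)$ (dual to the induction product) is compatible with the one on $K(\cQ)$ under restriction of scalars along $\eta'$, the map $\Theta$ is an algebra homomorphism on the generators, hence everywhere. Then both $\Xi_2$ and $\Theta$ are $\cA$-linear, agree on the products $E_{\bfi_1}^{(a_1)}*\cdots*E_{\bfi_t}^{(a_t)}$ (because the induction functor is multiplicative: $[L_\oz,\phi_0]=[L_{\oz_{\bfi_1,a_1}},\phi_0]\circ\cdots\circ[L_{\oz_{\bfi_t,a_t}},\phi_0]$, by the standard $\tcF_\oz$ fibration and compatibility of $\phi_0$ with the induction isomorphism), and these products span $\bfU^+_\cA$. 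Therefore $\Xi_2=\Theta$ is a well-defined isomorphism.

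The remaining point — and the one requiring care — is the \emph{well-definedness} of $\Xi_2$: a priori the expression $E_{\bfi_1}^{(a_1)}*\cdots*E_{\bfi_t}^{(a_t)}$ is not a basis element, so one must check that any linear relation among such monomials in $\bfU^+_\cA$ maps to the corresponding relation among $[L_\oz,\phi_0]$ in $K(\tcQ)$. I would handle this exactly as in the quiver case: the multiplicativity $[L_\oz,\phi_0]=[L_{\oz_{\bfi_1,a_1}},\phi_0]\circ\cdots$ lets one define $\Xi_2$ first on the generators $E_\bfi^{(a)}$ and extend by the algebra structure, and then verify that the quantum Serre relations are satisfied in $K(\tcQ)$ — this last verification is the genuine obstacle and is the folding-compatible analogue of Lusztig's computation; it can be deduced by applying $\sz$-equivariance to the known Serre relations in $K(\cQ)$ for $Q$, using that $\sz^*$ commutes with induction and that a $\sz$-orbit sum of simple perverse sheaves vanishing in $K(\cQ)$ forces the corresponding $[L,\phi]$ to vanish in $K(\tcQ)$. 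Once the Serre relations hold, the universal property of $\bfU^+$ gives the algebra map $\Xi_2$, and the span/triangularity argument above upgrades it to an isomorphism over $\cA$.
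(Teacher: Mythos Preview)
The paper does not supply its own proof of this lemma: it is stated as a known result, implicitly citing Lusztig's construction in \cite{Lusztig_Introduction_to_quantum_groups} (Chapters 12--13), where $K(\tcQ)$ is defined and identified with $\bfU^+_\cA$ for a symmetrizable Cartan datum arising from $(Q,\sz)$. Your proposal is therefore not competing with a proof in the paper but rather sketching the standard argument behind the cited fact.

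Your sketch is broadly correct and follows Lusztig's line: define $\Xi_2$ on generators, use multiplicativity of induction to extend to monomials, verify the quantum Serre relations in $K(\tcQ)$, and then argue injectivity/surjectivity via the canonical basis. One point to tighten: you suggest deducing the Serre relations in $K(\tcQ)$ from those in $K(\cQ)$ by ``$\sz$-equivariance,'' saying a $\sz$-orbit sum vanishing in $K(\cQ)$ forces the corresponding $[L,\phi]$ to vanish in $K(\tcQ)$. This is not quite how the argument goes --- there is no natural forgetful map $K(\tcQ)\to K(\cQ)$ that makes this immediate, since $K(\tcQ)$ is defined with traceless objects killed and signs from $\phi$ entering. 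Lusztig's actual verification of the Serre relations in $K(\tcQ)$ (\cite{Lusztig_Introduction_to_quantum_groups}, 13.1--13.2) proceeds by a direct geometric computation of the relevant inductions on the $\sz$-stable side, not by reduction to $K(\cQ)$. If you want to cite rather than reprove, the cleanest route is simply to invoke \cite{Lusztig_Introduction_to_quantum_groups}, Theorem 13.2.11, which is exactly this lemma.
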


Easy to see that Lemma \ref{KQ isom to U+ for quiver} is a special case of  Lemma \ref{KQ isom to U+ for v quiver}.

\subsection{The canonical basis}\label{sec: canonical basis}

Let $\bfU$ be the quantum group of $\ggz$ of any type. The signed canonical basis $\cB$ is by definition 
the set of all $x\in\bfU^+_\cA$ such that $\bar{x}=x$ and $(x,x)\in 1+v^{-1}\bbZ[[v^{-1}]]$. By Theorem 14.2.3 in \cite{Lusztig_Introduction_to_quantum_groups}, for a subset $B\subset \cB$, $\cB=B\sqcup (-B)$ implies that $B$ is an $\cA$-basis of $\bfU^+_\cA$ and also a $\bbQ(v)$-basis of $\bfU^+$.

Let $D$ be the Vernier duality. The following lemma is from Proposition 12.5.2 in \cite{Lusztig_Introduction_to_quantum_groups}.
\begin{lemma}\label{lemma: CB is unique up to sign}
For each simple $L\in\cP_V$ such that $\sz^*L\cong L$, there exists an isomorphism $\phi:\sz^* L\ra L$ such that $(D(L),D(\phi)^{-1})\cong (L,\phi)$. Moreover, $\phi$ is unique up to a multiplication by $\pm 1$.
\end{lemma}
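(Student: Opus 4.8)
The plan is to reduce the statement to standard facts about Verdier duality on $\GL_V$-equivariant perverse sheaves and the diagonalizability of $\sigma^*$ on the relevant Hom-spaces. First I would recall that Verdier duality $D$ is a contravariant self-equivalence of $\cD(\bbE_V)$ which preserves $\cP_V$: indeed each simple $L\in\cP_V$ is $\IC(\overline{\cO},\cL)$ for an orbit $\cO$ and an irreducible $\GL_V$-equivariant local system $\cL$, hence self-dual up to a Tate twist/shift, and the normalization built into $L_{\oz'}$ (the shift by $\dime\tcF_{\oz'}$, which comes from a proper pushforward) makes $D(L)\cong L$ canonically for each simple $L\in\cP_V$. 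So fix such an $L$ with $\sigma^*L\cong L$; the hypothesis already supplies \emph{some} isomorphism, call it $\phi_1:\sigma^*L\to L$, and since $L$ is simple with $\End(L)=\overline{\bbQ}_l$ (equivariant simple perverse sheaf on an irreducible variety), any two such isomorphisms differ by a nonzero scalar. The condition $\phi\circ\sigma^*\phi\circ\cdots\circ\sigma^{*(T-1)}\phi=1_L$ forces the scalar to be a $T$-th root of unity, so there is a $T$-element torsor of admissible $\phi$'s; but for the present lemma we only need an isomorphism $\phi$ (not necessarily satisfying this compatibility) with the duality property, so I will work with arbitrary nonzero $\phi_1$ first and worry about the root-of-unity normalization at the end.

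Next I would set up the duality constraint. Given $\phi_1:\sigma^*L\to L$, applying $D$ and using the canonical isomorphisms $D\sigma^*\cong\sigma^*D$ and $D(L)\cong L$ yields an isomorphism $D(\phi_1)^{-1}:\sigma^*L\to L$, hence $D(\phi_1)^{-1}=c\,\phi_1$ for a unique $c\in\overline{\bbQ}_l^\times$. I claim a rescaling $\phi=a\phi_1$ satisfies $D(\phi)^{-1}=\phi$ iff $a^2=c^{-1}$ (here one must track how $D$ and inversion interact with the scalar $a$: $D$ is $\overline{\bbQ}_l$-linear on Homs, so $D(a\phi_1)=a\,D(\phi_1)$ and $(a\phi_1)^{-1}=a^{-1}\phi_1^{-1}$, giving $D(\phi)^{-1}=a^{2}D(\phi_1)^{-1}=a^2 c\,\phi_1$; setting this equal to $a\phi_1\cdot a^{0}$... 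I will be careful to get the exponent right). Granting the bookkeeping, the equation $a^2=c^{-1}$ has exactly two solutions $\pm a_0$, which is precisely the "unique up to $\pm1$" assertion. Existence of a solution in $\overline{\bbQ}_l$ is automatic since $\overline{\bbQ}_l$ is algebraically closed; uniqueness up to sign follows because the scalars $a$ realizing the duality identity form a torsor under $\{\pm1\}=\mu_2(\overline{\bbQ}_l)$.

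The main obstacle I anticipate is the scalar bookkeeping in the previous paragraph: one has to pin down precisely how Verdier duality interacts with the normalization shift, with $\sigma^*$, and with the passage from $\phi$ to $D(\phi)^{-1}$, so that the relevant equation is genuinely $a^2=(\text{scalar})$ and not, say, $a\bar a=(\text{scalar})$ or something with a sign twist that would change the count of solutions. The cleanest route is to invoke Proposition 12.5.2 of \cite{Lusztig_Introduction_to_quantum_groups} directly, which states exactly this for the simply-laced (quiver) setting; since $\sigma^*$ is an auto-equivalence commuting with everything in sight and $\cP_V$, $D$ are defined identically, the argument transfers verbatim. I would therefore present the proof as: recall $D$ preserves $\cP_V$ and acts as identity on isomorphism classes; observe $\End(L)=\overline{\bbQ}_l$; deduce the set of isomorphisms $\sigma^*L\to L$ is a single $\overline{\bbQ}_l^\times$-orbit; and conclude by the scalar equation that the subset satisfying $(D(L),D(\phi)^{-1})\cong(L,\phi)$ is a nonempty $\mu_2$-torsor, citing \cite{Lusztig_Introduction_to_quantum_groups} for the identical computation. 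Finally, if one additionally wants $\phi$ to satisfy the cocycle identity $\phi\circ\sigma^*\phi\circ\cdots=1_L$ (making $(L,\phi)\in\tcQ_V$), one checks the two signs $\pm a_0$ both satisfy it, or exactly one does, depending on the parity of $T$ — but the lemma as stated only asks for an isomorphism, so this is a remark rather than part of the proof.
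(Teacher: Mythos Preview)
The paper does not supply its own proof of this lemma: it is stated as ``The following lemma is from Proposition 12.5.2 in \cite{Lusztig_Introduction_to_quantum_groups}'' and left at that. Your proposal is precisely a sketch of Lusztig's argument there (simple $L$ has $\End(L)=\overline{\bbQ}_l$, so all $\phi:\sz^*L\to L$ differ by a scalar, and the Verdier-duality constraint cuts this $\overline{\bbQ}_l^\times$-torsor down to a $\mu_2$-torsor), and you even say explicitly that the cleanest route is to invoke that proposition. So your approach and the paper's approach coincide.

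One small correction to your bookkeeping, since you flagged it yourself: from $D(a\phi_1)=a\,D(\phi_1)$ one gets $D(a\phi_1)^{-1}=a^{-1}D(\phi_1)^{-1}$, not $a^{2}D(\phi_1)^{-1}$. After identifying $D(L)\cong L$ the condition $(D(L),D(\phi)^{-1})\cong(L,\phi)$ becomes $a^{-1}c\,\phi_1=a\phi_1$, i.e.\ $a^2=c$. This does not change your conclusion: it is still a quadratic equation in $a$ over the algebraically closed field $\overline{\bbQ}_l$, with exactly the two solutions $\pm a_0$, giving existence and uniqueness up to sign.
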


By Lusztig, under the isomorphism $\Xi_2: \bfU^+_\cA\cong K(\tcQ)$, the bar involution $x\mapsto \bar{x}$ coincides with the Verdier duality $[L,\phi]\mapsto[D(L),D(\phi)^{-1}]$, then 
the set $\cB''=\Xi(\cB)$ consists of all elements of the form $[L,\phi]$ where $(L,\phi)$ are as above lemma.
 
Given  $\bfi\in\bfI$ and $n\geq 0$, let $\cB_{\bfi;\geq n}=\cB\cap E_\bfi^n\bfU^+$ and $\cB_{\bfi;n}=\cB_{\bfi;\geq n}-\cB_{\bfi;\geq n+1}$. The canonical basis $\bfB\subset \cB$ is defined as 
$$\bfB=\{b\in\cB|\sgn(b)=1\},$$
where $\sgn:\cB\ra\{\pm 1\}$ is defined by properties below:
\begin{enumerate}
\item[(1)] $\sgn(1)=1$.
\item[(2)] For $b'\in\cB_{\bfi;0}$ and $n\geq 0$, there is a unique $b\in\cB_{\bfi;n}$ such that $E_\bfi^{(n)}b'=b$ plus an $\cA$-linear combination of elements in $\cB_{\bfi;\geq n+1}$. Then $\sgn(b')=\sgn(b)$.
\end{enumerate}
Also, the corresponding $b\mapsto b'$ is denoted by $\varphi_{\bfi;n}:\cB_{\bfi;0}\ra\cB_{\bfi;n}$, which is a bijection of sets.
We shall see that $\bfB$ is compatible with Kashiwara's operators in Section \ref{Sec: Kashi}.

\subsection{Simple perverse sheaves for tame quivers}\label{sec Simple perverse sheaves for tame quivers}

Now assume $\ggz=\ggz(Q,\sz)$ is tame.
We denote by $\cG(Q)$ (respectively, $\cG'(Q),\cG^a(Q),\tcG(Q)$) the index set $\cG$ (respectively, $\cG',\cG^a,\tcG$)  introduced in Section \ref{index set} for quiver $Q$ instead of the valued quiver $\ggz$.
Let $\bbE_V$ be the representation variety of $Q$ over an $I$-graded space $V$ and denote by $M(x)$ the representation $(V,x)$ corresponding to $x\in\bbE_V$.
For $(\bfd,m)\in\tcG(Q)$, let
\begin{eqnarray}
&X(\bfd,m)=\{x\in\bbE_V|M(x)\cong M(\bfd)\oplus Z_1\oplus\dots\oplus Z_m, \notag\\
&\text{for some simple objects}\, Z_1,\dots,Z_m\, \text{in different homogeneous regular tubes}\},\notag
\end{eqnarray}
$$Y(\bfd,m)=\{x\in\bbE_V|M(x)\cong M(\bfd)\oplus Z, \,\text{for some}\, Z\in\cR_{\rm h}, \udim Z=m\dz_Q\}.$$
Here $\dz_Q$ is the minimal imaginary root in $\bbN I$.
Obviously, we have $X(\bfd,m)\subset Y(\bfd,m)$.

By Proposition 4.14 in \cite{Lusztig_Affine_quivers_and_canonical_bases}, $X(\bfd,m)$ is a smooth, locally closed, irreducible $\GL_V$-invariant subvariety of $\bbE_V$. For $\lz\vdash m$, let $\cL_\lz$ be the irreducible local system on $X(\bfd,m)$ such that the monodromy representation at the stalk of $x\in X(\bfd,m)$ induced by $\cL_\lz$ is the Specht module $S_\lz$ (see \cite{Lusztig_Affine_quivers_and_canonical_bases} and \cite{Li_Notes_on_affine_canonical_and_monomial_bases}). 

For each $(\bfd,t_\lz)\in\cG^a(Q)$, let $\IC(\bfd,t_\lz)$ be the intersection cohomology sheaf on $\bbE_V$ whose support is the closure of  $X(\bfd,|\lz|)$ and whose restriction to $X(\bfd,|\lz|)$ is $\cL_\lz$ up to shift. Then any simple perverse sheaf in $\cP_V$ is isomorphic to $\IC(\bfd,t_\lz)$ for some $(\bfd,t_\lz)\in\cG^a(Q)$.

Assume $|V|\in(\bbZ I)^\sz$, and fix an automorphism $\sz:V\ra V$, then the induced functor $\sz^*:\cD(\bbE_V)\ra \cD(\bbE_V)$ maps $\IC(\bfd,t_\lz)$ to $\IC(\sz^{-1}(\bfd),t_\lz)$, where $\sz:\cG'(Q)\ra\cG'(Q)$ is a bijection such that $M(\sz(\bfd))=M(\bfd)^{[1]}$. Therefore any $[L,\phi]\in\cB$ must satisfy $L\cong \IC(\bfd,t_\lz)$ for some $(\bfd,t_\lz)\in\cG^a(Q)$ such that $\sz(\bfd)=\bfd$.

For each $(\bfd,t_\lz)$ such that $\sz(\bfd)=\bfd$, let $\tphi: \sz^*\IC(\bfd,t_\lz)\ra \IC(\bfd,t_\lz)$  be the unique isomorphism such that $(D(\IC(\bfd,t_\lz)),D(\tphi)^{-1})\cong (\IC(\bfd,t_\lz),\tphi)$ and $\epsilon([\IC(\bfd,t_\lz),\tphi])=1$. 
The isomorphism $\tphi$ is denoted by $\az_P$ for $P=\IC(\bfd,t_\lz)$ in \cite{Lusztig_Canonical_bases_and_Hall_algebras}, which is called the canonical isomorphism.
Then by Lusztig, we have the following lemma which gives a geometric interpretation of the affine canonical basis.

\begin{lemma}
In $K(\tcQ)$, let 
$$\bfB''=\{[\IC(\bfd,t_\lz),\tphi]|(\bfd,t_\lz)\in\cG^a(Q),\sz(\bfd)=\bfd\},$$ 
then $\Xi_2(\bfB)=\bfB''$.
\end{lemma}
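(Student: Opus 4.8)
The plan is to reduce the statement to the characterisation of the canonical basis via Kashiwara's operators recalled in Section \ref{sec: canonical basis}, after making $\cB''=\Xi_2(\cB)$ explicit and then matching the two sign normalisations. First I would describe $\cB''$: by Lemma \ref{KQ isom to U+ for v quiver}, $\Xi_2\colon\bfU^+_\cA\to K(\tcQ)$ is an algebra isomorphism and, by Lusztig, it intertwines the bar involution with Verdier duality, so by Lemma \ref{lemma: CB is unique up to sign} and the definition of $\cB$ the set $\cB''$ is exactly the collection of classes $[L,\phi]$ with $L\in\cP_V$ simple, $\sz^*L\cong L$ and $(D(L),D(\phi)^{-1})\cong(L,\phi)$. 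Since $\ggz=\ggz(Q,\sz)$ is tame, Section \ref{sec Simple perverse sheaves for tame quivers} shows such an $L$ is $\IC(\bfd,t_\lz)$ for a unique $(\bfd,t_\lz)\in\cG^a(Q)$ with $\sz(\bfd)=\bfd$ and that the admissible $\phi$ is $\pm\tphi$; since $[L,-\phi]=-[L,\phi]$ in $K(\tcQ)$, this gives $\cB''=\bfB''\sqcup(-\bfB'')$, the elements of $\bfB''$ being pairwise distinct and no one the negative of another.

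Next, as $\cB=\bfB\sqcup(-\bfB)$ and $\Xi_2$ restricts to a bijection $\cB\to\cB''$, the set $\Xi_2(\bfB)$ can differ from $\bfB''$ only by a sign on each element, so it remains to match signs. Let $\epsilon\colon\cB''\to\{\pm1\}$ be the function with $\epsilon(x)=1$ iff $x\in\bfB''$; I would show $\epsilon\circ\Xi_2=\sgn$. Because $\sgn$ is the unique function on $\cB$ satisfying properties (1) and (2) of Section \ref{sec: canonical basis}, it suffices to verify these two properties for $\epsilon\circ\Xi_2$. Property (1) holds since $\Xi_2(1)$ is the unit of $K(\tcQ)$, namely the class of the constant sheaf on a point with its tautological isomorphism, which lies in $\bfB''$. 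For property (2), fix $\bfi\in\bfI$, $n\geq0$, $b'\in\cB_{\bfi;0}$, and let $b\in\cB_{\bfi;n}$ satisfy $E_\bfi^{(n)}b'=b$ plus an $\cA$-combination of elements of $\cB_{\bfi;\geq n+1}$; put $\varepsilon'=\epsilon(\Xi_2(b'))$ and write $\Xi_2(b')=\varepsilon'[L',\tphi']$ with $L'=\IC(\bfd',t_{\lz'})$ and $\tphi'$ its canonical isomorphism. Also $\Xi_2(E_\bfi^{(n)})=[P_n,\psi_n]$ with $P_n$ a shifted constant sheaf and $\psi_n$ its tautological isomorphism, so $\epsilon([P_n,\psi_n])=1$. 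As $\Xi_2$ is an algebra map, $\Xi_2(E_\bfi^{(n)}b')=\varepsilon'\,[\Ind^V_{T,W}(P_n\boxtimes L'),\phi]$ for the induced isomorphism $\phi$; this is a semisimple complex, and modulo the classes corresponding to $\cB_{\bfi;\geq n+1}$ the only $\IC$-summand surviving is $\IC(\bfd,t_\lz)$, where $\Xi_2(b)=\pm[\IC(\bfd,t_\lz),\tphi]$. By the positivity of the induction functor — inducing $\IC$-sheaves equipped with their canonical isomorphisms (or the constant sheaf with its tautological isomorphism) yields an $\bbN[v,v^{-1}]$-combination of $\IC$-sheaves with their canonical isomorphisms — the coefficient of $[\IC(\bfd,t_\lz),\tphi]$ in $\Xi_2(E_\bfi^{(n)}b')$ equals $\varepsilon'$ times a nonzero element of $\bbN[v,v^{-1}]$; it also equals $\epsilon(\Xi_2(b))$, since $[\IC(\bfd,t_\lz),\tphi]$ does not occur in the $\cB_{\bfi;\geq n+1}$-part. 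As both are scalars $\pm1$, we get $\epsilon(\Xi_2(b))=\varepsilon'=\epsilon(\Xi_2(b'))$, which is property (2). Hence $\epsilon\circ\Xi_2=\sgn$, and therefore $\Xi_2(\bfB)=\Xi_2(\sgn^{-1}(1))=\epsilon^{-1}(1)=\bfB''$.

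The main obstacle is the positivity input used in property (2): that inducing the $\IC$-sheaf of a stratum closure carrying its canonical isomorphism $\tphi$ against the constant sheaf on a point contributes, in its leading term, that stratum's $\IC$-sheaf with its canonical isomorphism and with a positive leading coefficient. In the symmetric (unfolded) case this is part of Lusztig's theory; in the present valued case it is the Frobenius-equivariant analogue, which I would either quote from \cite{Lusztig_Canonical_bases_and_Hall_algebras} or deduce by passing to the trace-of-Frobenius functions over the finite fields $k\in\cK$ and using that $D$ commutes with $\Ind^V_{T,W}$ and with $\sz^*$. Everything else is formal, given Lemmas \ref{KQ isom to U+ for v quiver} and \ref{lemma: CB is unique up to sign} and the classification of simple perverse sheaves recalled in Section \ref{sec Simple perverse sheaves for tame quivers}.
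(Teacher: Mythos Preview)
The paper does not give a proof of this lemma; it attributes the result to Lusztig. More to the point, in the paper's setup the isomorphism $\tphi$ is \emph{defined} by the two conditions: Verdier self-duality and $\epsilon([\IC(\bfd,t_\lz),\tphi])=1$, where implicitly $\epsilon=\sgn\circ\Xi_2^{-1}$. With this definition the lemma is tautological: $\bfB''$ is by construction the set of images of signed canonical basis elements with sign $+1$, which is exactly $\Xi_2(\bfB)$. The paper then remarks that this $\tphi$ coincides with Lusztig's intrinsic canonical isomorphism $\az_P$ from \cite{Lusztig_Canonical_bases_and_Hall_algebras}, but does not argue this identification.

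Your proposal therefore does more than the paper: you take $\tphi$ to be Lusztig's $\az_P$ (an intrinsic object) and then verify that the resulting sign function satisfies the defining recursion of $\sgn$. Your outline is correct and is essentially a sketch of Lusztig's own argument (compare Chapter~14 of \cite{Lusztig_Introduction_to_quantum_groups}, particularly 14.4). The positivity you invoke for property~(2)---that inducing objects equipped with their canonical isomorphisms yields an $\bbN[v,v^{-1}]$-combination of the same---is precisely the deep ingredient; it comes down to positivity of traces of the twisted Frobenius on $\IC$-stalks, as in \cite{Lusztig_Canonical_bases_and_Hall_algebras}. You are right to flag it as the main obstacle, and once granted the rest of your argument goes through. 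So your proof is valid, just considerably more substantive than the paper's treatment, which packages the sign choice into the definition of $\tphi$ and defers the content to Lusztig.
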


Note that for each $\bfc\in\cG'(\ggz)$, by Section \ref{sec Frob fold}, there is a unique $\bfd\in\cG'(Q)$ such that
$\sz(\bfd)=\bfd$ and $M_k(\bfc)\in\Rep_k(\ggz)$ is isomorphic to the Frobenius folding of $M_{\bar{k}}(\bfd)\in\Rep_{\bar{k}}Q$. 
Denote by $\eta$ the injection $\bfc\mapsto \bfd$. 
Easy to check that $\eta$ maps aperiodic indices to aperiodic ones. 
By abuse of notation, we also denote by $\eta$ the $\bbZ$-linear map $\bbZ \bfI\ra\bbZ I, \bfi\mapsto\sum_{i\in\bfi}i$. 
Easy to see that $\udim M_{\bar{k}}(\eta(\bfc))=\eta(\udim M_k(\bfc))$ for $\bfc\in\cG'(\ggz)$.

Denote $B(\bfc,t_\lz)=[\IC(\eta(\bfc),t_\lz),\tphi]$. Then $$\bfB''=\{B(\bfc,t_\lz)|(\bfc,t_\lz)\in\cG^a(\ggz)\}$$

\subsection{Trace map}

Consider the $\bbQ$-algebra homomorphism $\Phi_k: K(\tcQ)\cong \bfU^+_\cA\cong \cC^*_\cA(\ggz)\hookrightarrow \cH^*(\ggz)\ra\cH^*_k(\ggz)$. 
By \cite{Lusztig_Canonical_bases_and_Hall_algebras}, for any $[L,\phi]\in\cB$, the image $\Phi_k([L,\phi])$ can be computed as follow.

Let $k=\bbF_q$ and the ground field be $\bar{k}$. 
Given a Frobenius map $F:V\ra V$ such that $F(V_i)=V_i$, then there is a naturally induced Frobenius map $F:\bbE_V\ra\bbE_V$ such that $F(x)_h(F(v))=F(x_h(v))$ for any $x\in\bbE_V, h\in H$ and $v\in V_{s(h)}$. 
Then for each simple $L'\in\cP_V$, we have an canonical isomorphism $\xi_{L'}:F^*L'\ra L'$ given by Theorem 5.2 in \cite{Lusztig_Canonical_bases_and_Hall_algebras}.

For  $[L,\phi]\in\cB$, consider the composition
 $$(\sz F)^*L=F^*\sz^*L\xrightarrow{F^*(\phi)} F^*L\xrightarrow{\xi_L}L.$$ 
This induces an automorphism $$H^j(L)|_x=H^j(L)|_{\sz F(x)}=H^j((\sz F)^*L)|_x\cong H^j(L)|_x$$
for all $j\in\bbZ$ and $x\in\bbE_V^{\sz F}$,
where $H^j(L)|_x$ is the stalk at $x$ of the $j$-th cohomology sheaf of $L$.
Taking the trace of this automorphism times $(-1)^j$ and summing over $j\in\bbZ$, we obtain an element denoted by $\chi_{L,\phi,k}(x)\in\overline{\bbQ_l}\cong \bbC$.

Note that each $x\in\bbE_V^{\sz F}$ induces a representation in $\Rep_k \ggz$. By identifying the characteristic function $\chi_{[M]}$ of $M\in \Rep_k \ggz$ with $\lan M\ran$ in $\cH^*_k(\ggz)$, we have $\chi_{L,\phi,k}=\Phi_k([L,\phi])$.

\section{The signed canonical basis}

\subsection{Main theorem}

Let $\bfB'=\{C(\bfc,t_\lz)|(\bfc,t_\lz)\in\cG^a(\ggz)\}$ be the bar-invariant basis of $\cC^*_{\cA'}(\ggz)$ constructed in Section \ref{sec bar-inv basis} and $\bfB$ be the affine canonical basis of $\bfU^+$ introduced in Section \ref{sec Simple perverse sheaves for tame quivers}.
Let $\cB=\bfB\sqcup{-\bfB}$ be the signed canonical basis and $\cB'=\bfB'\sqcup{-\bfB'}$.
Denote the isomorphism $\Xi=\Xi_2\circ(\Xi_1^{-1}):\cC^*_\cA\ra K(\tcQ)$.

\begin{theorem}\label{main theorem}
Under the isomorphism $\Xi:\cC^*_\cA\ra K(\tcQ)$, we have 
$$\Xi(\cB')=\cB''.$$
Moreover, the isomorphism $\Xi$ maps $\pm C(\bfc,t_\lz)$ to $\pm B(\bfc,t_\lz)$.
\end{theorem}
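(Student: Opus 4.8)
The plan is to reduce the theorem to its simply-laced counterpart (Theorem 1.1 in the excerpt) via the Frobenius-folding formalism. First I would observe that the monomial, PBW and bar-invariant bases for the valued quiver $\ggz = \ggz(Q,\sz)$ are built from the very same combinatorial data ($\cG^a(\ggz) \hookrightarrow \cG^a(Q)$ through $\eta$, together with the orderings $\preceq$ and $\leq_G$, which are insensitive to the ground field), and that the map $\fkm^\oz \mapsto \fkm^{\hat\oz}$ sends monomials for $\cC^*(\ggz)$ into monomials for $\cC^*(Q)$. Combined with Lemma \ref{KQ isom to U+ for quiver} being a special case of Lemma \ref{KQ isom to U+ for v quiver}, this lets me set up a commuting square relating $\Xi$ for $\ggz$ and $\Xi_2$ for $Q$. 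The key point is that $C(\bfc,t_\lz)$ for $\ggz$ is characterized (by Proposition \ref{relation bt C and E}) by bar-invariance plus the triangularity relation to the PBW basis $E(\bfc,t_\lz)$ with coefficients in $v^{-1}\bbQ[v^{-1}]$, and the same characterization holds for the simply-laced $Q$; so if I can match the PBW bases up to the combinatorics, $\Xi(C(\bfc,t_\lz))$ must be the geometric bar-invariant element supported on $\overline{X(\eta(\bfc),|\lz|)}$.

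Next I would carry out the geometric identification using the trace-map machinery of Section 7 (the $\Phi_k$ of Lusztig). The heart of the argument is: $B(\bfc,t_\lz) = [\IC(\eta(\bfc),t_\lz),\tphi]$ has, by Theorem 1.1 and \cite{Lusztig_Canonical_bases_and_Hall_algebras}, a known image $\Phi_k(B(\bfc,t_\lz)) \in \cH^*_k(\ggz)$ computed as an alternating trace of Frobenius $\sz F$ on the stalks of $\IC(\eta(\bfc),t_\lz)$; because $\IC(\eta(\bfc),t_\lz)$ is the IC-sheaf on $\overline{X(\eta(\bfc),|\lz|)}$ with the Specht local system on the open stratum, this trace is supported (modulo lower terms in $\preceq$) on the $\sz F$-fixed points corresponding to $M_k(\bfc) \oplus (\text{homogeneous regular of degree } |\lz|)$, with leading coefficient governed by the Green/Lusztig polynomials for the cyclic-quiver IC-sheaves already treated in the simply-laced case. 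So $\Xi^{-1}(B(\bfc,t_\lz))$ is bar-invariant (Verdier duality $\leftrightarrow$ bar), lies in $\cC^*$, and satisfies $\Xi^{-1}(B(\bfc,t_\lz)) - E(\bfc,t_\lz) \in \sum_{(\bfc',t_{\lz'}) \prec (\bfc,t_\lz)} v^{-1}\bbZ[v^{-1}]\, E(\bfc',t_{\lz'})$; by the uniqueness in Proposition \ref{relation bt C and E} this forces $\Xi^{-1}(B(\bfc,t_\lz)) = C(\bfc,t_\lz)$, hence $\Xi(C(\bfc,t_\lz)) = B(\bfc,t_\lz)$. Running the same argument with signs, and noting $\cB = \bfB \sqcup (-\bfB)$, $\cB' = \bfB' \sqcup (-\bfB')$, gives $\Xi(\cB') = \cB''$ and the $\pm$ statement.

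The main obstacle I expect is controlling how the non-homogeneous tubes of $\ggz$ arise as Frobenius foldings of (possibly several, possibly higher-rank) non-homogeneous tubes of $Q$ — cases (a) and (b) of Lemma \ref{how nh tubes are F-folded}. In case (b) a rank-$r$ tube of $\ggz$ comes from a rank-$2r$ tube of $Q$, so the match between aperiodic cyclic multisegments $\Pi_r^a$ for $\ggz$ and $\Pi_{2r}^a$ for $Q$ is not a literal identity of index sets; one must check that $\eta$ on $\cG_0^a$ is compatible with the generic-extension monoid structure $\diamond$ and with the $\leq_G$ order, and that the trace computation for the corresponding IC-sheaves restricted along this folding still yields the right leading term (this is exactly where the cyclic-quiver canonical-basis results of \cite{Deng_Du_Xiao_Generic_extensions_and_canonical_bases_for_cyclic_quivers}, already used in Section 5 to build $E(\bfc_0,t_\lz)$, do the work). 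A secondary technical point is the choice of the canonical isomorphism $\tphi = \az_P$ so that $\epsilon([\IC,\tphi]) = 1$ is consistent with the trace-function normalization $\lan M \ran = v_k^{-\dim + \dim\End}[M]$ used to define $\cH^0(\ggz)$; this is a bookkeeping matter but must be pinned down to get the unsigned statement in types $\tilde B_n, \tilde C_n$ rather than merely the signed one. Once these compatibilities are in place, everything else is a transcription of the simply-laced proof in \cite{XXZ_Tame_quivers_and_affine_bases_I}.
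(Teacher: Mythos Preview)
Your overall strategy matches the paper's: pass from $\fkm^{\oz(\bfc,t_\lz)}$ to $\fkm^{\hat\oz(\bfc,t_\lz)}$, show the latter has leading term $N_Q(\eta(\bfc),t_\lz)$ in $\cH^0(Q)$, deduce the sheaf decomposition $L_{\oz(\bfc,t_\lz)}\cong\IC(\eta(\bfc),t_\lz)\oplus L'$ with $L'$ supported on lower strata, and then use almost-orthogonality plus bar-invariance to conclude. You have also correctly singled out the genuine technical obstacle: in case~(b) of Lemma~\ref{how nh tubes are F-folded} one must verify that $\eta:\Pi_r^a\to\Pi_{2r}^a$ is a monoid map for $\diamond$, and the paper checks this explicitly.

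There is, however, a real gap in your second paragraph. You assert that $\Xi^{-1}(B(\bfc,t_\lz)) - E(\bfc,t_\lz)$ lies in the $v^{-1}\bbZ[v^{-1}]$-span of lower $E$'s, i.e.\ that the leading coefficient is exactly $+1$. The argument you sketch does not give this. When one restricts the canonical isomorphism $\phi_0:\sz^*L_{\oz}\to L_{\oz}$ to the summand $\IC(\eta(\bfc),t_\lz)$, the resulting $\tilde\phi_0$ is some isomorphism satisfying the Verdier-duality condition, but by Lemma~\ref{lemma: CB is unique up to sign} such a $\phi$ is determined only up to sign; there is no a priori reason $\tilde\phi_0=\tphi$ rather than $-\tphi$. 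Likewise the trace-map computation you invoke depends on $\tphi$, which is defined only implicitly via $\sgn=1$, so you cannot evaluate it without already knowing the answer. The paper therefore only obtains $\Xi^{-1}(B(\bfc,t_\lz)) = \pm E(\bfc,t_\lz) + (\text{lower})$, and hence $\Xi^{-1}(B(\bfc,t_\lz)) = \pm C(\bfc,t_\lz)$; this is precisely why the theorem is stated with $\pm$. Your description of the sign as ``a bookkeeping matter'' is mistaken: removing it requires the entire Kashiwara-operator and braid-group machinery of Sections~8--9, and even then succeeds only for $\tilde B_n,\tilde C_n$. Once you weaken your claim to leading coefficient $\pm 1$, the rest of your argument is the paper's proof.
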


\subsection{Proof of Theorem \ref{main theorem}}

\begin{lemma}
For $(\bfc,t_\lz)\in\cG^a(\ggz)$, we have
$$L_{\oz(\bfc,t_\lz)}\cong\IC(\eta(\bfc),t_\lz)\oplus L',$$
where $\supp L'\subset \overline{X(\eta(\bfc),|\lz|)}\setminus X(\eta(\bfc),|\lz|)$.
\end{lemma}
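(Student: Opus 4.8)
The plan is to analyze the monomial $\fkm^{\oz(\bfc,t_\lz)}$ on the geometric side and identify its semisimple complex $L_{\oz(\bfc,t_\lz)}$ as a direct sum of simple perverse sheaves, then extract the top term. First I would translate the monomial $\fkm^{\oz(\bfc,t_\lz)} = \fkm^{\oz(\bfc_-)}*\fkm^{\oz(\bfc_0)}*\fkm^{\oz(t_\lz)}*\fkm^{\oz(\bfc_+)}$ into the corresponding sequence $\oz(\bfc,t_\lz) \in \cS(\ggz)$ (equivalently $\hoz(\bfc,t_\lz)\in\cS(Q)$ via the injection of Section~\ref{section: monomials}), so that under $\Xi_2$ the image of $\fkm^{\oz(\bfc,t_\lz)}$ is $[L_{\oz(\bfc,t_\lz)},\phi_0]$ by Lemma~\ref{KQ isom to U+ for v quiver}. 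By the decomposition theorem, $L_{\oz(\bfc,t_\lz)}$ is a semisimple $\GL_V$-equivariant complex, hence a direct sum of shifts of simple perverse sheaves in $\cP_V$, each of which (by Section~\ref{sec Simple perverse sheaves for tame quivers}) is of the form $\IC(\bfd',t_{\lz'})$ for some $(\bfd',t_{\lz'})\in\cG^a(Q)$.

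Second, I would pin down which $\IC$'s occur and with what multiplicity by comparing supports and by using the triangularity already established. On the Hall-algebra side, Proposition~\ref{monomial basis} says $\fkm^{\oz(\bfc,t_\lz)} = N(\bfc,t_\lz) + \sum_{(\bfc',t_{\lz'})\prec(\bfc,t_\lz)}\phi\,N(\bfc',t_{\lz'})$, and transporting this through $\Xi$ and Proposition~\ref{relation bt C and E} / Corollary~\ref{relation bt C and m}, the monomial is $C(\bfc,t_\lz)$ plus a $\preceq$-lower combination of $C(\bfc',t_{\lz'})$'s; geometrically (assuming the main theorem's triangular comparison, or just the support/degree bookkeeping) this forces $L_{\oz(\bfc,t_\lz)}$ to contain $\IC(\eta(\bfc),t_\lz)$ with multiplicity exactly one, appearing in degree $0$ (the summand is $\IC(\eta(\bfc),t_\lz)$, not a nontrivial shift of it), because $X(\eta(\bfc),|\lz|)$ is precisely the open stratum hit by the generic flag of type $\hoz(\bfc,t_\lz)$ and the local system $\cL_\lz$ is read off from the monodromy. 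The remaining summands $L'$ are then all the other $\IC(\bfd',t_{\lz'})[\cdot]$, and the key point is that every such $\bfd'$ satisfies $\overline{X(\bfd',|\lz'|)}\subsetneq\overline{X(\eta(\bfc),|\lz|)}$ with $X(\bfd',|\lz'|)$ disjoint from $X(\eta(\bfc),|\lz|)$, so $\supp L'\subset\overline{X(\eta(\bfc),|\lz|)}\setminus X(\eta(\bfc),|\lz|)$. This is exactly the simply-laced statement, so I would cite the corresponding lemma of \cite{XXZ_Tame_quivers_and_affine_bases_I} (applied to $Q$, since $\oz(\bfc,t_\lz)$ folds to a monomial $\hoz(\bfc,t_\lz)$ for $Q$) and note that the conclusion is $\sz$-equivariant, hence descends.

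The main obstacle I expect is verifying that the generic point of $\overline{X(\eta(\bfc),|\lz|)}$ genuinely lies in the image of $\pi_{\hoz(\bfc,t_\lz)}$ with the fiber giving exactly $\IC$ with the local system $\cL_\lz$ (and multiplicity one), i.e.\ that the chosen monomial is "adapted" to the stratum $X(\eta(\bfc),|\lz|)$. Concretely this requires: (i) the preprojective/preinjective factors $\fkm^{\oz(\bfc_\pm)}$ build $M(\bfc_\pm)$ generically, which follows from Lemmas~\ref{monomial for proj}–\ref{monomial for inj} and the $\Hom/\Ext$ vanishing; (ii) the nonhomogeneous factor $\fkm^{\oz(\bfc_0)}$ realizes $M(\bfc_0)$ via generic extensions in the tubes, using the $\diamond$-product of \cite{Deng_Du_Xiao_Generic_extensions_and_canonical_bases_for_cyclic_quivers} and Lemma~\ref{monomial for nh}; and (iii) the homogeneous factor $\fkm^{\oz(t_\lz)}$ produces, on the generic locus, semisimple modules in $|\lz|$ distinct homogeneous tubes with the symmetric-group monodromy equal to $S_\lz$ — this last point is where the local system $\cL_\lz$ and the Specht-module labeling enter, and it is the delicate part. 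All three were handled in the simply-laced case; here one additionally checks $\sz$-equivariance of every construction, but since $\oz(\bfc,t_\lz)$ is by design a folded monomial and $\IC(\eta(\bfc),t_\lz)$ is $\sz^*$-stable (as $\sz(\eta(\bfc))=\eta(\bfc)$), the equivariance is automatic and the proof reduces verbatim to the statement for $Q$.
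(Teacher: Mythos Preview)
Your overall strategy---unfold $\oz(\bfc,t_\lz)$ to $\hoz(\bfc,t_\lz)\in\cS(Q)$, invoke the decomposition theorem, and identify the top stratum---matches the paper's, and you correctly locate the reduction: once one knows
\[
\fkm^{\hoz(\bfc,t_\lz)}=N_Q(\eta(\bfc),t_\lz)+\sum_{(\bfd,t_\mu)\prec(\eta(\bfc),t_\lz)}\phi^{(\bfd,t_\mu)}N_Q(\bfd,t_\mu)
\]
in $\cH^0(Q)$, the geometric statement follows from \cite{XXZ_Tame_quivers_and_affine_bases_I}. The gap is in how you propose to obtain this equation.

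You appeal to Proposition~\ref{monomial basis} and Lemma~\ref{monomial for nh}, but those are statements in $\cH^0(\ggz)$, not in $\cH^0(Q)$; and the monomial $\hoz(\bfc,t_\lz)$ is \emph{not} one of the monomials $\oz(\bfd,t_\mu)$, $(\bfd,t_\mu)\in\cG^a(Q)$, constructed in Part~I, so you cannot simply cite the simply-laced lemma for it. The claim that ``the proof reduces verbatim to the statement for $Q$'' by $\sz$-equivariance is where the argument fails. What must be checked is that the unfolded nonhomogeneous factor $\fkm^{\hoz(\bfc_0)}$ has leading term $\lan M(\eta(\bfc_0))\ran$ in $\cH^0(Q)$, and this is not a formal consequence of equivariance: the generic-extension product $\diamond$ is taken in different cyclic quivers before and after folding. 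The paper handles this by the case analysis of Lemma~\ref{how nh tubes are F-folded}. Case~(a) (several tubes in $Q$ permuted by $\sz$) is straightforward by comparing dimension vectors, but case~(b) (a rank-$2r$ tube in $Q$ folding to a rank-$r$ tube in $\ggz$) requires proving the compatibility
\[
\eta(\pi\diamond\pi')=\eta(\pi)\diamond\eta(\pi')\quad\text{in }\Pi^a_{2r},
\]
where $\eta:\Pi^a_r\to\Pi^a_{2r}$ sends $[i;l)\mapsto[i;l)+[i{+}r;l)$. The paper verifies this by an explicit computation with the combinatorial description of $\diamond$ from \cite{Deng_Du_Xiao_Generic_extensions_and_canonical_bases_for_cyclic_quivers}. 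Without this step your item~(ii) is unjustified in $\cH^0(Q)$, and the identification of the top summand as $\IC(\eta(\bfc),t_\lz)$ does not go through.

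A secondary issue: your first paragraph suggests using Corollary~\ref{relation bt C and m} and the main theorem's triangular comparison to read off the decomposition---but this lemma is an input to the proof of the main theorem, so that route is circular.
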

\begin{proof}
Since $L_{\oz(\bfc,t_\lz)}\cong L_{\hoz(\bfc,t_\lz)}$, by \cite{XXZ_Tame_quivers_and_affine_bases_I}, it suffices to show that 
$$\fkm^{\hoz(\bfc,t_\lz)}=N_Q(\eta(\bfc),t_\lz)+\sum_{(\bfd,t_\mu)\prec(\eta(\bfc),t_\lz)}\phi^{(\bfd,t_\mu)}_{(\bfc,t_\lz)} N_Q(\bfd,t_\mu)$$
holds in $\cH^0(Q)$ for some $\phi^{(\bfd,t_\mu)}_{(\bfc,t_\lz)}\in\cA$. 
Here we use $N_Q(-)$ instead of $N(-)$ to emphasize that we are now dealing in $\cH^0(Q)$ instead of  $\cH^0(\ggz)$, where $Q$ is a (non-valued) quiver and $\ggz=\ggz(Q,\sz)$.

By the proof of Proposition 9.10 in \cite{XXZ_Tame_quivers_and_affine_bases_I}, we need only to prove the following equations:
$$\fkm^{\hoz(\bfc_-)}=N_Q(\eta(\bfc_-),0)+\sum_{(\bfd,t_\mu)\in\cG(Q)\atop\bfd_->_L\bfc_-}\phi_{\bfc_-}^{(\bfd,t_\mu)}N_Q(\bfd,t_\mu),$$
$$\fkm^{\hoz(\bfc_+)}=N_Q(\eta(\bfc_+),0)+\sum_{(\bfd,t_\mu)\in\cG(Q)\atop\bfd_+>_L\bfc_+}\phi_{\bfc_+}^{(\bfd,t_\mu)}N_Q(\bfd,t_\mu),$$
$$\circledast: \fkm^{\hoz(\bfc_0)}=\lan M(\eta(\bfc_0))\ran+\sum_{\bfd_0\in\cG_0(Q)\atop\bfd_0<_G\eta(\bfc_0)}\phi^{\bfd_0}_{\bfc_0}(v)\lan M(\bfd_0)\ran+\sum_{(\bfd,t_\mu)\in\cG(Q)\atop\bfd_->0,\bfd_+>0}\phi^{(\bfd,t_\mu)}_{\bfc_0}(v)N_Q(\bfd,t_\mu),$$
$$\fkm^{\hoz(\lz\dz)}=H_\lz+\sum_{(\bfd,t_\mu)\in\cG\atop\bfd_\pm=0,\bfd_0\neq 0}\phi^{(\bfd,t_\mu)}_{\lz\dz}(v)N_Q(\bfd,t_\mu)+\sum_{(\bfd,t_\mu)\in\cG\atop\bfd_->0,\bfd_+>0}\phi^{(\bfd,t_\mu)}_{\lz\dz}(v)N_Q(\bfd,t_\mu).$$
Here all $\phi^-_-\in\cA$.

We prove $\circledast$ here, while the rest ones are routine to check by Lemma 9.2, Lemma 9.3, Lemma 9.8 in \cite{XXZ_Tame_quivers_and_affine_bases_I} and definition of these monomials.

For a non-homogeneous tube $\cT$ in $\Rep_k \ggz$ and a regular simple $L$ in $\cT$, by Lemma \ref{how nh tubes are F-folded}, there are two cases to be considered.

For case $(a)$ in Lemma \ref{how nh tubes are F-folded}, $L\cong (L_0\oplus L_1 \oplus\dots\oplus L_{l-1})^F$, then
$$\eta(\udim L)=\sum_{0\leq t\leq l-1}\udim L_t.$$
By comparing dimension vectors, for $a\geq 1$ we have
$$\fkm^{a\eta(\udim L)}\in\lan L_0\oplus L_1 \oplus\dots\oplus L_{l-1}\ran+\sum_{\bfd_\pm\neq 0}\cA'N_Q(\bfd,t_\mu),$$
then by construction, for any $\pi\in\Pi^a$, we have
$$\fkm^{\hoz(\pi)}\in\lan M_{T}(\pi)\ran*\lan M_{T^{[1]}}(\pi)\ran*\dots*\lan M_{T^{[l-1]}}(\pi)\ran+\sum_{\bfd_0<_G\bfc^\pi}\cA'\lan M(\bfd_0)\ran+\sum_{\bfd_\pm\neq 0}\cA'N_Q(\bfd,t_\mu),$$
here $\bfc^\pi$ is the index such that $\bfc^\pi_\pm=0$ and takes $\pi$ on tube $T,T^{[1]},T^{[l-1]}$ and $0$ on others.

For case $(b)$ in Lemma \ref{how nh tubes are F-folded}, $L\cong (L_1\oplus L_2)^F$, we also have
$$\fkm^{a\eta(\udim L)}\in\lan L_1\oplus L_2\ran+\sum_{\bfd_\pm\neq 0}\cA'N_Q(\bfd,t_\mu).$$
Let $r=\rk \cT\geq 2$. By abuse of notation, denote by $\eta:\Pi^a_r\ra\Pi^a_{2r}$ such that $\eta([i;l))=[i;l)+[i+r;l)$ and $\eta$ is spanned linearly.
To obtain the similar conclusion, we need only to prove that $\eta(\pi)\diamond\eta(\pi')=\eta(\pi\diamond\pi')$ for any $\pi,\pi'\in\Pi^a_r$. 

We check this for $\pi=[j;1)$. Let $l_0$ be maximal such that $\pi'_{j+1,l_0}\leq 0$. By \cite{Deng_Du_Xiao_Generic_extensions_and_canonical_bases_for_cyclic_quivers}, we have
\begin{eqnarray*}
\eta([j;1)\diamond\pi')&=&\eta(\pi'-[j+1;l_0)+[j;l_0+1))\\
&=&\eta(\pi')-[j+1;l_0)-[j+i+r;l_0)+[j;l_0+1)+[j+r;l_0+1),
\end{eqnarray*}
\begin{eqnarray*}
\eta([j;1))\diamond\eta(\pi')&=&[j;1)\diamond[j+r;1)\diamond\eta(\pi')\\
&=&[j;1)\diamond(\eta(\pi')-[j+r+1;l_0)+[j+r;l_0+1))\notag\\
&=&\eta(\pi')-[j+r+1;l_0)+[j+r;l_0+1)-[j+1;l_0)+[j;l_0+1),\notag
\end{eqnarray*}
then $\eta([j;1))\diamond\eta(\pi')=\eta([j;1)\diamond\pi')$. 
By induction on length, let $\pi=[j;1)\diamond \pi''$ for some $j$ and $\pi''$, then 
$$\eta(\pi)\diamond\eta(\pi')=\eta([i;1))\diamond\eta(\pi'')\diamond\eta(\pi')=\eta([i;1))\diamond\eta(\pi''\diamond\pi')=\eta([i;1)\diamond\pi''\diamond\pi')=\eta(\pi\diamond\pi'').$$

Therefore, in both cases, we have 
$$\fkm^{\hoz(\pi)}\in N(\eta(\pi))+\sum_{\bfd_0<_G\eta(\pi)}\cA' N(\bfd_0,0)+\sum_{\bfd_\pm\neq 0}\cA'N_Q(\bfd,t_\mu),$$
By multiplying these for $\bfc_0=(\pi_1,\pi_2,\dots,\pi_s)$, $\circledast$ is implied.

\end{proof}

By applying $(-,\phi_0)$ to the both sides, where $\phi_0:\sz^*L_{\oz(\bfc,t_\lz)}\ra L_{\oz(\bfc,t_\lz)}$ is an isomorphism (see Section \ref{section: Perverse sheaves for valued quivers}), we have
$$(L_{\oz(\bfc,t_\lz)},\phi_0)\cong(\IC(\eta(\bfc),t_\lz)\oplus L',\phi_0)\cong(\IC(\eta(\bfc),t_\lz),\phi_0|_{\IC(\eta(\bfc),t_\lz)})\oplus(L',\phi_0|_{L'}).$$
We abbreviate $\tilde{\phi}_0=\phi_0|_{\IC(\eta(\bfc),t_\lz)}$.

By applying $\Xi$, in $\cK(\tcQ)$ we have
$$\fkm^{\oz(\bfc,t_\lz)}=[\IC(\eta(\bfc),t_\lz),\tilde{\phi}_0]+[L',\phi_0|_{L'}].$$
By Lemma \ref{lemma: CB is unique up to sign}, $$[\IC(\eta(\bfc),t_\lz),\tilde{\phi}_0]=\pm B(\bfc,t_\lz)$$ and   $$[L',\phi_0|_{L'}]\in\sum_{(\bfc',t_{\lz'})\prec(\bfc,t_\lz)}\cA B(\bfc',t_{\lz'}).$$

This shows that the monomial basis $\{\fkm^{\oz(\bfc,t_\lz)}|(\bfc,t_\lz)\in\cG^a\}$ is upper-triangular to $\bfB''$ up to $\pm$.
Then by Proposition \ref{monomial basis},
$$\Xi^{-1}(B(\bfc,t_{\lz}))=\pm N(\bfc,t_\lz)+\sum_{(\bfc',t_{\lz'})\in\cG\atop(\bfc',t_{\lz'})\prec(\bfc,t_\lz)}\zeta_{(\bfc,t_\lz)}^{(\bfc',t_{\lz'})}(v)N(\bfc',t_{\lz'})$$
for some $\zeta_-^-\in\cA'$.
Since both $\bfB''$ and $\{N(\bfc,t_\lz)|(\bfc,t_\lz)\in\cG\}$ are almost orthogonal with respect to the bilinear from $(-,-)$, an elementary argument shows that all $\zeta^-_-\in v^{-1}\bbQ[v^{-1}].$
Then
$$\Xi^{-1}(B(\bfc,t_{\lz}))-\sum_{(\bfc',t_{\lz'})\in\cG^a\atop(\bfc',t_{\lz'})\prec(\bfc,t_\lz)}\zeta_{(\bfc,t_\lz)}^{(\bfc',t_{\lz'})}(v)E(\bfc',t_{\lz'})\in
\pm N(\bfc,t_\lz)+\sum_{(\bfc'',t_{\lz''})\in\cG\setminus\cG^a\atop(\bfc'',t_{\lz''})\prec(\bfc,t_\lz)}\cA'N(\bfc'',t_{\lz''}).$$
By Proposition \ref{uniqueness of PBW}, we have
$$\Xi^{-1}(B(\bfc,t_{\lz}))=\pm E(\bfc,t_\lz)+\sum_{(\bfc',t_{\lz'})\in\cG^a\atop(\bfc',t_{\lz'})\prec(\bfc,t_\lz)}\zeta_{(\bfc,t_\lz)}^{(\bfc',t_{\lz'})}(v)E(\bfc',t_{\lz'})$$
with $\zeta^-_-\in v^{-1}\bbQ[v^{-1}].$

Through Proposition $\ref{relation bt C and E}$, we know that the transition matrix between $\Xi^{-1}(\bfB'')$ and $\bfB'$ is upper-triangular with entries in the main diagonal are $\pm 1$ and other entries are in $v^{-1}\bbQ[v^{-1}]$. Since these two bases are both bar-invariant, all entries except those in the main diagonal shall be 0. 
Therefore we have
$$\cB'=\bfB'\sqcup(-\bfB')=\Xi^{-1}(\bfB'')\sqcup(-\Xi^{-1}(\bfB''))=\Xi^{-1}(\cB'').$$

\section{Kashiwara's operators}\label{Sec: Kashi}

\subsection{Kashiwara's operators}

We recall the setup for Kashiwara's operators given in [\cite{Lusztig_Introduction_to_quantum_groups}, Chapter 16]. 

We fix a $\bbQ(v)$-vector space $P$ and $\bfi\in\bfI$, and suppose that there are two $\bbQ(v)$-linear maps $\ez,\phi:P\ra P$ such that $\ez$ is locally nilpotent and $\ez\phi=v_\bfi^2\phi\ez+1$.
We call $(P,\ez,\phi)$ an admissible triple (of $\bfi$).

Let $\phi^{(s)}:P\ra P$ be defined as $\phi^{(s)}/[s]_\bfi^!$. Then by induction we have
$$\ez\phi^{(N)}=v_\bfi^{2N}\phi^{(N)}\ez+v_\bfi^{N-1}\phi^{(N-1)}$$
for all $N$. Define for $N\geq 0$ the subspaces $P(N)=P_{\ez,\phi}(N)$ of $P$ by
$$P(0)=\{x\in P|\ez(x)=0\}, P(N)=\phi^{(N)}P(0).$$
Then $P=\oplus_{N\geq 0}P(N)$.

Define the associated Kashiwara's operators $\tez,\tphi: P\ra P$ by
$$\tphi(\phi^{(N)}y)=\phi^{(N+1)}y,\tez(\phi^{(N)}y)=\phi^{(N-1)}y$$
for all $y\in P(0)$.

Now consider $P=\bbQ(v)\otimes_{\cA'}\cH^0(\ggz)$. 
We shall remind the readers that this is different from [\cite{Lusztig_Introduction_to_quantum_groups}, Chapter 17] where $P$ is chosen to be $\cC^*$.
For each $\bfi\in\bfI$, let
$\phi_\bfi:P\ra P$ acts as left multiplication by $u_\bfi$ and $\ez_\bfi:P\ra P$ acts as ${}_\bfi r$ (See Section \ref{sec: Almost orthogonality}). 
One can check that $(P,\ez_\bfi,\phi_\bfi)$ is an admissible triple, thus $\tez_\bfi,\tphi_\bfi$ are well defined.

Let $$\cL_\cA=\{x\in \cC^*_\cA(\ggz)\subset P|(x,x)\in\bbQ[[v^{-1}]]\cap\bbQ(v)\}$$ and
$$\cL_{\cA'}=\{x\in \cH^0(\ggz)\subset P|(x,x)\in\bbQ[[v^{-1}]]\cap\bbQ(v)\}$$
be the subsets of $P$. 
Obviously we have $\cL_\cA\subset\cL_{\cA'}$.
Moreover, the almost orthogonality of $\{N(\bfc,t_\lz)|(\bfc,t_\lz)\in\cG\}$ shows that 
$$\cL_{\cA'}=\{\sum_{(\bfc,t_\lz)\in\cG}a(\bfc,t_\lz)N(\bfc,t_\lz)|a(\bfc,t_\lz)\in\bbQ[[v^{-1}]]\cap\bbQ(v)\}.$$

By construction of bases, we have the following lemma:
\begin{lemma}\label{lemma: C=E=N mod v-1}
    For $(\bfc,t_\lz)\in\cG^a$, we have
    $$C(\bfc,t_\lz)\equiv E(\bfc,t_\lz) \equiv N(\bfc,t_\lz)\,\mod v^{-1}\cL_{\cA'}.$$
\end{lemma}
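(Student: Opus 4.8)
The plan is to prove the two congruences $C(\bfc,t_\lz)\equiv N(\bfc,t_\lz)$ and $E(\bfc,t_\lz)\equiv N(\bfc,t_\lz)$ modulo $v^{-1}\cL_{\cA'}$ in turn, using throughout that $\cL_{\cA'}$ is the $\bbQ[[v^{-1}]]$-span of the PBW basis $\{N(\bfc,t_\lz)\mid(\bfc,t_\lz)\in\cG\}$. Since all transition coefficients among $C,E,N$ lie in $\cA'=\bbQ[v,v^{-1}]$, membership in $v^{-1}\cL_{\cA'}$ just means that in the $N$-expansion every coefficient attached to a strictly smaller index has $v$-degree $<0$.

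The key step is a formal one. I would show: if $x\in\cH^0(\ggz)$ lies in $N(\bfc,t_\lz)+\sum_{(\bfc',t_{\lz'})\prec(\bfc,t_\lz)}\cA'\,N(\bfc',t_{\lz'})$ and $(x,x)\in 1+v^{-1}\bbQ[[v^{-1}]]$, then already $x\equiv N(\bfc,t_\lz)\pmod{v^{-1}\cL_{\cA'}}$. Indeed, expand $(x,x)=\sum c'c''(N',N'')$ over the $N$-coefficients $c'$ of $x$ (that of $N(\bfc,t_\lz)$ being $1$) and invoke the almost-orthogonality Proposition of Section~\ref{sec: Almost orthogonality}: the diagonal pairings lie in $1+v^{-1}\bbQ[[v^{-1}]]$ and the off-diagonal ones in $v^{-1}\bbQ[[v^{-1}]]$. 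If $d$ is the largest $v$-degree among the nonzero $c'$ (so $d\ge 0$), the diagonal contribution has $v$-degree $2d$ with leading coefficient a sum of squares of leading coefficients of those $c'$ of degree $d$ — hence strictly positive, so no cancellation occurs — while the off-diagonal contribution has $v$-degree $\le 2d-1$. Comparing with $(x,x)\in 1+v^{-1}\bbQ[[v^{-1}]]$, which has no term of positive degree and $v^0$-coefficient $1$, forces $d=0$ and then, reading off the $v^0$-coefficient, forces every $c'$ attached to an index $\prec(\bfc,t_\lz)$ to have $v$-degree $<0$.

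Now, by Theorem~\ref{main theorem}, $\Xi$ sends $\pm C(\bfc,t_\lz)$ to $\pm B(\bfc,t_\lz)$, so $C(\bfc,t_\lz)$ lies in the signed canonical basis; as the form on $\cC^*(\ggz)$ coincides with that on $\bfU^+$, this yields $(C(\bfc,t_\lz),C(\bfc,t_\lz))\in 1+v^{-1}\bbZ[[v^{-1}]]\subseteq 1+v^{-1}\bbQ[[v^{-1}]]$. Combining Propositions~\ref{uniqueness of PBW} and~\ref{relation bt C and E} puts $C(\bfc,t_\lz)$ in $N(\bfc,t_\lz)+\sum_{(\bfc',t_{\lz'})\prec(\bfc,t_\lz)}\cA'\,N(\bfc',t_{\lz'})$, so the key step applies with $x=C(\bfc,t_\lz)$ and gives $C(\bfc,t_\lz)\equiv N(\bfc,t_\lz)\pmod{v^{-1}\cL_{\cA'}}$; applying it at every aperiodic index also shows $C(\bfc',t_{\lz'})\in\cL_{\cA'}$ for all $(\bfc',t_{\lz'})\in\cG^a$. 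For the remaining congruence, inverting the unitriangular relation of Proposition~\ref{relation bt C and E} (its strictly-lower part is nilpotent with entries in $v^{-1}\bbQ[v^{-1}]$, hence the same for the inverse) writes $E(\bfc,t_\lz)=C(\bfc,t_\lz)+\sum_{(\bfc',t_{\lz'})\prec(\bfc,t_\lz)}g^{(\bfc',t_{\lz'})}(v)\,C(\bfc',t_{\lz'})$ with $g^{(\bfc',t_{\lz'})}\in v^{-1}\bbQ[v^{-1}]$; since each $C(\bfc',t_{\lz'})\in\cL_{\cA'}$ and $v^{-1}\bbQ[v^{-1}]\cdot\cL_{\cA'}\subseteq v^{-1}\cL_{\cA'}$ (as $\cL_{\cA'}$ is stable under multiplication by $\bbQ[[v^{-1}]]$), we obtain $E(\bfc,t_\lz)\equiv C(\bfc,t_\lz)\equiv N(\bfc,t_\lz)$ modulo $v^{-1}\cL_{\cA'}$, which is the assertion.

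I expect the key step of the second paragraph to be the main obstacle: the whole degree computation hinges on the diagonal pairings contributing positive (squared) leading coefficients so that no cancellation can occur, and one must check carefully that the off-diagonal terms really do drop a degree — that is, one genuinely needs the almost orthogonality of the PBW basis $\{N(\bfc,t_\lz)\}$, not merely its triangularity.
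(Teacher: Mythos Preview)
Your argument is correct. The paper offers no proof beyond the sentence ``By construction of bases,'' so any comparison is necessarily with what the authors \emph{presumably} intend: that the congruence $C\equiv E$ is immediate from the $v^{-1}\bbQ[v^{-1}]$ coefficients in Proposition~\ref{relation bt C and E}, and that $E\equiv N$ is supposed to come out of the Deng--Du--Xiao style construction of the PBW basis. Your route is genuinely different in that you do not appeal to any hidden property of the $b$-coefficients in Proposition~\ref{uniqueness of PBW}; instead you invoke Theorem~\ref{main theorem} to obtain $(C,C)\in 1+v^{-1}\bbZ[[v^{-1}]]$, then run the standard positivity/degree argument against the almost-orthogonal $N$-basis to force every off-leading $N$-coefficient of $C$ into $v^{-1}\bbQ[v^{-1}]$, and finally pull $E$ along via the unitriangular inverse of Proposition~\ref{relation bt C and E}. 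This is logically sound (Theorem~\ref{main theorem} is proved in Section~7 and does not use the present lemma) and has the virtue of being self-contained at the level of the statements actually recorded in the paper: as written, Proposition~\ref{uniqueness of PBW} only asserts $b^{(\bfc',t_{\lz'})}_{(\bfc,t_\lz)}\in\cA'$, so a purely ``by construction'' proof of $E\equiv N\bmod v^{-1}\cL_{\cA'}$ would need an extra input about those coefficients that the paper does not make explicit. The trade-off is that your argument is heavier, pulling in the full geometric comparison, whereas the intended shortcut presumably stays within the Hall-algebra construction.
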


The next lemma is similar to Lemma 16.2.8 in \cite{Lusztig_Introduction_to_quantum_groups}.
\begin{lemma}\label{lemma: Kashiwara's operators keep LA'}
    For all $\bfi\in\bfI$, $\tez_\bfi(\cL_{\cA'})\subset\cL_{\cA'},\tphi_\bfi(\cL_{\cA'})\subset\cL_{\cA'}$.
\end{lemma}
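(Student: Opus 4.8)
### Proof proposal for Lemma \ref{lemma: Kashiwara's operators keep LA'}

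The plan is to mimic the proof of Lemma 16.2.8 in \cite{Lusztig_Introduction_to_quantum_groups}, working in $P = \bbQ(v)\otimes_{\cA'}\cH^0(\ggz)$ with the admissible triple $(P,\ez_\bfi,\phi_\bfi)$, and using the PBW basis $\{N(\bfc,t_\lz)\}$ together with the explicit description of $\cL_{\cA'}$ as the set of $\bbQ[[v^{-1}]]\cap\bbQ(v)$-linear combinations of PBW basis elements (this description follows from almost-orthogonality of the $N(\bfc,t_\lz)$, already recorded in the excerpt). The key structural input is the eigenspace decomposition $P = \oplus_{N\ge 0} P(N)$ where $P(N) = \phi_\bfi^{(N)}P(0)$ and $P(0) = \ker\ez_\bfi$; once one knows $\cL_{\cA'} = \oplus_N (\cL_{\cA'}\cap P(N))$ and that $\cL_{\cA'}\cap P(N) = \phi_\bfi^{(N)}(\cL_{\cA'}\cap P(0))$, the conclusion $\tez_\bfi\cL_{\cA'}\subset\cL_{\cA'}$ and $\tphi_\bfi\cL_{\cA'}\subset\cL_{\cA'}$ is immediate from the definitions $\tphi_\bfi(\phi_\bfi^{(N)}y) = \phi_\bfi^{(N+1)}y$, $\tez_\bfi(\phi_\bfi^{(N)}y) = \phi_\bfi^{(N-1)}y$.

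First I would establish that $\phi_\bfi$ and $\ez_\bfi$ behave well with respect to the bilinear form $(-,-)$ on $\cH^0(\ggz)$: the map $\ez_\bfi = {}_\bfi r$ is (up to the scalar $(1-v_\bfi^{-2})^{-1}$) adjoint to left multiplication $\phi_\bfi$ by $u_\bfi$, exactly as on $\bfU^+$ via ${}_\bfi r$ and the inner-product identity $(u_\bfi x, y) = (1-v_\bfi^{-2})^{-1}(x, {}_\bfi r(y))$. This adjunction, which holds on all of $\cH^0(\ggz)$ since the form and the operators ${}_\bfi r, r_\bfi, \phi_\bfi$ are all defined on $\cH^0(\ggz)$ (Section \ref{sec: Almost orthogonality}), lets one reduce the problem to a purely formal statement about an admissible triple equipped with a compatible inner product. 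Then I would argue, as in Lusztig, that $\cL_{\cA'}$ is stable under the orthogonal projections onto the $P(N)$: if $x = \sum_N x_N$ with $x_N\in P(N)$ and $(x,x)\in\bbQ[[v^{-1}]]\cap\bbQ(v)$, then because the decomposition $P=\oplus P(N)$ is orthogonal with respect to $(-,-)$ (the relation $\ez_\bfi\phi_\bfi = v_\bfi^2\phi_\bfi\ez_\bfi + 1$ forces the $\phi_\bfi^{(N)}P(0)$ to be mutually orthogonal up to invertible scalars in $\bbQ[v,v^{-1}]$ — here one needs the normalization to land in $1 + v^{-1}\bbQ[[v^{-1}]]$ rather than merely in $\bbQ(v)$), each $(x_N,x_N)\in\bbQ[[v^{-1}]]\cap\bbQ(v)$, so each $x_N\in\cL_{\cA'}$. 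Finally, on $P(0)$ the operator $\phi_\bfi^{(N)}$ scales the form by a factor in $v_\bfi^{N(N-1)}(1 + v^{-1}\bbQ[[v^{-1}]])$, so $\phi_\bfi^{(N)}$ and its inverse $\tez_\bfi$ preserve the property of having $(x,x)\in\bbQ[[v^{-1}]]\cap\bbQ(v)$, giving $\cL_{\cA'}\cap P(N) = \phi_\bfi^{(N)}(\cL_{\cA'}\cap P(0))$.

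The main obstacle I anticipate is the bookkeeping of the scalar factors: one must verify precisely that $(\phi_\bfi^{(N)}y, \phi_\bfi^{(N)}y)$ and $(\phi_\bfi^{(N)}y, \phi_\bfi^{(M)}y')$ for $y,y'\in P(0)$ differ from $(y,y')$ and $0$ respectively by elements of $\bbQ[v^{-1}]$ that are units in $\bbQ[[v^{-1}]]$ (i.e. have nonzero constant term). This is exactly the computation underlying Lemma 16.1.4 and Lemma 16.2.8 in \cite{Lusztig_Introduction_to_quantum_groups}; it uses the commutation relation $\ez_\bfi\phi_\bfi^{(N)} = v_\bfi^{2N}\phi_\bfi^{(N)}\ez_\bfi + v_\bfi^{N-1}\phi_\bfi^{(N-1)}$ and induction on $N$, together with the fact that $\ez_\bfi$ is the (scaled) adjoint of $\phi_\bfi$. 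Since all of this takes place inside $\cH^0(\ggz)$, which carries the same form and the same operators as $\bfU^+$ does (and restricts compatibly to $\cC^*(\ggz)$), the argument is formally identical to Lusztig's; I would simply cite [\cite{Lusztig_Introduction_to_quantum_groups}, §16.1--16.2] for the formal lemma about admissible triples with an inner product and apply it here. One small point worth spelling out is that, unlike in [\cite{Lusztig_Introduction_to_quantum_groups}, Chapter 17] where $P=\cC^*$, here $P$ is the larger space $\cH^0(\ggz)$; but the only properties used are the admissibility of the triple, the adjunction between $\ez_\bfi$ and $\phi_\bfi$, and the almost-orthogonal basis $\{N(\bfc,t_\lz)\}$, all of which the excerpt has already provided for $\cH^0(\ggz)$.
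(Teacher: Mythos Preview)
Your proposal is correct and follows essentially the same route as the paper: both arguments reduce to Lusztig's formal lemma on admissible triples (the paper cites \cite{Lusztig_Introduction_to_quantum_groups}, Lemma 16.2.6, for the ratio $(\phi_\bfi^{(N+1)}x,\phi_\bfi^{(N+1)}x)/(\phi_\bfi^{(N)}x,\phi_\bfi^{(N)}x)\in 1+v^{-1}\bbZ[v^{-1}]$), then use orthogonality of the $P(N)$ together with a positivity-of-leading-coefficients argument to show $y\in\cL_{\cA'}$ iff each component $y_N\in\cL_{\cA'}$, from which stability under $\tez_\bfi,\tphi_\bfi$ is immediate. One small correction: your stated scaling factor $v_\bfi^{N(N-1)}(1+v^{-1}\bbQ[[v^{-1}]])$ is off --- the correct factor $\prod_{j=1}^N(1-v_\bfi^{-2j})^{-1}$ lies in $1+v^{-1}\bbZ[[v^{-1}]]$ with no positive power of $v$ in front, which is exactly what you need (and what Lusztig's lemma gives).
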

\begin{proof}
    See [\cite{Lusztig_Introduction_to_quantum_groups} Lemma 16.2.6], we can prove similarly that for $x\in P(0)$,
    $$(\phi_\bfi^{(N+1)} x,\phi_\bfi^{(N+1)} x)/(\phi_\bfi^{(N)} x,\phi_\bfi^{(N)} x)\in 1+v^{-1}\bbZ[v^{-1}].$$
    For any $y\in P$, write uniquely $y=\sum_{N\geq 0} \phi_\bfi^{(N)}y_N$ with $y_N\in P(0)$. 
    We show that $y\in\cL_{\cA'}$ if and only if $y_N\in\cL_{\cA'}$ for all $N$. 
    
    The if-part is easy since $(y,y)=\sum_{N}(\phi_\bfi^{(N)} y_N,\phi_\bfi^{(N)} y_N)$.
    Suppose there is some $N$ such that $y_N\notin\cL_{\cA'}$, then $(y_N,y_N)\in v^{2d}(\bbQ^+ +v^{-1}\bbQ[[v^{-1}]]\cap\bbQ(v))$ for some $d>0$. Let $d_m$ be the maximal $d$ among all $N$.
    Therefore the coefficient of $v^{2d_m}$ in $(y,y)=\sum_{N}(\phi_\bfi^{(N)} y_N,\phi_\bfi^{(N)} y_N)$ is nonzero. Thus the claim is right.

    Now let $y\in\cL_{\cA'}$, then $y_N\in\cL_{\cA'}$ for all $N$. We deduce that 
    $$\tphi_\bfi(y)=\sum_{N\geq 0}\phi^{(N+1)}y_N$$
    and
    $$\tez_\bfi(y)=\sum_{N>0}\phi^{(N-1)}y_N$$
    are both in $\cL_{\cA'}$.
    
\end{proof}

 Consider the subset $\cB_{\bfi,n}$ of $\cB$ and the bijection $\varphi_{\bfi;n}:\cB_{\bfi;0}\ra\cB_{\bfi;n}$ defined in Section \ref{sec: canonical basis}.  
 Denote $\cB'_{\bfi,n}=\Xi_1(\cB_{\bfi,n})\subset\cC^*\subset P$ and (by abuse of notation) $\varphi_{\bfi;n}:\cB'_{\bfi;0}\ra\cB'_{\bfi;n}$.
 Note that when restricted to $\cC^*$, $\ez_\bfi|_{\cC^*},\phi_\bfi|_{\cC^*}$ are the operators introduced in [\cite{Lusztig_Introduction_to_quantum_groups}, chp 17.3]. 

\begin{lemma}\label{Kashiwara's operator acts like pi}
    For $\bfi\in\bfI,n\geq 0$ and $b\in\cB'_{\bfi;0}$ we have
   $$\tphi^n_\bfi(b)\equiv \varphi_{\bfi;n}(b)\,\mod v^{-1}\cL_{\cA}.$$
\end{lemma}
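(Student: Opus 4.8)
The plan is to push the statement inside $\cC^*(\ggz)\cong\bfU^+$, where $\phi_\bfi$ and $\ez_\bfi$ are Lusztig's operators on $\bfU^+$ (as recalled just before the lemma), and then appeal to Lusztig's crystal basis theorem for $\bfU^+$, transported through $\Xi_1$. First I would improve Lemma~\ref{lemma: Kashiwara's operators keep LA'} from $\cL_{\cA'}$ to the smaller lattice $\cL_\cA$. By Lusztig's string decomposition, $\cC^*(\ggz)=\bigoplus_{N\geq 0}\phi_\bfi^{(N)}(\cC^*(\ggz)\cap P(0))$ and the analogous decomposition holds for the integral form $\cC^*_\cA(\ggz)$; hence $\tphi_\bfi$ and $\tez_\bfi$ map $\cC^*(\ggz)$ into itself and $\cC^*_\cA(\ggz)$ into itself. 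Since $\cL_\cA=\cL_{\cA'}\cap\cC^*_\cA(\ggz)$, combining this with Lemma~\ref{lemma: Kashiwara's operators keep LA'} yields $\tphi_\bfi(\cL_\cA)\subseteq\cL_\cA$ and $\tez_\bfi(\cL_\cA)\subseteq\cL_\cA$. In particular both sides of the asserted congruence lie in $\cL_\cA$, so it is enough to compare their images in $\bar\cL_\cA:=\cL_\cA/v^{-1}\cL_\cA$.

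Next comes the crystal bookkeeping. By Theorem~\ref{main theorem}, $\Xi_1$ identifies $\cB'$ with the signed canonical basis $\cB$ and $\cB'_{\bfi;n}$ with $\cB_{\bfi;n}$; as $\cB$ is almost orthonormal for $(-,-)$, $\cL_\cA$ is the $(\bbQ[[v^{-1}]]\cap\bbQ(v))$-span of $\cB'$, so $\bar\cL_\cA$ has basis $\{\bar b\mid b\in\bfB'\}$ and, for each $n$, $E_\bfi^n\bfU^+\cap\cL_\cA$ is the $\bbQ[[v^{-1}]]$-span of $\cB'_{\bfi;\geq n}$. From the defining formulas $\tphi_\bfi(\phi_\bfi^{(N)}y)=\phi_\bfi^{(N+1)}y$ and $\tez_\bfi(\phi_\bfi^{(N)}y)=\phi_\bfi^{(N-1)}y$ one reads off $\tez_\bfi\tphi_\bfi=\mathrm{id}_P$, hence $\bar\tphi_\bfi$ is injective on $\bar\cL_\cA$; and since $E_\bfi^n\bfU^+=\bigoplus_{N\geq n}\phi_\bfi^{(N)}(\cC^*(\ggz)\cap P(0))$, we get $\tphi_\bfi(E_\bfi^n\bfU^+)=E_\bfi^{n+1}\bfU^+$, i.e.\ $\tphi_\bfi$ raises the $\bfi$-string level by exactly one. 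I would then induct on $n$. The case $n=0$ is immediate since $\varphi_{\bfi;0}$ is the identity on $\cB'_{\bfi;0}$. For the step, set $c:=\varphi_{\bfi;n}(b)\in\cB'_{\bfi;n}$; using the inductive hypothesis and $\tphi_\bfi(v^{-1}\cL_\cA)\subseteq v^{-1}\cL_\cA$ we get $\tphi_\bfi^{n+1}(b)\equiv\tphi_\bfi(c)\pmod{v^{-1}\cL_\cA}$, so it remains to show $\tphi_\bfi(c)\equiv\varphi_{\bfi;n+1}(b)\pmod{v^{-1}\cL_\cA}$.

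For this last identification I would invoke Lusztig's crystal basis theorem for $\bfU^+$ [\cite{Lusztig_Introduction_to_quantum_groups}, \S14.3, \S16.1]: transported through $\Xi_1$, it asserts that $\bar\tphi_\bfi$ permutes $\{\bar b\mid b\in\bfB'\}$ up to sign and is compatible with the filtration labelling, i.e.\ $\bar\tphi_\bfi$ carries $\cB'_{\bfi;m}$ bijectively onto $\cB'_{\bfi;m+1}$ via the composite $\varphi_{\bfi;m+1}\circ\varphi_{\bfi;m}^{-1}$. Since $\bar\tphi_\bfi$ is injective, $\bar\tphi_\bfi(\bar c)$ is nonzero, hence equals $\pm\overline{b'}$ for a unique $b'\in\cB'_{\bfi;n+1}$, and the compatibility pins it down to $\overline{\varphi_{\bfi;n+1}(b)}$, finishing the induction. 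The concrete input behind that compatibility is the matching of the Kashiwara labelling of $\cB'_{\bfi;n+1}$ with the filtration labelling, obtained by comparing $\tphi_\bfi(c)$ with the divided power $u_\bfi^{(n+1)}b=\phi_\bfi^{(n+1)}(b)$ through their common membership in $E_\bfi^{n+1}\bfU^+$ and the leading-term characterization $u_\bfi^{(n+1)}b\in\varphi_{\bfi;n+1}(b)+\mathrm{span}\,\cB'_{\bfi;\geq n+2}$. I expect the main obstacle to be bookkeeping rather than conceptual: the congruences in this circle of ideas arise most naturally modulo $v^{-1}\cL_{\cA'}$ (for instance via Lemma~\ref{lemma: C=E=N mod v-1}), and one must be careful that the final congruence genuinely holds modulo the smaller lattice $v^{-1}\cL_\cA$ — which is exactly why the integrality statement $\tphi_\bfi(\cC^*_\cA(\ggz))\subseteq\cC^*_\cA(\ggz)$ from the first step is essential.
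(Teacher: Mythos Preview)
Your argument is not wrong, but it is far more elaborate than what is needed, and the extra scaffolding partly obscures that at the end you are invoking exactly the same result the paper cites. The paper's proof is a single line: it quotes \cite[Corollary 17.3.7]{Lusztig_Introduction_to_quantum_groups}. The point is that, by the definitions given immediately before the lemma, the statement lives entirely inside $\cC^*(\ggz)\cong\bfU^+$: the set $\cB'_{\bfi;0}$ is \emph{defined} as $\Xi_1(\cB_{\bfi;0})$, the map $\varphi_{\bfi;n}$ is the $\Xi_1$-transport of Lusztig's bijection $\cB_{\bfi;0}\to\cB_{\bfi;n}$, the lattice $\cL_\cA$ is defined in terms of $\cC^*_\cA(\ggz)$, and it was observed that $\ez_\bfi|_{\cC^*},\phi_\bfi|_{\cC^*}$ (hence $\tphi_\bfi|_{\cC^*}$) are precisely Lusztig's operators from \cite[17.3]{Lusztig_Introduction_to_quantum_groups}. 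So the congruence is literally Lusztig's 17.3.7 read through $\Xi_1$, and nothing further is required.

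In particular, two pieces of your argument are superfluous. First, your appeal to Theorem~\ref{main theorem} is not needed: you use it to identify $\cB'_{\bfi;n}$ with a subset of the Hall-algebra basis $\cB'$, but $\cB'_{\bfi;n}$ is already defined as $\Xi_1(\cB_{\bfi;n})$, so the identification with Lusztig's setting is tautological. Second, the detour through $\cL_{\cA'}$ and Lemma~\ref{lemma: Kashiwara's operators keep LA'} is unnecessary for this lemma: the stability $\tphi_\bfi(\cL_\cA)\subset\cL_\cA$ is already part of Lusztig's theory in $\bfU^+$ \cite[16.2.8, 17.3]{Lusztig_Introduction_to_quantum_groups}, and your final inductive step ultimately rests on the same compatibility of $\bar\tphi_\bfi$ with the $\varphi_{\bfi;n}$'s that is the content of 17.3.7. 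The role of the larger lattice $\cL_{\cA'}$ in this paper is elsewhere (e.g.\ Remark~\ref{rmk: Kashiwara's operator acts like pi} and Proposition~\ref{prop: pi acts on the left-most side}), where one genuinely needs to compare with the PBW elements $N(\bfc,t_\lz)$ living only in $\cH^0(\ggz)$.
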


\begin{proof}
    This follows directly from [\cite{Lusztig_Introduction_to_quantum_groups}, Corollary 17.3.7].
\end{proof}

\begin{remark}\label{rmk: Kashiwara's operator acts like pi}
    We can replace $\cL_\cA$ by $\cL_{\cA'}$ in the previous lemma and it is still true since $\cL_\cA\subset\cL_{\cA'}$.
\end{remark}

Denote by $\varphi_\bfi:\cB'\ra\cB'$ such that $\varphi_\bfi$ acts on $\cB'_{\bfi;n}$ by $\varphi_{\bfi;n+1}\circ\varphi_{\bfi;n}^{-1}$.

\begin{corollary}\label{cor: Kashiwara's operator acts like pi}
     For $\bfi\in\bfI$ and $b\in\cB'$ we have
   $$\tphi_\bfi(b)\equiv \varphi_{\bfi}(b)\,\mod v^{-1}\cL_{\cA}.$$
\end{corollary}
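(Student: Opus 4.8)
The plan is to bootstrap from Lemma \ref{Kashiwara's operator acts like pi}, which handles the case $b\in\cB'_{\bfi;0}$, to an arbitrary $b\in\cB'$, using that $\tphi_\bfi$ is $\bbQ(v)$-linear and preserves the lattice $v^{-1}\cL_\cA$.

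First I would fix $\bfi\in\bfI$ and recall that $\cB'=\bigsqcup_{n\geq 0}\cB'_{\bfi;n}$, so a given $b\in\cB'$ lies in a unique $\cB'_{\bfi;n}$; write $b=\varphi_{\bfi;n}(b_0)$ for the unique $b_0\in\cB'_{\bfi;0}$ with $\varphi_{\bfi;n}(b_0)=b$, using that $\varphi_{\bfi;n}$ is a bijection. By the definition of $\varphi_\bfi$ (which acts on $\cB'_{\bfi;n}$ as $\varphi_{\bfi;n+1}\circ\varphi_{\bfi;n}^{-1}$) we have $\varphi_\bfi(b)=\varphi_{\bfi;n+1}(b_0)$. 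Applying Lemma \ref{Kashiwara's operator acts like pi} together with Remark \ref{rmk: Kashiwara's operator acts like pi} to $b_0$ with the exponents $n$ and $n+1$ gives
\begin{equation*}
\tphi_\bfi^{n}(b_0)\equiv\varphi_{\bfi;n}(b_0)=b
\quad\text{and}\quad
\tphi_\bfi^{n+1}(b_0)\equiv\varphi_{\bfi;n+1}(b_0)=\varphi_\bfi(b)\pmod{v^{-1}\cL_\cA}.
\end{equation*}

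Next I would apply $\tphi_\bfi$ to the first of these congruences. Two facts are used here. One, $\tphi_\bfi$ is $\bbQ(v)$-linear on $P$: this is immediate from the decomposition $P=\bigoplus_{N\geq 0}P_{\ez_\bfi,\phi_\bfi}(N)$ and the defining formula $\tphi_\bfi(\phi_\bfi^{(N)}y)=\phi_\bfi^{(N+1)}y$. Two, $\tphi_\bfi(\cL_\cA)\subset\cL_\cA$, which is the integral counterpart of Lemma \ref{lemma: Kashiwara's operators keep LA'}: since $\ez_\bfi|_{\cC^*}$ and $\phi_\bfi|_{\cC^*}$ coincide with the operators of [\cite{Lusztig_Introduction_to_quantum_groups}, Chapter 17.3], the stability of $\cL_\cA$ under $\tphi_\bfi$ is [\cite{Lusztig_Introduction_to_quantum_groups}, Chapter 16]. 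Hence $\tphi_\bfi(v^{-1}\cL_\cA)\subset v^{-1}\cL_\cA$, so $\tphi_\bfi$ respects congruences modulo $v^{-1}\cL_\cA$, and applying it to $\tphi_\bfi^{n}(b_0)\equiv b$ yields $\tphi_\bfi^{n+1}(b_0)\equiv\tphi_\bfi(b)\pmod{v^{-1}\cL_\cA}$. Combining with the second congruence above, $\tphi_\bfi(b)\equiv\tphi_\bfi^{n+1}(b_0)\equiv\varphi_\bfi(b)\pmod{v^{-1}\cL_\cA}$, which is the assertion.

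I expect the only genuine point to verify is the second fact above, namely that $\tphi_\bfi$ preserves the \emph{integral} lattice $\cL_\cA$ and not merely $\cL_{\cA'}$; concretely this means checking that, restricted to $\cC^*$, the triple $(\cC^*,\ez_\bfi,\phi_\bfi)$ is exactly Lusztig's setup and that $\cL_\cA$ is the lattice appearing there, after which stability is Lusztig's theorem. The remainder is formal bookkeeping with the decomposition $\cB'=\bigsqcup_n\cB'_{\bfi;n}$ and the two applications of Lemma \ref{Kashiwara's operator acts like pi}.
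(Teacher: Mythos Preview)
Your proof is correct and follows essentially the same route as the paper: write $b=\varphi_{\bfi;n}(b_0)$, apply Lemma~\ref{Kashiwara's operator acts like pi} at levels $n$ and $n+1$, and use that $\tphi_\bfi$ preserves $v^{-1}\cL_\cA$ (which the paper cites as Lemma~16.2.8 in \cite{Lusztig_Introduction_to_quantum_groups}) to pass from $\tphi_\bfi^{n}(b_0)\equiv b$ to $\tphi_\bfi^{n+1}(b_0)\equiv\tphi_\bfi(b)$. The reference to Remark~\ref{rmk: Kashiwara's operator acts like pi} is unnecessary since you work modulo $v^{-1}\cL_\cA$ throughout, but it does no harm.
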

\begin{proof}
    Let $b\in\cB'_{\bfi;n}$ and $b_0=\varphi_{\bfi;n}^{-1}(b)\in\cB'_{\bfi;0}$. Then
    $$\varphi_{\bfi}(b)=\varphi_{\bfi;n+1}(b_0)\equiv \tphi^{n+1}_\bfi(b)\equiv\tphi_\bfi(\varphi_{\bfi;n}(b_0))=\tphi_\bfi(b)\,\mod v^{-1}\cL_{\cA}$$
    by Lemma 16.2.8 in \cite{Lusztig_Introduction_to_quantum_groups} and Lemma \ref{Kashiwara's operator acts like pi}.
\end{proof}

For $\bfc\in\cG'$, let $\bfc^{\vdash}\in\cG'$ be such that $\bfc^{\vdash}_{+}=\bfc_{+},\bfc^{\vdash}_{0}=\bfc_{0}$,  $\bfc^{\vdash}_{-}(t)=\bfc_{-}(t)$ for $t<0$ and $\bfc^{\vdash}_{-}(0)=0$.

\begin{proposition}\label{prop: pi acts on the left-most side}
    For $\bmbz_0=\bfi_0$ and any $(\bfc,t_\lz)\in\cG^a$, let $N=\bfc_-(0)$, then we have $C(\bfc,t_\lz)\in\cB'_{\bfi_0;N}$. In particular, $C(\bfc^{\vdash},t_\lz)\in\cB'_{\bfi_0;0}$. 
    Moreover, we have
    $$\varphi_{\bfi_0,N}(C(\bfc^{\vdash},t_\lz))=C(\bfc,t_\lz).$$
\end{proposition}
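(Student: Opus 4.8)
The plan rests on the observation that left multiplication by $u_{\bfi_0}$ carries the PBW basis $\{E(\bfc,t_\lz)\mid(\bfc,t_\lz)\in\cG^a\}$ to itself up to nonzero scalars. So I would first prove that for every $(\bfd,t_\mu)\in\cG^a$ and $p\geq0$,
$$u_{\bfi_0}^{(p)}*E(\bfd,t_\mu)=c_{p,\bfd}(v)\,E(\bfd^{(+p)},t_\mu),$$
where $\bfd^{(+p)}\in\cG'^a$ is obtained from $\bfd$ by adding $p$ to the multiplicity of $\bmbz_0=\bfi_0$ in the preprojective part (all other data unchanged) and $c_{p,\bfd}(v)\in\cA'$ is a Gaussian binomial coefficient, in particular nonzero. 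By Proposition \ref{PBW basis is multiplicative} it is enough to treat the preprojective factor $E(\bfd_-)=\lan M(\bfd_-)\ran$: writing $M(\bfd_-)\cong S_{\bfi_0}^{\oplus\bfd_-(0)}\oplus M'$ with $M'$ a sum of copies of $M(\bmbz_{-j})$, $j\geq1$, the Lemma in Section \ref{sec: PI part} gives $\Ext^1_\ggz(S_{\bfi_0},M')=0$ and $\Hom_\ggz(M',S_{\bfi_0})=0$, and a direct Hall-algebra computation then yields $\lan M(\bfd_-)\ran=u_{\bfi_0}^{(\bfd_-(0))}*\lan M'\ran$; combining this with the divided-power identity for $u_{\bfi_0}^{(p)}*u_{\bfi_0}^{(q)}$ and Proposition \ref{PBW basis is multiplicative} once more gives the displayed formula. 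Since $(\bfd,t_\mu)\mapsto(\bfd^{(+p)},t_\mu)$ is a bijection of $\cG^a$ onto $\{(\bfc',t_{\lz'})\in\cG^a\mid\bfc'_-(0)\geq p\}$, it follows that $u_{\bfi_0}^{p}\cC^*=u_{\bfi_0}^{(p)}\cC^*$ is the $\bbQ(v)$-span of $\{E(\bfc',t_{\lz'})\mid\bfc'_-(0)\geq p\}$, which equals the $\bbQ(v)$-span of $\{C(\bfc',t_{\lz'})\mid\bfc'_-(0)\geq p\}$ by the unitriangularity of Proposition \ref{relation bt C and E} together with the fact, immediate from Definition \ref{order}, that $(\bfc'',t_{\lz''})\preceq(\bfc',t_{\lz'})$ implies $\bfc''_-(0)\geq\bfc'_-(0)$.

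Granting this, write $N=\bfc_-(0)$. For (a) and (b): by Theorem \ref{main theorem} we have $C(\bfc,t_\lz)\in\cB'$, so it belongs to $\cB'_{\bfi_0;n}$ for a unique $n$, and it remains to identify $n=N$. Proposition \ref{relation bt C and E} expresses $C(\bfc,t_\lz)$ as $E(\bfc,t_\lz)$ plus an $\cA'$-combination of $E(\bfc',t_{\lz'})$ with $(\bfc',t_{\lz'})\prec(\bfc,t_\lz)$, hence with $\bfc'_-(0)\geq N$; so $C(\bfc,t_\lz)\in u_{\bfi_0}^{N}\cC^*$, i.e.\ $C(\bfc,t_\lz)\in\cB'_{\bfi_0;\geq N}$, while the coefficient $1$ of $E(\bfc,t_\lz)$ shows $C(\bfc,t_\lz)\notin u_{\bfi_0}^{N+1}\cC^*$, i.e.\ $C(\bfc,t_\lz)\notin\cB'_{\bfi_0;\geq N+1}$. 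Thus $n=N$, and the case $N=0$ gives $C(\bfc^\vdash,t_\lz)\in\cB'_{\bfi_0;0}$.

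For (c) I would put $x=u_{\bfi_0}^{(N)}*C(\bfc^\vdash,t_\lz)$, which is bar-invariant because $u_{\bfi_0}^{(N)}$ and $C(\bfc^\vdash,t_\lz)$ are, and set $W=u_{\bfi_0}^{(N)}\cC^*\supseteq W'=u_{\bfi_0}^{(N+1)}\cC^*$. Expanding $C(\bfc^\vdash,t_\lz)$ in the PBW basis and applying the first display termwise, the contributions of indices $(\bfc'',t_{\lz''})$ with $\bfc''_-(0)\geq1$ lie in $W'$, so that modulo $W'$
$$x\equiv E(\bfc,t_\lz)+\sum_{\substack{(\bfc',t_{\lz'})\prec(\bfc,t_\lz)\\ \bfc'_-(0)=N}}g^{((\bfc')^\vdash,t_{\lz'})}_{(\bfc^\vdash,t_\lz)}(v)\,E(\bfc',t_{\lz'})\pmod{W'},$$
where I use that on the slice $\bfc'_-(0)=N$ the map $(\bfc',t_{\lz'})\mapsto((\bfc')^\vdash,t_{\lz'})$ preserves the order $\preceq$ (a routine check from Definition \ref{order}). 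By Proposition \ref{relation bt C and E} these coefficients lie in $v^{-1}\bbQ[v^{-1}]$; the bar involution preserves $W$ and $W'$, so it descends to $W/W'$ and is triangular there with respect to the induced basis $\{E(\bfc',t_{\lz'})\mid\bfc'_-(0)=N\}$. Hence $x\bmod W'$ is a bar-invariant element of $W/W'$ with leading coefficient $1$ and all remaining coefficients in $v^{-1}\bbQ[v^{-1}]$, and the uniqueness argument of Proposition \ref{relation bt C and E} forces $x\equiv C(\bfc,t_\lz)\pmod{W'}$. Therefore $x-C(\bfc,t_\lz)$ lies in $W'\cap\cC^*_{\cA'}$, which by the first paragraph is the $\cA'$-span of $\{C(\bfc',t_{\lz'})\mid\bfc'_-(0)\geq N+1\}\subseteq\cB'_{\bfi_0;\geq N+1}$; by the defining property of $\varphi_{\bfi_0,N}$ together with (a) and (b), this gives $\varphi_{\bfi_0,N}(C(\bfc^\vdash,t_\lz))=C(\bfc,t_\lz)$.

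The hard part will be the first paragraph, specifically the Hall-algebra identity $\lan M(\bfd_-)\ran=u_{\bfi_0}^{(\bfd_-(0))}*\lan M'\ran$: obtaining it exactly requires the powers of $v$ in the product $\lan S_{\bfi_0}^{\oplus a}\ran*\lan M'\ran$ to cancel on the nose, which is precisely where the vanishing $\Hom_\ggz(M',S_{\bfi_0})=0$ (and not merely that of $\Ext^1$) is used. The bookkeeping of $\preceq$ on the slices $\{\bfc'_-(0)=\mathrm{const}\}$ is a minor nuisance, and once both are in place the bar-invariance step is routine.
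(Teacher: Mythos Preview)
Your argument is correct. The treatment of parts (a) and (b) is essentially the paper's: both show $C(\bfc,t_\lz)\in u_{\bfi_0}^{N}\cC^*\setminus u_{\bfi_0}^{N+1}\cC^*$ by expanding in the PBW basis, using that every $(\bfc',t_{\lz'})\preceq(\bfc,t_\lz)$ has $\bfc'_-(0)\geq N$, together with the factorisation $E(\bfc',t_{\lz'})=u_{\bfi_0}^{(\bfc'_-(0))}*E((\bfc')^{\vdash},t_{\lz'})$ coming from Proposition~\ref{PBW basis is multiplicative}. (Your invocation of Theorem~\ref{main theorem} to get $C(\bfc,t_\lz)\in\cB'$ is unnecessary: this is immediate from the definition $\cB'=\bfB'\sqcup(-\bfB')$.)

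For the last assertion you take a genuinely different route. The paper argues via Kashiwara's operators: since $\bfi_0$ is a sink, no module occurring in $N(\bfc^{\vdash},t_\lz)$ has $S_{\bfi_0}$ as a quotient, hence $\ez_{\bfi_0}(N(\bfc^{\vdash},t_\lz))={}_{\bfi_0}r(N(\bfc^{\vdash},t_\lz))=0$, so $N(\bfc^{\vdash},t_\lz)\in P(0)$ and $\tphi_{\bfi_0}^N$ acts on it as $u_{\bfi_0}^{(N)}*(-)$, yielding $\tphi_{\bfi_0}^N(N(\bfc^{\vdash},t_\lz))=N(\bfc,t_\lz)$; the conclusion then follows from the comparison $\varphi_{\bfi_0,N}\equiv\tphi_{\bfi_0}^N$ modulo $v^{-1}\cL_{\cA'}$ (Lemmas~\ref{lemma: C=E=N mod v-1}--\ref{Kashiwara's operator acts like pi}). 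Your argument bypasses the Kashiwara machinery altogether and works directly with bar-invariance in the quotient $u_{\bfi_0}^{N}\cC^*/u_{\bfi_0}^{N+1}\cC^*$: this is more elementary and self-contained, needing only Proposition~\ref{relation bt C and E} and the description of $u_{\bfi_0}^{p}\cC^*$ in terms of the PBW basis. The paper's approach, on the other hand, exploits the sink property of $\bfi_0$ explicitly and ties the result into the crystal framework that is needed anyway for Propositions~\ref{C(pi)=phi C(pi|)} and~\ref{k(C)=C_0}.
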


\begin{proof}
    We already know that $C(\bfc,t_\lz)\in\cB'$, so we need to show that $C(\bfc,t_\lz)\in u_{\bfi_0}^N\cC^*$ and $C(\bfc,t_\lz)\notin u_{\bfi_0}^{N+1}\cC^*$.
    
    By Proposition \ref{PBW basis is multiplicative}, 
    $$E(\bfc',t_{\lz'})=u_{\bfi_0}^{(\bfc'_-(0))}*E((\bfc')^{\vdash},t_{\lz'})\in u_{\bfi_0}^{\bfc'_-(0)}\cC^*$$
    for all $(\bfc',t_{\lz'})\in\cG^a$.
    Since $C(\bfc,t_\lz)$ is an $\cA'$-sum of some $E(\bfc',t_{\lz'})$ with $(\bfc',t_{\lz'})\preceq (\bfc,t_\lz)$, by the lexicographic ordering on $\cG_-$, $\bfc'_-(0)\geq N$, so $C(\bfc,t_\lz)\in u_{\bfi_0}^{N}\cC^*$.

    On the other side, assume that $C(\bfc,t_\lz)\in u_{\bfi_0}^{N+1}\cC^*$, then $C(\bfc,t_\lz)=u_{\bfi_0}^{(N+1)}x$ for some $x\in\cC^*$. 
    Thus by Proposition \ref{multiplication of N}, $C(\bfc,t_\lz)$ is an $\cA'$-sum of $E(\bfc',t_{\lz'})$ with $\bfc'_-(0)>N$. This contradicts to the fact that $E(\bfc,t_{\lz})$ is the leading term of $C(\bfc,t_\lz)$.

    By Lemma \ref{lemma: C=E=N mod v-1}, Lemma \ref{lemma: Kashiwara's operators keep LA'}, Remark \ref{rmk: Kashiwara's operator acts like pi}, we have 
    $$\varphi_{\bfi_0,N}(C(\bfc^{\vdash},t_\lz))\equiv \tphi^N_{\bfi_0}(C(\bfc^{\vdash},t_\lz))\equiv \tphi^N_{\bfi_0}(N(\bfc^{\vdash},t_\lz))\,\mod v^{-1}\cL_{\cA'}.$$
    
    Since $\bfi_0$ is a sink of $\ggz$, and any module $L$ appearing in the $k$-specialization of $N(\bfc^{\vdash},t_\lz)$ has no direct summands isomorphic to the simple module $S_{\bfi_0}$ at $\bfi_0$, the Hall number $g^L_{S_\bfi M}=0$ for any $M\in\Rep_k\ggz$. Therefore 
    $$\ez_{\bfi_0}(N(\bfc^{\vdash},t_\lz))={}_{\bfi_0}r(N(\bfc^{\vdash},t_\lz))=0$$
    by definition of $r$ (See Section \ref{sec: Almost orthogonality}). That is, $N(\bfc^{\vdash},t_\lz)\in P(0)$.
    
    Then 
    $$\tphi^N_{\bfi_0}(N(\bfc^{\vdash},t_\lz))=\phi_{\bfi_0}^{(N)}(N(\bfc^{\vdash},t_\lz))=u_{\bfi_0}^{(N)}*N(\bfc^{\vdash},t_\lz)=N(\bfc,t_\lz).$$
    Finally, we have
    $$\varphi_{\bfi_0,N}(C(\bfc^{\vdash},t_\lz))\equiv N(\bfc,t_\lz)\,\mod v^{-1}\cL_{\cA'}.$$
    Since $C(\bfc,t_\lz)$ is the only element in $\cB'$ such that 
    $$C(\bfc,t_\lz)\equiv N(\bfc,t_\lz)\,\mod v^{-1}\cL_{\cA'},$$
    we have $$\varphi_{\bfi_0,N}(C(\bfc^{\vdash},t_\lz))=C(\bfc,t_\lz).$$
\end{proof}

\subsection{Kashiwara's operators for a simple in $\cR^\ggz$}

Assume in this section that $\bfi\in\bfI$ is a vertex such that $S_\bfi$ is a regular (hence non-homogeneous regular) module.

Without loss of generality, we assume that $S_\bfi$ is in the nonhomogeneous tube $\cT=\cT_1$. For convenience, we denote $\cK=\cK_{r_1}(k_1)$ and $\vez: \cK\ra\cT$ be the equivalence. Denote $\Pi=\Pi_{r_1}$. 

For a vertex $j$ in the cyclic quiver $K=K_{r_1}$, the operators $\ez_j={}_jr$ and $\phi_j=u_j*(-)$ are well defined for $P^K=\bbQ(v)\otimes_{\cA'}\cH^0(K)=\oplus_{\pi\in\Pi}\bbQ(v)\lan M(\pi)\ran$, then $(P^K,\ez_j,\phi_j)$ is an admissible triple of $j$, and we have the associated Kashiwara's operators $\tez_j,\tphi_j$. Denote 
$$\cL^K_{\cA'}=\{x\in\cH^0(K)\subset P|(x,x)\in\bbQ[[v^{-1}]]\cap\bbQ(v)\},$$
then
$$\cL^K_{\cA'}=\{\sum_{\pi\in\Pi}a(\pi)\lan M(\pi)\ran|a(\pi)\in\bbQ[v^{-1}]\}.$$
By \cite{Deng_Du_Xiao_Generic_extensions_and_canonical_bases_for_cyclic_quivers}, let $\bfB^K=\{B^K(\pi)|\pi\in\Pi^a\}$ be the canonical basis of $\cC^*(K)$ such that $$B^K(\pi)\equiv \lan M(\pi)\ran \,\mod v^{-1}\cL^K_{\cA'}, \pi\in\Pi^a.$$
Let $\cB^K=\bfB^K\sqcup(-\bfB^K)$.
Then the subsets $\cB^K_{j;n}$, the bijections $\varphi^K_{j;n}:\cB^K_{j;0}\ra\cB^K_{j;n}$ and maps $\varphi^K_{j}:\cB^K\ra\cB^K$ are well defined.

For any $\pi\in\Pi^a$, denote by
$\pi\uparrow_j\in\Pi^a$ such that 
$$\varphi^K_{j}(B^K(\pi))=B^K(\pi\uparrow_j).$$

\begin{proposition}\label{C(pi)=phi C(pi|)}
    We fix $(\pi_2,\dots,\pi_s)\in \Pi^a_{r_2}\times \dots\times\Pi^a_{r_s}$ and $t_\lz\in\bbP$. 
    Let $i$ be the vertex in the quiver $K$ such that $\vez(S_i)=S_\bfi$. 
    Then
    $$\varphi_{\bfi}C((\pi,\pi_2,\dots,\pi_s),t_\lz)=C((\pi\uparrow_i,\pi_2,\dots,\pi_s),t_\lz).$$
\end{proposition}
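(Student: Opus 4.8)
The plan is to mimic the proof of Proposition~\ref{prop: pi acts on the left-most side}, but to confine the whole argument to the non-homogeneous tube $\cT_1$ and then import the corresponding statement for the cyclic quiver from \cite{Deng_Du_Xiao_Generic_extensions_and_canonical_bases_for_cyclic_quivers} through the equivalence $\vez=\vez_1\colon\cK\cong\cT_1$ (recall $\cK=\cK_{r_1}(k_1)$, $K=K_{r_1}$). Write $\bfc_0=(\pi,\pi_2,\dots,\pi_s)$ and $\bfc_0'=(\pi\uparrow_i,\pi_2,\dots,\pi_s)$, and identify $(\bfc_0,t_\lz)$ with $((0,\bfc_0,0),t_\lz)\in\cG^a$, so the claim reads $\varphi_\bfi(C(\bfc_0,t_\lz))=C(\bfc_0',t_\lz)$. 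First I would reduce to a single congruence modulo $v^{-1}\cL_{\cA'}$. Since $\varphi_\bfi$ maps $\cB'$ into $\cB'$, $\varphi_\bfi(C(\bfc_0,t_\lz))=\pm C(\bfc'',t_{\lz''})$ for some $(\bfc'',t_{\lz''})\in\cG^a$, and by Corollary~\ref{cor: Kashiwara's operator acts like pi}, Remark~\ref{rmk: Kashiwara's operator acts like pi}, Lemma~\ref{lemma: C=E=N mod v-1}, the $\bbQ(v)$-linearity of $\tphi_\bfi$, and Lemma~\ref{lemma: Kashiwara's operators keep LA'} (so that $\tphi_\bfi$ stabilizes $v^{-1}\cL_{\cA'}$),
$$\varphi_\bfi(C(\bfc_0,t_\lz))\equiv\tphi_\bfi(C(\bfc_0,t_\lz))\equiv\tphi_\bfi(N(\bfc_0,t_\lz))\pmod{v^{-1}\cL_{\cA'}}.$$
Since $\cL_{\cA'}$ is the $(\bbQ[[v^{-1}]]\cap\bbQ(v))$-span of the almost orthogonal basis $\{N(\bfc,t_\lz)\}$ and $C(\bfc'',t_{\lz''})\equiv N(\bfc'',t_{\lz''})\pmod{v^{-1}\cL_{\cA'}}$ by Lemma~\ref{lemma: C=E=N mod v-1}, it then suffices to prove
$$\tphi_\bfi(N(\bfc_0,t_\lz))\equiv N(\bfc_0',t_\lz)\pmod{v^{-1}\cL_{\cA'}},\qquad(\star)$$
for this forces $(\bfc'',t_{\lz''})=(\bfc_0',t_\lz)$ and the sign $+$.

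The second, substantive step is to see that $\phi_\bfi$ and $\ez_\bfi$ act only on the $\cT_1$-slot. As $\bfc_-=\bfc_+=0$ and $\cT_1,\dots,\cT_s$ are pairwise distinct regular components, the vanishing of $\Hom$ and $\Ext^1$ between distinct tubes yields in $\cH^0(\ggz)$
$$N(\bfc_0,t_\lz)=\lan M(\bfc_0)\ran*S_\lz=\vez_1(\lan M_{k_1}(\pi)\ran)*w,\qquad w=\vez_2(\lan M_{k_2}(\pi_2)\ran)*\cdots*\vez_s(\lan M_{k_s}(\pi_s)\ran)*S_\lz,$$
consistently with the tensor decomposition of Section~\ref{sec def of h0}. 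Now $u_\bfi=\lan S_\bfi\ran$ lies in the subalgebra $\cH^0(\cT_1)$, so $\phi_\bfi=u_\bfi*(-)$ preserves $\cH^0(\cT_1)$; and $S_\bfi$, being regular simple in $\cT_1$, is not a quotient of any module supported in another tube or in a homogeneous tube, so the Leibniz rule for ${}_\bfi r$ (with the relevant twists trivial) shows that $\ez_\bfi={}_\bfi r$ preserves $\cH^0(\cT_1)$ and annihilates $w$. Hence $(\bbQ(v)\otimes_{\cA'}\cH^0(\cT_1),\ez_\bfi,\phi_\bfi)$ is an admissible triple whose Kashiwara operator is the restriction of $\tphi_\bfi$, and a short computation with the eigenspace decomposition gives $\tphi_\bfi(a*w)=\tphi_\bfi(a)*w$ for $a\in\cH^0(\cT_1)$. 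Finally, $\vez_1$ is an isometric isomorphism of twisted Hall algebras, so it intertwines both the multiplication and Green's comultiplication, carrying the admissible triple $(\cH^0(K),\ez_i,\phi_i)$ to $(\cH^0(\cT_1),\ez_\bfi,\phi_\bfi)$; in particular $\tphi_\bfi(\vez_1(x))=\vez_1(\tphi_i(x))$ for $x\in\cH^0(K)$, where $\tphi_i$ is the Kashiwara operator on $P^K$.

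Combining these, $\tphi_\bfi(N(\bfc_0,t_\lz))=\vez_1(\tphi_i(\lan M_{k_1}(\pi)\ran))*w$. The analogues for the cyclic quiver of Lemma~\ref{lemma: C=E=N mod v-1}, Lemma~\ref{lemma: Kashiwara's operators keep LA'} and Corollary~\ref{cor: Kashiwara's operator acts like pi} hold, all contained in \cite{Deng_Du_Xiao_Generic_extensions_and_canonical_bases_for_cyclic_quivers}: $B^K(\pi)\equiv\lan M_{k_1}(\pi)\ran$, $\tphi_i$ stabilizes $v^{-1}\cL^K_{\cA'}$, and $\tphi_i(b)\equiv\varphi^K_i(b)\pmod{v^{-1}\cL^K_{\cA'}}$ for $b\in\cB^K$; together with $\varphi^K_i(B^K(\pi))=B^K(\pi\uparrow_i)$ these give $\tphi_i(\lan M_{k_1}(\pi)\ran)\equiv\lan M_{k_1}(\pi\uparrow_i)\ran\pmod{v^{-1}\cL^K_{\cA'}}$. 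Applying $\vez_1$, multiplying by $w$, and using $\vez_1(v^{-1}\cL^K_{\cA'})*w\subseteq v^{-1}\cL_{\cA'}$, we arrive at $\tphi_\bfi(N(\bfc_0,t_\lz))\equiv\vez_1(\lan M_{k_1}(\pi\uparrow_i)\ran)*w=N(\bfc_0',t_\lz)\pmod{v^{-1}\cL_{\cA'}}$, which is $(\star)$; the first step then completes the proof.

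The formal parts --- the reduction, the cyclic-quiver input, and the bookkeeping modulo $v^{-1}\cL_{\cA'}$ --- are routine once Proposition~\ref{prop: pi acts on the left-most side} and \cite{Deng_Du_Xiao_Generic_extensions_and_canonical_bases_for_cyclic_quivers} are in hand. The point requiring genuine care is the second step: verifying that $\ez_\bfi={}_\bfi r$, and not merely $\phi_\bfi$, decomposes along the tensor factorization of $\cH^0(\ggz)$ and that its $\cT_1$-component is transported by $\vez_1$ to the coproduct operator ${}_ir$ on $\cH^0(K)$. This rests on the exact $\Hom$/$\Ext^1$ vanishing among the various regular components (so that no stray powers of $v$ enter the Leibniz expansion of ${}_\bfi r$) and on the fact that an isometric algebra isomorphism of Hall algebras automatically respects Green's comultiplication --- the one place where the twisted Hall-algebra structure must be handled attentively.
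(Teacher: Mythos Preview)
Your proof is correct and follows essentially the same route as the paper's: reduce to the congruence $\tphi_\bfi(N(\bfc_0,t_\lz))\equiv N(\bfc_0',t_\lz)\pmod{v^{-1}\cL_{\cA'}}$, show that $\vez_1$ intertwines the admissible triples (hence the Kashiwara operators) and that $\tphi_\bfi(x*w)=\tphi_\bfi(x)*w$ because ${}_\bfi r(w)=0$, then import the cyclic-quiver congruence via $\vez_1$ and conclude by uniqueness. The only cosmetic difference is that the paper verifies $\vez\circ{}_ir={}_\bfi r\circ\vez$ by a direct Hall-number computation (using that $g^L_{S_\bfi M}\neq 0$ with $L\in\cT_1$ forces $M\in\cT_1$) rather than by appealing to the adjointness of multiplication and comultiplication under the bilinear form.
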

  
\begin{proof}
    By Lemma \ref{lemma: Kashiwara's operators keep LA'} and Corollary \ref{cor: Kashiwara's operator acts like pi}, we have 
    $$(*):\,\lan M(\pi\uparrow_i) \ran\equiv B^K(\pi\uparrow_i)=\varphi^K_{i}(B^K(\pi))\equiv\tphi_i(B^K(\pi))\equiv \tphi_i(\lan M(\pi) \ran)\,\mod v^{-1}\cL^K_{\cA'}.$$
    Note that the induced isomorphism $\vez:\cH^0(K)\ra\cH^0(\cT)$ satisfies
    $$\vez\circ\phi_i=\vez(u_i*(-)))=u_\bfi*\vez(-)=\phi_\bfi\circ\vez.$$
    
    We claim that $\vez({}_ir(x))={}_\bfi r(\vez(x))$ for any $x\in\cH^0(K)$, hence $\vez\circ\ez_i=\ez_\bfi\circ\vez$.
    We need only to show this for $x=\lan J\ran,J\in\cK$.
    Let $\vez(J)=L\in\cT$,
    then in the $k$-specialization,
    $${}_\bfi r(\lan L\ran)=\sum_{[M],M\in\Rep_k\ggz}v_k^{\lan \bfi,\udim M\ran+\dim_k\End L-\dim_k\End S_\bfi-\dim_k\End M}g^L_{S_\bfi M}\frac{a_{S_{\bfi}}a_M}{a_L}\lan M\ran.$$
    Remind that $S_\bfi\in\cT$, then $g^L_{S_\bfi M}\neq 0$ implies $M\in\cT$. 
    Applying $\vez^{-1}$ to both sides will prove our claim.

    By [\cite{Lusztig_Introduction_to_quantum_groups}, 16.1], $\tphi$ is local-finitely generated by $\ez,\phi$ as operators on $P$ for any admissible triple $(P,\ez,\phi)$, then $\vez\circ\tphi_i=\tphi_\bfi\circ\vez$. 
    Apply $\vez$ to $(*)$, we get
    $$(**):\, N((\pi\uparrow_i,0,\dots,0),0) \equiv\tphi_\bfi(N((\pi,0,\dots,0),0))\,\mod v^{-1}\vez(\cL^K_{\cA'}).$$

    Note that for any $x\in\cH^0(\cT)$ and  $y\in\cH^0(\cT_2)\otimes\dots\otimes\cH^0(\cT_s)\otimes\cA'[H_1,H_2,\dots]$, 
    ${}_\bfi r(y)=0$, then 
    $${}_\bfi r(x*y)={}_\bfi r(x)*y.$$
    Therefore $\tphi_\bfi(x*y)=\tphi_\bfi(x)*y$ for the same reason.

    Apply right-multiplication by $N((0,\pi_2,\dots,\pi_s),t_\lz)$ to $(**)$, we get
    $$N((\pi\uparrow_i,\pi_2,\dots,\pi_s),t_\lz) \equiv\tphi_\bfi(N((\pi_i,\pi_2,\dots,\pi_s),t_\lz))\,\mod v^{-1}\cL_{\cA'}$$
    since $\vez(\cL^K_{\cA'})*N((0,\pi_2,\dots,\pi_s),t_\lz)\subset \cL_{\cA'}$.

    By Lemma \ref{lemma: C=E=N mod v-1}, Lemma \ref{lemma: Kashiwara's operators keep LA'}, Lemma \ref{Kashiwara's operator acts like pi}, we have 
    \begin{align*}
    \varphi_{\bfi}C((\pi,\pi_2,\dots,\pi_s),t_\lz)&\equiv \tphi_\bfi C((\pi,\pi_2,\dots,\pi_s),t_\lz)\\
    &\equiv \tphi_\bfi(N((\pi,\pi_2,\dots,\pi_s),t_\lz))\\
    &\equiv N((\pi\uparrow_i,\pi_2,\dots,\pi_s),t_\lz)\,\mod v^{-1}\cL_{\cA'}.  
    \end{align*}
    Since $C((\pi\uparrow_i,\pi_2,\dots,\pi_s),t_\lz)\in\cB$ is the only element such that $$C((\pi\uparrow_i,\pi_2,\dots,\pi_s),t_\lz)\equiv N((\pi\uparrow_i,\pi_2,\dots,\pi_s),t_\lz)\,\mod v^{-1}\cL_{\cA'},$$
    the proposition follows.
\end{proof}

\subsection{Braid group action}

Let $T_\bfi=T''_{\bfi,1},T^{-1}_\bfi=T'_{\bfi,-1}:\bfU\ra\bfU$ be the Lusztig's symmetries defined in [\cite{Lusztig_Introduction_to_quantum_groups}, Chapter 37]. 
For $\bfi\in\bfI$, let
$$\bfU^+[\bfi]=\{x\in\bfU^+|T_\bfi(x)\in\bfU^+\}=\Ker {}_\bfi r,$$
$${}^\sz\bfU^+[\bfi]=\{x\in\bfU^+|T^{-1}_\bfi(x)\in\bfU^+\}=\Ker r_\bfi$$
by [\cite{Lusztig_Introduction_to_quantum_groups}, Proposition 38.1.6]. 
Then 
$T_\bfi:\bfU^+[\bfi]\ra {}^\sz\bfU^+[\bfi]$ is an isomorphism of subalgebras.

Also, we have $\bfU^+=\bfU^+[\bfi]\oplus E_\bfi\bfU^+={}^\sz\bfU^+[\bfi]\oplus\bfU^+E_\bfi$ as a vector space. 
Let ${}^\bfi\pi:\bfU^+\ra\bfU^+[\bfi]$ and $\pi^\bfi:\bfU^+\ra{}^\sz\bfU^+[\bfi]$ be the projections.

By identifying $\bfU^+$ with $\cC^*$, the subalgebras $\cC^*[\bfi],{}^\sz\cC^*[\bfi]$, the isomorphism $T_\bfi:\cC^*[\bfi]\ra {}^\sz\cC^*[\bfi]$ and the projections ${}^\bfi\pi:\cC^*\ra\cC^*[\bfi]$ and $\pi^\bfi:\cC^*\ra{}^\sz\cC^*[\bfi]$ are well-defined by abuse of notation.

\begin{lemma}[\cite{Lusztig_Braid_group_action_and_canonical_bases}, Proposition 1.8]\label{b-pi(b)}
    For $b\in\cB'-(\cB'\cap u_\bfi\cC^*)$, $b-{}^\bfi\pi(b)\in v^{-1}\cL_\cA$;
    For $b\in\cB'-(\cB'\cap \cC^* u_\bfi)$, $b-\pi^\bfi(b)\in v^{-1}\cL_\cA$.
\end{lemma}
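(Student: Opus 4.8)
The plan is to obtain the statement from its $\bfU^+$-counterpart [\cite{Lusztig_Braid_group_action_and_canonical_bases}, Proposition~1.8] by transporting everything through Green's isomorphism $\Xi_1\colon\bfU^+\xrightarrow{\;\sim\;}\cC^*(\ggz)$. Under $\Xi_1$ the operator $r$, hence ${}_\bfi r$ and $r_\bfi$, correspond to the operators of the same name on $\cC^*$ (Section~\ref{sec: Almost orthogonality}), so $\bfU^+[\bfi]=\Ker\,{}_\bfi r$, ${}^\sz\bfU^+[\bfi]=\Ker\,r_\bfi$ and the projections ${}^\bfi\pi,\pi^\bfi$ go to the objects fixed in this subsection; Lusztig's pairing goes to the Hall pairing $(-,-)$, so by almost orthogonality of $\cB$ the lattice $\{x\in\bfU^+_\cA\mid(x,x)\in\bbQ[[v^{-1}]]\cap\bbQ(v)\}$ goes to $\cL_\cA$; and by Theorem~\ref{main theorem} the signed canonical basis $\cB$ goes to $\cB'$, with $\cB\cap E_\bfi\bfU^+$ going to $\cB'\cap u_\bfi\cC^*$. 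Granting these identifications, the assertions, restricted to $\bfB'$, are exactly [\cite{Lusztig_Braid_group_action_and_canonical_bases}, Proposition~1.8].

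The approach I would actually write out is direct. Restricting the admissible triple $(P,\ez_\bfi,\phi_\bfi)$ of Section~\ref{Sec: Kashi} to the $\ez_\bfi$- and $\phi_\bfi$-stable subspace $\cC^*$ yields the $\bfi$-decomposition $\cC^*=\bigoplus_{n\geq0}\phi_\bfi^{(n)}\cC^*[\bfi]$, in which $\cC^*[\bfi]=\cC^*\cap\Ker\ez_\bfi$ is the $n=0$ summand, $u_\bfi\cC^*$ is the sum of the summands $n\geq1$, and ${}^\bfi\pi$ is the projection onto the $n=0$ summand. From the defining formulas $\tphi_\bfi(\phi_\bfi^{(N)}y)=\phi_\bfi^{(N+1)}y$ and $\tez_\bfi(\phi_\bfi^{(N)}y)=\phi_\bfi^{(N-1)}y$ one gets $\tphi_\bfi\tez_\bfi=\mathrm{id}-{}^\bfi\pi$ on $\cC^*$, hence $b-{}^\bfi\pi(b)=\tphi_\bfi\tez_\bfi(b)$ for every $b\in\cC^*$. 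If now $b\in\cB'-(\cB'\cap u_\bfi\cC^*)=\cB'_{\bfi;0}$, the crystal-basis property of $\cB'$ (the incarnation through $\Xi_1$ of [\cite{Lusztig_Introduction_to_quantum_groups}, Ch.~16--17]: $\tez_\bfi$ kills $b$ modulo $v^{-1}\cL_\cA$ whenever the $\bfi$-height of $b$ is $0$) gives $\tez_\bfi(b)\in v^{-1}\cL_\cA$, whence $b-{}^\bfi\pi(b)=\tphi_\bfi\tez_\bfi(b)\in\tphi_\bfi(v^{-1}\cL_\cA)\subseteq v^{-1}\cL_\cA$ by Lemma~\ref{lemma: Kashiwara's operators keep LA'}. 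The passage from $\bfB'$ to $-\bfB'$, i.e.\ to all of $\cB'$, is free, since $b\mapsto b-{}^\bfi\pi(b)$ is linear and $v^{-1}\cL_\cA$ is stable under negation. The second assertion follows the same way from the right-handed admissible triple $(r_\bfi,\,(-)*u_\bfi)$ in place of $({}_\bfi r,\,u_\bfi*(-))$; alternatively, apply the $\bbQ$-algebra anti-automorphism of $\cC^*$ fixing each $u_\bfi$ and $v$, which preserves $\cB'$ and $\cL_\cA$ and swaps ${}_\bfi r\leftrightarrow r_\bfi$, $u_\bfi\cC^*\leftrightarrow\cC^*u_\bfi$, ${}^\bfi\pi\leftrightarrow\pi^\bfi$.

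The only genuine work is the crystal-basis input $\tez_\bfi(b)\in v^{-1}\cL_\cA$ for $b\in\cB'_{\bfi;0}$: the Kashiwara formalism here is set up on $P=\bbQ(v)\otimes\cH^0(\ggz)$ with the larger lattice $\cL_{\cA'}$, so one must invoke the $\cC^*$-level theory of [\cite{Lusztig_Introduction_to_quantum_groups}, Ch.~16--17] (where $\ez_\bfi|_{\cC^*},\phi_\bfi|_{\cC^*}$ agree with Lusztig's operators on $\bfU^+$) and check the conclusion lands in $\cL_\cA\subseteq\cL_{\cA'}$. This, together with the routine verification that $\Xi_1$ simultaneously intertwines $r$, the bilinear form, and the identification $\cB\leftrightarrow\cB'$, is the only obstacle — no new idea is needed.
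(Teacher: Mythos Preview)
The paper gives no proof of this lemma at all: it is stated with the citation [\cite{Lusztig_Braid_group_action_and_canonical_bases}, Proposition~1.8] and nothing more. Your first paragraph---transport Lusztig's statement through $\Xi_1$, using Theorem~\ref{main theorem} to identify $\cB'$ with $\Xi_1(\cB)$ and the compatibility of $r$, the pairing, and the lattice recorded in Section~\ref{sec: Almost orthogonality}---is precisely the justification the paper leaves implicit in that citation, and is correct.

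Your second and third paragraphs go further and supply a self-contained argument via $b-{}^\bfi\pi(b)=\tphi_\bfi\tez_\bfi(b)$ and the crystal property $\tez_\bfi(b)\in v^{-1}\cL_\cA$ for $b\in\cB'_{\bfi;0}$. This is also fine, but note two small points. First, you appeal to Lemma~\ref{lemma: Kashiwara's operators keep LA'} for $\tphi_\bfi(v^{-1}\cL_\cA)\subseteq v^{-1}\cL_\cA$, yet that lemma is stated only for $\cL_{\cA'}$; the $\cL_\cA$-statement you need is the original [\cite{Lusztig_Introduction_to_quantum_groups}, Lemma~16.2.8] on $\bfU^+$, which you do invoke elsewhere. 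Second, your ``direct'' route still relies on Theorem~\ref{main theorem} (to know that the elements of $\cB'$ enjoy the crystal-basis property of $\cB$ and that $\cB'-(\cB'\cap u_\bfi\cC^*)=\cB'_{\bfi;0}$), so it is not logically independent of the transport argument---it just unpacks the content of Lusztig's proposition rather than quoting it.
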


\begin{lemma}[\cite{Lusztig_Braid_group_action_and_canonical_bases}, Theorem 1.2]\label{kappa is T}
    There is a unique bijection $\kappa_\bfi:\cB'-(\cB'\cap u_\bfi\cC^*)\ra \cB'-(\cB'\cap \cC^*u_\bfi)$ such that $T_\bfi({}^\bfi\pi(b))=\pi^\bfi(\kappa_\bfi(b))$ and $\sgn(\kappa_\bfi (b))=\sgn(b)$.
\end{lemma}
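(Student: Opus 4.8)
The plan is to deduce this statement from Lusztig's theorem \cite{Lusztig_Braid_group_action_and_canonical_bases} for $\bfU^+$ and its (signed) canonical basis, transported to $\cC^*$ along the isomorphism $\Xi_1:\bfU^+\to\cC^*$. The first step is to observe that $\cB'$ \emph{is} the signed canonical basis once we identify $\bfU^+$ with $\cC^*$: since $\Xi=\Xi_2\circ\Xi_1^{-1}$ and $\cB''=\Xi_2(\cB)$, Theorem \ref{main theorem} gives $\Xi_2(\Xi_1^{-1}(\cB'))=\cB''=\Xi_2(\cB)$, hence $\Xi_1^{-1}(\cB')=\cB$.

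The second step is to check that $\Xi_1$ intertwines every piece of structure appearing in the statement. Because $\Xi_1(E_\bfi)=u_\bfi$, it carries $E_\bfi\bfU^+$ onto $u_\bfi\cC^*$ and $\bfU^+E_\bfi$ onto $\cC^*u_\bfi$, so the index sets $\cB\setminus(\cB\cap E_\bfi\bfU^+)$ and $\cB\setminus(\cB\cap\bfU^+E_\bfi)$ correspond to $\cB'\setminus(\cB'\cap u_\bfi\cC^*)$ and $\cB'\setminus(\cB'\cap\cC^*u_\bfi)$. By Section \ref{sec: Almost orthogonality}, $\Xi_1$ intertwines the operators ${}_\bfi r$ and $r_\bfi$ on the two algebras; hence it identifies $\bfU^+[\bfi]=\Ker{}_\bfi r$ with $\cC^*[\bfi]$, identifies ${}^\sz\bfU^+[\bfi]$ with ${}^\sz\cC^*[\bfi]$, and intertwines the projections ${}^\bfi\pi$ and $\pi^\bfi$. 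The symmetry $T_\bfi$ is literally the same operator on $\cC^*=\bfU^+$. Finally $\sgn$ on $\cB'$ is defined through $\cB'_{\bfi;n}=\Xi_1(\cB_{\bfi;n})$ and the bijections $\varphi_{\bfi;n}$ of Section \ref{sec: canonical basis} (see Section \ref{Sec: Kashi}), so $\sgn(\Xi_1(b))=\sgn(b)$ for $b\in\cB$.

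With these compatibilities, Lusztig's theorem yields a unique bijection $\kappa^{\bfU}_\bfi$ on the $\bfU^+$-side satisfying $T_\bfi({}^\bfi\pi(b))=\pi^\bfi(\kappa^{\bfU}_\bfi(b))$ and $\sgn(\kappa^{\bfU}_\bfi(b))=\sgn(b)$ --- first for the canonical basis $\bfB$, and then for $\cB=\bfB\sqcup(-\bfB)$ by the observation that all the subsets and maps involved are stable under $b\mapsto-b$, that the projections and $T_\bfi$ are linear, and that $\sgn$ is odd. Conjugating by $\Xi_1$, i.e.\ setting $\kappa_\bfi=\Xi_1\circ\kappa^{\bfU}_\bfi\circ\Xi_1^{-1}$, produces the asserted bijection, and uniqueness transports because its two defining conditions are preserved by $\Xi_1$.

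I do not expect a genuine obstacle here: the argument is bookkeeping around $\Xi_1$, and each compatibility it needs is already recorded in the excerpt. The one point worth a careful sentence is that the two notions of $\sgn$ agree under $\Xi_1$ --- the one on $\cB'$ built from the Kashiwara operators $\tphi_\bfi,\tez_\bfi$ on $P=\bbQ(v)\otimes_{\cA'}\cH^0(\ggz)$, and the one on $\cB$ built from the operators of [\cite{Lusztig_Introduction_to_quantum_groups}, Chapter 17] on $\cC^*$; this holds because $\Xi_1$ intertwines $\phi_\bfi$ with left multiplication by $E_\bfi$ and $\ez_\bfi$ with ${}_\bfi r$, hence intertwines the associated Kashiwara operators and therefore the maps $\varphi_{\bfi;n}$ that define $\sgn$.
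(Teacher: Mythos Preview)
The paper gives no proof of this lemma at all: it is simply quoted as \cite[Theorem 1.2]{Lusztig_Braid_group_action_and_canonical_bases}. Your proposal makes explicit the one step the paper leaves implicit, namely that Lusztig's theorem is about the signed canonical basis $\cB$ of $\bfU^+$ while the lemma is phrased for $\cB'\subset\cC^*$, and these coincide under $\Xi_1$ by Theorem~\ref{main theorem} (already proved in Section~7, so no circularity). Your transport-along-$\Xi_1$ argument is correct and is exactly what the citation is tacitly using.
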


If $\bfi$ is a sink or source,
we have the Bernstein–Gelfand–Ponomarev reflection functors $\sz_\bfi^\pm:\Rep_k \ggz\ra\Rep_k\sz_\bfi\ggz$, which induce an exact equivalence $\sz_\bfi^\pm:\Rep_k \ggz\lan\bfi\ran\ra\Rep_k\sz_\bfi\ggz\lan \bfi\ran$, where $\Rep_k \ggz\lan\bfi\ran$ (resp., $\Rep_k\sz_\bfi\ggz\lan \bfi\ran$) is the full subcategory of $\Rep_k \ggz$ (resp., $\Rep_k\sz_\bfi\ggz$) of all representations which do not contain a direct summand isomorphic to the simple $S_\bfi$. 
Then $\sz_\bfi^\pm$ induce an isomorphism $\sz_\bfi^\pm: \cH^*_k(\ggz)\lan\bfi\ran\ra\cH^*_k(\sz_\bfi\ggz)\lan \bfi\ran$,  where $\cH^*_k (\ggz)\lan\bfi\ran$ (resp., $\cH^*_k(\sz_\bfi\ggz)\lan \bfi\ran$) is the subalgebra of $\cH^*_k (\ggz)$ (resp., $\cH^*_k(\sz_\bfi\ggz)$) spanned by isoclasses of modules in $\Rep_k \ggz\lan\bfi\ran$ (resp., $\Rep_k\sz_\bfi\ggz\lan \bfi\ran$).

By \cite{Deng_Xiao_Ringel-Hall_algebras_and_Lusztig's_symmetries}, Lustig's symmetries coincide with the BGP reflection functors up to a canonical isomorphism. 
Namely, $\sz_\bfi^\pm: \cH^*_k(\ggz)\lan\bfi\ran\ra\cH^*_k(\sz_\bfi\ggz)\lan \bfi\ran$ induces an isomorphism $\sz_\bfi^\pm:\cC^*(\ggz)\lan\bfi\ran\ra\cC^*(\sz_\bfi\ggz)\lan\bfi\ran$, where $\cC^*(\ggz)\lan\bfi\ran=\cC^*\cap \Pi_k\cH^*_k(\ggz)\lan\bfi\ran$. Moreover, under the isomorphism $\cC^*(\ggz)\cong\bfU^+\cong\cC^*(\sz_{\bfi}\ggz)$, $\sz_\bfi^+:\cC^*(\ggz)\lan\bfi\ran\ra\cC^*(\sz_\bfi\ggz)\lan\bfi\ran$ coincides with $T_\bfi:\bfU^+[\bfi]\ra {}^\sz\bfU^+[\bfi]$ when $\bfi$ is a sink and $\sz_\bfi^-:\cC^*(\ggz)\lan\bfi\ran\ra\cC^*(\sz_\bfi\ggz)\lan\bfi\ran$ coincides with $T_\bfi:{}^\sz\bfU^+[\bfi]\ra \bfU^+[\bfi]$.

By construction, we have
$$M(\bmbz_t)=\left\{
\begin{aligned}
&\sz^{-}_{\bfi_0}\sz^{-}_{\bfi_{-1}}\dots \sz^{-}_{\bfi_{t+1}}(S_{\bfi_t}),   & \text{if}\, t\leq 0,\\
&\sz^+_{\bfi_1}\sz^+_{\bfi_{2}}\dots \sz^+_{\bfi_{t-1}}(S_{\bfi_t}),    & \text{if}\, t>0.
\end{aligned}
\right.
$$
Then 
\begin{align*}
    E(\bfc_-)&=\lan M(\bmbz_0)^{\oplus \bfc_-(0)}\ran*\lan M(\bmbz_{-1})^{\oplus \bfc_-(-1)}\ran*\lan M(\bmbz_{-2})^{\oplus \bfc_-(-2)}\ran*\dots\\
    &=E_{\bfi_0}^{(\bfc_-(0))}*T_{\bfi_0}^{-1}(E_{\bfi_{-1}}^{(\bfc_-(-1))})*T_{\bfi_0}^{-1}T_{\bfi_{-1}}^{-1}(E_{\bfi_{-2}}^{(\bfc_-(-2))})*\dots
\end{align*}
and
\begin{align*}
    E(\bfc_+)&=\dots*\lan M(\bmbz_3)^{\oplus \bfc_+(3)}\ran*\lan M(\bmbz_{2})^{\oplus \bfc_+(2)}\ran*\lan M(\bmbz_{1})^{\oplus \bfc_+(1)}\ran\\
    &=\dots *T_{\bfi_1}T_{\bfi_{2}}(E_{\bfi_{3}}^{(\bfc_+(3))})*T_{\bfi_1}(E_{\bfi_{2}}^{(\bfc_+(2))})*E_{\bfi_1}^{(\bfc_+(1))}
\end{align*}
for any $\bfc_-\in\cG_-,\bfc_+\in\cG_+$.
Then
\begin{align*}
   E(\bfc,t_\lz)=&\left(  E_{\bfi_0}^{(\bfc_-(0))}*T_{\bfi_0}^{-1}(E_{\bfi_{-1}}^{(\bfc_-(-1))})*T_{\bfi_0}^{-1}T_{\bfi_{-1}}^{-1}(E_{\bfi_{-2}}^{(\bfc_-(-2))})*\dots\right)*E(\bfc_0,t_\lz)\\
   &*\left(\dots *T_{\bfi_1}T_{\bfi_{2}}(E_{\bfi_{3}}^{(\bfc_+(3))})*T_{\bfi_1}(E_{\bfi_{2}}^{(\bfc_+(2))})*E_{\bfi_1}^{(\bfc_+(1))}\right)
\end{align*}
for any $(\bfc,t_\lz)\in\cG^a(\ggz)$.

For $(\bfc,t_\lz)\in\cG^a(\ggz)$ and $p\in\bbZ$, we define
\begin{align*}
   E_p(\bfc,t_\lz)=&\left(  E_{\bfi_p}^{(\bfc(p))}*T_{\bfi_p}^{-1}(E_{\bfi_{p-1}}^{(\bfc(p-1))})*T_{\bfi_p}^{-1}T_{\bfi_{p-1}}^{-1}(E_{\bfi_{p-2}}^{(\bfc(p-2))})*\dots\right)\\
   &*T_{\bfi_{p+1}}T_{\bfi_{p+2}}\dots T_{\bfi_0} (E(\bfc_0,t_\lz))\\
   &*\left(\dots *T_{\bfi_{p+1}}T_{\bfi_{p+2}}(E_{\bfi_{p+3}}^{(\bfc(p+3))})*T_{\bfi_{p+1}}(E_{\bfi_{p+2}}^{(\bfc(p+2))})*E_{\bfi_{p+1}}^{(\bfc(p+1))}\right)
\end{align*}
for $p\leq 0$. For $p>0$, we replace the middle part by $T^{-1}_{\bfi_{p}}T^{-1}_{\bfi_{p-1}}\dots T^{-1}_{\bfi_1} (E(\bfc_0,t_\lz))$. 
Here $\bfc=\bfc_-+\bfc_+$ as functions on $\bbZ$.
In particular, $E_0(\bfc,t_\lz)=E(\bfc,t_\lz)$.

The following lemma is obtained by observation.
\begin{lemma}
    For $p\in\bbZ$, the set $\{E_p(\bfc,t_\lz)|(\bfc,t_\lz)\in\cG^a(\ggz)\}$ is the PBW basis for $\sz_{\bfi_{p}}\dots\sz_{\bfi_{-1}}\sz_{\bfi_0}\ggz$ ($p\leq 0$), or respectively, $\sz_{\bfi_{p}}\dots\sz_{\bfi_{2}}\sz_{\bfi_1}\ggz$ ($p>0$), with the admissible sequence $\underline{\bfi}[p]$.
    Here $\underline{\bfi}[p]=(\dots,\bfi_{p-1}|\bfi_p,\bfi_{p+1},\dots)$.
\end{lemma}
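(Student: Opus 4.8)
The plan is to identify the explicitly defined family $\{E_p(\bfc,t_\lz)\mid(\bfc,t_\lz)\in\cG^a(\ggz)\}$ with the PBW basis that Section~\ref{sec def of h0} and Propositions~\ref{uniqueness of PBW} and~\ref{PBW basis is multiplicative} attach to the reflected valued tame quiver $\ggz_p:=\sz_{\bfi_p}\cdots\sz_{\bfi_{-1}}\sz_{\bfi_0}\ggz$ (for $p\le 0$; for $p>0$, to $\sz_{\bfi_p}\cdots\sz_{\bfi_2}\sz_{\bfi_1}\ggz$) equipped with the sequence $\underline{\bfi}[p]$. First I would check that $\underline{\bfi}[p]$ is admissible to $\ggz_p$, which is immediate from the definition of admissibility, since passing to $\ggz_p$ reverses arrows exactly at the successive sinks recorded by the admissible sink subsequence of $\underline{\bfi}$. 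Next, the index set $\cG$ is literally the same for $\ggz$ and $\ggz_p$ --- reflection functors preserve the number and ranks of the nonhomogeneous tubes --- and the order $\preceq$ is built only from the lexicographic orders on $\cG_\pm$, the order $\le_G$ on $\cG_0$, and the comparison of the integers $m$ and of partitions, none of which sees the chosen admissible sequence; so the PBW basis $\{E'(\bfc,t_\lz)\mid(\bfc,t_\lz)\in\cG^a\}$ of $\cC^*(\ggz_p)$ attached to $\underline{\bfi}[p]$ is well defined and is what the lemma asserts coincides with $\{E_p(\bfc,t_\lz)\}$ under $\cC^*(\ggz)\cong\bfU^+\cong\cC^*(\ggz_p)$. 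By Proposition~\ref{PBW basis is multiplicative} for $\ggz_p$ one has $E'(\bfc,t_\lz)=E'(\bfc_-)*E'(\bfc_0,t_\lz)*E'(\bfc_+)$, so it is enough to match these three factors with the three bracketed factors in the definition of $E_p(\bfc,t_\lz)$.

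For the preprojective and preinjective factors, I would apply Section~\ref{sec: PI part} directly to $(\ggz_p,\underline{\bfi}[p])$: its indecomposable preprojective and preinjective modules are the $M_k(\bmbz_t[p])$, and --- exactly as in the computation displayed just before the statement --- each of these is an iterated BGP reflection of a simple module of $\ggz_p$, so by \cite{Deng_Xiao_Ringel-Hall_algebras_and_Lusztig's_symmetries} $\lan M_k(\bmbz_t[p])\ran$ equals a divided power of a Chevalley generator twisted by a product of Lusztig symmetries; reading off $E'(\bfc_-)$ and $E'(\bfc_+)$ term by term then reproduces, verbatim, the left and right bracketed factors of $E_p(\bfc,t_\lz)$. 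For the middle factor the key point is that each $\bfi_j$ in the sink (respectively source) subsequence is a sink (source) of the relevant intermediate quiver, so that $S_{\bfi_j}$ there is preprojective or preinjective, never regular; hence the composite reflection functor $\Theta_p=\sz^{\pm}_{\bfi_p}\cdots\sz^{\pm}_{\bfi_0}$ is defined on all of $\cR^\ggz$ and restricts to an equivalence $\cR^\ggz\to\cR^{\ggz_p}$ preserving the decomposition into tubes, their ranks, their degrees, and aperiodicity. Consequently it matches $\lan M(\bfc_0)\ran$, $H_m$, $S_\lz$ and the building block $E(\bfc_0,t_\lz)$ for $\ggz$ with their counterparts for $\ggz_p$, and under the identification of composition algebras its restriction to the regular part is precisely the composite Lusztig symmetry twisting $E(\bfc_0,t_\lz)$ in the definition of $E_p(\bfc,t_\lz)$ (again by \cite{Deng_Xiao_Ringel-Hall_algebras_and_Lusztig's_symmetries}). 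Assembling the three factors through Proposition~\ref{PBW basis is multiplicative} for $\ggz_p$ then yields $E_p(\bfc,t_\lz)=E'(\bfc,t_\lz)$, and the case $p>0$ is entirely symmetric, with source-reflection functors, the inverse Lusztig symmetries $T_{\bfi_j}^{-1}$, and the analogous argument on the $\cI^\ggz$-side.

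The step I expect to be the real obstacle is the index bookkeeping. A reflection functor $\sz^{\pm}_{\bfi}$ is an equivalence only on the subcategory of modules with no summand $\cong S_{\bfi}$, so one must follow carefully, as the distinguished cut moves past $\bfi_0,\bfi_{-1},\dots,\bfi_p$, how simple roots enter and leave the preprojective and preinjective strings, and check that at each stage the new head of the preprojective part is exactly the simple at the newly created source of the intermediate quiver, so that the leftmost factor $E_{\bfi_p}^{(\bfc(p))}$ of $E_p(\bfc,t_\lz)$ lands exactly where $E'(\bfc_-)$ prescribes; this is where the precise position convention for $\underline{\bfi}[p]$ --- which entry is its $0$-th one, hence which $\bfi_t$ plays the former role of $\bfi_0$ --- must be fixed and kept consistent on the two sides. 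One must also verify that the tautological relabelling of iso-classes inside the nonhomogeneous tubes is transported compatibly by $\Theta_p$, using the independence of $\cG_0$ from the chosen equivalences $\vez_t$. Once these conventions ($\sz^+$ versus $\sz^-$ at sinks versus sources, $T_{\bfi}$ versus $T_{\bfi}^{-1}$, and the cut convention) are settled, the identity $E_p(\bfc,t_\lz)=E'(\bfc,t_\lz)$ reduces to unwinding definitions --- which is why the lemma can indeed be obtained by observation.
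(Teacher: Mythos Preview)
The paper offers no proof of this lemma beyond the sentence ``The following lemma is obtained by observation.'' Your proposal is exactly the natural way to make that observation rigorous, and it is correct: one uses the Deng--Xiao identification of BGP reflection functors with Lusztig's symmetries to match the three factors in the displayed definition of $E_p(\bfc,t_\lz)$ with the three factors $E'(\bfc_-)*E'(\bfc_0,t_\lz)*E'(\bfc_+)$ of the PBW basis for the reflected quiver, invoking Proposition~\ref{PBW basis is multiplicative} on the reflected side and the fact that the composite reflection is an equivalence on the regular category. The index bookkeeping you flag is indeed the only place where care is needed; note in particular that there appears to be an off-by-one in the statement as written (for $p=0$ the formula $\sz_{\bfi_p}\cdots\sz_{\bfi_0}\ggz$ gives $\sz_{\bfi_0}\ggz$ rather than $\ggz$, whereas $E_0=E$ is manifestly the PBW basis for $\ggz$ itself), so when you carry out the verification you should fix the convention for which reflected quiver corresponds to which $p$ and stick to it.
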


Denote by $C_p(\bfc,t_\lz)$ the unique element in $\cB$ such that $C_p(\bfc,t_\lz)\equiv E_p(\bfc,t_\lz)\,\mod v^{-1}\cL_{\cA'}$. In particular, $C_0(\bfc,t_\lz)=C(\bfc,t_\lz)$. 
Then for any fixed $p\in\bbZ$, $\cB=\{\pm C_p(\bfc,t_\lz)|(\bfc,t_\lz)\in\cG^a(\ggz)\}$.

\begin{proposition}\label{k(C)=C_0}
    For $(\bfc,t_\lz)\in\cG^a(\ggz)$, if $\bfc_-(0)=0$, then $$\kappa_{\bfi_0}(C(\bfc,t_\lz))=C_{-1}(\bfc,t_\lz).$$
\end{proposition}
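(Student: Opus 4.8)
The plan is to identify $\kappa_{\bfi_0}(C(\bfc,t_\lz))$ through the characterization of $\kappa_{\bfi_0}$ in Lemma~\ref{kappa is T}, reducing everything to a congruence modulo $v^{-1}\cL_{\cA'}$. First, since $\bfc_-(0)=0$, Proposition~\ref{prop: pi acts on the left-most side} (applied with $N=\bfc_-(0)=0$) shows $C(\bfc,t_\lz)\in\cB'_{\bfi_0;0}$, so it lies in the domain $\cB'-(\cB'\cap u_{\bfi_0}\cC^*)$ of $\kappa_{\bfi_0}$.

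The key step I would establish is that in the layer ``$(-)_-(0)=0$'' the PBW basis is carried to the one of $\underline{\bfi}[-1]$ by $T_{\bfi_0}$: for every $(\bfc',t_{\lz'})\in\cG^a(\ggz)$ with $\bfc'_-(0)=0$,
$$E(\bfc',t_{\lz'})\in\bfU^+[\bfi_0]\quad\text{and}\quad T_{\bfi_0}(E(\bfc',t_{\lz'}))=E_{-1}(\bfc',t_{\lz'}).$$
For this I would use Proposition~\ref{PBW basis is multiplicative}: $E(\bfc',t_{\lz'})=E(\bfc'_-)*E(\bfc'_0,t_{\lz'})*E(\bfc'_+)$, and each factor is an $\cA'$-combination of classes $\lan M\ran$ of modules without direct summand $S_{\bfi_0}$ (for $E(\bfc'_-)=\lan M(\bfc'_-)\ran$ because $\bfc'_-(0)=0$; for $E(\bfc'_0,t_{\lz'})\in\cH^*(\cR^\ggz)$ and $E(\bfc'_+)=\lan M(\bfc'_+)\ran$ because $\bfi_0$ is a sink, so $S_{\bfi_0}$ is projective and hence neither regular nor preinjective), so all three lie in the subalgebra $\cC^*(\ggz)\lan\bfi_0\ran=\bfU^+[\bfi_0]$ (Section~\ref{Sec: Kashi}). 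Then $T_{\bfi_0}$ is an algebra homomorphism on $\bfU^+[\bfi_0]$, and comparing, via the expressions of $E(\bfc'_-),E(\bfc'_+)$ through Lusztig's symmetries recorded in Section~\ref{Sec: Kashi}, with the definition of $E_{-1}(\bfc',t_{\lz'})$ — whose rightmost factor $E_{\bfi_0}^{(\bfc'(0))}$ is $1$ since $\bfc'(0)=\bfc'_-(0)=0$ — gives the displayed identity.

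Next I would compute ${}^{\bfi_0}\pi(C(\bfc,t_\lz))$ and apply $T_{\bfi_0}$. Expanding $C(\bfc,t_\lz)=\sum_{(\bfc',t_{\lz'})\preceq(\bfc,t_\lz)}g^{(\bfc',t_{\lz'})}_{(\bfc,t_\lz)}(v)\,E(\bfc',t_{\lz'})$ with $g^{(\bfc,t_\lz)}_{(\bfc,t_\lz)}=1$ and the rest in $v^{-1}\bbQ[v^{-1}]$ (Proposition~\ref{relation bt C and E}), and using $E(\bfc',t_{\lz'})=u_{\bfi_0}^{(\bfc'_-(0))}*E((\bfc')^{\vdash},t_{\lz'})$ with $E((\bfc')^{\vdash},t_{\lz'})\in\bfU^+[\bfi_0]$, one sees that the $\bfi_0$-stratification $\bfU^+=\bigoplus_{N\ge 0}u_{\bfi_0}^{(N)}*\bfU^+[\bfi_0]$ is refined by the PBW basis, so projection onto the $N=0$ summand gives
$${}^{\bfi_0}\pi(C(\bfc,t_\lz))=\sum_{(\bfc',t_{\lz'})\preceq(\bfc,t_\lz),\ \bfc'_-(0)=0}g^{(\bfc',t_{\lz'})}_{(\bfc,t_\lz)}(v)\,E(\bfc',t_{\lz'}).$$
Applying the identity above term by term,
$$T_{\bfi_0}({}^{\bfi_0}\pi(C(\bfc,t_\lz)))=\sum_{(\bfc',t_{\lz'})\preceq(\bfc,t_\lz),\ \bfc'_-(0)=0}g^{(\bfc',t_{\lz'})}_{(\bfc,t_\lz)}(v)\,E_{-1}(\bfc',t_{\lz'}).$$
Since the $E_{-1}(\bfc',t_{\lz'})$ form a (hence almost orthogonal) PBW basis of $\sz_{\bfi_0}\ggz$, each of them lies in $\cL_{\cA'}$; as the off-diagonal $g$'s lie in $v^{-1}\bbQ[v^{-1}]$ and $E_{-1}(\bfc,t_\lz)\equiv C_{-1}(\bfc,t_\lz)\,\mod v^{-1}\cL_{\cA'}$, this gives $T_{\bfi_0}({}^{\bfi_0}\pi(C(\bfc,t_\lz)))\equiv C_{-1}(\bfc,t_\lz)\,\mod v^{-1}\cL_{\cA'}$.

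To finish, set $b'=\kappa_{\bfi_0}(C(\bfc,t_\lz))$. By Lemma~\ref{kappa is T}, $b'\in\cB'-(\cB'\cap\cC^*u_{\bfi_0})$ and $\pi^{\bfi_0}(b')=T_{\bfi_0}({}^{\bfi_0}\pi(C(\bfc,t_\lz)))$, while Lemma~\ref{b-pi(b)} gives $b'-\pi^{\bfi_0}(b')\in v^{-1}\cL_{\cA}\subseteq v^{-1}\cL_{\cA'}$; hence $b'\equiv C_{-1}(\bfc,t_\lz)\,\mod v^{-1}\cL_{\cA'}$. Both $b'$ and $C_{-1}(\bfc,t_\lz)$ lie in $\cB$, and by Lemma~\ref{lemma: C=E=N mod v-1} together with the description of $\cL_{\cA'}$ by the almost-orthogonal basis $\{N(\bfc,t_\lz)\}$, the map $\cB\to\cL_{\cA'}/v^{-1}\cL_{\cA'}$ is injective; therefore $b'=C_{-1}(\bfc,t_\lz)$. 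The step I expect to be the main obstacle is making the second and third paragraphs rigorous — in particular the identification $\bfU^+[\bfi_0]=\cC^*(\ggz)\lan\bfi_0\ran$ (so that the three PBW factors really lie there and the reflection-functor dictionary of Section~\ref{Sec: Kashi} applies) and the compatibility of the $\bfi_0$-stratification with the PBW basis; once these are in hand the remaining manipulations are formal.
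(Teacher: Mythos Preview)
Your proof is correct and follows essentially the same route as the paper's: both verify $C(\bfc,t_\lz)\in\cB'_{\bfi_0;0}$ via Proposition~\ref{prop: pi acts on the left-most side}, establish $E(\bfc,t_\lz)\in\bfU^+[\bfi_0]$ with $T_{\bfi_0}(E(\bfc,t_\lz))=E_{-1}(\bfc,t_\lz)$, and then combine Lemmas~\ref{b-pi(b)} and~\ref{kappa is T} to conclude $\kappa_{\bfi_0}(C(\bfc,t_\lz))\equiv E_{-1}(\bfc,t_\lz)\bmod v^{-1}\cL_{\cA'}$. The only difference is in how the congruence survives $T_{\bfi_0}$: the paper invokes the general fact that $T_{\bfi_0}$ preserves $\cL_\cA\cap\bfU^+[\bfi_0]$, whereas you compute ${}^{\bfi_0}\pi(C(\bfc,t_\lz))$ explicitly in the PBW basis and apply $T_{\bfi_0}$ term by term---a slightly more hands-on but equally valid justification.
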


\begin{proof}
    We already showed that $C(\bfc,t_\lz)\in\cB_{\bfi_0;0}=\cB-\cB\cap E_{\bfi_0}\bfU^+$ for $\bfc_-(0)=0$ in Proposition \ref{prop: pi acts on the left-most side}.
    By Lemma \ref{b-pi(b)}, 
    $${}^{\bfi_0}\pi(C(\bfc,t_\lz)))\equiv C(\bfc,t_\lz)\equiv E(\bfc,t_\lz)\,\mod v^{-1}\bbQ\otimes_\bbZ\cL_{\cA}.$$
    Note that $E(\bfc,t_\lz)\in\bfU^+[{\bfi_0}]$ for $\bfc_-(0)=0$, then by applying $T_{\bfi_0}$ to both sides, since $T_{\bfi_0}$ maps $\cL_\cA\cap\bfU^+[\bfi_0]$ into itself and $\bbQ\otimes_\bbZ\cL_\cA\subset\cL_{\cA'}$, we have
    $$T_{\bfi_0}({}^{\bfi_0}\pi(C(\bfc,t_\lz)))\equiv E_{-1}(\bfc,t_\lz)\,\mod v^{-1}\cL_{\cA'}.$$
    By Lemma \ref{b-pi(b)}, Lemma \ref{kappa is T},
    $$T_{\bfi_0}({}^{\bfi_0}\pi(C(\bfc,t_\lz)))=\pi^{\bfi_0}(\kappa_{\bfi_0}(C(\bfc,t_\lz)))\equiv \kappa_{\bfi_0}(C(\bfc,t_\lz))\,\mod v^{-1}\cL_{\cA'}.$$
    So $\kappa_{\bfi_0}(C(\bfc,t_\lz))\equiv  E_{-1}(\bfc,t_\lz)\,\mod v^{-1}\cL_{\cA'}$.
    Then $\kappa_{\bfi_0}(C(\bfc,t_\lz))=C_{-1}(\bfc,t_\lz)$ by definition.
\end{proof}

The following corollary is naturally deduced by Proposition \ref{k(C)=C_0}.
\begin{corollary}\label{k(C)=C_s}
    For $(\bfc,t_\lz)\in\cG^a(\ggz)$, let $s\leq 0$ such that $\bfc_-(t)=0$ for all $s\leq t\leq 0$, then $$\kappa_{\bfi_{s}}\dots\kappa_{\bfi_{-1}}\kappa_{\bfi_0}(C(\bfc,t_\lz))=C_{s-1}(\bfc,t_\lz).$$
\end{corollary}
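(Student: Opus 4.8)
The plan is to prove Corollary \ref{k(C)=C_s} by induction on $-s\geq 0$, iterating Proposition \ref{k(C)=C_0} along a chain of reflected frames. The base case $s=0$ is Proposition \ref{k(C)=C_0} itself: the hypothesis forces $\bfc_-(0)=0$, so $\kappa_{\bfi_0}(C(\bfc,t_\lz))=C_{-1}(\bfc,t_\lz)$. The only thing to organise is the inductive step, which amounts to running the argument of Proposition \ref{k(C)=C_0} not for $\ggz$ but for the quiver $\ggz_p$ obtained from $\ggz$ by the reflections at $\bfi_0,\bfi_{-1},\dots,\bfi_{p+1}$ (so $\ggz_0=\ggz$), equipped with the admissible sequence $\underline{\bfi}[p]=(\dots,\bfi_{p-1}\mid\bfi_p,\bfi_{p+1},\dots)$, for each $p$ with $s\leq p\leq 0$.

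First I would record that Proposition \ref{k(C)=C_0} transfers verbatim to this $p$-frame, the point being that every ingredient of its proof is orientation-independent. Namely: $\underline{\bfi}[p]$ is admissible to $\ggz_p$ with leftmost vertex $\bfi_p$ a sink of $\ggz_p$ (built into the admissibility of $\underline{\bfi}$); $\{E_p(\bfc',t_{\lz'})\}$ is the PBW basis of $\cC^*(\ggz_p)\cong\bfU^+$ attached to $\underline{\bfi}[p]$ and $\{\pm C_p(\bfc',t_{\lz'})\}$ the corresponding labelling of $\cB$ (the lemma just before Proposition \ref{k(C)=C_0}); the BGP reflections at $\bfi_0,\dots,\bfi_{p+1}$ fix the nonhomogeneous tubes, so $(\bfc,t_\lz)$ stays in $\cG^a$; and the operators $\ez_{\bfi_p},\phi_{\bfi_p},T_{\bfi_p},\kappa_{\bfi_p}$ depend only on $\bfi_p\in\bfI$ and the fixed identification $\cC^*\cong\bfU^+$. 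The single identity in the proof that needs checking in this generality is $T_{\bfi_p}(E_p(\bfc,t_\lz))=E_{p-1}(\bfc,t_\lz)$ when $\bfc(p)=0$ (here $\bfc=\bfc_-+\bfc_+$ as a function on $\bbZ$, so $\bfc(p)=\bfc_-(p)$ for $p\leq 0$); this follows by distributing $T_{\bfi_p}^{-1}$ through the defining product of $E_p(\bfc,t_\lz)$ and matching it term by term with that of $E_{p-1}(\bfc,t_\lz)$, and it also shows $E_p(\bfc,t_\lz)\in\bfU^+[\bfi_p]$. Granting this, Lemmas \ref{lemma: C=E=N mod v-1}, \ref{lemma: Kashiwara's operators keep LA'}, \ref{b-pi(b)}, \ref{kappa is T}, together with the $p$-frame form of Proposition \ref{prop: pi acts on the left-most side} (which puts $C_p(\bfc,t_\lz)$ into $\cB'_{\bfi_p;0}$, so that $\kappa_{\bfi_p}$ is defined on it), reproduce the proof of Proposition \ref{k(C)=C_0} word for word and give: if $\bfc_-(p)=0$ then $\kappa_{\bfi_p}(C_p(\bfc,t_\lz))=C_{p-1}(\bfc,t_\lz)$.

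The induction then closes: under the hypothesis $\bfc_-(t)=0$ for all $s\leq t\leq 0$, apply this statement for $p=0,-1,\dots,s$ in turn and compose, obtaining $\kappa_{\bfi_s}\cdots\kappa_{\bfi_{-1}}\kappa_{\bfi_0}(C(\bfc,t_\lz))=C_{s-1}(\bfc,t_\lz)$. I expect the main obstacle to be purely organisational, namely making the ``verbatim transfer'' of Proposition \ref{k(C)=C_0} precise --- confirming that each cited lemma and each step of that proof really is insensitive to replacing $\ggz$ by $\ggz_p$ and $\underline{\bfi}$ by $\underline{\bfi}[p]$ --- rather than any new computation. In the written version I would phrase it as a single induction on $-s$: the level-$(s+1)$ hypothesis gives $\kappa_{\bfi_{s+1}}\cdots\kappa_{\bfi_0}(C(\bfc,t_\lz))=C_s(\bfc,t_\lz)$, and applying the $(p=s)$-instance of the transferred Proposition \ref{k(C)=C_0} to $C_s(\bfc,t_\lz)$ finishes the proof.
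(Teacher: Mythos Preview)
Your proposal is correct and is exactly the argument the paper has in mind: the paper's proof is the single sentence ``naturally deduced by Proposition \ref{k(C)=C_0}'', i.e.\ iterate that proposition in the reflected frames $\ggz_p$ with admissible sequence $\underline{\bfi}[p]$, using the lemma immediately preceding it that identifies $\{E_p(\bfc',t_{\lz'})\}$ as the PBW basis for $\ggz_p$. Your write-up simply spells out the bookkeeping (that each ingredient transfers to the $p$-frame and that $T_{\bfi_p}E_p=E_{p-1}$ when $\bfc(p)=0$) that the paper leaves implicit.
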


\subsection{About the regular simples}

Assume in this section that $\ggz$ is of type $\tilde{B}_n,\tilde{C}_n$.
Fix a finite field $k$ and a non-homogeneous tube, for example, $\cT=\cT_1$.
\begin{lemma}
    If $\ggz$ is of type $\tilde{B}_n,\tilde{C}_n$, then for any regular simple module $M$, there exists $p\leq 0$ such that $\sz^+_{\bfi_{p+1}}\sz^+_{\bfi_{p+2}}\dots\sz^+_{\bfi_{0}} M$ is a simple module in $\mod_k\ggz$.
\end{lemma}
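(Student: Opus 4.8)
The plan is to push the regular simple $M$ through the reflection functors $\sz^+_{\bfi_0},\sz^+_{\bfi_{-1}},\dots$ read off from $\underline\bfi$ and to show that for types $\tilde B_n,\tilde C_n$ it becomes a simple module after finitely many steps. Two standard facts drive this. First, $\bfi_0$ is a sink of $\ggz$, $\bfi_{-1}$ a sink of $\sz_{\bfi_0}\ggz$, and so on, and each $\sz^+_{\bfi}$ at a sink restricts to an exact equivalence $\cR^\ggz\to\cR^{\sz_\bfi\ggz}$ (as used in Section \ref{sec: nh part}); hence $M^{(k)}:=\sz^+_{\bfi_{-k+1}}\cdots\sz^+_{\bfi_0}(M)$ is again a regular simple over $\ggz^{(k)}:=\sz_{\bfi_{-k+1}}\cdots\sz_{\bfi_0}\ggz$, with $\udim M^{(k)}=s_{\bfi_{-k+1}}\cdots s_{\bfi_0}(\udim M)$ --- here $M^{(k)}\not\cong S_{\bfi_{-k}}$, because the simple at a sink is preinjective rather than regular, so the reflection formula for dimension vectors applies (and if $M^{(k)}$ happened to be any simple module we would already be done). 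Second, $\underline\bfi$ is periodic, say of period $N$ with $\ggz^{(N)}=\ggz$, and the period functor $\sz^+_{\bfi_{-N+1}}\cdots\sz^+_{\bfi_0}$ is an auto-equivalence of $\Rep_k\ggz$ that, via the identification of iterated reflection functors with Coxeter functors (see \cite{Deng_Xiao_Ringel-Hall_algebras_and_Lusztig's_symmetries}), restricts on $\cR^\ggz$ to a power of the Auslander--Reiten translate $\tau$; therefore iterating the period functor moves $M$ cyclically through the set of regular simples of its tube $\cT$.

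With these in place the lemma reduces to the combinatorial statement $(\star)$: \emph{every non-homogeneous tube of $\ggz$ contains a simple module $S_\bfj$}. Granting $(\star)$, let $R_\cT=S_\bfj$ be the distinguished simple module in $\cT$; since the regular simples of $\cT$ form a single $\tau$-orbit, $M\cong\tau^{-a}R_\cT$ for some $a\ge 0$, and choosing $p=-aN$ (or $p=0$ if $a=0$) the functor $\sz^+_{\bfi_{p+1}}\cdots\sz^+_{\bfi_0}$ is the $a$-th power of the period functor, which carries $M$ to the corresponding element $R_\cT=S_\bfj$ of its $\tau$-orbit; this is a simple module in $\mod_k\ggz$ because $\ggz^{(-p)}=\ggz$.

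It remains to prove $(\star)$, and this is the crux, since it is exactly where $\tilde B_n,\tilde C_n$ differ from the other non-simply-laced affine types. I would use the Frobenius-folding description of $\cR^\ggz_{\rm nh}$ in Lemma \ref{how nh tubes are F-folded}: with $\ggz=\ggz(Q,\sz)$ for a simply-laced (tame, of type $\tilde D$) quiver $Q$, a regular simple of a tube $\cT$ of $\ggz$ has the form $(L_0\oplus\cdots\oplus L_{l-1})^F$ (case (a), $l\le 3$) or $(L_1\oplus L_2)^F$ (case (b)) with $L_i$ regular simples of the non-homogeneous tubes of $Q$; and if the constituents $L_i$ can be taken to be a full $\sz$-orbit of \emph{simple} $Q$-modules, then the fold is a simple $\ggz$-module. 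Combining this with the explicit list of regular simples of $\tilde B_n,\tilde C_n$ from \cite{Dlab_Ringel_Indecomposable_representations_of_graphs_and_algebras} (Section~6), and BGP-transporting from the reference orientation used there to the given $\ggz$, one verifies $(\star)$ tube by tube --- there are at most three non-homogeneous tubes, so this is finite. The main obstacle is precisely this verification: it does not seem to admit a type-uniform argument and relies on the specific shape of $\delta$ and of the $\tilde D$-tubes under the folding automorphism for $\tilde B_n$ and $\tilde C_n$; a secondary, more routine point is making precise the identification of the period functor of $\underline\bfi$ with a power of $\tau$ on $\cR^\ggz$, which needs a word of care because $\underline\bfi$ is an arbitrary admissible sequence rather than one coming from a single Coxeter word.
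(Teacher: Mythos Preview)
Your strategy coincides with the paper's in its essentials: regular simples of a tube form a single $\tau$-orbit, the AR translate is a composition of BGP reflections, and somewhere along this orbit sits an honest simple module. The difference lies in how you organise the argument, and your version introduces two avoidable complications.

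First, you reduce to $(\star)$ for the \emph{given} orientation $\ggz$ by insisting on full periods of $\underline\bfi$, then try to verify $(\star)$ via Frobenius folding and a tube-by-tube check. The paper bypasses this: it simply observes (citing Dlab--Ringel) that for one convenient orientation of $\tilde B_n$ or $\tilde C_n$ the unique non-homogeneous tube contains a simple module, establishes the lemma there using $\tau={}$Coxeter${}={}$composition of BGP reflections, and then BGP-transports the conclusion to an arbitrary orientation. This avoids your periodicity assumption on $\underline\bfi$ (which the paper never imposes) and also avoids having to prove $(\star)$ for every orientation --- a claim you do not actually verify, since ``BGP-transporting from the reference orientation'' moves the regular simples but does not keep them simple.

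Second, you speak of ``at most three non-homogeneous tubes'' and of checking $(\star)$ tube by tube; but the decisive structural fact for $\tilde B_n$ and $\tilde C_n$ (and the reason the lemma is stated only for these types) is that there is a \emph{unique} non-homogeneous tube. The paper uses this directly. Your Frobenius-folding detour is therefore unnecessary here: the single tube and its regular simples are already tabulated in Dlab--Ringel, and one sees by inspection that a simple module lies among them for the reference orientation.

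In short, your proof would go through once $(\star)$ is weakened to ``for some orientation reachable along $\underline\bfi$'' and the periodicity discussion is dropped, at which point it collapses to the paper's argument.
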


\begin{proof}
    By \cite{Dlab_Ringel_Indecomposable_representations_of_graphs_and_algebras}, we can choose an orientation for $\tilde{B}_n$ or $\tilde{C}_n$ such that there is at least one simple module $M$ in the unique non-homogeneous regular tube. Then other non-homogeneous regular simple modules can be obtained by applying Auslandar-Reiten translation functor by several times on $M$. Since AR translation functor can be written as a composition of some BGP reflection functors by \cite{Bernstein_Gel'fand_Ponomarev_Coxeter_functors_and_Gabriel's_theorem}, the lemma is true for this chosen orientation. Applying BGP reflection functors will prove this lemma for any orientation.
\end{proof}

Now we generalize Proposition \ref{C(pi)=phi C(pi|)}. 
Let $i$ be any vertex in the quiver $K$, then $\vez(S_i)$ is regular simple.
By lemma above, denote $\vez(S_i)=\sz^-_{\bfi_{0}}\dots\sz^-_{\bfi_{p+2}}\sz^-_{\bfi_{p+1}}(S_\bfj)$ for some $p\leq 0$ and $\bfj\in\bfI$.
\begin{proposition}\label{generalization of C(pi)=phi C(pi|)}
    Fix $(\pi_2,\dots,\pi_s)\in \Pi^a_{r_2}\times \dots\times\Pi^a_{r_s}$ and $t_\lz\in\bbP$.
    For $\pi\in\Pi^a$, we have
    $$C((\pi\uparrow_i,\pi_2,\dots,\pi_s),t_\lz)=\kappa^{-1}_{\bfi_0}\dots\kappa^{-1}_{\bfi_{p+2}}\kappa^{-1}_{\bfi_{p+1}}\varphi_{\bfj}\kappa_{\bfi_{p+1}}\kappa_{\bfi_{p+2}}\dots\kappa_{\bfi_0} C((\pi,\pi_2,\dots,\pi_s),t_\lz).$$
\end{proposition}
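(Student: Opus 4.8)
The plan is to reduce to Proposition~\ref{C(pi)=phi C(pi|)} by reflecting $\ggz$ into a quiver in which the regular simple $\vez(S_i)$ has become an honest simple module. Throughout write $(\bfc,t_\lz)$ for $((\pi,\pi_2,\dots,\pi_s),t_\lz)\in\cG^a(\ggz)$ (so $\bfc_-=\bfc_+=0$ and $\bfc_0=(\pi,\pi_2,\dots,\pi_s)$) and $(\bfc^\uparrow,t_\lz)$ for $((\pi\uparrow_i,\pi_2,\dots,\pi_s),t_\lz)$; the assertion to prove is $C(\bfc^\uparrow,t_\lz)=\kappa^{-1}_{\bfi_0}\cdots\kappa^{-1}_{\bfi_{p+1}}\,\varphi_\bfj\,\kappa_{\bfi_{p+1}}\cdots\kappa_{\bfi_0}\big(C(\bfc,t_\lz)\big)$. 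Let $\ggz^\dagger$ be the tame valued quiver obtained from $\ggz$ by reflecting successively at $\bfi_0,\bfi_{-1},\dots,\bfi_{p+1}$ (these are successive sinks, by admissibility of $\underline{\bfi}$); it is again of type $\tilde{B}_n$ or $\tilde{C}_n$, the shifted sequence $\underline{\bfi}[p]$ is admissible to it, and by the lemma identifying PBW bases of reflected quivers its PBW basis for $\underline{\bfi}[p]$ is $\{E_p(\bfc',t_{\lz'})\mid(\bfc',t_{\lz'})\in\cG^a(\ggz)\}$; hence, by Lemma~\ref{lemma: C=E=N mod v-1} applied to $\ggz^\dagger$, its bar-invariant basis is $\{C_p(\bfc',t_{\lz'})\}$.

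Next I transport the tube data. Let $\Phi=\sz^+_{\bfi_{p+1}}\cdots\sz^+_{\bfi_0}\colon\Rep_k\ggz\to\Rep_k\ggz^\dagger$ be the corresponding composite of BGP reflection functors. It restricts to an equivalence of the regular subcategories, carrying the nonhomogeneous tube $\cT=\cT_1$ (which contains $S_\bfi$) onto a nonhomogeneous tube $\cT^\dagger$ of $\ggz^\dagger$ of the same rank $r_1$ and with the same endomorphism field $k_1$. By the lemma on regular simples in type $\tilde{B}_n,\tilde{C}_n$ we have $\Phi(\vez(S_i))=\sz^+_{\bfi_{p+1}}\cdots\sz^+_{\bfi_0}(\vez(S_i))=S_\bfj$, an honest simple module of $\ggz^\dagger$, hence a regular simple module at the vertex $\bfj$. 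Take $\vez^\dagger=\Phi\circ\vez\colon\cK_{r_1}(k_1)\to\cT^\dagger$ as the distinguished equivalence for $\cT^\dagger$ (and $\Phi\circ\vez_t$ for the other nonhomogeneous tubes); then $\vez^\dagger(S_i)=S_\bfj$, and since Lusztig's symmetries realize the reflection functors (\cite{Deng_Xiao_Ringel-Hall_algebras_and_Lusztig's_symmetries}), the family $\{E_p(\bfc',t_{\lz'})\}$ is precisely the PBW basis of $\ggz^\dagger$ computed with these equivalences. In particular $C_p(\bfc,t_\lz)$ and $C_p(\bfc^\uparrow,t_\lz)$ are the bar-invariant basis elements of $\ggz^\dagger$ attached to the indices $\bfc$ and $\bfc^\uparrow$.

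Now Proposition~\ref{C(pi)=phi C(pi|)} applies to $\ggz^\dagger$ with the regular simple module $S_\bfj$ and the equivalence $\vez^\dagger$: the operation $\pi\uparrow_i$ is intrinsic to the cyclic quiver $\cK_{r_1}(k_1)$ and is unchanged by $\Phi$, and the operator $\varphi_\bfj$ depends only on $\bfU^+$ and on the generator $E_\bfj$, not on the quiver, so we obtain $\varphi_\bfj\,C_p(\bfc,t_\lz)=C_p(\bfc^\uparrow,t_\lz)$. On the other hand both $\bfc$ and $\bfc^\uparrow$ are purely regular, i.e.\ $\bfc_-(t)=\bfc^\uparrow_-(t)=0$ for all $t\le 0$, so Corollary~\ref{k(C)=C_s} (with $s=p+1$) gives
$$\kappa_{\bfi_{p+1}}\cdots\kappa_{\bfi_0}\big(C(\bfc,t_\lz)\big)=C_p(\bfc,t_\lz),\qquad \kappa_{\bfi_{p+1}}\cdots\kappa_{\bfi_0}\big(C(\bfc^\uparrow,t_\lz)\big)=C_p(\bfc^\uparrow,t_\lz),$$
where each $\kappa_{\bfi_t}$ occurring there is a bijection onto its image by Lemma~\ref{kappa is T}. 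Inverting the second identity step by step and feeding in the first together with $\varphi_\bfj\,C_p(\bfc,t_\lz)=C_p(\bfc^\uparrow,t_\lz)$ yields
$$C(\bfc^\uparrow,t_\lz)=\kappa^{-1}_{\bfi_0}\cdots\kappa^{-1}_{\bfi_{p+1}}\big(C_p(\bfc^\uparrow,t_\lz)\big)=\kappa^{-1}_{\bfi_0}\cdots\kappa^{-1}_{\bfi_{p+1}}\,\varphi_\bfj\,\kappa_{\bfi_{p+1}}\cdots\kappa_{\bfi_0}\big(C(\bfc,t_\lz)\big),$$
which is the claim. (When $p=0$ there are no reflections, $\ggz^\dagger=\ggz$, $\bfj=\bfi$, and the statement is exactly Proposition~\ref{C(pi)=phi C(pi|)}.)

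The main obstacle is the bookkeeping in the second paragraph: one must verify that Proposition~\ref{C(pi)=phi C(pi|)} may be invoked verbatim for $\ggz^\dagger$ with a matching parametrisation of indices — that reflection functors preserve the ranks and endomorphism fields of the nonhomogeneous tubes, and that under the reflection-functor/Lusztig-symmetry correspondence the family $\{E_p(\bfc',t_{\lz'})\}$ is indeed the intrinsic PBW basis of $\ggz^\dagger$ relative to the transported equivalences $\vez^\dagger$. Granting this, the remainder is formal, using only Corollary~\ref{k(C)=C_s} and the bijectivity and $\sgn$-compatibility of the maps $\kappa_\bfi$ from Lemma~\ref{kappa is T}.
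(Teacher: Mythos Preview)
Your proposal is correct and follows essentially the same approach as the paper: the paper's proof is the single line ``This follows directly from Proposition~\ref{C(pi)=phi C(pi|)} and Corollary~\ref{k(C)=C_s}'', and your argument is precisely a careful unpacking of that sentence --- reflect via $\kappa_{\bfi_{p+1}}\cdots\kappa_{\bfi_0}$ to land in the quiver $\ggz^\dagger$ where $S_\bfj$ is both simple and regular simple, invoke Proposition~\ref{C(pi)=phi C(pi|)} there, and reflect back. Your explicit discussion of why the transported equivalences $\vez^\dagger=\Phi\circ\vez$ make the index $(\pi,\pi_2,\dots,\pi_s)$ match up under reflection is bookkeeping the paper leaves implicit, but the strategy is identical.
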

  
\begin{proof}
    This follows directly from Proposition \ref{C(pi)=phi C(pi|)} and Corollary \ref{k(C)=C_s}.
\end{proof}

\subsection{About $sl_2$}
Also assume that $\ggz$ is of type $\tilde{B}_n,\tilde{C}_n$.
Let $\ooz$ be the Kronecker quiver.
There are two vertices, denoted by $1,2$, and two arrows, both from $1$ to $2$, in $\ooz$.
Let $\llz_k$ be the representation category of $\ooz$ over the finite field $k$.

For $\ggz$ of type $\tilde{B}_n$, there is 
indecomposable $M_1\in\cI$ and $M_2\in\cP$ such that $\udim M_1+\udim M_2=\dz$ and $$\End_kM_1=\End_kM_2=1,\Hom_k(M_1,M_2)=0,\Ext^1_k(M_1,M_2)=2.$$
Then $\llz_k$ can be embedded into $\Rep_k\ggz$ by mapping $S_1,S_2$ to $M_1,M_2$ respectively.

For $\ggz$ of type $\tilde{C}_n$, let $k'$ be a finite field such that $[k':k]=2$. There is 
indecomposable $M_1\in\cI$ and $M_2\in\cP$ such that $\udim M_1+\udim M_2=\dz$ and $$\End_kM_1=\End_kM_2=2,\Hom_k(M_1,M_2)=0,\Ext^1_k(M_1,M_2)=4.$$
Then $\llz_{k'}$ can be embedded into $\Rep_k\ggz$ by mapping $S_1,S_2$ to $M_1,M_2$ respectively.

For both cases, denote 
$$M_1=\sz^+_{\bfi_1}\sz^+_{\bfi_2}\dots\sz^+_{\bfi_{r-1}}(S_{\bfi_r}), M_2=\sz^-_{\bfi_0}\sz^-_{\bfi_{-1}}\dots\sz^-_{\bfi_{l+1}}(S_{\bfi_{l}}), r\geq 1, l\leq 0.$$

Consider the generic composition algebra $\cC^*(\ooz)$. By \cite{XXZ_Tame_quivers_and_affine_bases_I}, $\cC^*(\ooz)$ has a canonical basis
$$\bfB^\ooz=\{B^\ooz(\bfc_-,\bfc_+,t_\lz)|\bfc_-\in\cG^-,\bfc_+\in\cG^+,t_\lz\in\bbP\}$$
such that 
$$B^\ooz(\bfc_-,\bfc_+,t_\lz)\equiv N^\ooz(\bfc_-,\bfc_+,t_\lz)=\lan M^\ooz(\bfc_-)\ran*S^\ooz_\lz*\lan M^\ooz(\bfc_+)\ran\,\mod v^{-1}\cL^\ooz_{\cA}.$$
Let $\cB^\ooz=\bfB^\ooz\sqcup(-\bfB^\ooz)$.
Then the subsets $\cB^\ooz_{j;n}$, the bijections $\varphi^\ooz_{j;n}:\cB^\ooz_{j;0}\ra\cB^\ooz_{j;n}$ and the maps $\varphi^\ooz_{j}:\cB^\ooz\ra\cB^\ooz$ are well defined.

By abuse of notation, denote $\iota:\cC^*(\ooz)\ra\cH^0(\ggz)$ the injection induced by the embedding $\iota$ of $\llz_k$ or $\llz_{k'}$ into $\Rep_k\ggz$. Denote also $\iota:\cG^a(\ooz)\ra\cG^a(\ggz)$ such that $\iota(\bfc,t_\lz)=(\bfd,t_\lz)$ where $\iota(M(\bfc))\cong M(\bfd)$.

For any $\az\in\cG^a(\ooz)$, denote by $\az\uparrow_j\in\cG^a(\ooz), j=1,2$ such that 
$$\varphi^\ooz_j(B^oz(\az))=B^oz(\az\uparrow_j).$$

\begin{proposition}\label{Kash in sl2}
   For $\az\in\cG^a(\ooz)$, we have 
   $$\kappa_{\bfi_1}\kappa_{\bfi_2}\dots\kappa_{\bfi_{r-1}}\varphi_{\bfi_r}\kappa_{\bfi_{r-1}}^{-1}\dots\kappa_{\bfi_2}^{-1}\kappa_{\bfi_1}^{-1} C(\iota(\az))=C(\iota(\az\uparrow_1)),$$
   $$\kappa_{\bfi_0}^{-1}\kappa_{\bfi_{-1}}^{-1}\dots\kappa_{\bfi_{l+1}}^{-1}\varphi_{\bfi_l}\kappa_{\bfi_{l+1}}\dots\kappa_{\bfi_{-1}}\kappa_{\bfi_0} C(\iota(\az))=C(\iota(\az\uparrow_2)).$$
\end{proposition}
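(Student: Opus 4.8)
Both identities will be proved by the same two–step scheme: first strip off the chain of $\kappa$'s by passing to a suitably reflected quiver, and then settle a ``base case'' there. For the second identity, note that $M_2=M(\bmbz_l)=\sz^-_{\bfi_0}\sz^-_{\bfi_{-1}}\cdots\sz^-_{\bfi_{l+1}}(S_{\bfi_l})$, so under the bijection $\iota:\cG^a(\ooz)\to\cG^a(\ggz)$ the index $\iota(\az)$ has preprojective part supported only on positions $\le l$; in particular $\iota(\az)_-(t)=0$ for $l+1\le t\le 0$, and Corollary \ref{k(C)=C_s} gives $\kappa_{\bfi_{l+1}}\cdots\kappa_{\bfi_{-1}}\kappa_{\bfi_0}\bigl(C(\iota(\az))\bigr)=C_l(\iota(\az))$, the canonical element attached to the PBW basis $E_l$ of the reflected quiver $\sz_{\bfi_{l+1}}\cdots\sz_{\bfi_0}\ggz$ with admissible sequence $\underline{\bfi}[l]$. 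In that quiver $\bfi_l$ is a sink and $M_2$ has become the simple $S_{\bfi_l}=M(\bmbz_0)$, so Proposition \ref{prop: pi acts on the left-most side} applies: $\varphi_{\bfi_l}$ just increments the position-$0$ preprojective coordinate. Applying the inverse chain of $\kappa$'s (again Corollary \ref{k(C)=C_s}) returns us to $\cB'(\ggz)$, and since $2$ is a sink of $\ooz$ and $\varphi^\ooz_2$ increments the coordinate of $S_2=M^\ooz(\bmbz_0)$, incrementing the $M_2$–coordinate of $\iota(\az)$ is exactly $\iota(\az\uparrow_2)$; this yields the second identity.

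For the first identity the reduction is the mirror image. Since $M_1=M(\bmbz_r)=\sz^+_{\bfi_1}\cdots\sz^+_{\bfi_{r-1}}(S_{\bfi_r})$, the preinjective part of $\iota(\az)$ is supported on positions $\ge r$, so $\iota(\az)_+(t)=0$ for $1\le t\le r-1$. I would first establish the source analogue of Proposition \ref{k(C)=C_0} and Corollary \ref{k(C)=C_s}: if $\bfc_+(t)=0$ for $1\le t\le p$ then $\kappa_{\bfi_p}^{-1}\cdots\kappa_{\bfi_1}^{-1}\bigl(C(\bfc,t_\lz)\bigr)=C_p(\bfc,t_\lz)$. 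This goes exactly as in Proposition \ref{k(C)=C_0}, using the second halves of Lemmas \ref{b-pi(b)} and \ref{kappa is T} and the identity $T^{-1}_{\bfi_1}(E(\bfc,t_\lz))=E_1(\bfc,t_\lz)$ valid when $\bfc_+(1)=0$ — which holds because then each of the three PBW factors of $E(\bfc,t_\lz)$ lies in ${}^\sz\bfU^+[\bfi_1]=\Ker r_{\bfi_1}$, the injective simple $S_{\bfi_1}$ at the source $\bfi_1$ being a summand of no preprojective, no regular, and no preinjective module built from $\bmbz_t$ with $t\ge 2$. Thus $\kappa_{\bfi_{r-1}}^{-1}\cdots\kappa_{\bfi_1}^{-1}\bigl(C(\iota(\az))\bigr)=C_{r-1}(\iota(\az))$, and in $\ggz'=\sz_{\bfi_{r-1}}\cdots\sz_{\bfi_1}\ggz$ the vertex $\bfi_r$ is a source and $M_1$ has become the simple $S_{\bfi_r}=M(\bmbz_1)$, the first preinjective indecomposable. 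It remains to prove the base case: whenever $\iota$ sends $S_1$ to the simple $S_{\bfi}$ at a source $\bfi$, one has $\varphi_{\bfi}\bigl(C(\iota(\az))\bigr)=C(\iota(\az\uparrow_1))$. Here I would run the congruence chain of Proposition \ref{C(pi)=phi C(pi|)}: the embedding $\iota:\rep_k\ooz\hookrightarrow\Rep_k\ggz$ is fully faithful and exact, so it intertwines left multiplication by $u_1$ with left multiplication by $u_{\bfi}$ and, because every short exact sequence $0\to N\to\iota(J)\to S_{\bfi}\to 0$ is the $\iota$–image of one in $\rep_k\ooz$ (the surjection onto $S_\bfi$ being the image of a surjection onto $S_1$), it intertwines ${}_1r$ with ${}_\bfi r$; hence $\iota\circ\tphi_1=\tphi_{\bfi}\circ\iota$. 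Combining this with Lemmas \ref{lemma: C=E=N mod v-1}, \ref{lemma: Kashiwara's operators keep LA'}, \ref{Kashiwara's operator acts like pi}, Corollary \ref{cor: Kashiwara's operator acts like pi} and their $\ooz$–analogues from \cite{XXZ_Tame_quivers_and_affine_bases_I}, one gets $\varphi_{\bfi}(C(\iota(\az)))\equiv\tphi_{\bfi}(N(\iota(\az)))\equiv\tphi_{\bfi}(\iota(N^\ooz(\az)))=\iota(\tphi_1(N^\ooz(\az)))\equiv\iota(N^\ooz(\az\uparrow_1))\equiv N(\iota(\az\uparrow_1))\pmod{v^{-1}\cL_{\cA'}}$, and uniqueness of the element of $\cB'$ congruent to $N(\iota(\az\uparrow_1))$ closes the base case. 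Conjugating back by $\kappa_{\bfi_1}\cdots\kappa_{\bfi_{r-1}}$ and using that $\varphi^\ooz_1$ increments the coordinate of $S_1=M^\ooz(\bmbz_1)$ gives the first identity.

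The step I expect to be the main obstacle is justifying the two inputs to that congruence chain. The inclusion $\iota(\cL^\ooz_{\cA'})\subseteq\cL_{\cA'}$ should follow from $\iota$ being compatible — up to the normalization forced by $[k':k]=2$ in type $\tilde{C}_n$ — with the bilinear forms, so that $(\iota N^\ooz(\az),\iota N^\ooz(\az))\in 1+v^{-1}\bbQ[[v^{-1}]]$. The real difficulty is $\iota(N^\ooz(\az))\equiv N(\iota(\az))\pmod{v^{-1}\cL_{\cA'}}$: the preprojective and preinjective factors of $N^\ooz(\az)$ transport verbatim, but the homogeneous factor $S^\ooz_\lz$ does not, since $\iota$ carries some homogeneous Kronecker tubes into the non-homogeneous tubes of $\ggz$. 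Concretely, $\iota(H^\ooz_m)=\sum_{M}v^{-\dim_k\iota M}[\iota M]$ over homogeneous Kronecker modules of dimension vector $m\dz$, and the terms whose image lies in a homogeneous $\ggz$–tube reassemble exactly into $H^\ggz_m$, while each remaining term, living in a non-homogeneous $\ggz$–tube, equals $v^{-\dim_k\End_\ggz\iota M}N(\bfc_0^{\iota M},0)$ — a class with coefficient in $v^{-1}\bbQ[v^{-1}]$ sitting at a strictly smaller index of $\tcG_{m\dz}$ — so the discrepancy does lie in $v^{-1}\cL_{\cA'}$. Making this precise, in particular pinning down exactly which homogeneous Kronecker tubes fall into non-homogeneous $\ggz$–tubes and checking the parameter spaces match bijectively off the finitely many special points, is the delicate point, and is where the restriction to types $\tilde{B}_n,\tilde{C}_n$ is used; once it is granted, the remaining bookkeeping of the orderings and of $\iota$ on index sets is routine.
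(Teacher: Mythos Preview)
Your scheme is exactly the one the paper intends: strip the $\kappa$-chain via Corollary \ref{k(C)=C_s} (and its source analogue, which you correctly set up), then run the congruence chain of Proposition \ref{C(pi)=phi C(pi|)} once $\iota(S_j)$ has been made simple; the paper's proof literally says ``similar to Proposition \ref{C(pi)=phi C(pi|)} \ldots\ check that $\tphi_j$ commute with the embeddings if $\iota(S_j)$ is simple, and then by Corollary \ref{k(C)=C_s} the proof is done.'' Your treatment of the second identity through Proposition \ref{prop: pi acts on the left-most side} at the sink is clean and matches this.

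Where you go beyond the paper is in noticing that the analogy with Proposition \ref{C(pi)=phi C(pi|)} is imperfect. In the cyclic-quiver case $\vez(\lan M(\pi)\ran)=N((\pi,0,\dots,0),0)$ on the nose, so no comparison of $N$-elements is needed; in the Kronecker embedding the preprojective and preinjective factors transport exactly, but $\iota(S^\ooz_\lz)$ and $S^\ggz_\lz$ genuinely differ because some homogeneous Kronecker tubes land in non-homogeneous $\ggz$-tubes. You have put your finger on the one step the paper's sketch suppresses, and your proposed resolution --- that the extra terms live in $v^{-1}\cL_{\cA'}$ because a non-homogeneous regular module of dimension $m\dz$ carries a strictly positive power of $v^{-1}$ in its $\lan M\ran$-normalization --- is the right idea. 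Two remarks that may help you close it: first, $\iota$ is an isometry for the Green form (for type $\tilde C_n$ the doubling $v_{k'}=v_k^2$ and $\dim_k=2\dim_{k'}$ cancel), so $\iota(\cL^\ooz_{\cA'})\subset\cL_{\cA'}$ is automatic; second, rather than comparing $H^\ooz_m$ and $H^\ggz_m$ term by term, it is cleaner to compare $\iota(\tilde H^\ooz_m)$ with $\tilde H^\ggz_m$ (the sums over \emph{all} regular modules), where the discrepancy is zero, and then observe that the passage $\tilde H_m\to H_m$ on each side introduces only terms indexed by $(\bfc_0,t_\mu)$ with $\bfc_0\neq 0$, which lie below $(0,t_{(m)})$ in the order and hence in $v^{-1}\cL_{\cA'}$ after the leading term is isolated.
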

\begin{proof}
    The proof is similar to the proof of Proposition \ref{C(pi)=phi C(pi|)}, where we need only to check that the Kashiwara operators $\tphi_j$ commute with the embeddings if $\iota(S_j)$ is simple, and then by Corollary \ref{k(C)=C_s} the proof is done.
\end{proof}

\section{Remove the sign for type $\tilde{B}_n$, $\tilde{C}_n$}\label{sec remove sgn}

We write $\Delta(\bfc,t_\lz)=\sgn(C(\bfc,t_\lz))$ for short.

\begin{lemma}\label{lem1}
For $(\bfc,t_\lz)\in\cG^a$, $\Delta(\bfc,t_\lz)=\Delta(\bfc_0,t_\lz)$.
\end{lemma}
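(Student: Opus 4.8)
The plan is to show that $\Delta(\bfc,t_\lz)=\sgn(C(\bfc,t_\lz))$ is unaffected if we replace the preprojective data $\bfc_-$ by $0$ and, symmetrically, if we replace the preinjective data $\bfc_+$ by $0$; since $(0,\bfc_0,0)$ is identified with $\bfc_0$, this gives $\Delta(\bfc,t_\lz)=\Delta(\bfc_0,t_\lz)$. Two facts do all the work: the maps $\varphi_{\bfi;n}$ and $\kappa_\bfi$ preserve $\sgn$ (the former by the definition of $\sgn$ in Section~\ref{sec: canonical basis}, the latter by Lemma~\ref{kappa is T}). Concretely, Proposition~\ref{prop: pi acts on the left-most side} writes $C(\bfc,t_\lz)=\varphi_{\bfi_0,N}(C(\bfc^{\vdash},t_\lz))$ with $N=\bfc_-(0)$, hence $\Delta(\bfc,t_\lz)=\Delta(\bfc^{\vdash},t_\lz)$ --- that is, $\Delta$ does not depend on $\bfc_-(0)$ --- while Corollary~\ref{k(C)=C_s}, whenever $\bfc_-(t)=0$ for $s\leq t\leq 0$ with $s\leq 0$, gives $C_{s-1}(\bfc,t_\lz)=\kappa_{\bfi_s}\cdots\kappa_{\bfi_0}(C(\bfc,t_\lz))$ and therefore $\sgn(C(\bfc,t_\lz))=\sgn(C_{s-1}(\bfc,t_\lz))$. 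Finally, by the lemma preceding Proposition~\ref{k(C)=C_0}, $\{E_p(\bfc,t_\lz)\mid(\bfc,t_\lz)\in\cG^a(\ggz)\}$ is the PBW basis of a reflected valued quiver carrying the shifted admissible sequence $\underline{\bfi}[p]$, so every construction and statement of Section~\ref{Sec: Kashi} applies verbatim with $C_p$ in place of $C=C_0$.

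First I would strip $\bfc_-$, showing by induction on $k\geq 0$ that $\Delta(\bfc,t_\lz)$ is independent of each of $\bfc_-(0),\bfc_-(-1),\dots,\bfc_-(-k)$. The case $k=0$ is the assertion just extracted from Proposition~\ref{prop: pi acts on the left-most side}. For the step, suppose $\bfc$ and $\bfc'$ agree outside the coordinate $\bfc_-(-k-1)$. By the inductive hypothesis we may set $\bfc_-(-j)=\bfc'_-(-j)=0$ for $0\leq j\leq k$ without changing either $\Delta(\bfc,t_\lz)$ or $\Delta(\bfc',t_\lz)$; now $\bfc$ and $\bfc'$ still agree outside coordinate $-k-1$ and both satisfy $\bfc_-(t)=0$ for $-k\leq t\leq 0$. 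Corollary~\ref{k(C)=C_s} with $s=-k$ then gives $\sgn(C(\bfc,t_\lz))=\sgn(C_{-k-1}(\bfc,t_\lz))$, and likewise for $\bfc'$. Since $C_{-k-1}$ is the canonical basis of the reflected quiver $\sz_{\bfi_{-k}}\cdots\sz_{\bfi_0}\ggz$ with sequence $\underline{\bfi}[-k-1]$, whose distinguished sink is $\bfi_{-k-1}$ and whose preprojective coordinate at the index-$0$ slot is exactly $\bfc_-(-k-1)$, Proposition~\ref{prop: pi acts on the left-most side} applied to this reflected quiver gives $\sgn(C_{-k-1}(\bfc,t_\lz))=\sgn(C_{-k-1}(\bfc',t_\lz))$ (both equal $\sgn(C_{-k-1}(\hat\bfc,t_\lz))$, where $\hat\bfc$ is the common configuration obtained by zeroing coordinate $-k-1$). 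Hence $\Delta(\bfc,t_\lz)=\Delta(\bfc',t_\lz)$, completing the induction. As $\bfc_-$ is finitely supported, $\Delta(\bfc,t_\lz)$ depends on none of its coordinates, i.e. $\Delta(\bfc,t_\lz)=\Delta((0,\bfc_0,\bfc_+),t_\lz)$.

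Running the mirror-symmetric argument --- interchanging preprojective and preinjective, sinks and sources, ${}_\bfi r$ and $r_\bfi$, left and right multiplication by $u_\bfi$, $\bfU^+[\bfi]$ and ${}^\sz\bfU^+[\bfi]$, and the shifts $\underline{\bfi}[p]$ with $p\leq 0$ and with $p>0$ (equivalently, applying the result just proved to the opposite valued quiver $\ggz^{\mathrm{op}}$, which has the same Cartan datum, hence the same $\cB$ and $\sgn$, with $\cP^\ggz$ and $\cI^\ggz$ swapped) --- shows in the same way that $\Delta$ is independent of $\bfc_+$. Combining, $\Delta(\bfc,t_\lz)=\Delta((0,\bfc_0,0),t_\lz)=\Delta(\bfc_0,t_\lz)$. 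The step I expect to be most delicate is the bookkeeping with the shifted sequences $\underline{\bfi}[p]$: after a shift one must identify which original coordinate of $\bfc_-$ (respectively $\bfc_+$) occupies the index-$0$ slot and verify that the vertex there is a sink (respectively source) of the reflected quiver; and for the preinjective half one must either invoke the mirror forms of Lemma~\ref{b-pi(b)}, Lemma~\ref{kappa is T}, Proposition~\ref{prop: pi acts on the left-most side} and Corollary~\ref{k(C)=C_s}, or execute the $\ggz^{\mathrm{op}}$ reduction carefully. Domain conditions for the maps $\kappa_\bfi$ are not an issue, being already built into Proposition~\ref{k(C)=C_0} and Corollary~\ref{k(C)=C_s}.
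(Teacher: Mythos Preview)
Your proposal is correct and follows essentially the same approach as the paper: strip $\bfc_-$ one coordinate at a time using Proposition~\ref{prop: pi acts on the left-most side} (which preserves $\sgn$ via $\varphi_{\bfi;n}$) alternated with shifts to the reflected quiver via $\kappa_{\bfi}$ (which preserves $\sgn$ by Lemma~\ref{kappa is T}), then argue dually for $\bfc_+$. The paper organizes this as ``continue this process'' followed by a single application of Corollary~\ref{k(C)=C_s} to return from $C_s$ to $C_0$, whereas you package it as a clean induction on $k$ and invoke the corollary at each step to move forward; the content is the same.
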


\begin{proof}
    By Proposition \ref{prop: pi acts on the left-most side}, $\Delta(\bfc,t_\lz)=\Delta(\bfc^{\vdash},t_\lz)$. 
    Since $\bfc^\vdash_-(0)=0$, by Proposition \ref{k(C)=C_0}, $\kappa_{\bfi_0}(C(\bfc^\vdash,t_\lz))=C_{-1}(\bfc^\vdash,t_\lz)$, then $$\sgn(C(\bfc,t_\lz))=\sgn(C_{-1}(\bfc^\vdash,t_\lz))$$ by Lemma \ref{kappa is T}.
    Continue this process, then we will get
    $$\sgn(C(\bfc,t_\lz))=\sgn(C_{s}((0,\bfc_0,\bfc_+),t_\lz))$$ for $s\ll 0$.
    By Corollary \ref{k(C)=C_s}, $$\sgn(C(\bfc,t_\lz))=\sgn(\kappa_{\bfi_{s+1}}\dots\kappa_{\bfi_{-1}}\kappa_{\bfi_0}C((0,\bfc_0,\bfc_+),t_\lz))=\sgn (C((0,\bfc_0,\bfc_+),t_\lz)).$$
    So $\Delta(\bfc,t_\lz)=\Delta((0,\bfc_0,\bfc_+),t_\lz)$.
    Dually, $\Delta(\bfc,t_\lz)=\Delta((\bfc_-,\bfc_0,0),t_\lz)$ for any $(\bfc,t_\lz)\in\cG^a$. 
    Then $\Delta(\bfc,t_\lz)=\Delta((0,\bfc_0,\bfc_+),t_\lz)=\Delta((0,\bfc_0,0),t_\lz)$.
\end{proof}

\begin{lemma}\label{lem2}
    If $\ggz$ is of type $\tilde{B}_n,\tilde{C}_n$,
    for $(\bfc_0,t_\lz)\in\cG^a_0\times\bbP$, $\Delta(\bfc_0,t_\lz)=\Delta(0,t_\lz)$.
\end{lemma}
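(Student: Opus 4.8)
The plan is to run the crystal/braid-group machinery of Section~\ref{Sec: Kashi} one more time: show that the move $\bfc_0\mapsto\bfc_0\uparrow_i$ inside the unique nonhomogeneous tube leaves $\Delta$ unchanged, and then connect an arbitrary $\bfc_0$ to $0$ by a chain of such moves.

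For $\ggz$ of type $\tilde B_n$ or $\tilde C_n$ there is exactly one nonhomogeneous tube $\cT=\cT_1$ (the one used throughout the preceding subsections), so $\cG_0^a=\Pi_{r_1}^a$ and an index $\bfc_0\in\cG_0^a$ is a single aperiodic multisegment $\pi\in\Pi^a:=\Pi_{r_1}^a$. Since $s=1$, Proposition~\ref{generalization of C(pi)=phi C(pi|)} specializes to
$$C(\pi\uparrow_i,t_\lz)=\kappa_{\bfi_0}^{-1}\cdots\kappa_{\bfi_{p+1}}^{-1}\,\varphi_{\bfj}\,\kappa_{\bfi_{p+1}}\cdots\kappa_{\bfi_0}\,C(\pi,t_\lz)$$
for each vertex $i$ of the cyclic quiver $K=K_{r_1}$, with $p\le 0$ and $\bfj\in\bfI$ attached to $i$ as there. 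Each operator on the right preserves $\sgn$: the maps $\kappa_{\bfi_t}$ and their inverses do by Lemma~\ref{kappa is T}, and $\varphi_{\bfj}$ does because it is assembled from the bijections $\varphi_{\bfj;m}$, each of which preserves $\sgn$ by the defining property~(2) of $\sgn$ in Section~\ref{sec: canonical basis}. Hence
$$\Delta(\pi\uparrow_i,t_\lz)=\sgn\bigl(C(\pi\uparrow_i,t_\lz)\bigr)=\sgn\bigl(C(\pi,t_\lz)\bigr)=\Delta(\pi,t_\lz)$$
for every $\pi\in\Pi^a$ and every vertex $i$ of $K$.

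Next I would invoke the crystal description of the canonical basis $\bfB^K=\{B^K(\pi)\mid\pi\in\Pi^a\}$ of $\cC^*(K)$ from \cite{Deng_Du_Xiao_Generic_extensions_and_canonical_bases_for_cyclic_quivers}: the operators $\varphi_i^K$, i.e.\ the moves $\pi\mapsto\pi\uparrow_i$, make $\Pi^a$ into a connected crystal with distinguished source $0$ (corresponding to $B^K(0)=1$). Concretely, if $\pi\neq 0$ then $B^K(\pi)\in\cB^K_{i;n}$ for some vertex $i$ and some $n\ge 1$, so $B^K(\pi)=\varphi_i^K(B^K(\pi'))$ for some $\pi'$ with $\udim M(\pi')$ strictly smaller; by induction on $\udim M(\pi)$ there is a word $i_1,\dots,i_m$ with $\pi=(\cdots(0\uparrow_{i_1})\cdots)\uparrow_{i_m}$. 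Applying the displayed identity of signs $m$ times gives $\Delta(\pi,t_\lz)=\Delta(0,t_\lz)$, which is exactly $\Delta(\bfc_0,t_\lz)=\Delta(0,t_\lz)$.

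The only nontrivial input is the connectivity of the cyclic-quiver crystal $(\Pi^a,\uparrow_i)$ rooted at $0$; this is the point I expect to need the most care, though it is the standard fact that $B(\infty)$ is a highest-weight crystal and is already contained in the cited literature. The remaining verifications — that $s=1$ in these types and that the $\kappa$'s and $\varphi$'s preserve signs — are routine.
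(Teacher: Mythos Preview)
Your proof is correct and essentially identical to the paper's: both use Proposition~\ref{generalization of C(pi)=phi C(pi|)} together with the sign-preservation of $\kappa_{\bfi}$ (Lemma~\ref{kappa is T}) and of $\varphi_{\bfj}$ (property~(2) of $\sgn$) to show $\Delta$ is invariant under $\pi\mapsto\pi\uparrow_i$, and then invoke the connectedness of the cyclic-quiver crystal to reach $0$. The only cosmetic difference is that the paper phrases the final step as an iteration ``for all non-homogeneous tubes'', whereas you specialize at the outset to $s=1$; this is justified since, as the paper itself notes in the proof of the lemma on regular simples, types $\tilde B_n$ and $\tilde C_n$ have a unique nonhomogeneous tube.
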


\begin{proof}
    By Proposition \ref{generalization of C(pi)=phi C(pi|)} and Lemma \ref{kappa is T}, we have 
    $$\Delta((\pi,\pi_2,\dots,\pi_s),t_\lz)=\Delta ((\pi\uparrow_i,\pi_2,\dots,\pi_s),t_\lz)$$ for all $i$ and $\pi\in\Pi^a$.
    Since $\bfB^K=\{B^K(\pi)|\pi\in\Pi^a\}$, by Lusztig, for any $\pi\in\Pi^a$, there is a sequence $i_1,i_2,\dots,i_m$ of vertices in $K$ such that $\pi=0\uparrow_{i_1}\uparrow_{i_2}\dots\uparrow_{i_m}$, then 
    $$\Delta((\pi,\pi_2,\dots,\pi_s),t_\lz)=\Delta ((0,\pi_2,\dots,\pi_s),t_\lz).$$
    The lemma follows by continuing this process for all non-homogeneous tubes.
\end{proof}

\begin{lemma}\label{lem3}
     If $\ggz$ is of type $\tilde{B}_n,\tilde{C}_n$, for $t_\lz\in\bbP$, $\Delta(0,t_\lz)=1$.
\end{lemma}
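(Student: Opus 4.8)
The plan is to reduce the claim $\Delta(0,t_\lz)=1$ to the corresponding fact inside the $sl_2$-type subalgebra $\cC^*(\ooz)$ attached to the Kronecker quiver, via the embedding $\iota$ introduced in the ``About $sl_2$'' subsection, and then to invoke the known sign result from \cite{XXZ_Tame_quivers_and_affine_bases_I} for the affine $\tilde{A}_1$ (Kronecker) case. Concretely, the element $N(0,t_\lz)=S_\lz$ of $\cH^0(\ggz)$ is built entirely from $H_1,H_2,\dots$, i.e. from the homogeneous regular part; I would observe that $S_\lz$ (or a monomial approximating it modulo lower terms) lies in the image of $\iota$, since the embedding $\iota:\cC^*(\ooz)\ra\cH^0(\ggz)$ sends the $\ooz$-generators to the indecomposables $M_1\in\cI$, $M_2\in\cP$ with $\udim M_1+\udim M_2=\dz$, so that homogeneous regular modules of $\ggz$ of dimension $m\dz$ correspond under $\iota$ to the regular (homogeneous) modules of $\llz_k$ (resp.\ $\llz_{k'}$) of dimension $m\dz_\ooz$.

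The key steps, in order: (i) Write $t_\lz\in\bbP$ and set $\az=(0,0,t_\lz)\in\cG^a(\ooz)$, so that $N^\ooz(\az)=S_\lz^\ooz$ and $C(\iota(\az))=C((0,0,t_\lz),t_\lz)$, i.e.\ precisely $C(0,t_\lz)$ in $\cC^*(\ggz)$ up to the identification $\iota$. (ii) Apply Proposition \ref{Kash in sl2}: starting from $\az_0=(0,0,0)$, every $\az$ with $\bfc_\pm=0$ is reached from $\az_0$ by a sequence of the operators $\uparrow_1,\uparrow_2$ (this is the standard fact that the canonical basis of the affine $sl_2$-composition algebra restricted to the homogeneous part is connected under Kashiwara operators at the two Kronecker vertices — available from \cite{XXZ_Tame_quivers_and_affine_bases_I}), so using Proposition \ref{Kash in sl2} together with Lemma \ref{kappa is T} (which says $\kappa_\bfi$ and $\varphi_\bfi$ preserve $\sgn$) gives $\Delta(\iota(\az))=\Delta(\iota(\az_0))$. (iii) Finally $\iota(\az_0)=(0,0,0)$ with $t_0=0$, and $C(0,0)=1$ by the normalization $\fkm^{\oz(0)}=1$, so $\sgn(C(0,0))=\sgn(1)=1$ by property (1) of $\sgn$ in Section \ref{sec: canonical basis}. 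Chaining these equalities yields $\Delta(0,t_\lz)=1$.

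The main obstacle I anticipate is step (ii): making precise that the homogeneous-regular part $\{(0,0,t_\lz)\}$ of $\cG^a(\ooz)$ is exactly a single $\varphi_1,\varphi_2$-orbit of the base point under the $sl_2$-crystal structure, and that the operators $\varphi_{\bfi_r},\varphi_{\bfi_l}$ appearing in Proposition \ref{Kash in sl2} (conjugated by the $\kappa$'s) really do realize $\uparrow_1$ and $\uparrow_2$ on the nose rather than up to sign or up to lower-order corrections. One must check that, under the identification $\iota$, the element $C(\iota(\az))$ is congruent to $N(\iota(\az),t_\lz)$ mod $v^{-1}\cL_{\cA'}$ — which is immediate from Lemma \ref{lemma: C=E=N mod v-1} once one knows $\iota(\az)\in\cG^a$ has $\bfc_-=\bfc_0=\bfc_+=0$, so that the PBW element $E(\iota(\az))$ equals $S_\lz$ plus lower terms — and that Proposition \ref{Kash in sl2}'s conjugated operator indeed sends $C(\iota(\az))$ to $C(\iota(\az\uparrow_j))$ without introducing a sign, which is exactly the content of that proposition. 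Granting Proposition \ref{Kash in sl2} as stated, the argument is then a short induction on the length $m$ of a Kashiwara-word for $\lz$, the base case being $C(0,0)=1$.

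I would phrase the proof as follows.
\begin{proof}
By the ``About $sl_2$'' subsection, the embedding $\iota:\cC^*(\ooz)\ra\cH^0(\ggz)$ identifies, for each $t_\lz\in\bbP$, the element $C((0,0,t_\lz)^\ooz)\in\cB^\ooz$ with $C(0,t_\lz)\in\cB$; in particular $\iota$ preserves $\sgn$. By \cite{XXZ_Tame_quivers_and_affine_bases_I}, the canonical basis $\bfB^\ooz$ of $\cC^*(\ooz)$ is connected under the crystal operators $\varphi^\ooz_1,\varphi^\ooz_2$, so there is a sequence $j_1,\dots,j_m\in\{1,2\}$ with $(0,0,t_\lz)=(0,0,0)\uparrow_{j_1}\cdots\uparrow_{j_m}$. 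By Proposition \ref{Kash in sl2} and Lemma \ref{kappa is T}, each operation $\uparrow_{j}$ is realized on $\cB$ by a composition of maps $\kappa_\bullet^{\pm 1}$ and $\varphi_\bullet$, all of which preserve $\sgn$. Hence $\Delta(0,t_\lz)=\sgn(C(0,t_\lz))=\sgn(C(0,0))$. Finally $C(0,0)=1$ by the normalization $\fkm^{\oz(0)}=1$, and $\sgn(1)=1$, so $\Delta(0,t_\lz)=1$.
\end{proof}
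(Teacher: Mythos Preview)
Your proof is correct and essentially identical to the paper's: both use Proposition \ref{Kash in sl2} together with Lemma \ref{kappa is T} to show that each operation $\az\mapsto\az\uparrow_j$ preserves $\Delta(\iota(\az))$, then invoke crystal connectivity of the full $\bfB^\ooz$ (the paper attributes this to Lusztig rather than \cite{XXZ_Tame_quivers_and_affine_bases_I}) to connect $(0,t_\lz)$ to $0$, and conclude from $\sgn(1)=1$. Note that your side remarks about the path staying inside the homogeneous part and about ``$\iota$ preserving $\sgn$'' are inaccurate and unnecessary --- the crystal path from $0$ to $(0,t_\lz)$ in $\cG^a(\ooz)$ must pass through indices with $\bfc_\pm\neq 0$, and the argument never compares $\sgn$ on the $\ooz$ side with $\sgn$ on the $\ggz$ side --- but your formal proof does not rely on either statement.
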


\begin{proof}
    By Proposition \ref{Kash in sl2}, $\Delta(\iota(\az))=\Delta(\iota(\az\uparrow_j))$ for all $j=1,2$ and $\az\in\cG^a(\ooz)$.
    Since $\bfB^\ooz=\{B^\ooz(\az)|\az\in\cG^a(\ooz)\}$, by Lusztig, for any $\az\in\cG^a(\ooz)$, there is a sequence $i_1,i_2,\dots,i_m$ in $\{1,2\}$ such that $\az=0\uparrow_{i_1}\uparrow_{i_2}\dots\uparrow_{i_m}$. 
    Then $\Delta(\iota(\az))=\Delta(0)=1$ for all $\az\in\cG^a(\ooz)$. 
    In particular, $\Delta(0,t_\lz)=\Delta(\iota(0,t_\lz))=1$.
\end{proof}

\begin{theorem}
    If $\ggz$ is of type $\tilde{B}_n,\tilde{C}_n$, for $(\bfc,t_\lz)\in\cG^a$, $\Delta(\bfc,t_\lz)=1$.
\end{theorem}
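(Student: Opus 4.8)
The plan is to chain the three preceding lemmas, which between them peel the index $(\bfc,t_\lz)$ down to the trivial one. Fix $(\bfc,t_\lz)\in\cG^a$. First I would invoke Lemma \ref{lem1} to get $\Delta(\bfc,t_\lz)=\Delta(\bfc_0,t_\lz)$, which disposes of the preprojective and preinjective blocks $\bfc_-,\bfc_+$ and holds for every tame type. Since $\ggz$ is of type $\tilde{B}_n$ or $\tilde{C}_n$, Lemma \ref{lem2} then gives $\Delta(\bfc_0,t_\lz)=\Delta(0,t_\lz)$, disposing of the nonhomogeneous regular block $\bfc_0$, and Lemma \ref{lem3} gives $\Delta(0,t_\lz)=1$. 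Composing, $\Delta(\bfc,t_\lz)=\Delta(\bfc_0,t_\lz)=\Delta(0,t_\lz)=1$ for all $(\bfc,t_\lz)\in\cG^a$. Combined with Theorem \ref{main theorem}, this upgrades $\Xi(\pm C(\bfc,t_\lz))=\pm B(\bfc,t_\lz)$ to the signless statement $\Xi(C(\bfc,t_\lz))=B(\bfc,t_\lz)$, i.e. $\Xi(\bfB')=\bfB''$, so that $\bfB'$ is the Ringel-Hall incarnation of the honest canonical basis $\bfB$.

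So at the level of the theorem there is nothing more to verify; all the work sits in the three lemmas, and the strategy underlying them is uniform: strip off one block of $(\bfc,t_\lz)$ at a time using operations known to preserve $\sgn$. For $\bfc_\pm$ these are $\varphi_{\bfi_0}$ and the braid symmetries $\kappa_\bfi$ (sign-preserving by Lemma \ref{kappa is T}), applied through Proposition \ref{prop: pi acts on the left-most side}, Proposition \ref{k(C)=C_0} and Corollary \ref{k(C)=C_s}. For $\bfc_0$ one uses the cyclic-quiver Kashiwara operators via Propositions \ref{C(pi)=phi C(pi|)} and \ref{generalization of C(pi)=phi C(pi|)}, together with the fact that $\bfB^K$ is obtained from $0\in\Pi^a$ by successive application of the $\varphi^K_i$. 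For $t_\lz$ one uses the $sl_2$/Kronecker embedding $\iota$ of Proposition \ref{Kash in sl2}, together with the fact that $\bfB^\ooz$ is obtained from $0$ by successive $\varphi^\ooz_j$ and that in type $A_1$ every canonical basis sign is $+1$.

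The genuine obstacle is not the theorem but the two places where the hypothesis on the type is essential. In Lemma \ref{lem2} one needs every nonhomogeneous regular simple of $\ggz$ to be carried by a chain of BGP reflection functors $\sz^+_\bfi$ onto a genuine simple module, so that Proposition \ref{C(pi)=phi C(pi|)} can be transported back along the corresponding $\kappa_\bfi$'s; and in Lemma \ref{lem3} one needs $\dz$ to split as $\udim M_1+\udim M_2$ with $M_1\in\cI$, $M_2\in\cP$ spanning a subcategory equivalent to representations of the Kronecker quiver. Both are rank-one features special to $\tilde{B}_n$ and $\tilde{C}_n$ (a single nonhomogeneous tube, and a splitting of the minimal imaginary root into a preinjective plus a preprojective summand). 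For the remaining affine types these reductions fail, and one cannot expect more than Theorem \ref{main theorem}, $\Xi(\cB')=\cB''$, i.e. equality of signed bases.
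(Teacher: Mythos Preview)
Your proposal is correct and matches the paper's proof exactly: the theorem is obtained by composing Lemma~\ref{lem1}, Lemma~\ref{lem2}, and Lemma~\ref{lem3} in that order. The additional commentary you give on the strategy behind the three lemmas and on where the type hypothesis enters is accurate and helpful, but not part of the proof proper.
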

\begin{proof}
    This follows directly from Lemma \ref{lem1}, Lemma \ref{lem2}, Lemma \ref{lem3}.
\end{proof}

The following corollary is immediately obtained.
\begin{corollary}
     If $\ggz$ is of type $\tilde{B}_n,\tilde{C}_n$, for $(\bfc,t_\lz)\in\cG^a$, we have 
     $$\Xi(C(\bfc,t_\lz))=B(\bfc,t_\lz).$$
\end{corollary}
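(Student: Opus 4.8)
The plan is to combine the preceding Theorem of Section~\ref{sec remove sgn}, which asserts $\Delta(\bfc,t_\lz)=1$ for all $(\bfc,t_\lz)\in\cG^a$ when $\ggz$ is of type $\tilde B_n$ or $\tilde C_n$, with Theorem~\ref{main theorem}. First I would invoke Theorem~\ref{main theorem}: it already gives $\Xi(\cB')=\cB''$ and tells us that $\Xi$ sends the pair $\{C(\bfc,t_\lz),-C(\bfc,t_\lz)\}$ onto the pair $\{B(\bfc,t_\lz),-B(\bfc,t_\lz)\}$, so that $\Xi(C(\bfc,t_\lz))=\varepsilon\,B(\bfc,t_\lz)$ for some sign $\varepsilon=\varepsilon(\bfc,t_\lz)\in\{\pm1\}$. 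It then remains only to show $\varepsilon=1$.

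To pin down $\varepsilon$, note that $\Xi=\Xi_2\circ\Xi_1^{-1}$ and $\Xi_2(\cB)=\cB''$, hence $\Xi_1^{-1}(\cB')=\cB$, so $b:=\Xi_1^{-1}(C(\bfc,t_\lz))$ lies in $\cB\subset\bfU^+$; under the identification $\bfU^+\cong\cC^*$ one has $\sgn(b)=\sgn(C(\bfc,t_\lz))=\Delta(\bfc,t_\lz)$, which equals $1$ by the Theorem of Section~\ref{sec remove sgn}. By the definition of the canonical basis in Section~\ref{sec: canonical basis}, $\bfB=\{b\in\cB\mid\sgn(b)=1\}$, so $b\in\bfB$, and therefore $\Xi(C(\bfc,t_\lz))=\Xi_2(b)\in\Xi_2(\bfB)=\bfB''$ by the lemma of Section~\ref{sec Simple perverse sheaves for tame quivers} (which states $\Xi_2(\bfB)=\bfB''$). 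Since $\cB''=\bfB''\sqcup(-\bfB'')$ is a disjoint union and $0\notin\bfB''$, the sets $\bfB''$ and $-\bfB''$ are disjoint; as both $\Xi(C(\bfc,t_\lz))=\varepsilon\,B(\bfc,t_\lz)$ and $B(\bfc,t_\lz)$ belong to $\bfB''$, we are forced to conclude $\varepsilon=1$, i.e. $\Xi(C(\bfc,t_\lz))=B(\bfc,t_\lz)$.

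There is no substantial obstacle here: the entire content is already carried by Theorem~\ref{main theorem} and the sign-removal Theorem. The only point requiring attention is the bookkeeping with the three isomorphisms $\Xi_1,\Xi_2,\Xi$ and the compatibility of the several incarnations of ``sign'' appearing in the paper — the function $\sgn$ on $\cB$, its abbreviation $\Delta$, and the normalization $\epsilon([\IC(\eta(\bfc),t_\lz),\tphi])=1$ used to single out $\bfB''$ — but these identifications are exactly the ones recorded in Sections~\ref{sec: canonical basis} and \ref{sec Simple perverse sheaves for tame quivers}, so the corollary follows at once.
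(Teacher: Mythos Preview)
Your argument is correct and matches the paper's intended reasoning: the paper simply states that the corollary is ``immediately obtained'' from the preceding Theorem (that $\Delta(\bfc,t_\lz)=1$) together with Theorem~\ref{main theorem}, and you have spelled out exactly this deduction. The bookkeeping with $\Xi_1,\Xi_2,\Xi$, the identification $\sgn(\Xi_1^{-1}(C(\bfc,t_\lz)))=\Delta(\bfc,t_\lz)$, and the disjointness of $\bfB''$ and $-\bfB''$ are all handled correctly.
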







\bibliography{Tame_quivers_and_affine_bases_II}

\end{document}